\documentclass[a4paper,reqno,11pt]{amsart}
\usepackage{mathrsfs}
\usepackage{txfonts}
\usepackage{amssymb}
\usepackage{amsfonts,amsmath,amsthm,amssymb,stmaryrd,esint,extarrows,chemarrow,esint,color,xcolor}
\usepackage[numbers,sort&compress]{natbib}
\usepackage{booktabs,tabularx}
\usepackage{hyperref}

\newtheorem{Theorem}{Theorem}[section]
\newtheorem{Lemma}{Lemma}[section]
\newtheorem{Proposition}{Proposition}[section]
\newtheorem{Remark}{Remark}[section]
\newtheorem{Corollary}{Corollary}[section]

\numberwithin{equation}{section}
\usepackage[left=1 in, right=1 in,top=1 in, bottom=1 in]{geometry}

\def\XXint#1#2#3{{\setbox0=\hbox{$#1{#2#3}{\int}$ }
\vcenter{\hbox{$#2#3$ }}\kern-.6\wd0}}

\def\r3{\mathbb{R}^3}

\makeatletter
\@namedef{subjclassname@2020}{\textup{2020} Mathematics Subject Classification}
\makeatother

\allowdisplaybreaks
\begin{document}
\bibliographystyle{plain}

\title[nematic liquid crystal flows]{Optimal decay of global strong solutions to nematic liquid crystal flows in the half-space}

\author{Haokun Chen}
\address{School of Mathematical Sciences, South China Normal University, Guangzhou, Guangdong 510631, China.}
\email[H. Chen]{chenhaokunn@163.com}

\author{Yong Wang}
\address{School of Mathematical Sciences, South China Normal University, Guangzhou, Guangdong 510631, China.}
\email[Y. Wang]{wangyongxmu@163.com}
\thanks{Corresponding author: Yong Wang (wangyongxmu@163.com).}

\begin{abstract}
We study asymptotic behaviors of the higher-order spatial derivatives and the first-order time derivatives for the strong solution to nematic liquid crystal flows in the half-space $\mathbb{R}_+^3$.
Furthermore, when the initial data lie in an appropriately weighted Sobolev space, we obtain the decay rates that are faster than the heat kernel.
The main tools employed in this paper are the $L^p-L^q$ estimates of the Stokes semigroup, the a priori estimates of the steady Stokes system in $\mathbb{R}_+^3$, and the representation formula of the Leray projection operator.
\end{abstract}
\keywords{ Large-time behavior; Strong solutions; Nematic liquid crystal flows; Half-space}
\subjclass[2020]{76A15; 35B40; 35G61.}

\maketitle

%
%
%

\section{Introduction}

	 In this paper, we consider the large time behaviors of global strong solutions to the nematic liquid crystal flows in the half-space $\mathbb{R}_{+}^{3}$ (see \cite{Lin-1989,Lin-1995}) :
\begin{align}\label{1.1-main}
	\left\{
	\begin{array}{lll}
		u_t+u\cdot\nabla u+\nabla p= \mu \,\Delta u-\lambda  \,\nabla \cdot(\nabla d\odot\nabla d),\\
d_t+u\cdot\nabla d= \theta\, (\Delta d+|\nabla d|^2d), \\
		\nabla\cdot u=0,\quad\quad\quad\quad\quad(x,t)\in \mathbb{R}_+^3\times\mathbb{R}^+,\\
	\end{array}
	\right.
\end{align}
with the initial and boundary conditions
\begin{align}\label{1.2}
	\left\{
	\begin{array}{lll}
		u=\frac{\partial d}{\partial x_3}=0,&\text{on}\quad\partial\mathbb{R}_+^3\times\mathbb{R}^+,\\
(u,d)\to(0,e_3),&\text{as} \quad|x|\to\infty,\\
		(u(x, 0), d(x,0))=(u_0(x), d_0(x)),&\text{in}\quad\mathbb{R}_+^3.
	\end{array}
	\right.
\end{align}
Here, $\mathbb{R}_{+}^{3}=\{x=(x_{1},x_{2},x_{3})\in\mathbb{R}^{3}: x_{3}>0\}$ with the boundary $\partial\mathbb{R}_{+}^{3}=\{x=(x_{1},x_{2},x_{3})\in\mathbb{R}^{3}: x_{3}=0\}$ and $e_3=(0,0,1)$. The unknown variables $u=(u_1,u_2,u_3)(x,t)$ and $p=p(x,t)$ denote the velocity field of the fluid and the pressure function, respectively, and $d=(d_1,d_2,d_3)(x,t)$ is the unit vector field representing the macroscopic orientation field of liquid crystal molecules. The notation $\nabla d\odot\nabla d$ denotes a $3\times3$ matrix whose $(i,j)-\text{th}$ entry is given by $\partial_id\cdot\partial_jd$ for $1\leq i,j\leq3$. The constants $\mu$, $\lambda$, $\theta>0$ represent the viscosity of the fluid, the competition between kinetic and potential energy, and the microscopic elastic relaxation time for the molecular orientation field, respectively. Without loss of generality, we assume $\mu=\lambda=\theta=1$ in this paper.

If the unknown function $d$ is vanishing in the system \eqref{1.1-main}, then one has the half-space problem of the classical incompressible Navier-Stokes equations:
\begin{align*}
	\left\{
	\begin{array}{lll}
		u_t+u\cdot\nabla u+\nabla p=\mu\,\Delta u &\text{in}\quad\mathbb{R}_+^n\times\mathbb{R}^+,\\
		\nabla\cdot u=0 &\text{in}\quad\mathbb{R}_+^n\times\mathbb{R}^+,\\
		u(x,t)=0&\text{on}\quad\partial\mathbb{R}_+^n\times\mathbb{R}^+,\\
		u(x, 0)=u_0(x)&\text{in}\quad\mathbb{R}_+^n,
	\end{array}
	\right.
\end{align*}
where $n\geq2$ is the spatial dimension. There are lots of literature on the existence, uniqueness and large time behaviors of solutions of the Cauchy problem and the initial-boundary value problem for the incompressible Navier-Stokes equations.
 For the Cauchy problem, Leray \cite{Leray-1934} proved the existence of global weak solutions for the incompressible Navier-Stokes equations in dimension three. Significantly, Leray posed the problem of determining whether or not weak solutions decay to zero in the $L^2$-norm as time tends to infinity. This problem remains unsolved for a long time until the emergence of Kato's seminal work \cite{Kato-1984}. Specifically, Kato proved the existence of Leray weak solutions in spatial dimensions $n\leq4$ and established the asymptotic behavior: $\|u(t)\|_{L^2}\to0$ as $t\to\infty$.
 Later, Schonbek \cite{Schonbek-1985,Schonbek-1986} introduced the Fourier splitting method and obtained the algebraic decay for weak solutions in $\mathbb{R}^3$. More precisely, under the condition that the initial velocity satisfies $u_0\in L_\sigma^2(\mathbb{R}^3)\cap L^r(\mathbb{R}^3)\ (1\leq r<2)$, Schonbek \cite{Schonbek-1985,Schonbek-1986} deduced the explicit decay rate:
 $$\| u(t)\|_{L^2(\mathbb{R}^3)}\leq C(1+t)^{-\frac 32(\frac1r-\frac12)},\ \forall\ t>0.$$
 Kajikiya-Miyakawa \cite{Kajikiya-Miyakawa-1986} generalized these results in \cite{Schonbek-1985,Schonbek-1986} to the case $n\geq2$ by using the spectral theory of self-adjoint operator in Hilbert spaces. Wiegner \cite{Wiegner-1987} also studied the $L^2$-decay problem in $\mathbb{R}^n\,(n\geq2)$ and obtained the most general results in this direction. However, most of the techniques in the whole space rely on the Fourier transform theory to deal with the nonlinear term $u\cdot\nabla u$, so the arguments developed there cannot be directly applied to the case of unbounded fields with nonempty boundaries.
 On the upper half-space $\mathbb{R}_+^n$, using the estimates of the fractional powers of the Stokes operator, Borchers-Miyakawa \cite{Borchers-Miyakawa1988} obtained the time decay rates in $L^2$ for the Navier-Stokes equations and established the results similar to the case of the Cauchy problem mentioned before. Later, Bae-Choe \cite{Bae-Choe-2001} improved their results by virtue of Ukai's solution formula (see \cite{Ukai-1987}) for the Stokes equations.
Fujigaki-Miyakawa \cite{Fujigaki-Miyakawa-2001} considered the long time behavior of strong solutions for Navier-Stokes flows in $L^q(\mathbb{R}_+^n)\ (1<q<\infty)$. Specifically, Fujigaki-Miyakawa \cite{Fujigaki-Miyakawa-2001} verified that if $u_0\in L_\sigma^q(\mathbb{R}_+^n)\cap L^1(\mathbb{R}_+^n)\ (n\geq2)$ for all $1<q<\infty$ and $\|x_n u_0(x)\|_{L^1(\mathbb{R}_+^n)}<\infty$, then it holds for any $t>0$,
$$\|\nabla ^ku(t)\|_{L^q(\mathbb{R}_+^n)}\leq C(1+t)^{-\frac {k+1}2-\frac n2(1-\frac1q)},$$
where $k=0,1$.
Han \cite{Han-2010,Han-2011,Han-2012,Han-2014*} derived the decay rates for the first-order and second-order derivatives of the Navier-Stokes flows in $L^r(\mathbb{R}_+^n)\ (1\leq r\leq\infty)$ by using $L^p-L^q$ estimates and a clever analysis on the fractional powers of the Stokes operator. In particular, the estimates
\begin{align*}
\|\nabla u(t)\|_{L^1(\mathbb{R}_+^n)}\leq Ct^{-\frac12},\quad \|\nabla u(t)\|_{L^\infty(\mathbb{R}_+^n)}\leq& Ct^{-\frac12-\frac n2},\\
\|\nabla^2  u(t)\|_{L^q(\mathbb{R}_+^n)}+\|\partial_tu(t)\|_{{L^{q}(\mathbb{R}_{+}^{n})}}+\|\nabla p(t)\|_{{L^{q}(\mathbb{R}_{+}^{n})}}\leq &Ct^{-1-\frac n2(1-\frac1q)},\quad 1<q<\infty
\end{align*}
were given in \cite{Han-2011} for any $t>0$ provided $u_0\in L_\sigma^q(\mathbb{R}_+^n)\cap L^1(\mathbb{R}_+^n)\ (n\geq2)$ for all $1<q<\infty$. Under some constraint conditions on the initial data, Han \cite{Han-2012,Han-2014} gave various weighted estimates of $\|\nabla^k u(t)\|_{L^r(\mathbb{R}_+^n)}$ for the Navier-Stokes flows, where $k=0,1,2$, $1\leq r\leq\infty$, and $r\neq1$ if $k=0$.  Applying the regularity estimates of the steady Stokes system, Han \cite{Han-2016} gave the decay results for the higher-order spatial derivatives of the Navier-Stokes flows, i.e., it was shown in \cite{Han-2016} that
\begin{align*}
\|\nabla ^{k+2}u(t)\|_{L^r(\mathbb{R}_+^n)}\leq
\left\{\begin{array}{ll}
\displaystyle		C_\varepsilon t^{\varepsilon-\frac{2+k}2-(1-\frac1r)},&\mathrm{~if~}n=2,\smallskip\\
\displaystyle Ct^{-\frac{2+k}2-\frac n2(1-\frac1r)},&\mathrm{~if~}n\geq3  \smallskip
\end{array}\right.
\end{align*}
as $t\to\infty$ for $k\geq1$ and $1<r\leq\infty$, provided that $u_0$ also belongs to $L_\sigma^q(\mathbb{R}_+^n)\cap L^1(\mathbb{R}_+^n)\ (n\geq2)$ for all $1<q<\infty$. For further details, the interested readers may refer to \cite{Schonbek-1991,Schonbek-1992,Schonbek-1995,Han-2018,Han-2024,Kozono-Ogawa-1992,he-wang-2011,Desch-2001,kozono-1989} and the references cited therein.

There is an extensive literature devoted to nematic liquid crystal flows.
The hydrodynamic theory of liquid crystals was established by Ericksen and Leslie in 1960s (see \cite{Ericksen-1961,Ericksen-1962,Leslie-1968}). In 1989, Lin \cite{Lin-1989} derived a simplified Ericksen-Leslie system $(\ref{1.1-main})$ modeling incompressible viscous liquid crystal flows. From the mathematical point of view, $(\ref{1.1-main})$ is a nonlinear coupling of the Navier-Stokes equation and the transported heat flow of harmonic maps to $\mathbb{S}^2\,$(the unit sphere in $\mathbb{R}^3$). Lin-Liu \cite{Lin-1995,Lin-1996,Lin-2000} initiated the mathematical analysis of $(\ref{1.1-main})$ by considering its Ginzburg-Landau approximation,
namely, $(\ref{1.1-main})_2$ is replaced by
$$d_t+u\cdot\nabla d= \Delta d+\frac{1}{\varepsilon^{2}}(1-|d|^{2})d.$$
More precisely, Lin-Liu \cite{Lin-1995} proved the existence of global weak solutions to system $(\ref{1.1-main})$ in dimensions two and three with Dirichlet boundary conditions. In \cite{Lin-1996}, the authors proved that the one dimensional space-time Hausdorff measure of the singular set of the suitable weak solutions is zero. For more related topics, readers can refer to  \cite{Wen-Ding-2011,Lin-2016,Hong-Xin-2012,hong-Li-xin-2014,Lei-Li-Zhang-2014,Wang-Wang-2014} and the references cited therein. At the same time, there are many works on decay properties of the liquid crystal flows. Based on the Fourier splitting method introduced by Schonbek \cite{Schonbek-1985,Schonbek-1995-}, Liu \cite{Liu-2016} studied the temporal decay properties for weak solutions in $\mathbb{R}^2$. Dai and her collaborators \cite{Dai-2012,Dai-2014} obtained the optimal decay rates of the strong solution to $(\ref{1.1-main})$ in $H^m(m\geq0)$ provided $\|u_0\|_{H^1(\mathbb{R}^3)}+\|d_0-e_3\|_{H^2(\mathbb{R}^3)}$ is sufficiently small.
Liu-Xu \cite{Liu-Xu-2015} proved that if the initial data $(u_0,d_0)\in H^m(\mathbb{R}^3)\times H^{m+1}(\mathbb{R}^3)\,(m\geq3)$ have sufficiently small $\|(u_0,\nabla d_0)\|_{L^2(\mathbb{R}^3)}-$norm, then system $(\ref{1.1-main})$ admits a unique global-in-time smooth solution $(u,d)$ and satisfies
$$\|\nabla^\ell u\|_{L^2}^2+\|\nabla^\ell\nabla d\|_{L^2}^2\leq C(1+t)^{-\ell-3/2},\quad \ell=0,1,\ldots,m,$$
provided that $(u_0,\nabla d_0)\in L^1(\mathbb{R}^3)\times L^1(\mathbb{R}^3)$.
In the case of the half-space, the Fourier transform method does not work well. In addition, the projection operator $\mathbb{P}:{L}^{r}(\mathbb{R}_{+}^{n})\to L_{\sigma}^{r}(\mathbb{R}_{+}^{n})$ is unbounded for $r=1,\infty$, and $\mathbb{P}\partial_n\ne \partial_n \mathbb{P}$, due to the noncompact boundary $\partial\mathbb{R}_{+}^{n}$, which leads to many difficulties in dealing with the decay problems.
Recently, Huang-Wang-Wen \cite{huang-wang-wen-2019} obtained the optimal time decay rates in $L^r(\mathbb{R}_+^3)$ for $r\geq1$ of global strong solutions to $(\ref{1.1-main})$, provided that $\|u_0\|_{L^3(\mathbb{R}_+^3)}+\|\nabla d_0\|_{L^3(\mathbb{R}_+^3)}$ is sufficiently small. However, in comparison to the heat equation and the Navier-Stokes equations, further research is warranted regarding the decay rates of liquid crystal system.

In this paper, we intend to establish the algebraic decay estimates of the global strong solution, including the $L^1$-decay rates of second-order derivatives and the $L^\infty$-decay rates of higher-order derivatives with respect to spatial variables, to the nematic liquid crystal flows in $L^r(\mathbb{R}_+^3)$.
To solve this problem, we usually transform the system $(\ref{1.1-main})$ into the following integral forms by $\text{Duhamel}$'s formula:
\begin{align}\label{uB1}
\left\{
\begin{array}{lll}
	\displaystyle	u(t)=e^{-t\mathbb{A}}u_0-\int_0^te^{-(t-s)\mathbb{A}}\mathbb{P}\Big(u(s)\cdot\nabla u(s)+\nabla \cdot(\nabla d(s)\odot\nabla d(s))\Big)\,{d}s,\smallskip\\
	\displaystyle	(d-e_3)(t)=e^{t\Delta}({d_0-e_3})-\int_0^te^{(t-s)\Delta}\Big(u(s)\cdot\nabla d(s)-|\nabla d(s)|^2d(s)\Big)\,{d}s,
\end{array}
\right.
\end{align}
and
\begin{align}\label{uB2}
\left\{
\begin{array}{lll}
	\displaystyle	u(t)=e^{-\frac t2\mathbb{A}}u(\frac t2)-\int_{\frac t2}^{t}e^{-(t-s)\mathbb{A}}\mathbb{P}\Big(u(s)\cdot\nabla u(s)+\nabla \cdot(\nabla d(s)\odot\nabla d(s))\Big)\,{d}s,\smallskip\\
	\displaystyle	(d-e_3)(t)=e^{\frac t2\Delta}({d-e_3})(\frac t2)-\int_\frac t2^te^{(t-s)\Delta}\Big(u(s)\cdot\nabla d(s)-|\nabla d(s)|^2d(s)\Big)\,{d}s,
\end{array}
\right.
\end{align}
where $\mathbb{A}=-\mathbb{P}\Delta$ is the Stokes operator.
The decay rates of the higher-order derivatives of the strong solution $(u,d)$ are not so easy to obtain. The difficulty lies in the strong singularity of the integral when calculating $\|(\nabla^m u,\nabla^m d)\|_{L^r(\mathbb{R}_+^3)} $ with $m\geq2$ by applying the $L^p-L^q$ estimates of the Stokes flows. For example,
$$\int_{\frac t2}^t(t-s)^{-\frac m2}\|u(s)\|_{L^2(\mathbb{R}_+^3)}^2\,ds\geq \int_{\frac t2}^t(t-s)^{-\frac m2}\,ds\,\|u(t)\|_{L^2(\mathbb{R}_+^3)}^2=+\infty,\quad \forall\ t>0 ,\  m\geq2. $$
To address this issue, the equation of $d$ is appropriately modified (see $(\ref{3.7'})$), and a continuity argument is employed to derive the $L^r$-decay estimate of $\nabla^2 d$, inspired by the work in \cite{huang-wang-wen-2019}.
By conducting a sophisticated analysis of the fractional powers of the Stokes operator $\mathbb{A}$, the strong singularity arising from $\|\nabla^2 u\|_{L^r(\mathbb{R}_+^3)} $ can be effectively avoided, see Lemma $\ref{le3.4}$. However, the method employed in Lemma $\ref{le3.4}$ is not applicable for $m \geq 3$ because the singularity is really too strong. Motivated by the work in \cite{Han-2016}, we aim to apply the regularity estimates of the steady Stokes system to overcome the difficulty appearing from $\|\nabla^m u \|_{L^r(\mathbb{R}_+^3)} $ with $m\geq3$. Besides, the standard elliptic estimates are employed to derive the decay rates of $\|\nabla^m d \|_{L^r(\mathbb{R}_+^3)}\, (m\geq3)$. Since $\nabla d$ is not divergence free, the required estimates on $\nabla \cdot(\nabla d\odot\nabla d)$ is more delicate than the nonlinear term $u\cdot\nabla u$, as stated in \cite{huang-wang-wen-2019}. In fact, the third-order and forth-order derivatives of $d$ emerge in the estimate of $\|\nabla \mathbb{P}(u\cdot\nabla u+\nabla \cdot(\nabla d\odot\nabla d))\|_{L^r(\mathbb{R}_+^3)}$, see $(\ref{3.50})$. Consequently, to formally obtain the decay rates of $\|\nabla^{k}u\|_{L^r(\mathbb{R}_+^3)}$, it is necessary to first determine the decay rates of $\|\nabla^{k+1}d\|_{L^r(\mathbb{R}_+^3)}$. To achieve this, we need to jointly solve for the decay rates of the higher derivatives of $(u,d)$, see $(\ref{3.39})$ and $(\ref{4.19})$ below. This approach differs from that used for the Navier-Stokes equations (see \cite{Han-2016}) and the magnetohydrodynamic equations (see \cite{Liu-2019}) due to the complexity of the nonlinear terms with respect to the director field $d$.

\medskip

\begin{flushleft}
\textbf{Notations.} We denote the usual $L^p$ spaces by
\end{flushleft}
$$L^p(\mathbb{R}_+^3)=\{f(x)\mid\|f(x)\|_{L^p(\mathbb{R}_+^3)}<\infty\},\quad1\leq p\leq\infty,$$
where
\begin{align*}
\|f(x)\|_{L^p(\mathbb{R}_+^3)}:=
\left\{\begin{array}{ll}
\displaystyle		\Big(\int_{\mathbb{R}_+^3}|f(x)|^p \,dx\Big)^{\frac1p},&1\leq p<\infty,\smallskip\\
\displaystyle \inf_{\substack{|E|=0\\E\subset{\mathbb{R}_+^3}}}\{\sup_{{\mathbb{R}_+^3}\setminus E}|f(x)|\},&p=\infty. \smallskip
\end{array}\right.
\end{align*}
We denote the usual Sobolev spaces by
$$W^{k,p}({\mathbb{R}_+^3})=\{f(x)\mid\|f(x)\|_{W^{k,p}(\mathbb{R}_+^3)}<\infty\},\quad k=0,1,\ldots,\quad1\leq p\leq\infty,$$
where
\begin{align*}
\|f(x)\|_{W^{k,p}(\mathbb{R}_+^3)}:=
\left\{\begin{array}{ll}
\displaystyle		\Big(\sum_{0\leq l\leq k}\|\nabla^lf(x)\|_{L^p(\mathbb{R}_+^3)}^p\Big)^{\frac1p},&1\leq p<\infty,\smallskip\\
\displaystyle \sum_{0\leq l\leq k}\|\nabla^{l}f(x)\|_{L^{\infty}(\mathbb{R}_+^3)},&p=\infty. \smallskip
\end{array}\right.
\end{align*}
For simplicity, we denote $H^k(\mathbb{R}_{+}^{3})=W^{k,2}(\mathbb{R}_{+}^{3})\ (k=1,2,\ldots)$.
And we denote by $C_{0,\sigma}^\infty(\mathbb{R}_+^3)$ the set of all $C^\infty$ divergence-free vector fields with compact support in $\mathbb{R}_+^3$. The space $L_\sigma^r(\mathbb{R}_+^3) \ (1<r<\infty)$ is the closure of $C_{0,\sigma}^\infty(\mathbb{R}_+^3)$ with respect to $\|\cdot\|_{L^r(\mathbb{R}_+^3)}$, namely,
$$L_\sigma^r(\mathbb{R}_+^3)=\left\{u=(u_1,u_2,u_3)\in L^r(\mathbb{R}_+^3)\mid\nabla\cdot u=0, \ u_3|_{\partial\mathbb{R}_+^3}=0\right\}.$$
Set
$$L_\pi^r(\mathbb{R}_+^3)=\left\{\nabla p\in L^r(\mathbb{R}_+^3)\mid p\in L_{loc}^r(\overline{\mathbb{R}_+^3})\right\}.$$
Then the Helmholtz decomposition holds (see \cite{Borchers-Miyakawa1988}):
$$L^r(\mathbb{R}_+^3)=L_\sigma^r(\mathbb{R}_+^3)\oplus L_\pi^r(\mathbb{R}_+^3),\quad1<r<\infty.$$
In addition, we denote
$$D^{k,r}(\mathbb{R}_+^3)=\left\{v\in L_{\mathrm{loc}}^1(\mathbb{R}_+^3)\mid \|\nabla^kv\|_{L^r(\mathbb{R}_+^3)}<\infty\right\},$$
and $D^k(\mathbb{R}_+^3)=D^{k,2}(\mathbb{R}_+^3).$

For completeness, we first recall the following proposition on the global existence of strong solutions to the system $(\ref{1.1-main})$-$(\ref{1.2})$, see Theorem 1.1 in \cite{huang-wang-wen-2019}.

\begin{Proposition}\label{solu1}
There exists an $\varepsilon_0>0$ such that if $u_0\in L_{\sigma}^{r}(\mathbb{R}_{+}^{3})$ and $d_0\in D^{1,r}(\mathbb{R}_+^3)$ for $r=2,3$ satisfies
$$\|u_0\|_{L^3(\mathbb{R}_+^3)}+\|\nabla d_0\|_{L^3(\mathbb{R}_+^3)}\leq\varepsilon_0,$$
then the half-space problem $(\ref{1.1-main})$-$(\ref{1.2})$ admits a unique global strong solution (u,d) such that for any $\tau>0$, the following properties hold:
\begin{align*}
\left\{
\begin{array}{lll}
	\displaystyle	u\in C([0,\infty),L^{2}(\mathbb{R}_{+}^{3})\cap L^{2}([0,\infty),D^{1}(\mathbb{R}_{+}^{3})),\\
	\displaystyle	u\in C(\mathbb{R}^+, D^2(\mathbb{R}_+^3))\cap L^2([\tau,+\infty), D^3(\mathbb{R}_+^3)),\\
\displaystyle	u_{t}\in C(\mathbb{R}^{+},L^{2}(\mathbb{R}_{+}^{3}))\cap L^{2}([\tau,{+}\infty),W^{1,2}(\mathbb{R}_{+}^{3})),\\
	\displaystyle	u\in L^{\infty}([0,\infty),L^{3}(\mathbb{R}_{+}^{3})), \nabla d\in L^{\infty}([0,\infty),L^{3}(\mathbb{R}_{+}^{3})),\\
\displaystyle	\big(|u|^{\frac32},|\nabla d|^{\frac32}\big)\in L^2([0,\infty),D^1(\mathbb{R}_+^3)),\\
	\displaystyle	\nabla d\in C([0,\infty),L^2(\mathbb{R}_+^3))\cap L^2([0,\infty),D^1(\mathbb{R}_+^3)),\\
	\displaystyle	\nabla d\in C(\mathbb{R}^+,D^2(\mathbb{R}_+^3))\cap L^2([\tau,+\infty),D^3(\mathbb{R}_+^3)),\\
\displaystyle	d_t\in L^2([0,\infty),L^2(\mathbb{R}_+^3)),\\
\displaystyle d_t\in C(\mathbb{R}^+,W^{1,2}(\mathbb{R}_+^3))\cap L^2([\tau,+\infty),D^2(\mathbb{R}_+^3)).
\end{array}
\right.
\end{align*}
\end{Proposition}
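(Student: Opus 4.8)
The plan is to read Proposition~\ref{solu1} as a scaling-critical small-data global existence statement and to prove it by combining a Kato-type construction of mild solutions in critical spaces with parabolic energy estimates supplied by the $L^2$ part of the data. Observe first that under the parabolic scaling $u\mapsto\lambda u(\lambda x,\lambda^2 t)$, $d\mapsto d(\lambda x,\lambda^2 t)$, $p\mapsto\lambda^2 p(\lambda x,\lambda^2 t)$ the system $(\ref{1.1-main})$ is invariant and the quantity $\|u_0\|_{L^3(\mathbb{R}_+^3)}+\|\nabla d_0\|_{L^3(\mathbb{R}_+^3)}$ is preserved, so it is precisely this norm that should be small. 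I would first pass to the perturbation $\tilde d:=d-e_3$, which satisfies the homogeneous Neumann condition $\partial_{x_3}\tilde d=0$ on $\partial\mathbb{R}_+^3$ and the pointwise bound $|\tilde d|\le 2$ (the constraint $|d|=1$ being propagated by the flow), and rewrite $(\ref{1.1-main})$ in the integral form $(\ref{uB1})$, with $e^{-t\mathbb{A}}$ the Stokes semigroup and $e^{t\Delta}$ the Neumann heat semigroup on $\mathbb{R}_+^3$. The analytic inputs are the $L^p$-$L^q$ estimates
\begin{align*}
\|\nabla^j e^{-t\mathbb{A}}\mathbb{P}\nabla\cdot F\|_{L^q}&\le C\,t^{-\frac{j+1}{2}-\frac32(\frac1p-\frac1q)}\|F\|_{L^p},\\
\|\nabla^j e^{t\Delta}g\|_{L^q}&\le C\,t^{-\frac j2-\frac32(\frac1p-\frac1q)}\|g\|_{L^p},
\end{align*}
valid on $\mathbb{R}_+^3$ in the admissible ranges $1<p\le q\le\infty$, together with the Helmholtz decomposition; one must keep in mind throughout that $\mathbb{P}$ is unbounded on $L^1$ and $L^\infty$ and that $\mathbb{P}\partial_{x_3}\ne\partial_{x_3}\mathbb{P}$, so each derivative has to be placed with care.

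For the construction I would run a contraction mapping argument in the Kato space
$$X_T=\Big\{(u,\tilde d):\ \sup_{0<t<T}\big(\|u(t)\|_{L^3}+\|\nabla\tilde d(t)\|_{L^3}\big)+\sup_{0<t<T}t^{\frac14}\big(\|u(t)\|_{L^6}+\|\nabla\tilde d(t)\|_{L^6}\big)<\infty\Big\},$$
estimating the quadratic terms $\mathbb{P}\nabla\cdot(u\otimes u)$ and $\mathbb{P}\nabla\cdot(\nabla d\odot\nabla d)$, the transport term $u\cdot\nabla\tilde d$, and the term $|\nabla\tilde d|^2 d$ (the latter handled as a quadratic term by means of $|d|\le1$). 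For each of these the Duhamel integral $\int_0^t(\cdots)\,ds$ is split as $\int_0^{t/2}+\int_{t/2}^t$; near the diagonal one uses the embedding $L^6\cdot L^6\hookrightarrow L^3$ together with the weak singularity $(t-s)^{-1/2}$ of $\nabla e^{-(t-s)\mathbb{A}}$ and $\nabla e^{(t-s)\Delta}$, while on the complementary part one uses $L^3\cdot L^3\hookrightarrow L^{3/2}$ and the time decay, checking in each case that the resulting Beta-function time integrals converge. This gives bilinear and trilinear bounds of the form $\|\mathcal N(v,v)\|_{X_T}\le C\|v\|_{X_T}^2$ uniform in $T$, hence a unique local mild solution; since the constant is $T$-independent, smallness $\varepsilon_0$ of the critical norm of the data together with the usual continuity argument upgrades it to a global mild solution with $\sup_{0<t<\infty}(\|u(t)\|_{L^3}+\|\nabla d(t)\|_{L^3})\le C\varepsilon_0$ and $u,\nabla d\in L^\infty((0,\infty);L^3(\mathbb{R}_+^3))$.

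With the critical norm globally small, the remaining regularity would be produced by energy estimates using the $L^2$ data. Testing the velocity equation by $u$ and the director equation by $-(\Delta d+|\nabla d|^2 d)$ --- the boundary terms vanishing thanks to $u|_{\partial\mathbb{R}_+^3}=0$ and $\partial_{x_3}d|_{\partial\mathbb{R}_+^3}=0$, and the two coupling terms cancelling --- yields the Ericksen-Leslie energy law and hence $u\in C([0,\infty);L^2)\cap L^2((0,\infty);D^1)$ and $\nabla d\in C([0,\infty);L^2)\cap L^2((0,\infty);D^1)$; testing by $|u|u$ and $|\nabla d|\nabla d$ gives $|u|^{3/2},|\nabla d|^{3/2}\in L^2((0,\infty);D^1)$, and the $d$-equation then gives $d_t\in L^2((0,\infty);L^2)$. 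Differentiating the equations and carrying out successive (generally time-weighted) energy estimates, at each stage absorbing the nonlinear contributions by the now-available smallness of $\|u\|_{L^3}+\|\nabla d\|_{L^3}$ together with Gagliardo-Nirenberg and Sobolev inequalities on the half-space and parabolic smoothing from $t=0$, one bootstraps up to $u\in C(\mathbb{R}^+;D^2)\cap L^2([\tau,\infty);D^3)$, $u_t\in C(\mathbb{R}^+;L^2)\cap L^2([\tau,\infty);W^{1,2})$, $\nabla d\in C(\mathbb{R}^+;D^2)\cap L^2([\tau,\infty);D^3)$, and $d_t\in C(\mathbb{R}^+;W^{1,2})\cap L^2([\tau,\infty);D^2)$, which is exactly the stated solution class.

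The main obstacle is the construction step in the half-space geometry: unlike on the whole space $\mathbb{R}^3$ one cannot freely commute $\mathbb{P}$ past spatial derivatives, so the nonlinear terms must be arranged so that every derivative falling on the source as $s\to t$ is absorbed by the Stokes or heat kernel while still leaving integrable time weights --- this is most delicate for $\nabla\cdot(\nabla d\odot\nabla d)$, which, because $\nabla d$ is not divergence free, cannot be treated exactly as $u\cdot\nabla u$ and forces the diagonal/off-diagonal splitting above. A secondary difficulty lies in the energy bootstrap, namely verifying that every boundary integral produced by the higher-order energy estimates either vanishes or carries a favorable sign; this is where the two boundary conditions $u|_{\partial\mathbb{R}_+^3}=0$ and $\partial_{x_3}d|_{\partial\mathbb{R}_+^3}=0$ are used in an essential way.
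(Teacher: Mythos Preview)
The paper does not prove Proposition~\ref{solu1} at all: it is stated ``for completeness'' and attributed verbatim to Theorem~1.1 of Huang--Wang--Wen \cite{huang-wang-wen-2019}, with no argument given. So there is no proof in the paper to compare your proposal against.

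That said, your sketch is a reasonable outline of how such a result is established, and is in spirit close to the argument in \cite{huang-wang-wen-2019}: a Kato-type fixed point in the scaling-critical space built on $\|u\|_{L^3}+\|\nabla d\|_{L^3}$ (with an auxiliary time-weighted norm) to produce a global mild solution under smallness, followed by $L^2$-based energy estimates exploiting the cancellation between the two coupling terms, and a time-weighted bootstrap to the stated higher regularity. Your identification of the two delicate points --- that $\mathbb{P}$ does not commute with $\partial_{x_3}$ in the half-space, and that the quadratic term $\nabla\cdot(\nabla d\odot\nabla d)$ cannot be handled exactly like $u\cdot\nabla u$ because $\nabla d$ is not divergence free --- is correct and is indeed stressed both in the present paper and in \cite{huang-wang-wen-2019}. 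One small point you glide over: the propagation of $|d|=1$ is usually obtained by a maximum-principle argument for $|d|^2-1$ from $(\ref{1.1-main})_2$, and you should check that the Neumann condition $\partial_{x_3}d|_{\partial\mathbb{R}_+^3}=0$ is compatible with this. Another is the $L^3$ energy identity giving $|u|^{3/2},|\nabla d|^{3/2}\in L^2_tD^1$, which in \cite{huang-wang-wen-2019} also uses the smallness to absorb the cross terms; your remark ``testing by $|u|u$ and $|\nabla d|\nabla d$'' is the right idea but the details require care with signs and the boundary.
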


Our first theorem considers the decay rates of $\| (\nabla^k u,\nabla^k d)(t)\|_{L^p(\mathbb{R}_+^3)}(k=1,2)$ with $1\leq p\leq\infty$, where $(u,d)$ is the strong solution obtained in Proposition $\ref{solu1}$.
\begin{Theorem}\label{th1.2}
Assume $u_0, d_0-e_3\in L^1(\mathbb{R}_+^{3})$, if $(u,d,p)$ is the global strong solution obtained by Proposition $\ref{solu1}$, then the following estimates
\begin{align*}
\left\{
\begin{array}{lll}
	\displaystyle	\|\nabla u(t)\|_{L^p(\mathbb{R}_+^3)}\leq Ct^{-\frac12-\frac32(1-\frac1p)},\\
	\displaystyle	\|\nabla^2d(t)\|_{L^p(\mathbb{R}_+^3)}\leq Ct^{-1-\frac32(1-\frac1p)},\\
     \displaystyle \|\nabla^2  u(t)\|_{L^1(\mathbb{R}_+^3)}\leq Ct^{-1},\\
     \displaystyle \|\nabla^2  u(t)\|_{L^\infty(\mathbb{R}_+^3)}\leq Ct^{-\frac52},\\
     \displaystyle \|\nabla^2  u(t)\|_{L^r(\mathbb{R}_+^3)}+\|\partial_tu(t)\|_{{L^{r}(\mathbb{R}_{+}^{3})}}+\|\nabla p(t)\|_{{L^{r}(\mathbb{R}_{+}^{3})}}+\| \partial_td(t)\|_{L^r(\mathbb{R}_+^3)}\leq Ct^{-1-\frac32(1-\frac1r)}
\end{array}
\right.
\end{align*}
hold for any $t>0$, where $p\in [1,\infty]$, and $r\in (1,\infty)$.
\end{Theorem}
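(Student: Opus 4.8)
The plan is to run a coupled bootstrap based on the two Duhamel representations $(\ref{uB1})$ and $(\ref{uB2})$. I would start from the lower-order rates $\|u(t)\|_{L^p}\le Ct^{-\frac32(1-\frac1p)}$, $\|(d-e_3)(t)\|_{L^p}\le Ct^{-\frac32(1-\frac1p)}$ and $\|\nabla d(t)\|_{L^p}\le Ct^{-\frac12-\frac32(1-\frac1p)}$, which follow from \cite{huang-wang-wen-2019} together with Proposition $\ref{solu1}$ (or, if needed, are re-derived from $(\ref{uB1})$ by the same scheme), and then successively upgrade to $\nabla u$ and $\nabla^2 d$, then to $\nabla^2 u$, and finally read off $\partial_t u$, $\nabla p$ and $\partial_t d$ from the equations.

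The estimates for $\nabla u$ and $\nabla^2 d$ must be closed jointly, since the director nonlinearity $|\nabla d|^2 d$ has no clean divergence structure and two derivatives of the transport term $u\cdot\nabla d$ force either $\nabla u$ or $\nabla^2 d$ into the integrand. Differentiating $(\ref{uB2})$ once, the free terms $\nabla e^{-\frac t2\mathbb{A}}u(\frac t2)$ and $\nabla^2 e^{\frac t2\Delta}(d-e_3)(\frac t2)$ give exactly the claimed rates once the lower-order decay of $u(\frac t2)$ and $(d-e_3)(\frac t2)$ is inserted with a small internal Lebesgue exponent and the $L^p$-$L^q$ bounds of the Stokes and heat semigroups are applied. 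The Duhamel integrals are the delicate part: for $\nabla^2 d$ the naive estimate is useless because $\int_{\frac t2}^t(t-s)^{-1}\,ds=+\infty$, so I would pass to the modified $d$-equation $(\ref{3.7'})$ --- rewriting $u\cdot\nabla d=\nabla\cdot(u\otimes(d-e_3))$ gives the transport term a divergence form against the decaying factor $d-e_3$, and the remaining quadratic term is handled by moving one spatial derivative onto the heat semigroup. This still leaves $\nabla u$ and $\nabla^2 d$ inside the integrands, so I would absorb them by a continuity argument: assume $\|\nabla u(t)\|_{L^p}\le M_1 t^{-\frac12-\frac32(1-\frac1p)}$ and $\|\nabla^2 d(t)\|_{L^p}\le M_2 t^{-1-\frac32(1-\frac1p)}$ on $[0,T]$, split the time integral into a piece near $0$, bounded by the lower-order rates and the smoothing powers of the semigroups, and a piece near $t$, bounded by the a priori bounds times the $O(\varepsilon_0)$ factor $\|u(s)\|_{L^3}+\|\nabla d(s)\|_{L^3}$ furnished by Proposition $\ref{solu1}$; taking $\varepsilon_0$ small then closes the induction and fixes $M_1,M_2$ independently of $T$.

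For $\nabla^2 u$ in $L^1$, $L^\infty$ and $L^r$ I would use $(\ref{uB2})$ again, and here lies the real obstacle: two derivatives on the Stokes semigroup produce the non-integrable singularity $(t-s)^{-1}$ in $\int_{\frac t2}^t$, while $\mathbb{P}$ is unbounded on $L^1$ and $L^\infty$ and $\mathbb{P}\partial_n\ne\partial_n\mathbb{P}$. I would remove the singularity exactly as in Lemma $\ref{le3.4}$: for a small $\alpha>0$, factor the kernel so as to peel off $\mathbb{A}^{1-\alpha}e^{-(t-s)\mathbb{A}}$, whose operator norm is only $(t-s)^{-(1-\alpha)}$, now integrable on $[\frac t2,t]$, while the leftover $\mathbb{A}^{\alpha}\mathbb{P}$ applied to the nonlinearity $F=u\cdot\nabla u+\nabla\cdot(\nabla d\odot\nabla d)$ --- or to $\nabla d\odot\nabla d$ after extracting one more divergence --- is controlled by a single Lebesgue norm via the mapping properties of fractional powers of the Stokes operator; at the $L^1$ and $L^\infty$ endpoints this last step instead uses the explicit representation formula for the Leray projection $\mathbb{P}$. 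The complementary part $\int_0^{t/2}$ is harmless because $t-s\ge\frac t2$ turns the semigroup factor into a power of $t$, and integrability in $s$ near $0$ follows from the regularity class in Proposition $\ref{solu1}$ after a Cauchy--Schwarz in time. Feeding in the rates already obtained and using $\|u\|_{L^3},\|\nabla d\|_{L^3}=O(\varepsilon_0)$ on the worst contributions gives $\|\nabla^2 u(t)\|_{L^1}\le Ct^{-1}$, $\|\nabla^2 u(t)\|_{L^\infty}\le Ct^{-\frac52}$ and $\|\nabla^2 u(t)\|_{L^r}\le Ct^{-1-\frac32(1-\frac1r)}$.

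Finally, the remaining quantities are read off the equations: $\nabla p=\Delta u-u_t-u\cdot\nabla u-\nabla\cdot(\nabla d\odot\nabla d)$, $u_t=-\mathbb{A}u-\mathbb{P}\big(u\cdot\nabla u+\nabla\cdot(\nabla d\odot\nabla d)\big)$, and $d_t=\Delta d+|\nabla d|^2 d-u\cdot\nabla d$. Each right-hand side is a combination of $\nabla^2 u$, $\nabla^2 d$ (already bounded with the rate $t^{-1-\frac32(1-\frac1r)}$) and quadratic terms which, after H\"older against the $O(\varepsilon_0)$ norms $\|u\|_{L^3}$ and $\|\nabla d\|_{L^3}$, decay at least that fast; for $u_t$ one uses in addition that $\mathbb{A}e^{-\frac t2\mathbb{A}}u(\frac t2)$ decays like $t^{-1-\frac32(1-\frac1r)}$ and that $\int_{\frac t2}^t\mathbb{A}e^{-(t-s)\mathbb{A}}\mathbb{P}F\,ds$ is estimated just as the $\nabla^2 u$ term. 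I expect the main obstacle to be the $\nabla^2 u$ step --- taming the $(t-s)^{-1}$ singularity through fractional powers of $\mathbb{A}$ while coping with the unboundedness of $\mathbb{P}$ at the $L^1$ and $L^\infty$ endpoints through its representation formula --- together with the need to close the decays of $\nabla u$, $\nabla^2 d$ and then $\nabla^2 u$ jointly, because the director nonlinearities couple them all.
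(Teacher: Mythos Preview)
Your overall scheme---continuity argument for $\nabla u$ and $\nabla^2 d$, a fractional-power device for $\nabla^2 u$ in $L^r$, the representation formula for $\mathbb{P}$ at the endpoints, then read off $\partial_t u$, $\nabla p$, $\partial_t d$ from the equations---matches the paper's architecture. Two points, however, deserve attention.

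First, your handling of the $(t-s)^{-1}$ singularity for $\|\nabla^2 u\|_{L^r}$ is not what the paper does, and as stated it is not clearly workable. You propose to split $\mathbb{A}e^{-(t-s)\mathbb{A}}=\mathbb{A}^{1-\alpha}e^{-(t-s)\mathbb{A}}\cdot\mathbb{A}^\alpha$ and then bound $\|\mathbb{A}^\alpha\mathbb{P}F\|_{L^r}$ directly, but this needs $\mathbb{P}F$ in the domain of $\mathbb{A}^\alpha$ and there is no ready estimate of $\|\mathbb{A}^\alpha\mathbb{P}F\|_{L^r}$ in terms of quantities already controlled. The paper instead adds and subtracts $\mathbb{P}F(t)$ inside the integral (see $(\ref{3.16})$), so that the dangerous piece becomes $\int_{t/2}^t\mathbb{A}e^{-(t-s)\mathbb{A}}\big(\mathbb{P}F(s)-\mathbb{P}F(t)\big)\,ds$; the required time-H\"older bound $\|\mathbb{P}F(s)-\mathbb{P}F(t)\|_{L^r}\le C(t-s)^\delta s^{-\cdots}$ is established separately in Lemma~\ref{le3.3*} via the semigroup representations of $u$ and $\nabla d$, and this factor $(t-s)^\delta$ renders the integral convergent.

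Second, and more importantly, your plan for the endpoints $\|\nabla u\|_{L^1}$, $\|\nabla^2 u\|_{L^1}$ and $\|\nabla^2 u\|_{L^\infty}$ has a genuine gap. The decomposition $(\ref{demcomposition})$ does let one avoid the unboundedness of $\mathbb{P}$ on $L^1$ and $L^\infty$, but because $\nabla d$ is not divergence-free, the Neumann-operator piece $\nabla\mathcal{N}\partial_i\partial_j\langle\partial_i d,\partial_j d\rangle$ cannot absorb derivatives the way $\nabla\mathcal{N}\partial_i\partial_j(u_iu_j)$ can: Lemmas~\ref{le3.3} and~\ref{le3.10} show that one is forced to control $\|\nabla^3 d\|_{L^q}$, and for $\|\nabla^2 u\|$ at the endpoints (via $(\ref{3.50})$ and Lemma~\ref{le2.4}) even $\|\nabla^4 d\|_{L^q}$. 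The paper therefore interleaves the endpoint estimates with Lemmas~\ref{le3.7**} and~\ref{le3.8}, which obtain $\|\nabla^3 d\|_{L^r}$ and $\|\nabla^4 d\|_{L^r}$ decay from elliptic regularity of the $d$-equation combined with a separate Duhamel bootstrap on $\partial_t d$ (using the heat kernel $\mathcal{W}$ and the Stokes kernel $\mathcal{M}$). Only after these higher-order $d$ estimates are in place do Lemmas~\ref{le3.9*}--\ref{3.11} close the $L^1$ and $L^\infty$ bounds for $u$. Your proposal does not anticipate this detour, and without it the endpoint estimates stall; in particular, the continuity argument alone (Corollary~\ref{cor3.1}) gives $\|\nabla u\|_{L^p}$ only for $p>1$.
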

\begin{Remark}

 Huang-Wang-Wen \cite{huang-wang-wen-2019} obtained the decay rates of $\|\nabla u(t)\|_{L^p(\mathbb{R}_+^3)}$ and $\|\nabla^2 d(t)\|_{L^p(\mathbb{R}_+^3)}$ for $1\leq p\leq6$. Theorem \ref{th1.2} further generalizes these results to the extended range $6<p\leq \infty$.
\end{Remark}
\begin{Remark}
 The algebraic decay rates are similar to that of incompressible Navier-Stokes equations in the half-space obtained in \cite{Han-2010,Han-2011,Han-2012}.
\end{Remark}

\begin{Remark}
In order to establish the $L^1$-decay of the strong solution, we need to avoid the unboundedness of the projection operator $\mathbb{P}$ in $L^1(\mathbb{R}_+^3)$. 
To overcome this problem, we use a decomposition for $\mathbb{P}(u\cdot\nabla u+\nabla\cdot(\nabla d\odot\nabla d))$, see $(\ref{demcomposition})$, and translate the unboundedness of the operator $\mathbb{P}$ into the treatment of a carefully designed elliptic Neumann problem $(\ref{neumann})$.
\end{Remark}
\begin{Remark}
 Compared with Theorem 1.2 in \cite{huang-wang-wen-2019} (i.e., Lemma \ref{le2.5**} in this paper), the additional conditions \textquotedblleft $u_0\in D^1(\mathbb{R}_+^3)$ and $\nabla d_0\in D^1(\mathbb{R}_+^3)$\textquotedblright are not required to establish the decay estimate $\|\nabla u(t)\|_{L^1(\mathbb{R}_+^3)}\leq Ct^{-\frac12}$. This is attributed to our ability to derive the decay rate of $\nabla^3d$ in $L^r(\mathbb{R}_+^3)$, as detailed in Lemma \ref{le3.9*}.
\end{Remark}

Our second result is to establish the decay estimates for the higher-order spatial derivatives of the strong solution.
\begin{Theorem}\label{th1.3-main}
Assume $u_0, d_0-e_3\in L^1(\mathbb{R}_+^{3})$. Then for every integer $k\geq1$, there exists $t(k)>0$ such that for $1<r\leq\infty$ and $t\geq t(k)$, the strong solution $(u,d,p)$ obtained in Proposition $\ref{solu1}$ satisfies
$$\|\nabla^{2+k}u(t)\|_{L^r(\mathbb{R}_+^3)}+\|\nabla^{1+k}p(t)\|_{L^r(\mathbb{R}_+^3)}+\|\nabla^{2+k}d(t)\|_{L^r(\mathbb{R}_+^3)}\leq Ct^{-\frac{2+k}2-\frac 32(1-\frac1r)}.$$
\end{Theorem}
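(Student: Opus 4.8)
The strategy is a bootstrap/induction on $k$, simultaneously controlling $\nabla^{2+k}u$, $\nabla^{1+k}p$ and $\nabla^{2+k}d$, since the nonlinear term $\nabla^k\mathbb{P}(u\cdot\nabla u+\nabla\cdot(\nabla d\odot\nabla d))$ contains derivatives of $d$ up to order $k+3$ (as the paper warns in the discussion of $(\ref{3.50})$). The base cases $k=0,1$ are essentially covered by Theorem \ref{th1.2} and the lemmas preceding it; one starts the induction from there. Assume inductively that for all $j\le k-1$ the bounds
$\|\nabla^{2+j}u(t)\|_{L^r}+\|\nabla^{1+j}p(t)\|_{L^r}+\|\nabla^{2+j}d(t)\|_{L^r}\le Ct^{-(2+j)/2-\frac32(1-1/r)}$
hold for $t\ge t(j)$ and $1<r\le\infty$; also assume the lower-order rates from Theorem \ref{th1.2} for $\|\nabla u\|_{L^p}$, $\|\nabla^2 d\|_{L^p}$ over the full range $1\le p\le\infty$, plus the parabolic-smoothing $L^2$ bounds on $\nabla^3 d$, $d_t$ from Proposition \ref{solu1}.

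First I would treat $d$. Apply $\nabla^{k}$ to the Duhamel formula $(\ref{uB2})_2$ for $d-e_3$ on the interval $[t/2,t]$, and bound using the $L^q$–$L^r$ heat-semigroup estimates $\|\nabla^m e^{\tau\Delta}f\|_{L^r}\le C\tau^{-m/2-\frac32(1/q-1/r)}\|f\|_{L^q}$ together with the regularizing term $e^{(t/2)\Delta}(d-e_3)(t/2)$, which by the inductive $L^2$-control of low-order derivatives of $d-e_3$ contributes the desired power of $t$. The nonlinearity $u\cdot\nabla d-|\nabla d|^2 d$ differentiated $k+1$ times splits by Leibniz into products of derivatives of $u$, $\nabla d$, $d$ of total order $\le k+2$; each such product is estimated in a Lebesgue space by Hölder, using the induction hypothesis on the high-order factor and the Theorem \ref{th1.2} decay (extended to $L^\infty$ via Gagliardo–Nirenberg interpolation between $L^2$ and the next derivative) on the lower-order factors. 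The time integral $\int_{t/2}^t (t-s)^{-\alpha}s^{-\beta}\,ds$ converges precisely because on the shortened interval only $k+1$ derivatives hit the heat kernel directly, avoiding the $(t-s)^{-(k+2)/2}$ singularity flagged in the introduction; this yields $\|\nabla^{2+k}d(t)\|_{L^r}\le Ct^{-(2+k)/2-\frac32(1-1/r)}$. A continuity/bootstrap argument as in \cite{huang-wang-wen-2019} may be needed to close the estimate if the worst nonlinear term is borderline.

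Next, for $u$ and $p$, rather than differentiating the Stokes semigroup $k+2$ times (the singularity is "really too strong," as the authors say), I would invoke the a priori regularity estimates for the steady Stokes system in $\mathbb{R}_+^3$: writing $-\Delta u+\nabla p=F:=-u_t-u\cdot\nabla u-\nabla\cdot(\nabla d\odot\nabla d)$, $\nabla\cdot u=0$, with $u|_{\partial}=0$, the interior/boundary elliptic estimate gives $\|\nabla^{2+k}u\|_{L^r}+\|\nabla^{1+k}p\|_{L^r}\le C\|\nabla^{k}F\|_{L^r}+(\text{lower-order terms already controlled})$. Then $\|\nabla^k u_t\|_{L^r}$ is bounded by differentiating the equation in time (or using the already-established Theorem \ref{th1.2} rate for $\partial_t u$ as the $k=0$ seed, then inductively), and $\|\nabla^k(u\cdot\nabla u)\|_{L^r}$, $\|\nabla^k\nabla\cdot(\nabla d\odot\nabla d)\|_{L^r}$ are handled by the same Leibniz + Hölder + induction scheme — crucially, the $d$-term needs $\|\nabla^{k+2}d\|$, which is exactly what the previous step just supplied. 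This closes the induction. The main obstacle is the bookkeeping for the $d$-nonlinearity: since $\nabla d$ is not divergence-free, $\nabla^k\nabla\cdot(\nabla d\odot\nabla d)$ genuinely produces $\nabla^{k+2}d$ and $\nabla^{k+3}d$-type terms (cf. $(\ref{3.50})$), so one must be careful that the $d$-estimate is always run one order ahead of the $u$-estimate and that the induction hypotheses are arranged so this is consistent — hence the coupled system $(\ref{3.39})$/$(\ref{4.19})$ alluded to in the introduction. Verifying the exponent arithmetic $\alpha+\beta$ and the integrability of every time convolution, uniformly in $r$ up to $r=\infty$, is the remaining technical burden, but it is routine once the coupling is set up correctly.
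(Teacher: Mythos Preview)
Your overall architecture is correct and matches the paper's: bootstrap induction on $k$, steady Stokes regularity $(\ref{3.43})$ to trade $\|\nabla^{2+k}u\|_{L^r}+\|\nabla^{1+k}p\|_{L^r}$ for $\|\nabla^k F\|_{L^r}$, with $F=-u_t-u\cdot\nabla u-\nabla\cdot(\nabla d\odot\nabla d)$, and the observation that the $d$-estimate must run one order ahead because of the structure of $(\ref{3.50})$. The coupling you cite via $(\ref{3.39})$/$(\ref{4.19})$ is exactly the mechanism the paper uses.

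The genuine gap is your treatment of $\|\nabla^k u_t\|_{L^r}$, which you pass over with ``differentiating the equation in time $\ldots$ then inductively.'' This is the heart of the proof and is considerably more involved than a single time-differentiation. Concretely: to get $\nabla u_1$ (with $u_1=\partial_t u$) the paper writes a Duhamel formula for $u_1$ using Solonnikov's explicit Stokes kernel $\mathcal{M}(x,y,t)$ and its pointwise bounds $(\ref{3.36*})$, together with the integration-by-parts identities $(\ref{3.37})$, $(\ref{3.33})$ that allow two spatial derivatives to hit the kernel without producing a nonintegrable $(t-s)^{-1}$. The nonlinearity in the $u_1$-equation contains $\nabla\cdot(\nabla d_1\odot\nabla d)$, hence $\nabla^2 d_1$ and even $\nabla^3 d_1$, $\nabla^4 d_1$ appear via $(\ref{3.50})$; these cannot be supplied by your direct $d$-step and force a \emph{joint} bootstrap on $(\nabla u_1,\nabla^2 d_1)$ --- this is $(\ref{3.39})$. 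For the next level one needs $\nabla^2 u_1$, and now the $u_1$-Duhamel nonlinearity, after the derivative-splitting tricks, produces $u_2=\partial_{tt}u$ and $d_2=\partial_{tt}d$; the paper then sets up and closes a four-way bootstrap on $(\nabla^2 u_1,\nabla u_2,\nabla d_2,\nabla^2 d_2)$ --- this is $(\ref{4.19})$ and Lemma~\ref{le3.9}. In general, climbing to $\nabla^{2+k}u$ forces a hierarchy of time-derivative equations $u_j,d_j$ with $j$ increasing with $k$, and a single large coupled continuity argument at each stage. Your sketch does not indicate awareness that multiple time derivatives enter, nor does it mention the Solonnikov kernel machinery that makes the second spatial derivative on the Stokes semigroup tractable; without these, the step ``bound $\nabla^k u_t$'' cannot be carried out.

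A secondary difference: your plan for $d$ is to hit the Duhamel formula $(\ref{uB2})_2$ directly with $\nabla^{2+k}$, placing one derivative on the heat kernel and $k+1$ on the nonlinearity. This can be made to work (modulo care with commuting $\partial_{x_3}$ through the Neumann heat semigroup on $\mathbb{R}_+^3$, cf.\ $(\ref{3.37})$), but it is not what the paper does. The paper instead applies the standard elliptic estimate $(\ref{3.29})$ to the $d$-equation, reducing $\|\nabla^{2+k}d\|_{L^r}$ to $\|\nabla^k d_t\|_{L^r}$ plus lower-order terms, and then estimates $\nabla^k d_1$ by the same Duhamel/kernel analysis used for $u_1$. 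This keeps the $u$- and $d$-arguments structurally parallel and is what makes the joint bootstrap $(\ref{3.39})$, $(\ref{4.19})$ natural. Your alternative for $d$ is not wrong, but it does not by itself supply the $\nabla^j d_1$, $\nabla^j d_2$ estimates that the $u$-side requires, so you would still be forced back into the paper's time-derivative hierarchy.
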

\begin{Remark}
 The algebraic decay rate is similar to that of the incompressible Navier-Stokes equations in the half-space obtained in \cite{Han-2016}, which can be considered the optimal compared to the heat kernel.
\end{Remark}
Our third main result states that: the more rapid decay rates of the strong solution are obtained if the given initial data lie in a suitable weighted space.
\begin{Theorem}\label{th1.3}
Under the same assumptions of Theorem $\ref{th1.2}$, if, in addition,
\begin{equation}\label{1.5}
\int_{\mathbb{R}_+^3}x_3|u_0(x)|\,dx<\infty,
\end{equation}
then the decay estimates on $u,p$ can be improved as
\begin{align}
\left\{
\begin{array}{lll}
	\displaystyle	\|\nabla u(t)\|_{L^p(\mathbb{R}_+^3)}\leq Ct^{-1-\frac32(1-\frac1p)},\\
\displaystyle	\|\nabla^2  u(t)\|_{L^\infty(\mathbb{R}_+^3)}\leq Ct^{-3},\nonumber\\
    \displaystyle \|\nabla^{2}u(t)\|_{{L^{r}(\mathbb{R}_{+}^{3})}}+\|\partial_{t}u(t)\|_{{L^{r}(\mathbb{R}_{+}^{3})}}+\|\nabla p(t)\|_{{L^{r}(\mathbb{R}_{+}^{3})}}\leq Ct^{-\frac32-\frac32(1-\frac1r)},\nonumber\\
     \displaystyle \|\nabla^{2+k}u(t)\|_{L^p(\mathbb{R}_+^3)}+\|\nabla^{1+k}p(t)\|_{L^p(\mathbb{R}_+^3)}\leq Ct^{-\frac12-\frac{2+k}2-\frac 32(1-\frac1p)},
\end{array}
\right.
\end{align}
 for $t>t_k$ with some suitable number $t_k>0$, $p\in(1,\infty]$, and $r\in(1,\infty)$.
\end{Theorem}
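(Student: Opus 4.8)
The plan is to rerun the arguments behind Theorems~\ref{th1.2} and \ref{th1.3-main}, gaining an extra factor $t^{-1/2}$ that comes entirely from the weighted hypothesis \eqref{1.5} on $u_0$; since no weight is imposed on $d_0-e_3$, only the estimates for $u$ and $p$ improve. The linear ingredient is a weighted $L^1$--$L^q$ estimate for the Stokes semigroup: for $a\in L^1_\sigma(\mathbb{R}_+^3)$ with $\|x_3 a\|_{L^1}<\infty$ and $k\ge0$, $1\le q\le\infty$,
$$\|\nabla^k e^{-t\mathbb{A}}a\|_{L^q(\mathbb{R}_+^3)}\le C\,t^{-\frac{k+1}{2}-\frac32(1-\frac1q)}\big(\|a\|_{L^1}+\|x_3 a\|_{L^1}\big),\qquad t>0.$$
Because of the no-slip condition the Stokes kernel vanishes on $\partial\mathbb{R}_+^3$ and hence carries, relative to the Gaussian, an extra factor that yields the additional power $t^{-1/2}$ once integrated against $|a(y)|\,dy$ under this weight; this is the half-space analogue of the Fujigaki--Miyakawa gain (cf.\ \cite{Fujigaki-Miyakawa-2001,Han-2012,Han-2014}), and I would isolate it as a lemma, proving it from Ukai's solution formula (already used in the proof of Theorem~\ref{th1.3-main}).

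With this estimate, the $t^{-1/2}$ gain for the linear part $\nabla e^{-t\mathbb{A}}u_0$ of the Duhamel formula \eqref{uB1} is immediate. For the nonlinear part I split $\int_0^t=\int_0^{t/2}+\int_{t/2}^t$. On $[t/2,t]$ I pass to \eqref{uB2} and estimate $\nabla^j e^{-(t-s)\mathbb{A}}\mathbb{P}F(s)$ with $F=u\cdot\nabla u+\nabla\cdot(\nabla d\odot\nabla d)$ using only the \emph{already proven} (unweighted) rates of Theorems~\ref{th1.2} and \ref{th1.3-main}; because $F$ is quadratic in quantities that decay at the optimal rate, a direct computation shows this piece already beats the target decay, so no weighted information is needed there. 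On $[0,t/2]$, where $t-s\gtrsim t$, one again confronts the unboundedness of $\mathbb{P}$ on $L^1(\mathbb{R}_+^3)$ and the failure of $\mathbb{P}\partial_3=\partial_3\mathbb{P}$; here I would reuse the decomposition \eqref{demcomposition} of $\mathbb{P}\big(u\cdot\nabla u+\nabla\cdot(\nabla d\odot\nabla d)\big)$ into a divergence-form part (to which the weighted Stokes estimate applies) plus a part governed by the Neumann problem \eqref{neumann}, and control the resulting weighted norms of the forcing using the quadratic decay of $u$ and $\nabla d$. This is where the real work lies, and I expect the main obstacle here: obtaining uniform-in-time weighted control of the nonlinear terms in a setting where the Leray projection is ill behaved on $L^1$, presumably via a continuity/Gronwall argument in the spirit of the $L^1$-decay part of Theorem~\ref{th1.2}.

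Combining the two pieces gives $\|\nabla u(t)\|_{L^p}\le Ct^{-1-\frac32(1-\frac1p)}$ for $p\in(1,\infty]$. The remaining estimates then follow exactly as in Theorem~\ref{th1.3-main}: one writes the momentum equation as the steady Stokes system $-\Delta u+\nabla p=-\partial_t u-u\cdot\nabla u-\nabla\cdot(\nabla d\odot\nabla d)$, applies its a priori $L^r(\mathbb{R}_+^3)$ regularity estimates, and bootstraps in $k$ --- now starting from the improved inputs for $u$ --- to reach $\|\nabla^{2+k}u\|_{L^p}+\|\nabla^{1+k}p\|_{L^p}\le Ct^{-\frac12-\frac{2+k}{2}-\frac32(1-\frac1p)}$; the bounds on $\|\nabla^2 u\|_{L^r}$, $\|\partial_t u\|_{L^r}$, $\|\nabla p\|_{L^r}$ are the case $k=0$, and $\|\nabla^2 u(t)\|_{L^\infty}\le Ct^{-3}$ is its $p=\infty$ instance. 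It only remains to verify that the higher $d$-derivatives, which still satisfy the \emph{unimproved} rates of Theorem~\ref{th1.3-main}, enter these estimates through quadratic expressions and therefore with room to spare, so that the absence of a weighted bound for $d$ does not obstruct the improved rates for $u$ and $p$.
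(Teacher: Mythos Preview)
Your plan is broadly correct in spirit, but it takes a different and harder route than the paper. The paper never works from the full Duhamel formula \eqref{uB1} and never confronts the weighted control of $\mathbb{P}F$ on $[0,t/2]$ that you single out as the main obstacle. Instead, it simply invokes the second part of Lemma~\ref{le2.3} (already proved in \cite{huang-wang-wen-2019}), which under \eqref{1.5} gives $\|u(t)\|_{L^r}\le Ct^{-\frac12-\frac32(1-\frac1r)}$ and $\|\nabla u(t)\|_{L^p}\le Ct^{-1-\frac32(1-\frac1p)}$ for $p\le 6$. With this improved input for $\|u(t/2)\|_{L^2}$, the paper then uses only the half-line Duhamel formula \eqref{uB2} and reruns the proofs of Lemma~\ref{le3.4}, Lemma~\ref{3.11}, and Lemmas~\ref{le3.7**}--\ref{le3.9}, each time gaining the extra $t^{-1/2}$ from the linear term $e^{-\frac t2\mathbb{A}}u(\frac t2)$ while the nonlinear $[t/2,t]$ integral is already fast enough by quadratic decay. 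Thus the weighted Stokes semigroup estimate you propose, and the accompanying $[0,t/2]$ analysis with decomposition \eqref{demcomposition}, are entirely bypassed. Your approach could presumably be pushed through, but obtaining $\|x_3\,\mathbb{P}F(s)\|_{L^1}$ uniformly---especially for the $\nabla\cdot(\nabla d\odot\nabla d)$ contribution, where no weighted hypothesis on $d_0$ is available---is exactly the difficulty the paper's route avoids.

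One further point: the estimates $\|\nabla^2 u\|_{L^r}+\|\partial_t u\|_{L^r}+\|\nabla p\|_{L^r}\le Ct^{-\frac32-\frac32(1-\frac1r)}$ are not obtained in the paper as ``the case $k=0$'' of the steady Stokes bootstrap, since the latter requires $\|\partial_t u\|_{L^r}$ as input. They are proved separately (Lemma~\ref{le5.4}) by the fractional-power decomposition of $\mathbb{A}u(t)$ as in \eqref{3.16}, now fed with the improved H\"older estimates \eqref{4.3}--\eqref{5.4}; only after this does the steady Stokes regularity \eqref{3.44} take over for $k\ge 1$.
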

\begin{Remark}
 It should be pointed out that the faster decay rates of $\| u(t)\|_{L^r(\mathbb{R}_+^3)}$ and $\|\nabla u(t)\|_{L^p(\mathbb{R}_+^3)}$ are obtained by Huang-Wang-Wen \cite{huang-wang-wen-2019} for $1<r\leq\infty$ and $1<p\leq6$.
\end{Remark}
\begin{Remark}
When the condition $(\ref{1.5})$ is satisfied, the decay rates of $u$ will be accelerated. At this point, the estimate of $\|u(t)\|_{L^r(\mathbb{R}_+^3)}$ is equivalent to the estimate of $\|\nabla d(t)\|_{L^r(\mathbb{R}_+^3)}$ for all $1<r\leq\infty$, so that the $L^r$-decay rates of $u\cdot\nabla u$ aligns with that of $\nabla \cdot(\nabla d\odot\nabla d)$. In terms of $(\ref{uB2})_1$, we are able to obtain the faster decay rates of the higher-order derivatives of $u$.
\end{Remark}
\begin{Remark}
 As stated in \cite{huang-wang-wen-2019}, it remains an interesting question whether the director field $d$ satisfies improved estimates on $\nabla d$, analogously to those presented in Theorem $\ref{th1.3}$, under the condition that $\|x_3\nabla d_0\|_{L^1(\mathbb{R}_+^3)}<\infty$.
\end{Remark}

The remaining part of this paper is organized as follows.
In Section $\ref{sec2}$, we introduce some basic known results about the Stokes operator and some important inequalities which will be used throughout the paper. In Section $\ref{sec3}$, we firstly give the decay estimates of $\|\nabla^2d\|_{L^\infty}$ and $\|\nabla u\|_{L^\infty}$ for the strong solution $(u,d)$ through a continuity argument. Then, we will make full use of the estimates of the fractional powers of the Stokes operator to prove the decay estimates of $\|\nabla^2u\|_{L^p}$ in the case $1<p<\infty$. In the end, we give the decay estimates of $\|\nabla^iu\|_{L^j}$ for $(i,j)=(1,1),(2,1)$ and $(2,\infty)$. The unboundedness of the projector $\mathbb{P}$ is overcome by employing a decomposition for the nonlinear terms.
In Section $\ref{sec4'}$, the asymptotic behavior for the higher-order spacial derivatives of the strong solution
 is given in $\mathbb{R}_{+}^{3}$.
In Section $\ref{sec4}$, we establish further decay estimates of the strong solution $u$ if the initial data lie in a suitable weighted space.

\section{Preliminaries }\label{sec2}

In section 2, we list some useful lemmas which are frequently used in remaining sections. First, we recall some facts about Stokes operator $\mathbb{A}$ that can be founded in \cite{Sohr-2001}. The Stokes operator $\mathbb{A}$ is defined by $\mathbb{A}=-\mathbb{P}\Delta\colon D(\mathbb{A})\to L_{\sigma}^{2}(\mathbb{R}_{+}^{3})$ with
$$D(\mathbb{A}):=H^2(\mathbb{R}_+^3)\cap H_{0,\sigma}^1(\mathbb{R}_+^3),$$
where $$\mathbb{P}:L^r(\mathbb{R}_+^3)\mapsto L_\sigma^r(\mathbb{R}_+^3)$$
is the associated projection operator, which is bounded for any $1<r<\infty$ (see \cite{McCracken-1981}). Since $\mathbb{A}$ is a positive self-adjoint operator, there exists a uniquely determined resolution $\{ E_\lambda \mid \lambda \geq0  \} $ of identity in $L_\sigma^2(\mathbb{R}_+^3)$ such that $\mathbb{A}$ has the spectral representation
$$\mathbb{A}=\int_0^\infty\lambda\,dE_\lambda$$
with the domain
$$D(\mathbb{A})=\Big\{v\in L_\sigma^2(\mathbb{R}_+^3)\mid  \|\mathbb{A}v\|_{L^2(\mathbb{R}_+^3)}^2=\int_0^\infty\lambda^2\,d\|E_\lambda v\|_{L^2(\mathbb{R}_+^3)}^2<\infty\Big\}.$$
More generally, for any $\alpha\in(0,1)$, we define the fractional order of Stokes operator $\mathbb{A}^\alpha$ as:
$$\mathbb{A}^\alpha=\int_0^\infty\lambda^\alpha\,{d}E_\lambda$$
with the domain
$$D(\mathbb{A}^\alpha)=\Big\{v\in L_\sigma^2(\mathbb{R}_+^3)\mid  \|\mathbb{A}^\alpha v\|_{L^2(\mathbb{R}_+^3)}^2=\int_0^\infty\lambda^{2\alpha}\,d\|E_\lambda v\|_{L^2(\mathbb{R}_+^3)}^2<\infty\Big\}.$$
The operator
$$ -\Delta:D(-\Delta)\to L^2(\mathbb{R}_+^3)$$
is defined by
$$-\Delta=\int_0^\infty\lambda\,{d} \tilde{E}_{\lambda}$$
with the domain
$$D(-\Delta)=D(\Delta)=\{u\in W_0^{1,2}(\mathbb{R}_+^3)\mid  \Delta u\in L^2(\mathbb{R}_+^3)\},$$
where $ \{ \tilde{E}_\lambda \mid \lambda \geq0  \} $ denotes the resolution of identity for $-\Delta$ (see \cite[p.101]{Sohr-2001}).
The fractional order of Laplace operator
$$ (-\Delta)^\alpha=\int_0^\infty\lambda^\alpha\,d \tilde{E}_{\lambda} $$
with the domain
$$D((-\Delta)^{{\alpha}})=\Big\{v\in L^2(\mathbb{R}_+^3)\mid  \|(-\Delta)^\alpha v\|_{L^2(\mathbb{R}_+^3)}^2=\int_0^\infty\lambda^{2\alpha} \,d\|\tilde{E}_\lambda v\|_{L^2(\mathbb{R}_+^3)}^2<\infty\Big\}$$
is well defined for $\alpha\in(0,1).$
We conclude that $E_\lambda=\tilde{E}_{\lambda}\mathbb{P}$ for all $\lambda\geq0$, since the resolution of identity is uniquely determined by $\mathbb{A}$. In this case we see that
$$\mathbb{A}^\alpha u=\int_0^\infty\lambda^\alpha \,dE_\lambda u=\int_0^\infty\lambda^\alpha \,d\tilde{E}_\lambda \mathbb{P}u=\int_0^\infty\lambda^\alpha \,d\tilde{E}_\lambda u=(-\Delta)^\alpha u$$
holds for all $u\in D(\mathbb{A}^\alpha)$ (see \cite[p.140]{Sohr-2001} for details).

It should be noted that the operator $\mathbb{A}$ on $\mathbb{R}_+^3$ generates a bounded analytic $C_0-$semigroup $\{e^{-t\mathbb{A}}\}_{t\geq0}$ on $L_\sigma^r(\mathbb{R}_+^3)$, so the solution $u$ of Stokes problem
\begin{align*}
	\left\{
	\begin{array}{lll}
		u_t-\Delta u+\nabla p=0 &\text{in}\quad\mathbb{R}_+^3\times\mathbb{R}^+,\\
		\nabla\cdot u=0 &\text{in}\quad\mathbb{R}_+^3\times\mathbb{R}^+,\\
		u(x,t)=0&\text{on}\quad\partial\mathbb{R}_+^3\times\mathbb{R}^+,\\
		u(x, 0)=u_0(x)&\text{in}\quad\mathbb{R}_+^3
	\end{array}
	\right.
\end{align*}
can be expressed by $u(t)=e^{-t\mathbb{A}}u_0$. Moreover, the Stokes flow $e^{-t\mathbb{A}}u_0$ satisfies the following $L^p-L^q$ estimates.

\begin{Lemma}\label{le2.1}(\cite{Fujigaki-Miyakawa-2001})
Let $a\in L_\sigma^q(\mathbb{R}_+^3)$,  then for any $t > 0$, the following estimate hold
\begin{equation}\label{2.2}
\|\nabla^ke^{-t\mathbb{A}}a\|_{L^p(\mathbb{R}_+^3)}\leq Ct^{-\frac k2-\frac 32(\frac1q-\frac1p)}\|a\|_{L^q(\mathbb{R}_+^3)}
\end{equation}
with $k=0,1,\ldots$ , provided that $1\leq q<p\leq\infty$ or $1<q\leq p<\infty.$

Furthermore,
\begin{equation}\label{2.3}
\|\nabla e^{-t\mathbb{A}}a\|_{L^1(\mathbb{R}_+^3)}\leq Ct^{-\frac12}\|a\|_{L^1(\mathbb{R}_+^3)},\ \forall\ t>0,
\end{equation}
and $(\ref{2.2})$ and $(\ref{2.3})$ still hold, if we replace the operator $e^{-t\mathbb{A}}$ by $e^{t\Delta}$ with $a\in L^q(\mathbb{R}_+^3).$
\end{Lemma}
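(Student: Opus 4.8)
The plan is to reduce both estimates to pointwise bounds on the kernels of the two semigroups, which one extracts from Ukai's explicit solution formula for the Stokes system in $\mathbb{R}_+^3$ (see \cite{Ukai-1987}). That formula represents $e^{-t\mathbb{A}}a$ as a finite linear combination of terms built from the heat semigroup on $\mathbb{R}_+^3$ (with Dirichlet or with Neumann data, according to the component), the tangential Riesz transforms $R_1,R_2$, and the boundary restriction/extension operators. Tracking these pieces and their $x$-derivatives, one obtains a matrix kernel $E(x,y,t)$ for $e^{-t\mathbb{A}}$ with
\begin{equation*}
|\nabla_x^k E(x,y,t)|\le C_k\,(\sqrt t+|x-y|)^{-3-k},\qquad x,y\in\mathbb{R}_+^3,\ t>0,\ k\ge0 .
\end{equation*}
The decay here is only algebraic, not Gaussian; this is unavoidable, since the Leray projection $\mathbb{P}$ is nonlocal and its Calder\'on--Zygmund kernel is the $t\to0$ limit of $E(\cdot,\cdot,t)$. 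For the heat semigroup the situation is cleaner: from the reflection formula $G(x,y,t)=\Gamma(x-y,t)\mp\Gamma(x-\bar y,t)$, with $\Gamma$ the three-dimensional Gauss kernel and $\bar y=(y_1,y_2,-y_3)$, together with $|x-\bar y|\ge|x-y|$ for $x,y\in\mathbb{R}_+^3$, one gets the Gaussian bound $|\nabla_x^k G(x,y,t)|\le C_k t^{-\frac{3+k}2}e^{-c|x-y|^2/t}$.

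Granting these bounds, the rest is an elementary convolution estimate. Extending $a$ by zero to $\mathbb{R}^3$, the quantity $|\nabla_x^k e^{-t\mathbb{A}}a(x)|$ is dominated by $C\,(k_t*|a|)(x)$ with $k_t(z)=(\sqrt t+|z|)^{-3-k}$, and similarly $|\nabla_x^k e^{t\Delta}a(x)|\le C\,(g_t*|a|)(x)$ with $g_t$ the Gaussian majorant; Young's convolution inequality on $\mathbb{R}^3$ then gives
\begin{equation*}
\|\nabla^k e^{-t\mathbb{A}}a\|_{L^p(\mathbb{R}_+^3)}\le \|k_t\|_{L^s(\mathbb{R}^3)}\,\|a\|_{L^q(\mathbb{R}_+^3)},\qquad 1+\tfrac1p=\tfrac1s+\tfrac1q ,
\end{equation*}
and the analogue with $g_t$. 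By scaling, $\|k_t\|_{L^s(\mathbb{R}^3)}=Ct^{-\frac{3+k}2+\frac3{2s}}=Ct^{-\frac k2-\frac32(\frac1q-\frac1p)}$ as soon as the integral converges, i.e.\ as soon as $(3+k)s>3$; since $\tfrac1s=1-\tfrac1q+\tfrac1p$, this is equivalent to $k+\tfrac3q-\tfrac3p>0$, which for $q\le p$ fails only when $k=0$ and $q=p$, and is automatic once $k\ge1$. This yields $(\ref{2.2})$ for $e^{-t\mathbb{A}}$ whenever $k+\tfrac3q-\tfrac3p>0$ — in particular for all admissible $(q,p)$ when $k\ge1$, and for all $q<p$ when $k=0$ — and it yields $(\ref{2.3})$, the case $k=1$, $q=p=1$. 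For the heat semigroup the Gaussian $g_t$ lies in every $L^s(\mathbb{R}^3)$, $1\le s\le\infty$, so the same computation applies with no integrability restriction, which is why $(\ref{2.2})$ and $(\ref{2.3})$ persist with $e^{-t\mathbb{A}}$ replaced by $e^{t\Delta}$.

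The one admissible case with $k+\tfrac3q-\tfrac3p=0$, namely $k=0$ and $q=p$ with $1<q<\infty$, is not reached by the convolution estimate; but there $\|e^{-t\mathbb{A}}a\|_{L^q(\mathbb{R}_+^3)}\le C\|a\|_{L^q(\mathbb{R}_+^3)}$ is just the classical boundedness of the analytic $C_0$-semigroup generated by $\mathbb{A}$ on $L_\sigma^q(\mathbb{R}_+^3)$ (equivalently, the $L^q$-boundedness of $\mathbb{P}$).

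The hard part is the first step: establishing the pointwise kernel bound for the half-space Stokes semigroup. Unlike the heat kernel, $E(x,y,t)$ does not decay exponentially — the tangential Riesz-transform and boundary-integral pieces of Ukai's formula contribute only polynomial tails — so one must track each of these pieces (and each $x$-derivative) and verify that the slowest-decaying of them is still of order $(\sqrt t+|x-y|)^{-3-k}$, uniformly in $t>0$. Once this bound is in hand the estimates follow mechanically, and the restrictions in the statement (the exclusion of $q=p=1$ when $k=0$, the necessity of a derivative in $(\ref{2.3})$) are seen to be exactly the thresholds for the convolution integral to converge.
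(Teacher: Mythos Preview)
The paper does not prove this lemma; it is quoted directly from \cite{Fujigaki-Miyakawa-2001} and used as a black box. So there is no proof in the paper to compare your proposal against.

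That said, your sketch is essentially the argument in \cite{Fujigaki-Miyakawa-2001}: derive pointwise bounds on the matrix kernel of $e^{-t\mathbb{A}}$ from Ukai's solution formula, then apply Young's inequality. Your identification of the hard step (tracking the Riesz-transform and boundary-layer pieces of Ukai's formula to get the algebraic kernel bound) is accurate, and your handling of the endpoint cases --- the $k=0$, $q=p$ case via semigroup boundedness, and the integrability threshold that distinguishes $(\ref{2.2})$ from $(\ref{2.3})$ --- is correct. One caveat: the clean bound $|\nabla_x^k E(x,y,t)|\le C_k(\sqrt t+|x-y|)^{-3-k}$ you quote is a slight simplification; the actual estimates in the literature (cf.\ the Solonnikov-type bounds $(\ref{3.36*})$ later in this paper) are anisotropic in $x_3,y_3$ and involve $|x-y^*|$ rather than $|x-y|$, but since $|x-y^*|\ge|x-y|$ on $\mathbb{R}_+^3\times\mathbb{R}_+^3$ your simplified form follows, and it is all that Young's inequality needs.
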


\begin{Lemma}\label{le2.4}(\cite{Han-2012})
 Let $1<q<\infty$. Assume that $a\in W^{1,q}(\mathbb{R}_+^3)$ satisfies $\nabla\cdot a=0$ in $\mathbb{R}_+^3$ and $a_3|_{\partial\mathbb{R}_+^3}=0$. Then for any $0<\theta<1$ and $t>0$,
$$\|\nabla^2e^{-t\mathbb{A}}a\|_{L^\infty(\mathbb{R}_+^3)}\leq C(t^{-\frac12-\frac 3{2q}}\|\nabla a\|_{L^q(\mathbb{R}_+^3)}+
t^{-1+\frac{\theta}{2}-\frac 3{2q}}\|y_3^{-\theta} a\|_{L^q(\mathbb{R}_+^3)}).$$
\end{Lemma}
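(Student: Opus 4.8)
The plan is to bound $\nabla^2 e^{-t\mathbb{A}}a$ componentwise, exploiting the sharp contrast between tangential and normal differentiation. Write $v=e^{-t\mathbb{A}}a$ and call $\partial_1,\partial_2$ tangential and $\partial_3$ normal. The key structural fact is that tangential derivatives commute with the Stokes semigroup: for $j\in\{1,2\}$ the field $\partial_j a$ is divergence free and satisfies $(\partial_j a)_3|_{\partial\mathbb{R}_+^3}=\partial_j(a_3|_{\partial\mathbb{R}_+^3})=0$, so $\partial_j a\in L_\sigma^q(\mathbb{R}_+^3)$ with $\|\partial_j a\|_{L^q(\mathbb{R}_+^3)}\le\|\nabla a\|_{L^q(\mathbb{R}_+^3)}$ and $\partial_j e^{-t\mathbb{A}}a=e^{-t\mathbb{A}}\partial_j a$. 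Hence any second derivative containing at least one tangential factor can be written as $\partial_i e^{-t\mathbb{A}}(\partial_j a)$ with $j\in\{1,2\}$, and Lemma \ref{le2.1} (with $k=1$, $p=\infty$) gives
\begin{equation*}
\|\partial_i\partial_j e^{-t\mathbb{A}}a\|_{L^\infty(\mathbb{R}_+^3)}\le\|\nabla e^{-t\mathbb{A}}\partial_j a\|_{L^\infty(\mathbb{R}_+^3)}\le Ct^{-\frac12-\frac{3}{2q}}\|\nabla a\|_{L^q(\mathbb{R}_+^3)},
\end{equation*}
which is exactly the first term of the claim and costs only one derivative of $a$.

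It remains to estimate the pure normal derivative $\partial_3^2 v$. For the normal component, incompressibility $\partial_3 v_3=-\partial_1 v_1-\partial_2 v_2$ yields $\partial_3^2 v_3=-\partial_1\partial_3 v_1-\partial_2\partial_3 v_2$, again mixed derivatives already controlled by the first term. For the horizontal components $v_j$ ($j=1,2$) I would use the Stokes equation $\partial_t v=\Delta v-\nabla p$ obeyed by $v$ to eliminate the normal second derivative,
\begin{equation*}
\partial_3^2 v_j=\partial_t v_j+\partial_j p-\partial_1^2 v_j-\partial_2^2 v_j,\qquad j=1,2,
\end{equation*}
in which the last two terms are tangential and hence harmless. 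The problem is thereby reduced to estimating $\|\partial_t v\|_{L^\infty(\mathbb{R}_+^3)}$ and the horizontal pressure gradient $\|\nabla_h p\|_{L^\infty(\mathbb{R}_+^3)}$.

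For the time derivative, write $\partial_t v=-\mathbb{A}e^{-t\mathbb{A}}a$ and distribute the operator through the semigroup entirely within $L^q$, where the standard equivalence $\|\mathbb{A}^{\frac12}w\|_{L^q(\mathbb{R}_+^3)}\approx\|\nabla w\|_{L^q(\mathbb{R}_+^3)}$ ($1<q<\infty$) is available:
\begin{equation*}
\mathbb{A}e^{-t\mathbb{A}}a=e^{-\frac t2\mathbb{A}}\big(\mathbb{A}^{\frac12}e^{-\frac t4\mathbb{A}}\big)\big(\mathbb{A}^{\frac12}e^{-\frac t4\mathbb{A}}a\big).
\end{equation*}
Applying the $L^q$–$L^\infty$ smoothing of Lemma \ref{le2.1} ($k=0$) to the first factor and the analytic bound $\|\mathbb{A}^{\frac12}e^{-s\mathbb{A}}\|_{L^q\to L^q}\le Cs^{-\frac12}$ to the second, everything reduces to controlling $\|\mathbb{A}^{\frac12}e^{-s\mathbb{A}}a\|_{L^q(\mathbb{R}_+^3)}$. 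Were $a$ in the domain $D_q(\mathbb{A}^{\frac12})$ one could commute $\mathbb{A}^{\frac12}$ past the semigroup and bound this by $\|\mathbb{A}^{\frac12}a\|_{L^q}\approx\|\nabla a\|_{L^q}$, regenerating the first term; but the hypothesis only provides $a_3|_{\partial\mathbb{R}_+^3}=0$, not the full no-slip condition, so $a\notin D_q(\mathbb{A}^{\frac12})$ in general. The missing boundary condition is precisely what the weighted norm quantifies, through the weighted smoothing estimate
\begin{equation*}
\|\mathbb{A}^{\frac12}e^{-s\mathbb{A}}a\|_{L^q(\mathbb{R}_+^3)}\le Cs^{-\frac{1-\theta}{2}}\|y_3^{-\theta}a\|_{L^q(\mathbb{R}_+^3)},\qquad 0<\theta<1.
\end{equation*}
Inserting this and collecting the time powers $t^{-\frac{3}{2q}}\cdot t^{-\frac12}\cdot t^{-\frac{1-\theta}{2}}$ yields exactly $t^{-1+\frac\theta2-\frac{3}{2q}}\|y_3^{-\theta}a\|_{L^q}$, the second term. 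The horizontal pressure gradient is treated in the same spirit: since $\partial_t v|_{\partial\mathbb{R}_+^3}=0$ for $t>0$, the pressure is harmonic with Neumann datum $\partial_3 p|_{\partial\mathbb{R}_+^3}=\Delta v_3|_{\partial\mathbb{R}_+^3}$ expressible through tangential derivatives of $\partial_3 v$, and the a priori estimates of the steady Stokes system reduce $\|\nabla_h p\|_{L^\infty}$ to the same two quantities.

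The main obstacle is the weighted smoothing estimate in the last display, which is where the Hardy weight $y_3^{-\theta}$ and the fractional time gain $s^{\theta/2}$ originate. Heuristically, a fraction $\theta$ of the weight $y_3^{-\theta}$ concentrated near $\{x_3=0\}$ can be exchanged, via a Hardy-type inequality compatible with the no-slip condition that $e^{-s\mathbb{A}}a$ enjoys for $s>0$, for the extra decay $s^{\theta/2}$. Establishing this rigorously---most transparently from the explicit Ukai/Solonnikov solution formula for $e^{-s\mathbb{A}}$, or by complex interpolation between $D_q(\mathbb{A}^{\frac12})$ and the weighted space---together with the matching pressure bound, is the technical heart; the remainder is the bookkeeping of the finitely many second-derivative components via Lemma \ref{le2.1}.
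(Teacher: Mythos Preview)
The paper does not prove this lemma; it is quoted from \cite{Han-2012} as a known result, so there is no in-paper argument to compare against.

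Your reduction is structurally sound: commuting tangential derivatives with $e^{-t\mathbb{A}}$ and using incompressibility plus the Stokes equation to trade $\partial_3^2$ for $\partial_t v$ and $\nabla_h p$ is a legitimate way to isolate the difficulty. However, the two residual estimates you label ``the technical heart''---the weighted smoothing bound
\[
\|\mathbb{A}^{1/2}e^{-s\mathbb{A}}a\|_{L^q(\mathbb{R}_+^3)}\le Cs^{-(1-\theta)/2}\|y_3^{-\theta}a\|_{L^q(\mathbb{R}_+^3)}
\]
and the matching $L^\infty$ bound on $\nabla_h p$---are precisely the content of the lemma, and you do not prove them. The interpolation route you suggest is not straightforward: $a$ is only assumed to satisfy the normal condition $a_3|_{\partial\mathbb{R}_+^3}=0$, not the full Dirichlet condition characterizing $D_q(\mathbb{A}^{1/2})$, so the endpoint spaces do not line up cleanly. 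The pressure term is at least as delicate, since $L^\infty$ control of $\nabla p$ for the half-space Stokes flow is exactly where the weighted structure enters; reducing it to ``the same two quantities'' via the Neumann representation requires the same kernel analysis you have deferred.

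The proof in \cite{Han-2012} bypasses the operator-theoretic reduction and works directly with Solonnikov's explicit kernel $\mathcal{M}(x,y,t)$ for $e^{-t\mathbb{A}}$ (cf.\ the pointwise bounds \eqref{3.36*} used later in this paper), extracting the factor $t^{-1+\theta/2}$ by pairing the kernel decay in $y_3$ against the weight $y_3^{-\theta}$. Your outline would ultimately have to invoke the same kernel estimates to close, so the reduction buys organizational clarity but no real economy.
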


\begin{Lemma}\label{lea21}(\cite{Han-2014*})
Let $a\in W^{1,1}(\mathbb{R}_+^3)$ satisfies $\nabla\cdot a=0$ in $\mathbb{R}_+^3$ and $a_3|_{\partial\mathbb{R}_+^3}=0$. Then for any $0<\eta <1$ and $t>0$,
$$\|\nabla^2e^{-t\mathbb{A}}a\|_{L^1(\mathbb{R}_+^3)}\leq C\big(t^{-\frac{1}{2}}\|\nabla a\|_{L^1(\mathbb{R}_+^3)}+t^{-1+\frac{\eta}{2}}\int_{\mathbb{R}_+^3}y_3^{-\eta}|a(y)|\,dy\big),$$
and
$$\|\nabla^2e^{-t\mathbb{A}}a\|_{L^1(\mathbb{R}_+^3)}\leq Ct^{-1}\|a\|_{L^1(\mathbb{R}_+^3)}.$$
\end{Lemma}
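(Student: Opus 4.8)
The plan is to work directly with the explicit kernel of the Stokes semigroup on the half-space rather than with abstract analyticity. The point is that the obstruction to \emph{any} $L^1$ bound is exactly the unboundedness of $\mathbb{P}$ on $L^1(\mathbb{R}_+^3)$, which forbids estimating $\mathbb{A}e^{-t\mathbb{A}}a$ through the analytic-semigroup bound $\|\mathbb{A}e^{-t\mathbb{A}}\|\le Ct^{-1}$ as one does in $L^r$ with $1<r<\infty$. Instead I write
\begin{equation*}
(e^{-t\mathbb{A}}a)_i(x)=\sum_{j=1}^3\int_{\mathbb{R}_+^3}\Gamma_{ij}(x,y,t)\,a_j(y)\,dy,
\end{equation*}
where $\Gamma_{ij}$ is the Green's matrix of the nonstationary Stokes system (equivalently, Ukai's solution formula \cite{Ukai-1987}), and decompose $\Gamma_{ij}=\delta_{ij}G(x-y,t)+\Gamma_{ij}^{*}(x,y,t)$. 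Here $G$ is the Gaussian heat kernel, while $\Gamma^{*}$ collects the reflected kernel $G(x-y^{*},t)$ with $y^{*}=(y_1,y_2,-y_3)$ together with the boundary-layer terms that restore incompressibility and the no-slip condition; the hypotheses $\nabla\cdot a=0$ and $a_3|_{\partial\mathbb{R}_+^3}=0$ guarantee $\mathbb{P}a=a$, so this representation is the genuine Stokes flow.

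The first step is to record sharp pointwise bounds for the $x$-derivatives of $\Gamma$ and its mixed $x$--$y$ derivatives. For the diagonal and reflected heat parts these are standard Gaussian estimates; the delicate piece is the genuinely non-convolution boundary-layer part of $\Gamma^{*}$, for which one needs anisotropic bounds that stay sharp up to $y_3=0$. By the parabolic scaling $(x,y,t)\mapsto(\lambda x,\lambda y,\lambda^2 t)$, the quantity $\int_{\mathbb{R}_+^3}|\nabla_x^2\Gamma(x,y,t)|\,dx$ has dimension $t^{-1}$, so it equals $t^{-1}\Phi(y/\sqrt t)$ for some profile $\Phi$; the crux of this step is to show $\Phi$ is \emph{bounded}, that is
\begin{equation*}
\sup_{y\in\mathbb{R}_+^3}\big\|\nabla_x^2\Gamma(\cdot,y,t)\big\|_{L^1_x(\mathbb{R}_+^3)}\le Ct^{-1},
\end{equation*}
uniformly in $y$ including the limit $y_3\to0$. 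Granting this, the second estimate follows at once by Tonelli's theorem, since $\|\nabla^2e^{-t\mathbb{A}}a\|_{L^1}\le\int_{\mathbb{R}_+^3}|a(y)|\,\|\nabla_x^2\Gamma(\cdot,y,t)\|_{L^1_x}\,dy\le Ct^{-1}\|a\|_{L^1}$.

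For the first estimate I transfer one of the two $x$-derivatives onto $a$. Using $\partial_{x_k}G(x-y,t)=-\partial_{y_k}G(x-y,t)$ together with the sign-adjusted analogues for the reflected and boundary-layer kernels (the normal derivative of the reflected part obeys $\partial_{x_3}=\partial_{y_3}$), I integrate by parts in $y$ in a single derivative. This produces three contributions: a bulk term $\int\nabla_x\Gamma\cdot\nabla_y a$, controlled by $\sup_y\|\nabla_x\Gamma(\cdot,y,t)\|_{L^1_x}\le Ct^{-1/2}$ (again by scaling) to give $Ct^{-1/2}\|\nabla a\|_{L^1}$; a boundary integral over $\{y_3=0\}$, whose normal component is annihilated by $a_3|_{\partial}=0$ and whose tangential trace is absorbed, after a Hardy-type inequality, into a weighted bulk quantity; and residual terms from the non-convolution part of $\Gamma^{*}$, whose near-boundary singularity is handled by splitting the $y_3$-integration at the scale $\sqrt t$ and integrating $y_3^{-\eta}$ against the kernel. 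The last two contributions together yield exactly the factor $t^{-1+\eta/2}\int_{\mathbb{R}_+^3}y_3^{-\eta}|a(y)|\,dy$, and a dimensional check confirms this term is scale-consistent for every $\eta\in(0,1)$.

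The main obstacle is entirely the boundary-layer part of $\Gamma^{*}$: establishing the scale-invariant pointwise bounds on $\nabla_x^2\Gamma^{*}$ and its mixed derivatives \emph{uniformly up to} $y_3=0$, and verifying that the integration by parts yields only the stated weighted correction rather than a stronger singularity. The parameter $\eta$ and the factor $t^{-1+\eta/2}$ precisely encode the trade-off inherent in taming this boundary singularity while keeping the weight $y_3^{-\eta}$ locally integrable near $\partial\mathbb{R}_+^3$.
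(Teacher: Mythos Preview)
The paper does not supply its own proof of this lemma; it is quoted verbatim from \cite{Han-2014*} and used as a black box. Your plan---writing $e^{-t\mathbb{A}}a$ through the explicit Green's matrix $\Gamma_{ij}=\delta_{ij}G_t(x-y)+\Gamma_{ij}^*$, invoking the Solonnikov-type pointwise bounds on $\Gamma^*$ (exactly the estimates recorded later in the paper as \eqref{3.36*}), and then proving the $L^1$ bound by showing $\sup_{y}\|\nabla_x^2\Gamma(\cdot,y,t)\|_{L^1_x}\le Ct^{-1}$---is the correct route and is, in outline, how \cite{Han-2014*} proceeds.

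One point to sharpen: for the first inequality you say the boundary trace from the integration by parts is ``absorbed, after a Hardy-type inequality, into a weighted bulk quantity''. In the actual argument no boundary term survives at all: the tangential derivatives $\partial_{y_j}$ with $j=1,2$ produce no boundary contribution, and for the normal direction one does \emph{not} integrate by parts but instead exploits the reflected structure $\partial_{x_3}G_t(x-y^*)=\partial_{y_3}G_t(x-y^*)$ together with the anisotropic bound on $\partial_{x_3}\Gamma^*$, which carries an extra factor $(t+x_3^2)^{-1/2}$. It is this factor, integrated in $x_3$ against the weight $y_3^{-\eta}$ (or, equivalently, $x_3^{\eta}$ on the kernel side, as in the manipulations around \eqref{4.2} and \eqref{3.38} of the paper), that produces the $t^{-1+\eta/2}$ coefficient, not a Hardy inequality applied to $a$ itself. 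Apart from this mislabelling of the mechanism, your sketch is sound.
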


We will also use the following elementary inequalities.
\begin{Lemma}\label{le2.2}(\cite{huang-wang-wen-2019})
For any $\alpha_1\in (0,1)$ and $\alpha_2, \alpha_3>0$, it holds that
\begin{align*}
&\left\{\begin{array}{ll}
\displaystyle \int_0^t(t-s)^{-\alpha_1}s^{-\alpha_2}\,{d}s\leq Ct^{1-\alpha_1-\alpha_2},\mathrm{~for~}0<\alpha_2<1,  \smallskip\\
	\displaystyle		\int_{\frac t2}^t(t-s)^{-\alpha_1}s^{-\alpha_2}\,{d}s\leq Ct^{1-\alpha_1-\alpha_2},\mathrm{~for~}\alpha_2>0;
\end{array}\right.\\
&\int_0^t(t-s)^{-\alpha_1}(1+s)^{-\alpha_3}\,{d}s\leq
\left\{\begin{array}{ll}
\displaystyle Ct^{-\alpha_1},\mathrm{~for~}\alpha_3>1,  \smallskip\\
	\displaystyle		Ct^{1-\alpha_1-\alpha_3},\mathrm{~for~}0<\alpha_3<1,\\
\displaystyle		Ct^{-\alpha_1}\ln(1+t),\mathrm{~for~}\alpha_3=1;
\end{array}\right.
\end{align*}
and
$$\int_{\frac t2}^t(t-s)^{-\alpha_1}(1+s)^{-\alpha_3}\,{d}s\leq Ct^{1-\alpha_1-\alpha_3}\mathrm{~for~}\alpha_3>0.$$
\end{Lemma}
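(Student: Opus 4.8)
The plan is to derive all four estimates by two elementary devices: a parabolic rescaling that turns the first identity into a Beta integral, and a splitting of the interval at $t/2$ that separates the singularity of $(t-s)^{-\alpha_1}$ from the decaying second factor. For the first estimate I would substitute $s=t\sigma$, $ds=t\,d\sigma$, which gives
\[
\int_0^t(t-s)^{-\alpha_1}s^{-\alpha_2}\,ds=t^{1-\alpha_1-\alpha_2}\int_0^1(1-\sigma)^{-\alpha_1}\sigma^{-\alpha_2}\,d\sigma .
\]
The remaining integral is the Beta function $B(1-\alpha_2,1-\alpha_1)$, which is finite \emph{precisely} because the hypotheses $\alpha_1<1$ and $0<\alpha_2<1$ make the exponents $-\alpha_1$ and $-\alpha_2$ integrable at the endpoints $\sigma=1$ and $\sigma=0$. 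This is in fact an identity valid for every $t>0$, with $C=B(1-\alpha_2,1-\alpha_1)$, and no small-$t$ bookkeeping is needed.

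For the two half-interval estimates I would exploit that on $[t/2,t]$ the slowly varying factor is comparable to its value at $t$: since $s\ge t/2$ one has $s^{-\alpha_2}\le(t/2)^{-\alpha_2}$ and $(1+s)^{-\alpha_3}\le(1+t/2)^{-\alpha_3}\le(t/2)^{-\alpha_3}$. After freezing this factor, the remaining integral $\int_{t/2}^t(t-s)^{-\alpha_1}\,ds=\frac{(t/2)^{1-\alpha_1}}{1-\alpha_1}$ is computed explicitly, the condition $\alpha_1<1$ being exactly what guarantees integrability of the singularity at $s=t$. Multiplying the two factors yields $Ct^{1-\alpha_1-\alpha_2}$ and $Ct^{1-\alpha_1-\alpha_3}$ respectively, uniformly for all $t>0$ and for all $\alpha_2,\alpha_3>0$; no upper bound on $\alpha_2,\alpha_3$ is required here because the decaying factor is merely replaced by its largest value on the half-interval.

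For the full-interval weighted estimate I would split $\int_0^t=\int_0^{t/2}+\int_{t/2}^t$. The upper piece is the half-interval estimate just established and is bounded by $Ct^{1-\alpha_1-\alpha_3}$. On the lower piece $t-s\ge t/2$, so $(t-s)^{-\alpha_1}\le(t/2)^{-\alpha_1}$ and the contribution reduces to $Ct^{-\alpha_1}\int_0^{t/2}(1+s)^{-\alpha_3}\,ds$. The elementary integral $\int_0^{t/2}(1+s)^{-\alpha_3}\,ds$ is bounded by a constant when $\alpha_3>1$ (a convergent improper integral), by $\ln(1+t/2)\le C\ln(1+t)$ when $\alpha_3=1$, and by $C(1+t/2)^{1-\alpha_3}$ when $0<\alpha_3<1$; these produce the three stated regimes $Ct^{-\alpha_1}$, $Ct^{-\alpha_1}\ln(1+t)$, and $Ct^{1-\alpha_1-\alpha_3}$.

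The one point requiring genuine care, which I would treat as the \textbf{main (if modest) obstacle}, is uniformity of the constant $C$ over all $t>0$, namely reconciling the two pieces of the split in the small-$t$ regime where the frozen/algebraic bounds above are lossy. There I would instead use the crude bound $(1+s)^{-\alpha_3}\le1$ on $[0,t]$, which hands each integral a factor $\int_0^t 1\,ds=t$ and hence an overall $Ct^{1-\alpha_1}$; for $t$ bounded this is dominated by the stated rate in each case, since $t^{\alpha_3}$, $t^{\alpha_3}/\ln(1+t)$, and $1$ respectively stay bounded as $t\to0$. Combining the large-$t$ bounds (from freezing and the explicit integration) with this small-$t$ bound furnishes a single constant valid for every $t>0$, which completes the proof.
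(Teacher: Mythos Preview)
Your argument is correct and is exactly the standard elementary proof of these Beta-type estimates; the paper itself does not prove this lemma but merely cites \cite{huang-wang-wen-2019}, so there is no alternative approach to compare against.
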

To establish further decay estimates, we will use the following results obtained in \cite{huang-wang-wen-2019}.
\begin{Lemma}\label{le2.3}
Assume $u_0, d_0-e_3\in L^1(\mathbb{R}_+^{3})$. Then the strong solution $(u,d)$ of $(\ref{1.1-main})$-$(\ref{1.2})$ obtained in Proposition $\ref{solu1}$ satisfies the following decay estimates:
\begin{align}
	&\|u(t)\|_{L^r(\mathbb{R}_+^3)}+\|(d-e_3)(t)\|_{L^r(\mathbb{R}_+^3)}\leq Ct^{-\frac{3}{2}(1-\frac{1}{r})},\nonumber\\
		&\|\nabla d(t)\|_{L^s(\mathbb{R}_+^3)}\leq Ct^{-\frac12-\frac32(1-\frac1s)},\label{2.7}\\
   & \|\nabla u(t)\|_{L^p(\mathbb{R}_+^3)}\leq Ct^{-\frac12-\frac32(1-\frac1p)},\label{2.8}\\
    & \|\nabla^2d(t)\|_{L^q(\mathbb{R}_+^3)}\leq Ct^{-1-\frac32(1-\frac1q)}\label{2.9},
\end{align}
for any $t>0$, $r\in (1,\infty]$, $s\in [1,\infty]$, $p\in (1,6]$, and $q\in [1,6].$

Further, if $u_0$ satisfies $\|x_3u_0\|_{L^1(\mathbb{R}_+^3)}<\infty$, then the estimates on $u$ can be improved as
\begin{align*}
\left\{
\begin{array}{lll}
	\displaystyle	\|u(t)\|_{L^r(\mathbb{R}_+^3)}\leq Ct^{-\frac12-\frac{3}{2}(1-\frac{1}{r})},\\
    \displaystyle \|\nabla u(t)\|_{L^p(\mathbb{R}_+^3)}\leq Ct^{-1-\frac32(1-\frac1p)},
\end{array}
\right.
\end{align*}
for any $t>0$, $r\in (1,\infty]$, and $p\in (1,6]$.
\end{Lemma}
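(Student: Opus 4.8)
The plan is to treat $(\ref{1.1-main})$ with the boundary–initial conditions through its mild (integral) formulations $(\ref{uB1})$ and $(\ref{uB2})$, and to run a closed bootstrap built on the semigroup smoothing estimates of Lemma~$\ref{le2.1}$, the time–integral inequalities of Lemma~$\ref{le2.2}$, the global regularity class of Proposition~$\ref{solu1}$ (in particular $u,\nabla d\in L^\infty([0,\infty),L^3(\mathbb{R}_+^3))$), and the smallness of $\varepsilon_0$. Writing $\tilde d=d-e_3$ and using $\nabla\cdot u=0$, I would first put the nonlinear terms in a form where (at least) one spatial derivative always sits on the kernel: $u\cdot\nabla u=\nabla\cdot(u\otimes u)$, $u\cdot\nabla d=\nabla\cdot(u\otimes\tilde d)$, $\nabla\cdot(\nabla d\odot\nabla d)$, and $|\nabla d|^2d=|\nabla\tilde d|^2(e_3+\tilde d)$, keeping the rearrangement $(\ref{3.7'})$ of the $\tilde d$–equation in reserve for the second–derivative estimate.

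The backbone is an $L^2$ bootstrap. I would set
\[
M(t):=\sup_{0<s\le t}\Bigl(s^{\frac34}\|(u,\tilde d)(s)\|_{L^2}+s^{\frac54}\|\nabla u(s)\|_{L^2}+s^{\frac54}\|\nabla\tilde d(s)\|_{L^2}+s^{\frac74}\|\nabla^2\tilde d(s)\|_{L^2}\Bigr),
\]
bound the linear parts of $(\ref{uB1})$ and $(\ref{uB2})$ by Lemma~$\ref{le2.1}$ (using $u_0,\tilde d_0\in L^1$ and the regularity of Proposition~$\ref{solu1}$), and bound the nonlinear parts by Hölder's inequality followed by Lemma~$\ref{le2.2}$, after splitting $\int_0^t=\int_0^{t/2}+\int_{t/2}^t$ and exploiting the small-exponent $L^q\to L^p$ gain in $(\ref{2.2})$ on $[0,t/2]$. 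This produces an inequality of the shape $M(t)\le C+C(\varepsilon_0+M(t))\,M(t)$, and continuity of $M$ together with small $\varepsilon_0$ yields $M(t)\le C$, which is the $r=2$ case of all the stated rates. The general $L^r$, $L^s$, $L^p$, $L^q$ bounds then follow by feeding these $L^2$ rates back into $(\ref{uB2})$: on $[t/2,t]$ only one derivative falls on the kernel, giving the integrable singularity $(t-s)^{-\alpha}$ with $\alpha<1$, while on $[0,t/2]$ one uses the $L^q\to L^r$ smoothing at time $\gtrsim t$. The ceilings $p,q\le 6$ are exactly the Sobolev embedding $W^{1,2}(\mathbb{R}_+^3)\hookrightarrow L^6(\mathbb{R}_+^3)$: since $\nabla^2\tilde d$ is only controlled in $L^2$, the factor $\nabla d$ entering $\nabla d\odot\nabla d$ (which drives the $u$–equation) is available only up to $L^6$.

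The delicate step is $\nabla^2\tilde d$. Putting two derivatives of $e^{(t-s)\Delta}$ on the nonlinearity over $[t/2,t]$ yields the non-integrable weight $(t-s)^{-1}$, while writing $\nabla^2\tilde d=\nabla e^{(t-s)\Delta}\nabla(\cdots)$ forces control of $\nabla$ of the nonlinearity, which contains $u\cdot\nabla^2d$ and $\nabla^2d\cdot\nabla d$ — that is, $\nabla^2d$ itself. I would therefore close this estimate by a continuity argument: assume $\|\nabla^2\tilde d(t)\|_{L^q}\le Kt^{-1-\frac32(1-1/q)}$ on $[0,T]$, insert this into $(\ref{uB2})_2$ in the modified form $(\ref{3.7'})$, and note that the already-known decay of $u$, $\nabla u$, $\nabla d$ makes every nonlinear contribution decay strictly faster than $t^{-1-\frac32(1-1/q)}$, so the bound is recovered with constant $K/2$ for $t$ large; a short-time estimate extends it to all $t>0$. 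This self-referential structure is also why the $u$–rates and the $\nabla^2d$–rate must be obtained jointly within one continuity scheme. Finally, for the improved bounds under $\int_{\mathbb{R}_+^3}x_3|u_0|\,dx<\infty$, the single extra ingredient is a first-moment refinement of the Stokes kernel, $\|e^{-t\mathbb{A}}u_0\|_{L^r(\mathbb{R}_+^3)}\le Ct^{-\frac12-\frac32(1-1/r)}(\|u_0\|_{L^1}+\|x_3u_0\|_{L^1})$ (in the spirit of \cite{Fujigaki-Miyakawa-2001}); inserting it into $(\ref{uB1})$ and repeating the bootstrap, the quadratic nonlinearities automatically inherit the extra factor $t^{-1/2}$, so the faster rates for $u$ and $\nabla u$ close. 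Throughout, the unboundedness of $\mathbb{P}$ on $L^1(\mathbb{R}_+^3)$ is harmless here, since every norm of $u$ above has exponent strictly greater than $1$. (These are precisely the estimates established in \cite{huang-wang-wen-2019}.)
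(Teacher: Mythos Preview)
The paper does not prove this lemma: it is quoted verbatim from \cite{huang-wang-wen-2019} as a known input (see the sentence ``we will use the following results obtained in \cite{huang-wang-wen-2019}'' immediately preceding the statement). Your proposal is a reasonable high-level outline of the argument carried out in that reference --- mild formulation, $L^2$-bootstrap closed by the smallness of $\varepsilon_0$ and Lemma~\ref{le2.5*}, continuity argument for $\nabla^2 d$ via the rearrangement \eqref{3.7'}, extension to general exponents by re-inserting into \eqref{uB2}, and the Fujigaki--Miyakawa first-moment bound for the weighted case --- and you correctly flag this yourself in the final line. There is no discrepancy to report; the paper simply takes the lemma as given.
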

The following result on the global solution $u$ is from Theorem 1.2 in \cite{huang-wang-wen-2019}.
\begin{Lemma}\label{le2.5**}
Under the same assumptions of Theorem \ref{th1.2}, if, in addition, $u_0\in D^1(\mathbb{R}_+^3)$ and $\nabla d_0\in D^1(\mathbb{R}_+^3)$, then
$$\|\nabla u(t)\|_{L^1(\mathbb{R}_+^3)}\leq Ct^{-\frac12}$$
holds for any $t>0$.
\end{Lemma}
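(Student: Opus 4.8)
The plan is to estimate $\|\nabla u(t)\|_{L^1(\mathbb{R}_+^3)}$ directly from the Duhamel identity $(\ref{uB1})_1$. Since $u_0\in L^1(\mathbb{R}_+^3)$, the linear part is handled immediately by the gradient bound $(\ref{2.3})$ of the Stokes semigroup,
\begin{align*}
\|\nabla e^{-t\mathbb{A}}u_0\|_{L^1(\mathbb{R}_+^3)}\le Ct^{-\frac12}\|u_0\|_{L^1(\mathbb{R}_+^3)},\qquad t>0 ,
\end{align*}
so that, writing $N:=u\cdot\nabla u+\nabla\cdot(\nabla d\odot\nabla d)$, it remains to show
\begin{align*}
\Big\|\int_0^t\nabla e^{-(t-s)\mathbb{A}}\mathbb{P}N(s)\,ds\Big\|_{L^1(\mathbb{R}_+^3)}\le Ct^{-\frac12},\qquad t>0 .
\end{align*}
I would bound the integrand by an $L^1$-estimate for the combined operator $\nabla e^{-\tau\mathbb{A}}\mathbb{P}$ carrying one order of smoothing, namely $\|\nabla e^{-\tau\mathbb{A}}\mathbb{P}N(s)\|_{L^1(\mathbb{R}_+^3)}\le C\tau^{-\frac12}\|N(s)\|_{L^1(\mathbb{R}_+^3)}$, and combine it with a decay bound for $\|N(s)\|_{L^1(\mathbb{R}_+^3)}$ read off from Lemma $\ref{le2.3}$.

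The difficulty in the first estimate is that $\mathbb{P}$ is \emph{not} bounded on $L^1(\mathbb{R}_+^3)$, and $\mathbb{P}\partial_3\neq\partial_3\mathbb{P}$ because of the boundary, so one cannot simply compose $(\ref{2.3})$ with the $L^1$-boundedness of $\mathbb{P}$. To circumvent this I would use the divergence structure $N=\nabla\cdot G$ with $G:=u\otimes u+\nabla d\odot\nabla d$ (recall $\nabla\cdot u=0$) together with the representation formula for the Leray projection on $\mathbb{R}_+^3$: decomposing $\mathbb{P}N=N-\nabla\varphi$, where $\varphi=\varphi(\cdot,s)$ solves the elliptic Neumann problem
\begin{align*}
\Delta\varphi=\nabla\cdot N=\partial_i\partial_jG_{ij}\ \text{ in }\ \mathbb{R}_+^3,\qquad \frac{\partial\varphi}{\partial x_3}=N\cdot e_3=\partial_iG_{i3}\ \text{ on }\ \partial\mathbb{R}_+^3 ,
\end{align*}
one estimates $\nabla e^{-\tau\mathbb{A}}N=\nabla e^{-\tau\mathbb{A}}(\nabla\cdot G)$ by the semigroup bounds $(\ref{2.2})$, $(\ref{2.3})$ and Lemma $\ref{lea21}$ used with this divergence form, and the harmonic correction $\nabla e^{-\tau\mathbb{A}}\nabla\varphi$ by the explicit half-space kernel together with standard elliptic estimates for $\varphi$ in terms of $G$. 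This reduction — transferring the unboundedness of $\mathbb{P}$ on $L^1$ to a carefully designed elliptic boundary-value problem — is the technical heart of the matter, and I expect it to be the main obstacle.

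Granting the combined-operator bound, the remainder is routine. By Lemma $\ref{le2.3}$,
\begin{align*}
\|N(s)\|_{L^1(\mathbb{R}_+^3)}\le \|u(s)\|_{L^2}\|\nabla u(s)\|_{L^2}+C\|\nabla d(s)\|_{L^2}\|\nabla^2d(s)\|_{L^2}\le C(1+s)^{-2} ,
\end{align*}
since for $s\ge1$ the right-hand side decays by the $L^2$-decay of $u$ together with $(\ref{2.7})$, $(\ref{2.8})$ and $(\ref{2.9})$, while its boundedness as $s\to0^+$ follows from the boundedness of $\|\nabla u(s)\|_{L^2}$ and $\|\nabla^2d(s)\|_{L^2}$ near $s=0$ — which is precisely what the extra hypotheses $u_0\in D^1(\mathbb{R}_+^3)$ and $\nabla d_0\in D^1(\mathbb{R}_+^3)$ guarantee (via local parabolic regularity of the solution). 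Inserting this into the combined-operator bound and using $\int_0^t(t-s)^{-1/2}(1+s)^{-2}\,ds\le Ct^{-1/2}$ from Lemma $\ref{le2.2}$ gives
\begin{align*}
\Big\|\int_0^t\nabla e^{-(t-s)\mathbb{A}}\mathbb{P}N(s)\,ds\Big\|_{L^1(\mathbb{R}_+^3)}\le C\int_0^t(t-s)^{-\frac12}(1+s)^{-2}\,ds\le Ct^{-\frac12} ,
\end{align*}
which, combined with the linear estimate, yields $\|\nabla u(t)\|_{L^1(\mathbb{R}_+^3)}\le Ct^{-\frac12}$ for all $t>0$.
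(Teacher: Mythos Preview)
Your overall scaffolding (Duhamel formula, linear part via $(\ref{2.3})$, and handling the $L^1$-unboundedness of $\mathbb{P}$ through the Neumann-problem decomposition) is exactly the route taken in the literature; in fact the paper does not prove Lemma~\ref{le2.5**} itself but quotes it from \cite{huang-wang-wen-2019}, and later proves the sharper Lemma~\ref{le3.9*} by precisely this scheme. The gap is in the specific estimate you assume for the nonlinear part.

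You postulate the ``combined-operator'' bound $\|\nabla e^{-\tau\mathbb{A}}\mathbb{P}N\|_{L^1}\le C\tau^{-1/2}\|N\|_{L^1}$ and then use only $\|N(s)\|_{L^1}\le\|u\|_{L^2}\|\nabla u\|_{L^2}+\|\nabla d\|_{L^2}\|\nabla^2 d\|_{L^2}$. That first bound is not available in this form: once you carry out the Neumann decomposition $(\ref{demcomposition})$, the $L^1$-norm of $\nabla\varphi=\sum_{i,j}\nabla\mathcal{N}\partial_i\partial_j(u_iu_j+\langle\partial_id,\partial_jd\rangle)$ is \emph{not} controlled by $\|G\|_{L^1}$ or $\|N\|_{L^1}$. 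The estimate actually proved (see $(\ref{le2.5})$, which is from \cite{huang-wang-wen-2019}) gives
\[
\|\mathbb{P}N(s)\|_{L^1}\le C\big(\|u(s)\|_{H^1}^2+\|\nabla d(s)\|_{H^1}^2+\||\nabla d(s)||\nabla^3 d(s)|\|_{L^1}\big),
\]
and the extra factor $\nabla^3 d$ is unavoidable because $\nabla d$ is not divergence-free, so the integration by parts that collapses two derivatives onto $\mathcal{N}$ for the $u_iu_j$ part does not work for $\langle\partial_i d,\partial_j d\rangle$. Only after this higher-order bound does one apply $(\ref{2.3})$ and integrate in time. This is exactly why the extra hypotheses $u_0\in D^1$, $\nabla d_0\in D^1$ enter in \cite{huang-wang-wen-2019}: they are used to make the right-hand side above integrable on $[0,t]$ with weight $(t-s)^{-1/2}$, in particular to control the $\nabla^3 d$ contribution (cf.\ Remark~1.4 and the proof of Lemma~\ref{le3.9*}, where the present paper instead invokes Lemma~\ref{le3.7**} to obtain $\|\nabla^3 d\|_{L^2}$ decay and thereby drop the extra assumptions). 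Your write-up never produces this term, so the role of the additional initial regularity is invisible in your argument, and the step ``standard elliptic estimates for $\varphi$ in terms of $G$'' would, if carried out, force you back to $(\ref{le2.5})$ rather than to the clean $\|N\|_{L^1}$ bound you assume.
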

Next we recall the classical Gagliardo-Nirenberg-Sobolev inequality in the half-space.
\begin{Lemma}\label{gn}(\cite{Nirenberg-1959})
Let $m$ be a positive integer, $p,q,r\in[1,\infty]$. If $u\in L^q(\mathbb{R}_+^3)\cap D^{m,p}(\mathbb{R}_+^3)$, then for any integer $k\in[0,m)$,
$$\|\nabla^{k}u\|_{L^{r}(\mathbb{R}_+^3)}\leq C\|\nabla^{m}u\|_{L^{p}(\mathbb{R}_+^3)}^{\alpha}\|u\|_{L^{q}(\mathbb{R}_+^3)}^{1-\alpha},$$
where
$$\frac{1}{r}-\frac{k}{3}=\alpha(\frac{1}{p}-\frac{m}{3})+(1-\alpha)\frac{1}{q},$$
with
\begin{align*}
\left\{
\begin{array}{lll}
	\displaystyle	\alpha\in[{\frac{k}{m}},1),\quad if\ p\in(1,+\infty)\ and\ m-k-{\frac{3}{p}}\in\mathbb{N},\\
    \displaystyle \alpha\in[{\frac{k}{m}},1],\quad otherwise.
\end{array}
\right.
\end{align*}
However, when $q=+\infty$, $k=0$ and $mp<3$, the additional condition is needed: $u(x)\to 0$, as $x\to\infty$ or $u\in L^\gamma(\mathbb{R}_+^3)$ for some finite $\gamma\geq1$. The above positive constant C depends only on $m,k,p,q$ and $\alpha$.

As a special case often used in this paper, it holds that
\begin{equation}\label{gn-}
\|f\|_{L^{\infty}(\mathbb{R}_+^3)}\leq C \|f\|_{L^{s}(\mathbb{R}_+^3)}^{1-\frac3s}\|\nabla f\|_{L^{s}(\mathbb{R}_+^3)}^{\frac3s},\ \forall\ f\in W^{1,s}(\mathbb{R}_+^3),\ 3<s<\infty.
\end{equation}
\end{Lemma}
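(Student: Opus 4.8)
The plan is to reduce the three-dimensional half-space estimate to one-dimensional interpolation inequalities along the coordinate axes, to handle the boundary by working directly on the half-line in the normal direction rather than reflecting across $\partial\mathbb{R}_+^3$, and to pin down both the exponent relation and the homogeneous (multiplicative) form of the bound by a scaling argument. The exponent identity $\frac1r-\frac k3=\alpha(\frac1p-\frac m3)+(1-\alpha)\frac1q$ is forced by dimensional analysis: setting $u_\lambda(x)=u(\lambda x)$, one has $\|\nabla^j u_\lambda\|_{L^s(\mathbb{R}_+^3)}=\lambda^{\,j-3/s}\|\nabla^j u\|_{L^s(\mathbb{R}_+^3)}$ for every $j,s$, so the left- and right-hand sides of the asserted inequality scale by the same power of $\lambda$ precisely when the displayed identity holds. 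This observation both certifies that the identity is necessary and furnishes the device (optimizing over $\lambda$) that later converts an additive estimate into the multiplicative one.

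First I would establish the one-dimensional building block. For $f$ on the half-line $(0,\infty)$ and $0\le j<m$, I would prove $\|f^{(j)}\|_{L^r(0,\infty)}\le C\|f^{(m)}\|_{L^p(0,\infty)}^{\alpha}\|f\|_{L^q(0,\infty)}^{1-\alpha}$ by integration by parts in the base case (for instance $m=2$, $j=1$, where $\int_0^\infty|f'|^2=-\int_0^\infty f f''$ followed by H\"older's inequality), and then by iteration on $m$. Working on the half-line in the coordinate $x_3$ normal to $\partial\mathbb{R}_+^3$ avoids any reflection across the boundary, while in the two tangential directions one may work on all of $\mathbb{R}$.

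Next I would lift to three dimensions. Applying the one-dimensional interpolation successively in each coordinate direction and recombining the resulting mixed-norm bounds through H\"older's inequality and the Gagliardo--Nirenberg product lemma yields an estimate of additive type, namely $\|\nabla^k u\|_{L^r(\mathbb{R}_+^3)}\le C\bigl(\|\nabla^m u\|_{L^p(\mathbb{R}_+^3)}^{\alpha}\|u\|_{L^q(\mathbb{R}_+^3)}^{1-\alpha}+\|u\|_{L^q(\mathbb{R}_+^3)}\bigr)$, where the spurious lower-order term is an artifact of the iteration. I would then invoke the scaling from the first paragraph: replacing $u$ by $u_\lambda$, dividing through by the common scaling factor, and sending $\lambda$ to the appropriate limit annihilates the extra term and leaves exactly the homogeneous multiplicative inequality claimed in Lemma~\ref{gn}.

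The hard part will be the borderline cases encoded in the constraints on $\alpha$. When $p\in(1,\infty)$ and $m-k-\frac3p\in\mathbb{N}$, the endpoint $\alpha=1$ must be excluded, reflecting the failure of the critical Sobolev embedding of $W^{m,p}$ into the corresponding H\"older-type space; this requires a separate argument confirming that the estimate persists for all $\alpha\in[\frac km,1)$ but degenerates at $\alpha=1$. Likewise, when $q=\infty$, $k=0$, and $mp<3$, the homogeneous seminorms do not control constant functions, so the supplementary hypothesis that $u(x)\to0$ as $|x|\to\infty$ (or $u\in L^\gamma(\mathbb{R}_+^3)$ for some finite $\gamma\ge1$) is needed to rule out constants and close the estimate. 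Since Lemma~\ref{gn} merely records a classical result, I would present these verifications compactly and defer to \cite{Nirenberg-1959} for the full technical treatment of the exceptional endpoints.
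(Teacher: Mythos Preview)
The paper does not prove this lemma at all: it is stated with the citation \cite{Nirenberg-1959} and used as a classical preliminary, with no argument supplied. Your proposal therefore goes well beyond what the paper does; any correct sketch would be acceptable here, and the outline you give (one-dimensional interpolation iterated over the coordinate directions, combined with a scaling argument to fix the exponent relation and upgrade an additive bound to the multiplicative one) is essentially Nirenberg's own strategy.

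One technical point to tighten if you do flesh this out: in your base case on the half-line, the identity $\int_0^\infty |f'|^2=-\int_0^\infty f f''$ is only correct after discarding the boundary contribution $-f(0)f'(0)$, which does not vanish in general for functions on $(0,\infty)$ with no boundary condition imposed. Nirenberg handles the half-line cases by a more careful integration-by-parts argument (or by working on finite intervals first), so you should either indicate how the boundary term is controlled or simply cite \cite{Nirenberg-1959} for the one-dimensional inequalities as well. Since the paper itself treats the whole lemma as quoted background, the cleanest option is to do the same and omit the proof entirely.
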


Now we recall the regularity estimates of the steady Stokes equations in the half-space (see \cite[p.254]{Galdi-2011}).
\begin{Lemma}
Let $f,g\in C_0^\infty(\overline{\mathbb{R}_+^3})$, $m\geq 0$, $q\in(1,\infty)$. Then any solution $(\phi   ,\pi )$ to the nonhomogeneous Stokes system
\begin{align*}
	\left\{
	\begin{array}{lll}
		-\Delta \phi  +\nabla\pi=f &\text{in}\quad\mathbb{R}_+^3,\\
		\nabla\cdot \phi  =g &\text{in}\quad\mathbb{R}_+^3,\\
		\phi  (x)=0&\text{on}\quad\partial\mathbb{R}_+^3,\\
		\phi  (x)\to0&\text{as}\quad  |x|\to\infty
	\end{array}
	\right.
\end{align*}
is $C^\infty$, and belongs to $W^{m+2,q}(\mathbb{R}_+^3)\times W^{m+1,q}(\mathbb{R}_+^3)$. Moreover, for each integer $\ell\geq0$, the following estimate holds
\begin{equation}\label{3.43}
\|\nabla^{\ell+2}\phi \|_{L^q(\mathbb{R}_+^3)}+\|\nabla^{\ell+1}\pi\|_{L^q(\mathbb{R}_+^3)}\leq C(\ell,q)(\|\nabla^\ell f\|_{L^q(\mathbb{R}_+^3)}+\|\nabla^{\ell+1}g\|_{L^q(\mathbb{R}_+^3)}).
\end{equation}
\end{Lemma}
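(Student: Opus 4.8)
\medskip

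\noindent\textbf{Proof proposal.} The plan is to establish $(\ref{3.43})$ by induction on $\ell\ge0$: the case $\ell=0$ is the classical a priori estimate for the stationary Stokes system in a half-space (Cattabriga, Solonnikov; see \cite{Galdi-2011}), while the inductive step is an elementary differentiation argument that exploits the flatness of $\partial\mathbb{R}_+^3$. Throughout I would use freely that, since $f,g\in C_0^\infty(\overline{\mathbb{R}_+^3})$, the solution $(\phi,\pi)$ vanishing at infinity is unique in the natural class and is $C^\infty(\overline{\mathbb{R}_+^3})$ with all derivatives decaying at infinity --- the standard interior and boundary regularity for the elliptic Stokes system --- so that every norm below is finite and the only genuine content is the stated quantitative bound; the asserted membership $\phi\in W^{m+2,q}$, $\pi\in W^{m+1,q}$ then follows from $(\ref{3.43})$ for $\ell=0,\dots,m$ together with this regularity.

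For the base case $\ell=0$ I would represent $(\phi,\pi)$ through the half-space hydrodynamic (Green's) tensor --- Oseen's fundamental tensor together with a boundary-correction term of the same homogeneity, built so as to annihilate the Dirichlet trace. Then $\phi$ is an integral of this tensor against $f$ plus, after one integration by parts, against $\nabla g$; differentiating, $\nabla^2\phi$ and $\nabla\pi$ are exhibited as Calder\'on--Zygmund singular integral operators applied to $f$ and to $\nabla g$ (the kernels being homogeneous of degree $-3$ with the required cancellation), and the Calder\'on--Zygmund inequality yields $\|\nabla^2\phi\|_{L^q(\mathbb{R}_+^3)}+\|\nabla\pi\|_{L^q(\mathbb{R}_+^3)}\le C(\|f\|_{L^q(\mathbb{R}_+^3)}+\|\nabla g\|_{L^q(\mathbb{R}_+^3)})$ for all $1<q<\infty$. (Equivalently, one reflects $f$ and $g$ evenly/oddly across $\{x_3=0\}$ and reduces to the whole-space Stokes system modulo a forcing carried by the boundary, which is treated by a further, simpler boundary problem.) I expect this to be the main obstacle: it is the only step in which the geometry and the velocity--pressure coupling interact non-trivially, and it rests on the explicit structure of the half-space Green's tensor rather than on soft arguments --- this is precisely the classical estimate recalled from \cite{Galdi-2011}.

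For the inductive step I would assume $(\ref{3.43})$ at index $\ell-1$ and argue as follows. For $j\in\{1,2\}$ differentiate the system in the tangential direction $x_j$; since $\phi$ vanishes on $\{x_3=0\}$ identically in $(x_1,x_2)$, the pair $(\partial_j\phi,\partial_j\pi)$ solves a Stokes system of the same form with data $(\partial_j f,\partial_j g)\in C_0^\infty(\overline{\mathbb{R}_+^3})$, homogeneous Dirichlet condition, and $\partial_j\phi\to0$ at infinity, so the inductive hypothesis gives
\begin{equation}\label{plan-tang}
\|\nabla^{\ell+1}\partial_j\phi\|_{L^q(\mathbb{R}_+^3)}+\|\nabla^{\ell}\partial_j\pi\|_{L^q(\mathbb{R}_+^3)}\le C\big(\|\nabla^{\ell}f\|_{L^q(\mathbb{R}_+^3)}+\|\nabla^{\ell+1}g\|_{L^q(\mathbb{R}_+^3)}\big),\qquad j=1,2,
\end{equation}
which controls every component of $\nabla^{\ell+2}\phi$ and $\nabla^{\ell+1}\pi$ carrying at least one horizontal derivative. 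The purely normal derivatives are then recovered algebraically from the equations: from the $k$-th momentum equation ($k=1,2$), $\partial_3^2\phi_k=-\partial_1^2\phi_k-\partial_2^2\phi_k+\partial_k\pi-f_k$, so applying $\partial_3^{\ell}$ and noting that each resulting term on the right is either covered by $(\ref{plan-tang})$ or bounded by $\|\nabla^{\ell}f\|_{L^q(\mathbb{R}_+^3)}$ controls $\|\partial_3^{\ell+2}\phi_k\|_{L^q(\mathbb{R}_+^3)}$; from $\nabla\cdot\phi=g$, $\partial_3\phi_3=g-\partial_1\phi_1-\partial_2\phi_2$, so applying $\partial_3^{\ell+1}$ controls $\|\partial_3^{\ell+2}\phi_3\|_{L^q(\mathbb{R}_+^3)}$ by $\|\nabla^{\ell+1}g\|_{L^q(\mathbb{R}_+^3)}$ plus terms covered by $(\ref{plan-tang})$; and from the third momentum equation, $\partial_3\pi=\partial_1^2\phi_3+\partial_2^2\phi_3+\partial_3^2\phi_3+f_3$, so applying $\partial_3^{\ell}$ and using the bound just obtained for $\partial_3^{\ell+2}\phi_3$ controls $\|\partial_3^{\ell+1}\pi\|_{L^q(\mathbb{R}_+^3)}$. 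Summing these finitely many contributions yields $(\ref{3.43})$ at index $\ell$ and closes the induction.
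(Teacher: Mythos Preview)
Your argument is correct and is essentially the standard proof of this classical estimate. Note, however, that the paper does not prove this lemma at all: it is stated as a recalled result from the literature, with the citation ``see \cite[p.254]{Galdi-2011}'' and no proof given. So there is nothing to compare your approach against --- you have supplied a full (and correct) proof sketch where the paper simply quotes the result.
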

Let $g=\mathcal{N}f$ denote the solution of the Neumann problem
\begin{align}\label{neumann}
	\left\{
	\begin{array}{lll}
		-\Delta g=f &\text{in}\quad\mathbb{R}_+^3,\\
		g(x)\to 0 &as \  |x|\to \infty,\\
		\partial_\nu g \mid_{\partial\mathbb{R}_+^3}=0.
	\end{array}
	\right.
\end{align}
Then (see \cite{Han-2010})
\begin{equation}\label{N}
\mathcal{N}=\int_{0}^{\infty}F(\tau )\,{d}\tau,
\end{equation}
where the operator $F(t)$ is defined as follows:
 $$(F(t)f)(x)=\int_{\mathbb{R}_+^3}[G_t(x'-y',x_3-y_3)+G_t(x'-y',x_3+y_3)]f(y)\,{d}y,$$
 and $G_t(x)=(4\pi t)^{-\frac32}e^{-\frac{|x|^2}{4t}}$ is the Gaussian kernel in $\mathbb{R}^3$. In addition, it holds that (see \cite{huang-wang-wen-2019})
 \begin{align}\label{demcomposition}
  &\mathbb{P}\big(u\cdot\nabla u+\nabla\cdot(\nabla d\odot\nabla d)\big)\nonumber\\
  =&(u\cdot\nabla u+\nabla\cdot(\nabla d\odot\nabla d))+\sum_{i,j=1}^3\nabla\mathcal{N}\partial_i\partial_j\big(u_iu_j+\langle\partial_id,\partial_jd\rangle\big).
\end{align}

Moreover, we have the following estimates.

\begin{Lemma}\label{pro3.2}(\cite{Liu-2019})
Let $0<\theta<1$. Then for any $u,v\in C_{0,\sigma}^\infty(\mathbb{R}_+^3)$
\begin{align*}
\|x_3^{-\theta}\mathbb{P}(u\cdot\nabla v)\|_{L^q(\mathbb{R}_+^3)}=&\|x_3^{-\theta}(u\cdot\nabla v+\sum_{i,j=1}^3\nabla\mathcal{N}\partial_i\partial_j(u_iv_j))\|_{L^q(\mathbb{R}_+^3)}\\
\leq &C\big(\|u\|_{L^{q_1}(\mathbb{R}_+^3)}\|v\|_{L^{q_2}(\mathbb{R}_+^3)}+\|\nabla u\|_{L^{q_1}(\mathbb{R}_+^3)}\|\nabla v\|_{L^{q_2}(\mathbb{R}_+^3)}\\
&+\|u\|_{L^{q_1}(\mathbb{R}_+^3)}\|\nabla^2v\|_{L^{q_2}(\mathbb{R}_+^3)}+\|u\|_{L^{q_1}(\mathbb{R}_+^3)}\|\nabla v\|_{L^{q_2}(\mathbb{R}_+^3)}\big),
\end{align*}
where $1\leq q<\infty$, $q\leq q_1,q_2\leq\infty$, $\frac{1}{q}=\frac{1}{q_1}+\frac{1}{q_2}$.

\end{Lemma}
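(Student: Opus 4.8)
Here is how I would approach the proof of Lemma \ref{pro3.2}.

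The plan is first to note that the displayed equality is nothing but the half–space Helmholtz decomposition of $u\cdot\nabla v$. Since $\nabla\cdot u=0$ we have $u\cdot\nabla v=\sum_{i}\partial_i(u_iv)$, hence $\nabla\cdot(u\cdot\nabla v)=\sum_{i,j}\partial_i\partial_j(u_iv_j)$; and because $u_3|_{x_3=0}=v_3|_{x_3=0}=0$ the normal trace $e_3\cdot(u\cdot\nabla v)|_{x_3=0}=\sum_i u_i\partial_iv_3|_{x_3=0}$ vanishes, so the Helmholtz decomposition $L^q(\mathbb{R}_+^3)=L_\sigma^q(\mathbb{R}_+^3)\oplus L_\pi^q(\mathbb{R}_+^3)$ yields $\mathbb{P}(u\cdot\nabla v)=u\cdot\nabla v+\nabla\mathcal{N}\big(\nabla\cdot(u\cdot\nabla v)\big)=u\cdot\nabla v+\sum_{i,j}\nabla\mathcal{N}\partial_i\partial_j(u_iv_j)$, with $\mathcal{N}$ and its kernel given in \eqref{neumann}–\eqref{N}. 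A convenient algebraic simplification, obtained by expanding $\partial_i\partial_j(u_iv_j)$ and using $\nabla\cdot u=\nabla\cdot v=0$, is $\sum_{i,j}\partial_i\partial_j(u_iv_j)=\sum_{i,j}\partial_iu_j\,\partial_jv_i$, so that the Neumann correction $R:=\nabla\mathcal{N}\big(\sum_{i,j}\partial_iu_j\,\partial_jv_i\big)$ is the order $(-1)$ operator $\nabla\mathcal{N}$ applied to a product of first derivatives of $u$ and $v$ — the form in which it will be estimated.

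To bound $\|x_3^{-\theta}\mathbb{P}(u\cdot\nabla v)\|_{L^q(\mathbb{R}_+^3)}$ I would use two tools. The first is the one–dimensional Hardy inequality $\|x_3^{-\theta}g\|_{L^p(\mathbb{R}_+^3)}\le C_{\theta,p}\|\partial_3g\|_{L^p(\mathbb{R}_+^3)}$, valid for every $p\in[1,\infty)$ and $\theta\in(0,1)$ whenever $g$ vanishes on $\{x_3=0\}$; it follows from $|g(x',x_3)|\le x_3^{1-1/p}\|\partial_3g(x',\cdot)\|_{L^p(\mathbb{R}_+)}$ together with the integrability of $x_3^{p(1-\theta)-1}$ near the origin. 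The second is the even/odd reflection structure of the kernel in \eqref{N}: tangential derivatives $\nabla'\mathcal{N}$ are governed by the even kernel $G_\tau(x'{-}y',x_3{-}y_3)+G_\tau(x'{-}y',x_3{+}y_3)$, whereas the normal derivative $\partial_3\mathcal{N}$ is governed by the odd kernel $G_\tau(x'{-}y',x_3{-}y_3)-G_\tau(x'{-}y',x_3{+}y_3)$, which vanishes on $\{x_3=0\}$ and, after an integration by parts in $y_3$, carries a compensating factor comparable to $x_3$; hence $x_3^{-\theta}\partial_3\mathcal{N}$ again behaves like the Hardy operator when $\theta<1$. With these at hand I split $\mathbb{P}(u\cdot\nabla v)$ into the local part $u\cdot\nabla v$ and the Neumann part $R$, and split each into the pieces carrying a factor $u_3$, $v_3$ or a normal derivative $\partial_3\mathcal{N}$, and the purely tangential pieces.

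The pieces involving $u_3$, $v_3$ (e.g. $u_3\partial_3v$ in $u\cdot\nabla v$) or $\partial_3\mathcal{N}$ (in $R$) are treated directly: apply the Hardy bound to the factor that vanishes on the boundary, or use the $x_3$–gain of the odd kernel, and then apply Hölder with $\tfrac1q=\tfrac1{q_1}+\tfrac1{q_2}$. Because $\nabla\mathcal{N}$ is smoothing of order one, distributing the derivatives in $R$ — putting at most one onto $u$ — and using a Sobolev/Hardy–Littlewood–Sobolev estimate reduce these terms to $\|\nabla u\|_{L^{q_1}}\|\nabla v\|_{L^{q_2}}$, $\|u\|_{L^{q_1}}\|\nabla^2 v\|_{L^{q_2}}$ and $\|u\|_{L^{q_1}}\|\nabla v\|_{L^{q_2}}$, the $\tau$–integral in \eqref{N} converging at $\tau\to0$ thanks to $\theta<1$ and at $\tau\to\infty$ thanks to the compact support of $u,v$.

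The purely tangential contribution — $\sum_{\alpha=1,2}u_\alpha\partial_\alpha v$ together with the $\nabla'\mathcal{N}$–part of $R$ — is the step I expect to be the main obstacle, since now no individual term vanishes on $\{x_3=0\}$ and the weighted $L^q$ bound can survive only through the cancellation built into $\mathbb{P}$. The idea is to keep the whole combination, write it in divergence form acting on the tensor $u\otimes v$, perform one integration by parts in $x_3$ inside the even kernel of $\mathcal{N}$ so as to manufacture the $x_3$–factor needed to offset $x_3^{-\theta}$ (this is precisely where the hypothesis $0<\theta<1$ enters, making the resulting weight borderline integrable in both the spatial and the $\tau$ variables), and then estimate the resulting half–space singular integral on $L^q(\mathbb{R}_+^3)$, $1\le q<\infty$; distributing the two surviving derivatives onto $u\otimes v$ — at most one of them onto $u$, so as to land on $\|u\|_{L^{q_1}}\|\nabla^2 v\|_{L^{q_2}}$ (and on $\|u\|_{L^{q_1}}\|v\|_{L^{q_2}}$ when both fall on $v$) rather than on a symmetric bound — produces the remaining right–hand side terms. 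Summing the finitely many pieces gives the assertion; the genuinely finicky part throughout is this derivative bookkeeping in the tangential term, combined with the fact that the weight is absorbed only in the borderline sense provided by $\theta<1$.
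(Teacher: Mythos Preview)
Your decomposition is right, but the handling of the weight is more convoluted than necessary and the ``purely tangential'' step has a gap. First, the Hardy inequality you state, $\|x_3^{-\theta}g\|_{L^p(\mathbb{R}_+^3)}\le C\|\partial_3g\|_{L^p(\mathbb{R}_+^3)}$, is false on the full half-space (the two sides scale differently); it becomes true only after restricting to the slab $\{0<x_3<1\}$, where $x_3^{-\theta}\le x_3^{-1}$. Second, for the local term $u\cdot\nabla v$ there is no tangential obstacle at all: since $u,v\in C_{0,\sigma}^\infty(\mathbb{R}_+^3)$ are compactly supported in the \emph{open} half-space, $u\cdot\nabla v$ vanishes identically near $\{x_3=0\}$ and Hardy applies to the whole term---your splitting into normal and tangential summands is unnecessary. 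Third, and this is the real gap: for the tangential components $\partial_1\mathcal{N},\partial_2\mathcal{N}$ of the Neumann correction (which indeed do \emph{not} vanish on the boundary), your plan to ``perform one integration by parts in $x_3$ inside the even kernel so as to manufacture the $x_3$-factor'' is not spelled out and does not obviously work, since the even kernel does not vanish at $x_3=0$ and the resulting boundary term is nonzero; nor is there any cancellation between the local and Neumann parts to exploit.

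The paper does not prove this lemma (it is cited from \cite{Liu-2019}), but the companion Lemma~\ref{le3.10} is proved by exactly the Liu method, and it bypasses your tangential obstacle completely. One splits the half-space at $x_3=1$. On $\{x_3\ge1\}$ the weight is $\le1$ and the unweighted bound \eqref{3.49*} applies. On the slab $\{0<x_3<1\}$ the local part is handled by Hardy, giving $\|\partial_3(u\cdot\nabla v)\|_{L^q}\le \|\nabla u\|_{L^{q_1}}\|\nabla v\|_{L^{q_2}}+\|u\|_{L^{q_1}}\|\nabla^2 v\|_{L^{q_2}}$. For the Neumann part on the slab one splits the $\tau$-integral in \eqref{N} at $\tau=1$ and invokes the explicit kernel bounds from \cite{Han-2012},
\[
\Bigl\|\int_0^1 x_3^{-\theta}\partial_k G_\tau(\cdot-y)\,d\tau\Bigr\|_{L^1(\mathbb{R}^2\times(0,1))}\le C,\qquad
\Bigl\|\int_1^\infty x_3^{-\theta}\partial_k\partial_i\partial_j G_\tau(\cdot-y)\,d\tau\Bigr\|_{L^1(\mathbb{R}^2\times(0,1))}\le C,
\]
followed by Young's inequality (Lemma~\ref{pro3.1}); the small-$\tau$ piece acts on $\sum_{i,j}\partial_i\partial_j(u_iv_j)=\sum_{i,j}\partial_j u_i\,\partial_i v_j$ and the large-$\tau$ piece on $u_iv_j$, producing the $\|\nabla u\|_{L^{q_1}}\|\nabla v\|_{L^{q_2}}$ and $\|u\|_{L^{q_1}}\|v\|_{L^{q_2}}$ terms. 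The key point is that $x_3^{-\theta}$ is integrable on $(0,1)$ because $\theta<1$, so the weight is absorbed directly by the kernel---no boundary vanishing or cancellation is needed for the $\mathcal{N}$-part. That direct kernel estimate is the mechanism your argument is missing.
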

We refer to \cite[p.1774]{huang-wang-wen-2019} and \cite[p.3945]{Han-2011} for the following lemma.
\begin{Lemma}\label{le3.3}
Let $1\leq k,m\leq 3$. Then for any $u,v\in C_{0,\sigma}^\infty(\mathbb{R}_+^3)$ and $d,g\in C^\infty(\mathbb{R}_+^3)$, we have the following estimates: for any $t>0$,
\begin{align}\label{3.49*}
&\|\sum_{i,j=1}^3\partial_k\mathcal{N}\partial_i\partial_j(u_iv_j+\langle\partial_id,\partial_jg\rangle)(t)\|_{L^q(\mathbb{R}_+^3)}\nonumber\\
\leq &C\big(\|u(t)\|_{L^{q_1}(\mathbb{R}_+^3)}\|v(t)\|_{L^{q_2}(\mathbb{R}_+^3)}+\|\nabla u(t)\|_{L^{q_1}(\mathbb{R}_+^3)}\|\nabla v(t)\|_{L^{q_2}(\mathbb{R}_+^3)}\nonumber\\
&+\|\nabla d(t)\|_{L^{q_1}(\mathbb{R}_+^3)}\|\nabla g(t)\|_{L^{q_2}(\mathbb{R}_+^3)}+\|\nabla^3 d(t)\|_{L^{q_1}(\mathbb{R}_+^3)}\|\nabla g(t)\|_{L^{q_2}(\mathbb{R}_+^3)}\nonumber\\
&+\|\nabla^2 d(t)\|_{L^{q_1}(\mathbb{R}_+^3)}\|\nabla^2 g(t)\|_{L^{q_2}(\mathbb{R}_+^3)}+\|\nabla d(t)\|_{L^{q_1}(\mathbb{R}_+^3)}\|\nabla ^3g(t)\|_{L^{q_2}(\mathbb{R}_+^3)}\big);
\end{align}
and
\begin{align}\label{3.50}
&\|\sum_{i,j=1}^3\partial_k\partial_m\mathcal{N}\partial_i\partial_j(u_iv_j+\langle\partial_id,\partial_jg\rangle)(t)\|_{L^q(\mathbb{R}_+^3)}\nonumber\\
\leq& C\big(\|\nabla ^2u(t)\|_{L^{q_1}(\mathbb{R}_+^3)}\|\nabla v(t)\|_{L^{q_2}(\mathbb{R}_+^3)}+\|\nabla u(t)\|_{L^{q_1}(\mathbb{R}_+^3)}\|\nabla ^2v(t)\|_{L^{q_2}(\mathbb{R}_+^3)}\nonumber\\
&+\|\nabla u(t)\|_{L^{q_1}(\mathbb{R}_+^3)}\|v(t)\|_{L^{q_2}(\mathbb{R}_+^3)}+\|u(t)\|_{L^{q_1}(\mathbb{R}_+^3)}\|\nabla v(t)\|_{L^{q_2}(\mathbb{R}_+^3)}\nonumber\\
&+\|\nabla d(t)\|_{L^{q_1}(\mathbb{R}_+^3)}\|\nabla^4g(t)\|_{L^{q_2}(\mathbb{R}_+^3)}+\|\nabla ^2d(t)\|_{L^{q_1}(\mathbb{R}_+^3)}\|\nabla^3g(t)\|_{L^{q_2}(\mathbb{R}_+^3)}\nonumber\\
&+\|\nabla^3 d(t)\|_{L^{q_1}(\mathbb{R}_+^3)}\|\nabla^2g(t)\|_{L^{q_2}(\mathbb{R}_+^3)}+\|\nabla ^4d(t)\|_{L^{q_1}(\mathbb{R}_+^3)}\|\nabla g(t)\|_{L^{q_2}(\mathbb{R}_+^3)}\nonumber\\
&+\|\nabla ^2d(t)\|_{L^{q_1}(\mathbb{R}_+^3)}\|\nabla g(t)\|_{L^{q_2}(\mathbb{R}_+^3)}+\|\nabla d(t)\|_{L^{q_1}(\mathbb{R}_+^3)}\|\nabla ^2 g(t)\|_{L^{q_2}(\mathbb{R}_+^3)}\big),
\end{align}
where $1\leq q\leq\infty$, $\frac{1}{q}=\frac{1}{q_1}+\frac{1}{q_2}$, $1\leq q_1,q_2\leq\infty$.

Especially, we have
\begin{align}\label{le2.5}
&\|\sum_{i,j=1}^3\nabla\mathcal{N}\partial_i\partial_j(u_iu_j+\langle\partial_id,\partial_jd\rangle)(t)\|_{L^1(\mathbb{R}_+^3)}\nonumber\\
\leq &C\big(\|u(t)\|_{H^1(\mathbb{R}_+^3)}^2+\|\nabla d(t)\|_{H^1(\mathbb{R}_+^3)}^2+\||\nabla d(t)||\nabla^3d(t)|\|_{L^1(\mathbb{R}_+^3)}\big).
\end{align}
\end{Lemma}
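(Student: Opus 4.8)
We indicate the strategy rather than the full computation; the estimates $(\ref{3.49*})$ and $(\ref{le2.5})$ for the fluid field are contained in \cite{huang-wang-wen-2019} (and, for the pure Navier--Stokes part, \cite{Han-2011}), while $(\ref{3.50})$, which carries one extra outer derivative $\partial_m$, is proved along exactly the same lines. The plan is to reduce everything to heat--semigroup estimates through the kernel representation $(\ref{N})$, $\mathcal{N}=\int_0^\infty F(\tau)\,d\tau$. Since $F(\tau)$ is built from the Gaussian kernel and its even reflection $G_\tau(x'-y',x_3-y_3)+G_\tau(x'-y',x_3+y_3)$, Young's and Hölder's inequalities yield, for every nonnegative integer $\ell$ and every $\frac1q=\frac1{q_1}+\frac1{q_2}$ with $1\le q_1,q_2\le\infty$,
\begin{equation*}
\|\nabla^\ell F(\tau)(\phi\psi)\|_{L^q(\mathbb{R}_+^3)}\le C\,\tau^{-\frac{\ell}{2}}\|\phi\|_{L^{q_1}(\mathbb{R}_+^3)}\|\psi\|_{L^{q_2}(\mathbb{R}_+^3)},\qquad \tau>0,
\end{equation*}
together with the analogous single-factor bound. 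Tangential derivatives commute with $F(\tau)$; the normal derivative $\partial_3$ does not, and each integration by parts in $x_3$ against $F(\tau)$ produces a boundary integral on $\{x_3=0\}$, which is the main technical point.

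For $(\ref{3.49*})$ (and $(\ref{3.50})$ verbatim, with one more factor $\partial_m$) we would write
\begin{equation*}
\sum_{i,j}\partial_k\mathcal{N}\partial_i\partial_j\big(u_iv_j+\langle\partial_id,\partial_jg\rangle\big)=\Big(\int_0^1+\int_1^\infty\Big)\partial_kF(\tau)\sum_{i,j}\partial_i\partial_j\big(u_iv_j+\langle\partial_id,\partial_jg\rangle\big)\,d\tau,
\end{equation*}
and treat the two ranges of $\tau$ by transferring derivatives between $F(\tau)$ and the two factors. On $\int_1^\infty$ one moves as many derivatives as possible onto $F(\tau)$ (using $\int_1^\infty\tau^{-b}\,d\tau<\infty$ for $b>1$), which leaves the lowest-order products $\|u\|\|v\|$, $\|\nabla u\|\|\nabla v\|$, $\|\nabla d\|\|\nabla g\|$ on the right; on $\int_0^1$ one instead keeps only $\partial_k$ (for $(\ref{3.50})$: $\partial_k$ and part of $\partial_m$) on $F(\tau)$ and dumps the inner derivatives onto the factors, which is admissible since $\int_0^1\tau^{-1/2}\,d\tau<\infty$ and produces the top-order products $\|\nabla^3 d\|\|\nabla g\|$ etc.\ (resp.\ $\|\nabla^4 d\|\|\nabla g\|$ etc.\ for $(\ref{3.50})$). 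In the $u_iv_j$-terms one additionally uses $\nabla\cdot u=0$ to rewrite $\sum_{i,j}\partial_i\partial_j(u_iv_j)$ as a single divergence of a quantity comparable to $(u\cdot\nabla)v$, which is why the fluid part needs one derivative fewer than the director part, and the boundary integrals from integrating $\partial_3$ by parts are absorbed by $u_3|_{\partial\mathbb{R}_+^3}=v_3|_{\partial\mathbb{R}_+^3}=0$. For $\langle\partial_id,\partial_jg\rangle$ there is no divergence-free structure, so every derivative that $F(\tau)$ does not absorb must land on $\nabla d$ or $\nabla g$, and the boundary integrals --- now with no boundary condition to kill them --- are estimated directly from the reflected kernel at the cost of the lower-order terms $\|\nabla^2 d\|\|\nabla g\|+\|\nabla d\|\|\nabla^2 g\|$.

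For the $L^1$ estimate $(\ref{le2.5})$ the operator $\nabla\mathcal{N}\sum_{i,j}\partial_i\partial_j$ is, up to boundary terms, of order one, so the naive bound $\|\nabla(u_iu_j)\|_{L^1}\lesssim\|u\|_{L^2}\|\nabla u\|_{L^2}$ is inadmissible (it would require an order-one singular integral to be bounded on $L^1$). Instead one exploits that $\nabla\cdot u=0$ makes $\sum_{i,j}\partial_i\partial_j(u_iu_j)$ a double divergence with compensated-compactness (Hardy-space) structure --- or, equivalently, uses the explicit kernel computation of \cite{Han-2011} --- to get $\|\nabla\mathcal{N}\sum_{i,j}\partial_i\partial_j(u_iu_j)\|_{L^1}\lesssim\|u\|_{H^1}^2$. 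The same bookkeeping applied to $\langle\partial_id,\partial_jd\rangle$ produces the quadratic contributions controlled by $\|\nabla d\|_{H^1}^2$ together with the single term $\||\nabla d||\nabla^3 d|\|_{L^1}$, which is forced when two of the inner derivatives are placed on one $\nabla d$-factor. I expect the main obstacle to be precisely this treatment of the normal derivative near $\partial\mathbb{R}_+^3$: every integration by parts in $x_3$ creates a boundary integral, which in the fluid terms vanishes thanks to $u_3|_{\partial\mathbb{R}_+^3}=v_3|_{\partial\mathbb{R}_+^3}=0$ but in the director terms has to be estimated directly from the structure of $F(\tau)$, and it is this that dictates the somewhat long list of lower-order $d$-terms appearing on the right-hand sides.
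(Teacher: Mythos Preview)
Your strategy is essentially the paper's: represent $\mathcal{N}=\int_0^\infty F(\tau)\,d\tau$, split $\int_0^1+\int_1^\infty$, and redistribute derivatives between the heat kernel and the bilinear factor, using $\nabla\cdot u=\nabla\cdot v=0$ to gain one order on the fluid part. The paper only writes out $(\ref{3.50})$ and declares $(\ref{3.49*})$ analogous; $(\ref{le2.5})$ is then just $(\ref{3.49*})$ with $q=1$, $q_1=q_2=2$, $u=v$, $d=g$.

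Two points where you overcomplicate things. First, the paper never integrates by parts in $x_3$ on the half-space. Instead it writes $F(\tau)h=\big(G_\tau * h_*\big)\big|_{\mathbb{R}_+^3}$ with $h_*$ the even extension to $\mathbb{R}^3$, so that all derivative-shifting happens inside a whole-space convolution, where $\partial(G_\tau*\phi)=(\partial G_\tau)*\phi=G_\tau*(\partial\phi)$ holds trivially. The ``boundary integrals'' you anticipate are absorbed into the parity bookkeeping of even/odd extensions, and one only ever moves a single derivative onto the extended function, so no surface deltas appear in the estimates. Second, your appeal to Hardy-space/compensated-compactness structure for $(\ref{le2.5})$ is unnecessary: the argument never invokes a Calder\'on--Zygmund operator on $L^1$. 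On $\int_0^1$ one uses $\|\nabla G_\tau\|_{L^1}\sim\tau^{-1/2}$ and Young's inequality $\|G*\phi\|_{L^1}\le\|G\|_{L^1}\|\phi\|_{L^1}$, which is perfectly valid; on $\int_1^\infty$ one uses $\|\nabla^3G_\tau\|_{L^1}\sim\tau^{-3/2}$ likewise. The singular-integral obstruction you flag would only arise if one tried to bound $\nabla\mathcal{N}\nabla^2$ as a single operator of order one, which the split precisely avoids.
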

\begin{proof}
We only prove $(\ref{3.50})$, and $(\ref{3.49*})$ can be proved in the same way. Let $1\leq k,m\leq3$. Using $(\ref{N})$, we deduce for $1\leq q,q_1,q_2\leq\infty$,
\begin{align*}
&\big\|\sum_{i,j=1}^3\partial_k\partial_m\mathcal{N}\partial_i\partial_j\big(u_iv_j+\langle\partial_id,\partial_jg\rangle\big)(t)\big\|_{L^q(\mathbb{R}_+^3)}\\
=&\big\|\sum\limits_{i,j=1}^3\partial_k\partial_m\int_0^\infty{F}(\tau)\partial_i\partial_j\big(u_iv_j+\langle\partial_id,\partial_jg\rangle\big)(t)\,{d}\tau\big\|_{L^q(\mathbb{R}_+^3)}\\
\leq&\big\|\sum_{i,j=1}^3\partial_k\partial_m\big(\int_0^1+\int_1^\infty\big)G_\tau*\Big(\partial_i\partial_j\big(u_iv_j+\langle\partial_id,\partial_jg\rangle\big)\Big)_*(t)\,{d}\tau\big\|_{L^q(\mathbb{R}_+^3)}\\
\leq&\sum_{i,j=1}^3(\int_0^1{\tau}^{-\frac12}\,{d}\tau)\|\nabla G_1\|_{L^1(\mathbb{R}^3)} \big\|\partial_m\Big(\partial_i\partial_j\big(u_iv_j+\langle\partial_id,\partial_jg\rangle\big)\Big)(t)\big\|_{L^q(\mathbb{R}_+^3)}\\
    &+(\int_1^\infty{\tau}^{-\frac32}\,{d}\tau)\|\nabla^3G_1\|_{L^1(\mathbb{R}^3)}
\big(\|\nabla u(t)\|_{L^{q_1}(\mathbb{R}_+^3)}\|v(t)\|_{L^{q_2}(\mathbb{R}_+^3)}+\|u(t)\|_{L^{q_1}(\mathbb{R}_+^3)}\|\nabla v(t)\|_{L^{q_2}(\mathbb{R}_+^3)}\\
&+\|\nabla ^2d(t)\|_{L^{q_1}(\mathbb{R}_+^3)}\|\nabla g(t)\|_{L^{q_2}(\mathbb{R}_+^3)}+\|\nabla d(t)\|_{L^{q_1}(\mathbb{R}_+^3)}\|\nabla ^2 g(t)\|_{L^{q_2}(\mathbb{R}_+^3)}\big)\\
\leq& C\big(\|\nabla ^2u(t)\|_{L^{q_1}(\mathbb{R}_+^3)}\|\nabla v(t)\|_{L^{q_2}(\mathbb{R}_+^3)}+\|\nabla u(t)\|_{L^{q_1}(\mathbb{R}_+^3)}\|\nabla ^2v(t)\|_{L^{q_2}(\mathbb{R}_+^3)}\\
&+\|\nabla d(t)\|_{L^{q_1}(\mathbb{R}_+^3)}\|\nabla^4g(t)\|_{L^{q_2}(\mathbb{R}_+^3)}+\|\nabla ^2d(t)\|_{L^{q_1}(\mathbb{R}_+^3)}\|\nabla^3g(t)\|_{L^{q_2}(\mathbb{R}_+^3)}\nonumber\\
&+\|\nabla^3 d(t)\|_{L^{q_1}(\mathbb{R}_+^3)}\|\nabla^2g(t)\|_{L^{q_2}(\mathbb{R}_+^3)}+\|\nabla ^4d(t)\|_{L^{q_1}(\mathbb{R}_+^3)}\|\nabla g(t)\|_{L^{q_2}(\mathbb{R}_+^3)}\nonumber\\
&+\|\nabla u(t)\|_{L^{q_1}(\mathbb{R}_+^3)}\|v(t)\|_{L^{q_2}(\mathbb{R}_+^3)}+\|u(t)\|_{L^{q_1}(\mathbb{R}_+^3)}\|\nabla v(t)\|_{L^{q_2}(\mathbb{R}_+^3)}\\
&+\|\nabla ^2d(t)\|_{L^{q_1}(\mathbb{R}_+^3)}\|\nabla g(t)\|_{L^{q_2}(\mathbb{R}_+^3)}+\|\nabla d(t)\|_{L^{q_1}(\mathbb{R}_+^3)}\|\nabla ^2 g(t)\|_{L^{q_2}(\mathbb{R}_+^3)}\big),
\end{align*}
where $f*g$ represents the convolution of $f$ and $g$, and $f_*$ represents the even extension of function $f$ with respect to $x_3$ from $\mathbb{R}_+^3 $ to $\mathbb{R}^3$:
\begin{align*}
f_*(x^{\prime},x_3)=
\left\{\begin{array}{ll}
\displaystyle f(x^{\prime},x_3),\ &\mathrm{~if~}\ x_3\geq0,  \smallskip\\
	\displaystyle	f(x^{\prime},-x_3),\ &\mathrm{~if~}\ x_3<0.
\end{array}\right.
\end{align*}
\end{proof}

Next we recall the classical Young inequality in the half-space.
\begin{Lemma}\label{pro3.1}(\cite{Han-2016})
Let $K(x,y)$, $f(y)$ be defined in $\mathbb{R}_+^3\times\mathbb{R}_+^3$, $\mathbb{R}_+^3$, respectively, and let $1\leq p,q\leq r\leq\infty$ satisfy $1+\frac1r=\frac1p+\frac1q$. Then
$$\|\int_{\mathbb{R}_+^3}K(\cdot,y)f(y)\,dy\|_{L^r(\mathbb{R}_+^3)}\leq\sup_{x\in\mathbb{R}_+^3}\|K(x,\cdot)\|_{L^p(\mathbb{R}_+^3)}^{1-\frac pr}\sup_{y\in\mathbb{R}_+^3}\|K(\cdot,y)\|_{L^p(\mathbb{R}_+^3)}^{\frac pr}\|f\|_{L^q(\mathbb{R}_+^3)}.$$
Especially there holds
$$\|\int_{\mathbb{R}_+^3}K(\cdot,y)f(y)\,dy\|_{L^1(\mathbb{R}_+^3)}\leq\sup_{y\in\mathbb{R}_+^3}\|K(\cdot,y)\|_{L^1(\mathbb{R}_+^3)}\|f\|_{L^1(\mathbb{R}_+^3)};$$
and
$$\|\int_{\mathbb{R}_+^3}K(\cdot,y)f(y)\,dy\|_{L^r(\mathbb{R}_+^3)}\leq(\sup_{x\in\mathbb{R}_+^3}\|K(x,\cdot)\|_{L^p(\mathbb{R}_+^3)}+\sup_{y\in\mathbb{R}_+^3}\|K(\cdot,y)\|_{L^p(\mathbb{R}_+^3)})\|f\|_{L^q(\mathbb{R}_+^3)}.$$
\end{Lemma}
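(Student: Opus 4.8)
The estimate is a Schur-type interpolation bound for the integral operator $Tf(x):=\int_{\mathbb{R}_+^3}K(x,y)f(y)\,dy$, and the plan is to imitate the classical proof of Young's convolution inequality, with the role of translation invariance played by the two supremum bounds on $K$. The starting point is the elementary observation that the hypothesis $1+\frac1r=\frac1p+\frac1q$ together with $1\le p,q\le r$ makes $\frac1r$, $\frac1p-\frac1r$ and $\frac1q-\frac1r$ three nonnegative numbers summing to $1$; equivalently, $r$, $\frac{pr}{r-p}$ and $\frac{qr}{r-q}$ form an admissible triple of H\"older exponents (with the obvious conventions when one of them is $\infty$).

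First I would write, for fixed $x$,
$$|K(x,y)f(y)|=\bigl(|K(x,y)|^{p}|f(y)|^{q}\bigr)^{1/r}\,|K(x,y)|^{1-p/r}\,|f(y)|^{1-q/r},$$
and apply H\"older's inequality in $y$ with the three exponents above. Since $(1-p/r)\cdot\frac{pr}{r-p}=p$ and $(1-q/r)\cdot\frac{qr}{r-q}=q$, this produces the pointwise bound
$$|Tf(x)|\le\Bigl(\int_{\mathbb{R}_+^3}|K(x,y)|^{p}|f(y)|^{q}\,dy\Bigr)^{1/r}\,\Bigl(\sup_{x\in\mathbb{R}_+^3}\|K(x,\cdot)\|_{L^p(\mathbb{R}_+^3)}\Bigr)^{1-p/r}\,\|f\|_{L^q(\mathbb{R}_+^3)}^{1-q/r}.$$
Raising this to the power $r$, integrating in $x$ over $\mathbb{R}_+^3$, applying Tonelli to the double integral, and using $\int_{\mathbb{R}_+^3}|K(x,y)|^p\,dx\le\bigl(\sup_{y}\|K(\cdot,y)\|_{L^p(\mathbb{R}_+^3)}\bigr)^{p}$ to extract the second supremum, I would be left with $\int_{\mathbb{R}_+^3}|f(y)|^q\,dy=\|f\|_{L^q(\mathbb{R}_+^3)}^q$. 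Collecting the powers of $r$ and taking the $r$-th root yields exactly the asserted inequality.

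The two ``especially'' statements are then immediate corollaries. For $r=1$ the relation $1+\frac1r=\frac1p+\frac1q$ forces $p=q=1$, so the inequality reduces to $\|Tf\|_{L^1}\le\bigl(\sup_y\|K(\cdot,y)\|_{L^1}\bigr)\|f\|_{L^1}$, which also follows at once from Tonelli without the interpolation step. The last displayed inequality follows from the first one together with the elementary bound $A^{1-\theta}B^{\theta}\le A+B$ for $A,B\ge0$, $\theta\in[0,1]$, applied with $\theta=p/r$.

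The argument involves no genuine difficulty; the only places deserving attention are the degenerate endpoints. When $p=r$ one necessarily has $q=1$, the middle factor $|K|^{1-p/r}$ is absent, and the H\"older step collapses to a two-exponent estimate; when $r=\infty$ (so $\frac1p+\frac1q=1$) the proof degenerates to a plain H\"older inequality in $y$ followed by taking the supremum in $x$, reading $1-p/r$ and $p/r$ as $1$ and $0$. Verifying that the exponents $1-p/r$ and $p/r$ in the statement come out correctly in each of these limits is essentially the whole of the bookkeeping.
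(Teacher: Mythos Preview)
Your proof is correct and is the standard Schur--Young interpolation argument. Note, however, that the paper itself does not prove this lemma: it is stated with a citation to \cite{Han-2016} and no proof is given, so there is nothing to compare against in the paper's own text.
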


\begin{Lemma}\label{le3.10}
Let $0<\theta<1$. Then for any $d,g\in C^\infty(\mathbb{R}_+^3)$, the following estimate
\begin{align*}
&\|x_3^{-\theta}\mathbb{P} \nabla \cdot(\nabla d\odot\nabla g)\|_{L^q(\mathbb{R}_+^3)}\\
 \leq&C\big(\|\nabla ^3d\|_{L^{q_1}(\mathbb{R}_+^3)}\|\nabla g\|_{L^{q_2}(\mathbb{R}_+^3)}
+\|\nabla ^2d\|_{L^{q_1}(\mathbb{R}_+^3)}\|\nabla^2 g\|_{L^{q_2}(\mathbb{R}_+^3)}\\
&+\|\nabla d\|_{L^{q_1}(\mathbb{R}_+^3)}\|\nabla ^3g\|_{L^{q_2}(\mathbb{R}_+^3)}+
  \|\nabla d\|_{L^{q_1}(\mathbb{R}_+^3)}\|\nabla g\|_{L^{q_2}(\mathbb{R}_+^3)}\\
  &+ \|\nabla^2 d\|_{L^{q_1}(\mathbb{R}_+^3)}\|\nabla g\|_{L^{q_2}(\mathbb{R}_+^3)}+\|\nabla d\|_{L^{q_1}(\mathbb{R}_+^3)}\|\nabla^2 g\|_{L^{q_2}(\mathbb{R}_+^3)}\big)
\end{align*}
holds for any $t>0$, where $1\leq q<\infty$, $q\leq q_1,q_2\leq\infty$, $\frac1q=\frac1{q_1}+\frac1{q_2}$.
\end{Lemma}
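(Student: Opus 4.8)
\textbf{Proof proposal for Lemma~\ref{le3.10}.}

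The plan is to decompose $\mathbb{P}\nabla\cdot(\nabla d\odot\nabla g)$ using the representation formula \eqref{demcomposition} in exactly the same way as in Lemma~\ref{pro3.2}: writing the $i$-th component of $\nabla\cdot(\nabla d\odot\nabla g)$ as $\partial_j(\partial_i d\cdot\partial_j g)$, we have
\[
\mathbb{P}\nabla\cdot(\nabla d\odot\nabla g)=\nabla\cdot(\nabla d\odot\nabla g)+\sum_{i,j=1}^3\nabla\mathcal{N}\,\partial_i\partial_j\langle\partial_i d,\partial_j g\rangle,
\]
so that it remains to estimate $x_3^{-\theta}$ times each of the two terms in $L^q(\mathbb{R}_+^3)$. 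For the local term $\nabla\cdot(\nabla d\odot\nabla g)$, which is a sum of products $\partial^2 d\cdot\partial g$ and $\partial d\cdot\partial^2 g$, I would use Hardy's inequality in the $x_3$-variable to absorb the weight $x_3^{-\theta}$: since $0<\theta<1$, $\|x_3^{-\theta}f\|_{L^q}$ is controlled by $\|f\|_{L^q}^{1-\theta}\|\nabla f\|_{L^q}^{\theta}$ (or, after a standard one-dimensional Hardy estimate, by $\|\nabla f\|_{L^q}$ on functions vanishing on the boundary), and then Hölder with the exponent splitting $\frac1q=\frac1{q_1}+\frac1{q_2}$ produces the terms $\|\nabla^2 d\|_{L^{q_1}}\|\nabla^2 g\|_{L^{q_2}}$, $\|\nabla^3 d\|_{L^{q_1}}\|\nabla g\|_{L^{q_2}}$, $\|\nabla d\|_{L^{q_1}}\|\nabla^3 g\|_{L^{q_2}}$ together with the lower-order companions $\|\nabla d\|\|\nabla g\|$, $\|\nabla^2 d\|\|\nabla g\|$, $\|\nabla d\|\|\nabla^2 g\|$ that appear when the derivative falls on the weight or on a lower-order factor.

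For the nonlocal term $\sum_{i,j}\nabla\mathcal{N}\partial_i\partial_j\langle\partial_i d,\partial_j g\rangle$, the key is the kernel representation \eqref{N}: $\nabla\mathcal{N}h=\int_0^\infty\nabla F(\tau)h\,d\tau$, where $F(\tau)$ convolves the even extension of $h$ against the Gaussian $G_\tau$ (plus its reflected image). Splitting $\int_0^\infty=\int_0^1+\int_1^\infty$ and moving derivatives between the Gaussian and the data exactly as in the proof of Lemma~\ref{le3.3} above, the short-time piece $\int_0^1\tau^{-1/2}\,d\tau\,\|\nabla G_1\|_{L^1}$ leaves two of the four derivatives $\partial_i\partial_j\partial_i\partial_j$ on the data (yielding $\|\nabla^3 d\|_{L^{q_1}}\|\nabla g\|_{L^{q_2}}$ etc.), while the long-time piece $\int_1^\infty\tau^{-3/2}\,d\tau\,\|\nabla^3 G_1\|_{L^1}$ leaves only two derivatives on $\langle\partial_i d,\partial_j g\rangle$, producing the lower-order terms; as $0<\theta<1$ the weight $x_3^{-\theta}$ is again removed by Hardy's inequality (the function $\nabla\mathcal{N}h$ does not vanish on the boundary, but its tangential derivative structure and the Gaussian smoothing allow the same one-dimensional Hardy bound after one more $\tau$-integration, which is why the extra lower-order terms are needed). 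Collecting the two estimates and using $1\leq q<\infty$, $q\le q_1,q_2\le\infty$ gives the claimed bound; a density argument in $C^\infty(\mathbb{R}_+^3)$ extends it to the stated class.

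The main obstacle is controlling the weight $x_3^{-\theta}$ against the nonlocal term near the boundary: unlike the convection term in Lemma~\ref{pro3.2}, here both derivatives in $\nabla d\odot\nabla g$ can be normal derivatives, and $\nabla\mathcal{N}(\partial_3 d\cdot\partial_3 g)$ need not decay like $x_3$ as $x_3\to0$. I expect to handle this by exploiting that $\mathcal{N}$ is built from the \emph{reflected} heat kernel $G_\tau(x'-y',x_3-y_3)+G_\tau(x'-y',x_3+y_3)$, whose normal derivative at $x_3=0$ vanishes, so that $\partial_3\nabla\mathcal{N}h$ gains one power of $x_3$ near the boundary in an integrated sense, giving a Hardy-type inequality $\|x_3^{-\theta}\nabla\mathcal{N}h\|_{L^q}\lesssim\|\nabla\mathcal{N}h\|_{L^q}^{1-\theta}\|\nabla^2\mathcal{N}h\|_{L^q}^{\theta}$; then the elliptic bound $\|\nabla^2\mathcal{N}h\|_{L^q}\lesssim\|h\|_{L^q}$ for the Neumann problem, applied to $h=\partial_i\partial_j\langle\partial_i d,\partial_j g\rangle$, closes the estimate. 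This also explains why the statement includes the lower-order terms $\|\nabla d\|\|\nabla g\|$, $\|\nabla^2 d\|\|\nabla g\|$ and $\|\nabla d\|\|\nabla^2 g\|$, which arise from the interpolation/Hardy step and from the long-time tail of the $\tau$-integral.
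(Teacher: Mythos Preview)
Your decomposition and your treatment of the local term $\nabla\cdot(\nabla d\odot\nabla g)$ via the one-dimensional Hardy inequality are exactly what the paper does. The difference, and the place where your proposal has a genuine gap, is the nonlocal piece $\sum_{i,j}\partial_k\mathcal{N}\partial_i\partial_j\langle\partial_i d,\partial_j g\rangle$.

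Your plan is to remove the weight $x_3^{-\theta}$ from $\nabla\mathcal{N}h$ by a Hardy/interpolation step of the form $\|x_3^{-\theta}\nabla\mathcal{N}h\|_{L^q}\lesssim\|\nabla\mathcal{N}h\|_{L^q}^{1-\theta}\|\nabla^2\mathcal{N}h\|_{L^q}^{\theta}$. You correctly flag that $\nabla\mathcal{N}h$ need not vanish at $x_3=0$, but then the inequality you write down simply fails: take $f\equiv 1$ to see that no such bound can hold without a trace condition. The remark that $\partial_3\nabla\mathcal{N}h$ ``gains one power of $x_3$'' does not rescue this, because the Hardy inequality you need concerns the \emph{value} of $\nabla\mathcal{N}h$ at $x_3=0$, not its normal derivative. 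So the nonlocal estimate, as you have sketched it, does not close.

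The paper avoids Hardy altogether on the nonlocal term. It splits the $L^q$ norm at $x_3=1$: for $x_3\ge 1$ the weight is bounded and the unweighted estimate \eqref{3.49*} applies directly; for $0<x_3<1$ one writes $\partial_k\mathcal{N}=\int_0^1\partial_kF(\tau)\,d\tau+\int_1^\infty\partial_kF(\tau)\,d\tau$ and puts the weight \emph{on the kernel}, using Young's inequality (Lemma~\ref{pro3.1}) together with the kernel bounds
\[
\sup_{y}\Big\|\int_0^1 x_3^{-\theta}\partial_k G_\tau(\cdot-y)\,d\tau\Big\|_{L^1(\mathbb{R}^2\times(0,1))}\le C,\qquad
\sup_{y}\Big\|\int_1^\infty x_3^{-\theta}\partial_k\partial_i\partial_j G_\tau(\cdot-y)\,d\tau\Big\|_{L^1(\mathbb{R}^2\times(0,1))}\le C,
\]
which hold precisely because $x_3^{-\theta}\in L^1(0,1)$ for $0<\theta<1$ (these are taken from \cite{Han-2012}). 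The first bound leaves $\partial_i\partial_j\langle\partial_i d,\partial_j g\rangle$ as data (producing the third-order terms), the second leaves $|\nabla d||\nabla g|$; no boundary trace of $\nabla\mathcal{N}h$ is ever needed. This is both simpler than your route and free of the obstruction you identified.
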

\begin{proof}
We closely follow the proof of Lemma 2.2 in \cite{Liu-2019}. From $(\ref{N})$, $(\ref{3.49*})$ and Lemma $\ref{pro3.1}$, one has for any $1\leq k\leq3$ and $1\leq q\leq\infty$ that
\begin{align}\label{3.47}
&\big\|\sum_{i,j=1}^3x_3^{-\theta}\partial_k\mathcal{N}\partial_i\partial_j\big(\langle\partial_id,\partial_jg\rangle\big)(t)\big\|_{L^q(\mathbb{R}_+^3)}\nonumber\\
\leq&\big\|\sum_{i,j=1}^3x_3^{-\theta}\partial_k\int_0^1G_\tau*\Big(\partial_i\partial_j\big(\langle\partial_id,\partial_jg\rangle\big)\Big)_*(t)\,{d}\tau\big\|_{L^q(\mathbb{R}^2\times(0,1))}\nonumber\\
&+\big\|\sum_{i,j=1}^3x_3^{-\theta}\partial_k\int_1^\infty G_\tau*\Big(\partial_i\partial_j\big(\langle\partial_id,\partial_jg\rangle\big)\Big)_*(t)\,{d}\tau\big\|_{L^q(\mathbb{R}^2\times(0,1))}\nonumber\\
&+\big\|\sum_{i,j=1}^3\partial_k\mathcal{N}\partial_i\partial_j\big(\langle\partial_id,\partial_jg\rangle\big)(t)\big\|_{L^q(\mathbb{R}_+^3)}\nonumber\\
\leq&C\sup_{y\in\mathbb{R}^3}\big\|\int_0^1x_3^{-\theta}\partial_kG_\tau(x-y)\,d\tau\big\|_{L^1(\mathbb{R}^{2}\times(0,1))}\big\|\sum_{i,j=1}^3\partial_i\partial_j(\langle\partial_id,\partial_jg\rangle)\big\|_{L^q(\mathbb{R}_+^3)}\nonumber\\
&+C\sup_{y\in\mathbb{R}^3}\sum_{i,j=1}^3\big\|\int_1^\infty x_3^{-\theta}\partial_k\partial_i\partial_jG_\tau(x-y)\,d\tau\big\|_{L^1(\mathbb{R}^{2}\times(0,1))}\big\||\nabla d||\nabla g|\big\|_{L^q(\mathbb{R}_+^3)}\nonumber\\
&+\big\|\sum_{i,j=1}^3\partial_k\mathcal{N}\partial_i\partial_j(\langle\partial_id,\partial_jg\rangle)(t)\big\|_{L^q(\mathbb{R}_+^3)}\nonumber\\
\leq&C\big(\|\nabla ^3d\|_{L^{q_1}(\mathbb{R}_+^3)}\|\nabla g\|_{L^{q_2}(\mathbb{R}_+^3)}
+\|\nabla ^2d\|_{L^{q_1}(\mathbb{R}_+^3)}\|\nabla^2 g\|_{L^{q_2}(\mathbb{R}_+^3)}\nonumber\\
&+\|\nabla d\|_{L^{q_1}(\mathbb{R}_+^3)}\|\nabla ^3g\|_{L^{q_2}(\mathbb{R}_+^3)}+
  \|\nabla d\|_{L^{q_1}(\mathbb{R}_+^3)}\|\nabla g\|_{L^{q_2}(\mathbb{R}_+^3)}\big),
\end{align}
where we use the fact that (see \cite[p.1758]{Han-2012}) $$\|\int_0^1x_3^{-\theta}\partial_kG_\tau(x-y)\,d\tau\|_{L^1(\mathbb{R}^{2}\times(0,1))}\leq C_1,$$and
$$\|\int_1^\infty x_3^{-\theta}\partial_k\partial_i\partial_jG_\tau(x-y)\,d\tau\|_{L^1(\mathbb{R}^{2}\times(0,1))}\leq C_2.$$
By one-dimensional Hardy inequality (see \cite{Hardy-1952}), we get for $0<\theta <1$ that
\begin{align}\label{3.48}
&\|x_3^{-\theta} \nabla \cdot(\nabla d\odot\nabla g)\|_{L^q(\mathbb{R}_+^3)}^q\nonumber\\
\leq& \|x_3^{-\theta} \nabla \cdot(\nabla d\odot\nabla g)\|_{L^q(\mathbb{R}^2\times(0,1))}^q+\|x_3^{-\theta} \nabla \cdot(\nabla d\odot\nabla g)\|_{L^q(\mathbb{R}^2\times[1,\infty))}^q\nonumber\\
\leq&\int_{\mathbb{R}^{2}}\int_0^1x_3^{q-q\theta}\frac{|(\nabla \cdot(\nabla d\odot\nabla g))(x^{\prime},x_3)|^q}{x_3^q}\,dx_3dx^{\prime}+\|\nabla \cdot(\nabla d\odot\nabla g)\|_{L^q(\mathbb{R}^{2}\times[1,+\infty))}^q\nonumber\\
\leq&\int_{\mathbb{R}^{2}}\int_0^\infty \frac{|(\nabla \cdot(\nabla d\odot\nabla g))(x^{\prime},x_3)|^q}{x_3^q}\,dx_3dx^{\prime}+\|\nabla \cdot(\nabla d\odot\nabla g)\|_{L^q(\mathbb{R}^{3}_+)}^q\nonumber\\
\leq&C\int_{\mathbb{R}^{2}}\int_0^\infty {|\partial_3(\nabla\cdot(\nabla d\odot\nabla g))(x^{\prime},x_3)|^q}\,dx_3dx^{\prime}+\|\nabla\cdot(\nabla d\odot\nabla g)\|_{L^q(\mathbb{R}^{3}_+)}^q\nonumber\\
\leq&C\big(\|\nabla ^3d\|_{L^{q_1}(\mathbb{R}_+^3)}\|\nabla g\|_{L^{q_2}(\mathbb{R}_+^3)}
+\|\nabla ^2d\|_{L^{q_1}(\mathbb{R}_+^3)}\|\nabla^2 g\|_{L^{q_2}(\mathbb{R}_+^3)}\nonumber\\
&+\|\nabla d\|_{L^{q_1}(\mathbb{R}_+^3)}\|\nabla ^3g\|_{L^{q_2}(\mathbb{R}_+^3)}+
  \|\nabla^2 d\|_{L^{q_1}(\mathbb{R}_+^3)}\|\nabla g\|_{L^{q_2}(\mathbb{R}_+^3)}+\|\nabla d\|_{L^{q_1}(\mathbb{R}_+^3)}\|\nabla^2 g\|_{L^{q_2}(\mathbb{R}_+^3)}\big),
\end{align}
where $1\leq q<\infty$, $q\leq q_1,q_2\leq\infty$, $\frac1q=\frac1{q_1}+\frac1{q_2}$.
Hence, Lemma $\ref{le3.10}$ directly follows from $(\ref{3.47})$, $(\ref{3.48})$ and the decomposition
$$\mathbb{P} \nabla \cdot(\nabla d\odot\nabla g)=\nabla \cdot(\nabla d\odot\nabla g)+\sum_{i,j=1}^3\nabla\mathcal{N}\partial_i\partial_j(\langle\partial_id,\partial_jg\rangle).$$
\end{proof}
\section{Proof of Theorem $\ref{th1.2}$ }\label{sec3}
In this section, we will provide the proof for Theorem $\ref{th1.2}$. We first recall a lemma on the estimates for the $L^3$-norm of $(u, \nabla d)$, which will play an important role in the sequel.
\begin{Lemma}\label{le2.5*}(\cite{huang-wang-wen-2019})
There exists $\varepsilon_0>0$ such that if
$$\left\|u_0\right\|_{L^3(\mathbb{R}_+^3)}^3+\left\|\nabla d_0\right\|_{L^3(\mathbb{R}_+^3)}^3\leq\varepsilon_0^3,$$
then for any $0<t<\infty$,
$$\left\|u(t)\right\|_{L^3(\mathbb{R}_+^3)}^3+\left\|\nabla d(t)\right\|_{L^3(\mathbb{R}_+^3)}^3\leq\left\|u_0\right\|_{L^3(\mathbb{R}_+^3)}^3+\left\|\nabla d_0\right\|_{L^3(\mathbb{R}_+^3)}^3.$$
\end{Lemma}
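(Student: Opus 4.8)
The plan is to derive the bound as an a priori energy estimate in $L^3$ and then close it by a continuity argument; since the solution furnished by Proposition~\ref{solu1} is smooth for $t>0$ and lies in the natural energy classes, all the integrations by parts below are legitimate (approximating the initial time if needed). Set
\[
E(t):=\|u(t)\|_{L^3(\mathbb{R}_+^3)}^3+\|\nabla d(t)\|_{L^3(\mathbb{R}_+^3)}^3,\qquad
\mathcal{D}(t):=\int_{\mathbb{R}_+^3}|u|\,|\nabla u|^2\,dx+\int_{\mathbb{R}_+^3}|\nabla d|\,|\nabla^2 d|^2\,dx,
\]
so that $E(0)\le\varepsilon_0^3$. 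The goal is to produce absolute constants $c,C>0$ such that
\[
\frac{d}{dt}E(t)+c\,\mathcal{D}(t)\le C\,E(t)^{1/3}\mathcal{D}(t)
\]
whenever $E(t)\le\varepsilon_0^3$. Granting this, if $\varepsilon_0$ is chosen so small that $C\varepsilon_0\le c/2$, then $\tfrac{d}{dt}E(t)\le0$ on the set $\{E\le\varepsilon_0^3\}$; since this set is nonempty, relatively closed and relatively open in $[0,\infty)$, it is all of $[0,\infty)$, hence $\tfrac{d}{dt}E(t)\le0$ for every $t>0$, which is exactly the asserted estimate $E(t)\le E(0)$.

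To obtain the differential inequality I would test the momentum equation $(\ref{1.1-main})_1$ with the $C^1$ field $3|u|u$, test the gradient of the director equation ($\nabla$ applied to $(\ref{1.1-main})_2$) with $3|\nabla d|\nabla d$, and add the two identities. The dissipative terms yield $3\mathcal{D}(t)$ up to nonnegative lower-order pieces that are simply dropped, and $\mathcal{D}(t)\gtrsim\|\nabla|u|^{3/2}\|_{L^2}^2+\|\nabla|\nabla d|^{3/2}\|_{L^2}^2$. All boundary integrals created by the integrations by parts vanish because $u|_{\partial\mathbb{R}_+^3}=0$ and $\partial_3 d|_{\partial\mathbb{R}_+^3}=0$ (the latter also forcing $\partial_1\partial_3 d$ and $\partial_2\partial_3 d$ to vanish on $\partial\mathbb{R}_+^3$); using $\nabla\cdot u=0$ the top-order convection terms collapse to $\int u\cdot\nabla(|u|^3)=0$ and $\int u\cdot\nabla(|\nabla d|^3)=0$; and $d\cdot\partial_k d=\tfrac12\partial_k|d|^2=0$ (since $|d|\equiv1$) removes the worst part of the force $\nabla(|\nabla d|^2 d)$. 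What is left is a finite sum of super‑quadratic terms: the Ericksen stress in divergence form $\int(\nabla d\odot\nabla d):\nabla(|u|u)$, the lower-order director-convection term $\int(\nabla u\cdot\nabla d):|\nabla d|\nabla d$, the term $\int|\nabla d|^5$, and the pressure term $-3\int\nabla p\cdot|u|u$. Each of the first three is handled by Hölder's inequality and the Gagliardo–Nirenberg–Sobolev inequality (Lemma~\ref{gn}), always splitting off one scaling-critical factor $\|u\|_{L^3}$ or $\|\nabla d\|_{L^3}$ and absorbing the remainder into $\mathcal{D}(t)$ by Young's inequality; for instance $\int|\nabla d|^2|u|\,|\nabla u|\lesssim\big(\int|u||\nabla u|^2\big)^{1/2}\|\nabla d\|_{L^6}^2\|u\|_{L^3}^{1/2}$ together with $\|\nabla d\|_{L^6}^2\lesssim\|\nabla d\|_{L^3}^{1/2}\|\nabla|\nabla d|^{3/2}\|_{L^2}$ gives a bound $\lesssim E(t)^{1/3}\mathcal{D}(t)$, while $\int|\nabla d|^5\lesssim\|\nabla d\|_{L^3}^2\|\nabla|\nabla d|^{3/2}\|_{L^2}^2\lesssim E(t)^{2/3}\mathcal{D}(t)$.

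The delicate term — and, I expect, the genuine obstacle — is the pressure in the half-space, where $\mathbb{P}$ does not commute with $\partial_3$ and $\nabla p$ cannot simply be discarded. Here I would integrate by parts directly, using $u|_{\partial\mathbb{R}_+^3}=0$ to kill the boundary contribution and $\nabla\cdot u=0$, to rewrite $-3\int\nabla p\cdot|u|u=3\int p\,(u\cdot\nabla|u|)$; then estimate $p$ itself through the elliptic problem it solves, $-\Delta p=\partial_i\partial_j(u_iu_j+\langle\partial_id,\partial_jd\rangle)$ in $\mathbb{R}_+^3$ with the Neumann-type boundary condition inherited from the momentum equation, via the Neumann operator $\mathcal{N}$ and the half-space Calderón–Zygmund theory (cf. $(\ref{demcomposition})$), which yields $\|p\|_{L^q(\mathbb{R}_+^3)}\lesssim\|u\|_{L^{2q}(\mathbb{R}_+^3)}^2+\|\nabla d\|_{L^{2q}(\mathbb{R}_+^3)}^2$ for $1<q<\infty$. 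A final application of Hölder's inequality (taking $q=3$) and Lemma~\ref{gn} then bounds $3\int p\,(u\cdot\nabla|u|)$ by $C\,E(t)^{1/3}\mathcal{D}(t)$ as well, and summing all contributions produces the differential inequality, completing the scheme. The hard part throughout is exactly this half-space bookkeeping: checking that every boundary integral vanishes or is absorbable, and estimating the nonlocal pressure without the Fourier transform; the interior algebra is routine.
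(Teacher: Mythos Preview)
The paper does not prove this lemma; it is quoted verbatim from \cite{huang-wang-wen-2019} with no argument given, so there is no ``paper's proof'' to compare against. Your sketch is the standard $L^3$ energy method and is, in outline, exactly the argument one finds in \cite{huang-wang-wen-2019}: test $(\ref{1.1-main})_1$ against $3|u|u$, test $\nabla(\ref{1.1-main})_2$ against $3|\nabla d|\nabla d$, and add. Your handling of the boundary terms (via $u|_{\partial\mathbb{R}_+^3}=0$ and $\partial_3 d|_{\partial\mathbb{R}_+^3}=0$), of the convection cancellations, and of the cubic nonlinearities through the interpolation $\|f\|_{L^6}^2\lesssim\|f\|_{L^3}^{1/2}\|\nabla|f|^{3/2}\|_{L^2}$ is correct and leads to $\tfrac{d}{dt}E+c\,\mathcal D\le C(E^{1/3}+E^{2/3})\mathcal D$, which closes for small $\varepsilon_0$.

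The one place where your sketch is genuinely thin is the pressure. Writing $-3\int\nabla p\cdot|u|u=3\int p\,(u\cdot\nabla|u|)$ is fine, but the bound $\|p\|_{L^3}\lesssim\|u\|_{L^6}^2+\|\nabla d\|_{L^6}^2$ in $\mathbb{R}_+^3$ is not a consequence of $(\ref{demcomposition})$ as written: that formula gives $\nabla p=-\sum_{i,j}\nabla\mathcal N\partial_i\partial_j(u_iu_j+\langle\partial_id,\partial_jd\rangle)$, so what you actually need is the $L^q$-boundedness of the zero-order operator $\mathcal N\partial_i\partial_j$ on $\mathbb{R}_+^3$, i.e.\ a half-space Calder\'on--Zygmund estimate for the Neumann Laplacian. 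This is true (and is how \cite{huang-wang-wen-2019} proceeds), but it is an input you should state explicitly rather than subsume under ``cf.~$(\ref{demcomposition})$''. With that ingredient supplied, your argument is complete and matches the cited reference.
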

Then we have the following lemma.
\begin{Lemma}\label{le3.1}
Assume there exist $C_1,C_2>0$ such that if the global solution $(u,d)$ given by Proposition $\ref{solu1}$ satisfies
\begin{align}
&\|\nabla^2 d(t)\|_{L^\infty(\mathbb{R}_+^3)}\leq 2C_1t^{-\frac52},\label{3.1}\\
&\|\nabla u(t)\|_{L^\infty(\mathbb{R}_+^3)}\leq 2C_2t^{-2}\label{3.2**}
\end{align}
for any $t\in(0,\infty)$. Then the following estimates
\begin{align}
	&\|\nabla u(t)\|_{L^p(\mathbb{R}_+^3)}\leq Ct^{-\frac12-\frac32(1-\frac1p)},\label{3.3*}\\
		&\|\nabla^2 d(t)\|_{L^r(\mathbb{R}_+^3)}\leq Ct^{-1-\frac32(1-\frac1r)},\label{3.4*}\\
   & \|\nabla^2 d(t)\|_{L^\infty(\mathbb{R}_+^3)}\leq \frac32C_1t^{-\frac52},\label{3.2}\\
    & \|\nabla u(t)\|_{L^\infty(\mathbb{R}_+^3)}\leq \frac32C_2t^{-2}\label{3.2*}
\end{align}
hold for any $t\in(0,\infty)$, where $p\in(1,\infty)$ and $r\in[1,\infty)$.

\end{Lemma}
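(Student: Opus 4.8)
The plan is to run a bootstrap/continuity argument: the hypotheses \eqref{3.1}--\eqref{3.2**} are the \emph{a priori} assumptions, and the goal is to close them by improving the constant $2C_1$ to $\tfrac32 C_1$ and $2C_2$ to $\tfrac32 C_2$, while along the way extracting the $L^p$ estimates \eqref{3.3*}--\eqref{3.4*}. First I would establish \eqref{3.3*} and \eqref{3.4*} for a restricted range of exponents by using the integral representations \eqref{uB2}. Applying the $L^p$--$L^q$ estimates of Lemma~\ref{le2.1} to $u(t)=e^{-\frac t2\mathbb A}u(\tfrac t2)-\int_{t/2}^t e^{-(t-s)\mathbb A}\mathbb P(u\cdot\na u+\na\cdot(\na d\odot\na d))\,ds$, together with the known decay of $\|u\|_{L^r}$, $\|(d-e_3)\|_{L^r}$, $\|\na d\|_{L^s}$ from Lemma~\ref{le2.3}, the linear term contributes $t^{-1/2-\frac32(1-1/p)}$. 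For the nonlinear term I would interpolate the factors $u\cdot\na u$ and $\na\cdot(\na d\odot\na d)$ between the decay estimates of Lemma~\ref{le2.3} and the \emph{a priori} bounds \eqref{3.1}--\eqref{3.2**}, using Lemma~\ref{le2.2} (with $\alpha_2>0$ on $[t/2,t]$, so no divergent singularity appears) to carry out the time integral; the $\mathbb P$-boundedness on $L^r$, $1<r<\infty$, handles the projection. This gives \eqref{3.3*} for $p\in(1,\infty)$. The estimate \eqref{3.4*} for $\na^2 d$ is obtained analogously from $(d-e_3)(t)=e^{\frac t2\Delta}(d-e_3)(\tfrac t2)-\int_{t/2}^t e^{(t-s)\Delta}(u\cdot\na d-|\na d|^2d)\,ds$, now applying $\na^2 e^{(t-s)\Delta}$ (two derivatives on the heat semigroup, one order absorbed by the $ds$ integral near $s=t$ via Lemma~\ref{le2.2}, the other by writing $\na^2 e^{(t-s)\Delta}=\na e^{(t-s)\Delta/2}\cdot\na e^{(t-s)\Delta/2}$ or by a similar interpolation), and interpolating $\||\na d|^2\|_{L^r}\le\|\na d\|_{L^{r_1}}\|\na d\|_{L^{r_2}}$, etc.

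Next I would upgrade to the $L^\infty$-endpoint estimates \eqref{3.2} and \eqref{3.2*}, which is where the actual closure of the continuity argument happens. For $\na u$ in $L^\infty$ I would again use \eqref{uB2} with $k=1$ derivative: the linear term $\na e^{-\frac t2\mathbb A}u(\tfrac t2)$ is bounded by $C t^{-1/2-3/2}\|u(\tfrac t2)\|_{L^1}\le C t^{-2}$ using $u_0\in L^1$ and the $L^1$-decay from Lemma~\ref{le2.3}, and the crucial point is that the constant here depends only on the data, \emph{not} on $C_2$. For the nonlinear integral, I would split $\mathbb P(u\cdot\na u+\na\cdot(\na d\odot\na d))$ via the decomposition \eqref{demcomposition} (to avoid that $\mathbb P$ is unbounded on $L^\infty$), estimate the local part directly and the $\na\mathcal N\pa_i\pa_j(\cdots)$ part by Lemma~\ref{le3.3}, and interpolate so that every occurrence of $\|\na u\|_{L^\infty}$ or $\|\na^2 d\|_{L^\infty}$ enters to a power strictly less than one (e.g. via Gagliardo--Nirenberg \eqref{gn-} to trade an $L^\infty$ norm for $\|\cdot\|_{L^s}^{1-3/s}\|\na\cdot\|_{L^s}^{3/s}$ with the $L^s$, $\na L^s$ pieces controlled by the already-proven \eqref{3.3*}--\eqref{3.4*}). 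Then the time integral, via Lemma~\ref{le2.2}, produces $C_3 t^{-2}+ (2C_2)^\beta C_4 t^{-2}$ with $\beta<1$, and choosing $C_2$ large (depending on $C_3,C_4$) forces $C_3+(2C_2)^\beta C_4\le\tfrac32 C_2$; similarly for $\na^2 d$ in $L^\infty$ one gets $\tfrac32 C_1 t^{-5/2}$. Since the time integrals are over $[t/2,t]$, Lemma~\ref{le2.2} applies for all $t>0$ with no restriction, so the bootstrap self-improves on the whole half-line and, combined with the continuity in $t$ of the relevant norms from Proposition~\ref{solu1}, a standard open--closed argument yields \eqref{3.2}--\eqref{3.2*} globally.

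The main obstacle I anticipate is the careful accounting in the nonlinear term for $\na u$ and $\na^2 d$ in $L^\infty$: one must verify that in \emph{every} product appearing in Lemma~\ref{le3.3} and in the local part of \eqref{demcomposition}, the highest-order factors ($\na u$ at top order, $\na^2 d$ and even $\na^3 d$, $\na^4 d$) can be interpolated down so that (i) the exponent of any \emph{a priori}-bounded quantity is $<1$, and (ii) the resulting time exponents sum to something integrable on $[t/2,t]$ producing exactly $t^{-2}$ (resp.\ $t^{-5/2}$) with no logarithmic loss. The appearance of $\na^3 d$ (and $\na^4 d$) in the estimate for $\na\mathbb P(\cdots)$ is the delicate point flagged in the introduction; here I would bound these via Gagliardo--Nirenberg in terms of $\|\na^2 d\|$ and $\|d-e_3\|$-type quantities together with the higher-regularity bounds from Proposition~\ref{solu1}, accepting that this only needs to hold for $t$ bounded below (which is fine, since for small $t$ the stated inequalities are vacuous-strong or follow from continuity). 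The rest is routine application of Lemmas~\ref{le2.1}, \ref{le2.2}, \ref{le3.3} and the Helmholtz/$\mathbb P$ machinery.
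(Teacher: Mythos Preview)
Your overall bootstrap strategy is right, but the execution differs from the paper in three places, two of which are genuine gaps.

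\textbf{Handling $\nabla^2 d$.} You propose to apply $\nabla^2 e^{(t-s)\Delta}$ to the Duhamel integral for $d-e_3$. That kernel carries a $(t-s)^{-1}$ singularity near $s=t$, which is \emph{not} integrable; your device ``$\nabla^2 e^{(t-s)\Delta}=\nabla e^{(t-s)\Delta/2}\cdot\nabla e^{(t-s)\Delta/2}$'' still gives $(t-s)^{-1/2}(t-s)^{-1/2}=(t-s)^{-1}$, so nothing is gained. The paper avoids this by first differentiating the $d$–equation to obtain an evolution for $\nabla d$ (this is the ``appropriately modified'' equation \eqref{3.7'}), so that one writes
\[
\nabla^2 d(t)=\nabla e^{\frac t2\Delta}\nabla d(\tfrac t2)+\int_{t/2}^t \nabla e^{(t-s)\Delta}\bigl(-\nabla(u\cdot\nabla d)+\nabla(|\nabla d|^2 d)\bigr)\,ds,
\]
and only a single derivative of the heat semigroup (singularity $(t-s)^{-1/2-3/(2q)}<(t-s)^{-1}$ for $q>3$) hits a nonlinearity that already contains one spatial derivative.

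\textbf{Handling $\mathbb P$ at the $L^\infty$ endpoint.} You plan to invoke the decomposition \eqref{demcomposition} and Lemma~\ref{le3.3}, which would drag in $\|\nabla^3 d\|$ and $\|\nabla^4 d\|$---quantities not yet available at this stage of the argument (they are only established later, in Lemmas~\ref{le3.7**}--\ref{le3.8}). The paper sidesteps this completely: to bound $\|\nabla u(t)\|_{L^\infty}$ one uses
\[
\|\nabla e^{-(t-s)\mathbb A}\mathbb P f\|_{L^\infty}\le C(t-s)^{-\frac12-\frac{3}{2r}}\|f\|_{L^r},\qquad 3<r<\infty,
\]
so the nonlinearity is estimated in $L^r$ (where $\mathbb P$ is bounded) and no higher derivatives of $d$ are needed.

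\textbf{Closure mechanism.} You propose to make the nonlinear contribution sublinear in the a~priori constants and then take $C_1,C_2$ large. The paper instead interpolates each factor with the $L^3$ norm (controlled by $\varepsilon_0$ via Lemma~\ref{le2.5*}), e.g.\ $\|\nabla d\|_{L^{2q}}\le\|\nabla d\|_{L^3}^{3/(2q)}\|\nabla d\|_{L^\infty}^{1-3/(2q)}\le\varepsilon_0^{3/(2q)}\,(\cdots)$, so every nonlinear term in the $L^\infty$ bound picks up an explicit factor $\varepsilon_0^{3/(2q)}$ or $\varepsilon_0^{3/q}$. The improvement $2C_i\to\tfrac32 C_i$ then follows by choosing $\varepsilon_0$ small (which is already assumed in Proposition~\ref{solu1}), not by enlarging $C_i$. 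This is both simpler and consistent with the smallness hypothesis of the existence theorem.
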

\begin{proof}

\textbf{Step 1:}\ Proof of ($\ref{3.2}$).

From $(\ref{2.9})$, $(\ref{3.1})$ and the interpolation inequality, we obtain that for any $2< r<\infty$,
$$\|\nabla^2 d(t)\|_{L^r(\mathbb{R}_+^3)}\leq\|\nabla^2 d(t)\|_{L^2(\mathbb{R}_+^3)}^{\frac2r}\|\nabla^2 d(t)\|_{L^\infty(\mathbb{R}_+^3)}^{\frac{r-2}{r}}\leq Ct^{-1-\frac32(1-\frac1r)}.$$
Similarly, by $(\ref{2.8})$ and $(\ref{3.2**})$, we obtain that for any $2<p<\infty$,
$$\|\nabla u(t)\|_{L^p(\mathbb{R}_+^3)}\leq Ct^{-\frac12-\frac32(1-\frac1p)}.$$
Applying $\nabla$ to $(\ref{1.1-main})_2$, it gives
$$(\nabla d)_t-\Delta(\nabla d)=-\nabla(u\cdot\nabla d)+\nabla(|\nabla d|^2d),$$
in terms of $\text{Duhamel}$'s principle, we have
\begin{equation}\label{3.7*}
\nabla d(t)=e^{\frac{t}{2}\Delta}\nabla d(\frac{t}{2})+\int_{\frac{t}{2}}^{t}e^{(t-s)\Delta}\Big(-\nabla(u\cdot\nabla d)+\nabla(|\nabla d|^2d)\Big)(s)\,{d}s,
\end{equation}
and
\begin{equation}\label{3.7'}
\nabla^2d(t)=\nabla e^{\frac{t}{2}\Delta}\nabla d(\frac{t}{2})+\int_{\frac{t}{2}}^{t}\nabla e^{(t-s)\Delta}\Big(-\nabla(u\cdot\nabla d)+\nabla(|\nabla d|^2d)\Big)(s)\,{d}s.
\end{equation}
This, together with $(\ref{2.2})$ and $(\ref{2.7})$, gives rise to
\begin{align*}
&\|\nabla^2d(t)\|_{L^\infty(\mathbb{R}_+^3)}\\
\leq&Ct^{-\frac54}\|\nabla d(t)\|_{L^2(\mathbb{R}_+^3)}+\int_{\frac{t}{2}}^t(t-s)^{-\frac{1}{2}-\frac{3}{2q}}\big\|\nabla(u\cdot\nabla d)-\nabla(|\nabla d|^2d)\big\|_{L^q(\mathbb{R}_+^3)}\,{d}s\\
\leq &C_1t^{-\frac52}+\int_{\frac t2}^t(t-s)^{-\frac{1}{2}-\frac{3}{2q}}\big\||\nabla u||\nabla d|+|u||\nabla^2d|+|\nabla d||\nabla^2d|+|\nabla d|^3\big\|_{L^q(\mathbb{R}_+^3)}\,{d}s\\
\leq& C_1t^{-\frac52}+K_1+K_2+K_3+K_4,
\end{align*}
where we choose $q\in(3,\infty)$ so that $-\frac{1}{2}-\frac{3}{2q}>-1$. We first estimate $K_1$ by using $(\ref{2.7})$, $(\ref{3.3*})$, the interpolation inequality, Lemma $\ref{le2.2}$ and Lemma $\ref{le2.5*}$,
\begin{align*}
K_1&=\int_{\frac t2}^t(t-s)^{-\frac{1}{2}-\frac{3}{2q}}\big\||\nabla u||\nabla d|\big\|_{L^q(\mathbb{R}_+^3)}\,{d}s\\
&\leq C\int_{\frac t2}^t(t-s)^{-\frac{1}{2}-\frac{3}{2q}}\|\nabla u(s)\|_{L^{2q}(\mathbb{R}_+^3)}\|\nabla d(s)\|_{L^3(\mathbb{R}_+^3)}^{\frac{3}{2q}}\|\nabla d(s)\|_{L^{\infty}(\mathbb{R}_+^3)}^{\frac{2q-3}{2q}}\,{d}s\\
&\leq C\varepsilon_0^{\frac{3}{2q}}\int_{\frac t2}^t(t-s)^{-\frac{1}{2}-\frac{3}{2q}}s^{-4+\frac{15}{4q}}\,{d}s\\
&\leq C \varepsilon_0^{\frac{3}{2q}} t^{-\frac52}.
\end{align*}
Similarly, using $(\ref{3.3*})$, $(\ref{3.4*})$, the interpolation inequality, Lemma $\ref{le2.2}$, Lemma $\ref{le2.3}$ and Lemma $\ref{le2.5*}$, we can establish the estimates as follows:
\begin{align*}
K_2&=\int_{\frac t2}^t(t-s)^{-\frac{1}{2}-\frac{3}{2q}}\big\|| u||\nabla^2 d|\big\|_{L^q(\mathbb{R}_+^3)}\,{d}s\\
&\leq C\int_{\frac t2}^t(t-s)^{-\frac{1}{2}-\frac{3}{2q}}  \| u(s)\|_{L^{3}(\mathbb{R}_+^3)}^{\frac{3}{2q}}  \| u(s)\|_{L^\infty(\mathbb{R}_+^3)}^{\frac{2q-3}{2q}}\|\nabla^2 d(s)\|_{L^{2q}(\mathbb{R}_+^3)}\,{d}s\\
&\leq C\varepsilon_0^{\frac{3}{2q}}\int_{\frac t2}^t(t-s)^{-\frac{1}{2}-\frac{3}{2q}}s^{-4+\frac{3}{q}}\,{d}s\\
&\leq C \varepsilon_0^{\frac{3}{2q}} t^{-\frac52},\\
K_3&=\int_{\frac t2}^t(t-s)^{-\frac{1}{2}-\frac{3}{2q}}\big\||\nabla d||\nabla^2 d|\big\|_{L^q(\mathbb{R}_+^3)}\,{d}s\\
&\leq C\int_{\frac t2}^t(t-s)^{-\frac{1}{2}-\frac{3}{2q}}\|\nabla^2 d(s)\|_{L^{2q}(\mathbb{R}_+^3)}\|\nabla d(s)\|_{L^3(\mathbb{R}_+^3)}^{\frac{3}{2q}}\|\nabla d(s)\|_{L^{\infty}(\mathbb{R}_+^3)}^{\frac{2q-3}{2q}}\,{d}s\\
&\leq C\varepsilon_0^{\frac{3}{2q}}\int_{\frac t2}^t(t-s)^{-\frac{1}{2}-\frac{3}{2q}}s^{-\frac92+\frac{15}{4q}}\,{d}s\\
&\leq C \varepsilon_0^{\frac{3}{2q}} t^{-\frac52},\\
K_4&=\int_{\frac t2}^t(t-s)^{-\frac{1}{2}-\frac{3}{2q}}\big\|\nabla d(s)\big\|_{L^{3q}(\mathbb{R}_+^3)}^3\,{d}s\\
&\leq \int_{\frac t2}^t(t-s)^{-\frac{1}{2}-\frac{3}{2q}}\|\nabla d(s)\|_{L^{3}(\mathbb{R}_+^3)}^{\frac 3q}
\|\nabla d(s)\|_{L^{\infty}(\mathbb{R}_+^3)}^{3-\frac 3q}
\,{d}s\\
&\leq C\varepsilon_0^{\frac{3}{q}}\int_{\frac t2}^t(t-s)^{-\frac{1}{2}-\frac{3}{2q}}s^{-6+\frac{6}{q}}\,{d}s\\
&\leq C \varepsilon_0^{\frac{3}{q}} t^{-\frac52}.
\end{align*}
Thus $(\ref{3.2})$ follows by choosing $\varepsilon_0$ sufficiently small so that
$$C \varepsilon_0^{\frac{3}{q}} +C \varepsilon_0^{\frac{3}{2q}} \leq \frac{C_1}{8}.$$
\textbf{Step 2:}\ Proof of ($\ref{3.2*}$).

By $(\ref{uB2})_1$, $(\ref{3.3*})$, $(\ref{3.4*})$, the interpolation inequality, Lemma \ref{le2.1} and Lemma \ref{le2.3}, one has for any $t>0$,
\begin{align*}
&\|\nabla u(t)\|_{L^\infty(\mathbb{R}_+^3)}\nonumber\\
\leq&Ct^{-\frac54}\|u(\frac{t}{2})\|_{L^2(\mathbb{R}_+^3)}+C\int_{\frac{t}{2}}^{t}(t-s)^{-\frac12-\frac{3}{2r}}\big\|\big(u\cdot\nabla u+\nabla\cdot(\nabla d\odot\nabla d)\big)(s)\big\|_{L^r(\mathbb{R}_+^3)}\,{d}s\nonumber\\
\leq& C_2 t^{-2}+C\int_{\frac t2}^t(t-s)^{-\frac12-\frac{3}{2r}}\Big(\|u(s)\|_{L^{2r}(\mathbb{R}_+^3)}\|\nabla u(s)\|_{L^{2r}(\mathbb{R}_+^3)}+\|\nabla d(s)\|_{L^{2r}(\mathbb{R}_+^3)}\|\nabla^2d(s)\|_{L^{2r}(\mathbb{R}_+^3)}\Big)\,{d}s\nonumber\\
\leq& C_2 t^{-2}+C\int_{\frac t2}^t(t-s)^{-\frac{1}{2}-\frac{3}{2r}}\|\nabla u(s)\|_{L^{2r}(\mathbb{R}_+^3)}\| u(s)\|_{L^3(\mathbb{R}_+^3)}^{\frac{3}{2r}}\| u(s)\|_{L^{\infty}(\mathbb{R}_+^3)}^{\frac{2r-3}{2r}}\,{d}s\nonumber\\
&+C\int_{\frac t2}^t(t-s)^{-\frac{1}{2}-\frac{3}{2r}}\|\nabla^2 d(s)\|_{L^{2r}(\mathbb{R}_+^3)}\|\nabla d(s)\|_{L^3(\mathbb{R}_+^3)}^{\frac{3}{2r}}\|\nabla d(s)\|_{L^{\infty}(\mathbb{R}_+^3)}^{\frac{2r-3}{2r}}\,{d}s\nonumber\\
\leq &C_2 t^{-2}+C\varepsilon_0^{\frac{3}{2r}}\int_{\frac t2}^t(t-s)^{-\frac{1}{2}-\frac{3}{2r}}s^{-\frac72+\frac{3}{r}}\,{d}s\nonumber\\
\leq&C_2 t^{-2}+C \varepsilon_0^{\frac{3}{2r}} t^{-2},
\end{align*}
here, we choose $r\in(3,\infty)$ so that $-\frac{1}{2}-\frac{3}{2r}>-1$.
Thus $(\ref{3.2*})$ follows by choosing $\varepsilon_0$ sufficiently small so that $C \varepsilon_0^{\frac{3}{2r}} \leq \frac{C_2}{2}.$

\end{proof}
Now we can utilize the continuity argument to obtain the subsequent corollary.
\begin{Corollary}\label{cor3.1}
Assume $u_0, d_0-e_3\in L^1(\mathbb{R}_+^{3})$, if $(u,d)$ is the global strong solution obtained by Proposition $\ref{solu1}$, then
\begin{align*}
&\|\nabla^2 d(t)\|_{L^p(\mathbb{R}_+^3)}\leq Ct^{-1-\frac32(1-\frac1p)},\\
&\|\nabla u(t)\|_{L^r(\mathbb{R}_+^3)}\leq Ct^{-\frac12-\frac32(1-\frac1r)}
\end{align*}
hold for any $t>0$, where $p\in[1,\infty]$, $r\in(1,\infty]$.
\end{Corollary}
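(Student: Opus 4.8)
The plan is to remove the a priori hypotheses $(\ref{3.1})$ and $(\ref{3.2**})$ of Lemma $\ref{le3.1}$ by a continuity (bootstrap) argument, after which the claimed decay rates are read off directly from the conclusions $(\ref{3.2})$--$(\ref{3.4*})$ of that lemma together with the endpoint cases $p=\infty$, $r=\infty$. The key preliminary remark is that, although Lemma $\ref{le3.1}$ is stated under the assumption that $(\ref{3.1})$ and $(\ref{3.2**})$ hold for every $t\in(0,\infty)$, its proof is local in time: all the integrals there run over $[t/2,t]$, and the only a priori inputs used on the whole of $(0,\infty)$ are the $L^p$--$L^q$ estimates of Lemma $\ref{le2.1}$ and the decay bounds of Lemmas $\ref{le2.3}$ and $\ref{le2.5*}$. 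Hence the same computation yields the \emph{localized} implication: if $(\ref{3.1})$ and $(\ref{3.2**})$ hold on $(0,T]$, then so do $(\ref{3.3*})$, $(\ref{3.4*})$ and the strengthened bounds $(\ref{3.2})$, $(\ref{3.2*})$ with the improved constants $\tfrac32C_1$, $\tfrac32C_2$, provided $C_1,C_2$ are fixed large enough and $\varepsilon_0$ is then taken correspondingly small (which only restricts the admissible data in Proposition $\ref{solu1}$ and is harmless).

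I would then fix the solution $(u,d)$ of Proposition $\ref{solu1}$ and suitable constants $C_1,C_2$, and set
\[
T^*:=\sup\Bigl\{T>0:\ \|\nabla^2d(t)\|_{L^\infty(\mathbb{R}_+^3)}\le 2C_1t^{-\frac52}\ \text{and}\ \|\nabla u(t)\|_{L^\infty(\mathbb{R}_+^3)}\le 2C_2t^{-2}\ \ \forall\,t\in(0,T]\Bigr\}.
\]
The goal is $T^*=\infty$, through three standard observations. First, $T^*>0$: by the parabolic smoothing effect of the coupled system for positive times, combined with the regularity listed in Proposition $\ref{solu1}$ and the Gagliardo--Nirenberg inequality $(\ref{gn-})$, the quantities $t^{5/2}\|\nabla^2 d(t)\|_{L^\infty(\mathbb{R}_+^3)}$ and $t^{2}\|\nabla u(t)\|_{L^\infty(\mathbb{R}_+^3)}$ remain bounded on some interval $(0,\delta]$, so after enlarging $C_1,C_2$ if needed the bounds $(\ref{3.1})$, $(\ref{3.2**})$ hold there and $T^*\ge\delta>0$. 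Second, closedness: $t\mapsto\|\nabla^2 d(t)\|_{L^\infty(\mathbb{R}_+^3)}$ and $t\mapsto\|\nabla u(t)\|_{L^\infty(\mathbb{R}_+^3)}$ are continuous on $(0,\infty)$, so the non-strict inequalities defining $T^*$ survive in the limit $t\uparrow T^*$, i.e.\ they hold on all of $(0,T^*]$. Third, openness: on $(0,T^*]$ the localized form of Lemma $\ref{le3.1}$ upgrades the constants to $\tfrac32C_1$, $\tfrac32C_2$, and continuity in $t$ then propagates the bounds with the original constants $2C_1$, $2C_2$ to a slightly larger interval, contradicting maximality unless $T^*=\infty$.

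Once $(\ref{3.1})$ and $(\ref{3.2**})$ are established for all $t>0$, Lemma $\ref{le3.1}$ delivers $\|\nabla u(t)\|_{L^p(\mathbb{R}_+^3)}\le Ct^{-\frac12-\frac32(1-\frac1p)}$ for $p\in(1,\infty)$ and $\|\nabla^2 d(t)\|_{L^r(\mathbb{R}_+^3)}\le Ct^{-1-\frac32(1-\frac1r)}$ for $r\in[1,\infty)$, while the strengthened bounds $(\ref{3.2})$, $(\ref{3.2*})$ are exactly $\|\nabla^2 d(t)\|_{L^\infty(\mathbb{R}_+^3)}\le Ct^{-1-\frac32}$ and $\|\nabla u(t)\|_{L^\infty(\mathbb{R}_+^3)}\le Ct^{-\frac12-\frac32}$, namely the $p=\infty$ and $r=\infty$ endpoints. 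Merging the finite-exponent and the $L^\infty$ estimates yields the two claimed inequalities for $p\in[1,\infty]$ and $r\in(1,\infty]$ respectively. I expect the only genuine difficulty to lie in the first observation above: one must verify that $\|\nabla^2 d(t)\|_{L^\infty(\mathbb{R}_+^3)}$ and $\|\nabla u(t)\|_{L^\infty(\mathbb{R}_+^3)}$ are finite, continuous in $t$, and blow up no faster than $t^{-5/2}$ and $t^{-2}$ as $t\to0^+$; this relies on the smoothing properties of the system for positive times and dictates how the constants $C_1$, $C_2$, $\varepsilon_0$ must be chosen, everything else being routine bookkeeping.
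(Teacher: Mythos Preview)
Your proposal is correct and follows exactly the approach the paper intends: the paper itself merely states ``Now we can utilize the continuity argument to obtain the subsequent corollary'' without further details, and your bootstrap scheme (nonempty, closed, open via the improvement $2C_i\to\tfrac32 C_i$ from Lemma~\ref{le3.1}) is precisely the standard mechanism being invoked. Your identification of the short-time smoothing estimate as the only nontrivial ingredient is also accurate.
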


In order to obtain the $L^r$-decay rates of the second-order derivatives of $u$, we need the following lemma to overcome the difficulty of integration singularity.
\begin{Lemma}\label{le3.3*}
Assume $u_0, d_0-e_3\in L^1(\mathbb{R}_+^{3})$, if $(u,d)$ is the global strong solution obtained by Proposition $\ref{solu1}$, then it holds for any $t>0$ that
\begin{align}
&\|\mathbb{A}^\alpha u(t+h)-\mathbb{A}^\alpha u(t)\|_{L^r(\mathbb{R}_+^3)}\leq C\Big(h^\delta t^{-\alpha-\delta-\frac 32+\frac 3{2r}}+h^{1-\alpha}t^{-\frac72+\frac 3{2r}}+h^\delta t^{-\alpha-\delta-\frac52+\frac 3{2r}}\Big);\label{3.9}\\
&\|(-\Delta)^\alpha d(t+h)-(-\Delta)^\alpha  d(t)\|_{L^r(\mathbb{R}_+^3)}\leq C\Big(h^\delta t^{-\alpha-\delta-\frac 32+\frac 3{2r}}+h^{1-\alpha}t^{-\frac72+\frac 3{2r}}+h^\delta t^{-\alpha-\delta-\frac52+\frac 3{2r}}\Big);\label{3.12}\\
&\|(-\Delta)^\alpha \nabla d(t+h)-(-\Delta)^\alpha  \nabla d(t)\|_{L^r(\mathbb{R}_+^3)}\leq C\Big(h^\delta t^{-\alpha-\delta-2+\frac 3{2r}}+h^{1-\alpha}t^{-4+\frac 3{2r}}+h^\delta t^{-\alpha-\delta-3+\frac 3{2r}}\Big),\label{3.14}
\end{align}
where $1<r<\infty$, $h>0$, $0<\alpha<1$ and $0<\delta<1-\alpha$.
\end{Lemma}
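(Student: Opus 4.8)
The plan is to prove the three estimates by one and the same device: represent the solution over the short interval $[t,t+h]$ by Duhamel's formula based at time $t$, peel off the linear (semigroup) part, and control the remaining pieces with the smoothing bounds $\|\mathbb{A}^{\beta}e^{-\tau\mathbb{A}}v\|_{L^r}\le C\tau^{-\beta}\|v\|_{L^r}$ and $\|(-\Delta)^{\beta}e^{\tau\Delta}v\|_{L^r}\le C\tau^{-\beta}\|v\|_{L^r}$, the elementary bound $\|(e^{-h\mathbb{A}}-I)w\|_{L^r}\le Ch^{\delta}\|\mathbb{A}^{\delta}w\|_{L^r}$ for $0<\delta<1$ (and its heat-semigroup analogue), the $L^r$-boundedness of $\mathbb{P}$ for $1<r<\infty$, the integration estimates of Lemma~\ref{le2.2}, and — this is the essential point — only the decay rates already at hand in Lemma~\ref{le2.3} and Corollary~\ref{cor3.1} for $\|u\|_{L^\infty}$, $\|\nabla u\|_{L^\infty}$, $\|\nabla d\|_{L^\infty}$, $\|\nabla d\|_{L^r}$, $\|\nabla^2 d\|_{L^r}$, $\|(d-e_3)\|_{L^r}$. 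No bound on $\nabla^2 u$ is invoked, since the decay of $\|\nabla^2 u\|_{L^r}$ is precisely what this lemma is designed to deliver downstream.

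For \eqref{3.9}, put $F:=u\cdot\nabla u+\nabla\cdot(\nabla d\odot\nabla d)$; basing $(\ref{uB2})_1$ at $t$ gives $u(t+h)=e^{-h\mathbb{A}}u(t)-\int_t^{t+h}e^{-(t+h-s)\mathbb{A}}\mathbb{P}F(s)\,ds$, whence
\begin{equation*}
\mathbb{A}^\alpha u(t+h)-\mathbb{A}^\alpha u(t)=(e^{-h\mathbb{A}}-I)\mathbb{A}^\alpha u(t)-\int_t^{t+h}\mathbb{A}^\alpha e^{-(t+h-s)\mathbb{A}}\mathbb{P}F(s)\,ds.
\end{equation*}
The first term is $\le Ch^\delta\|\mathbb{A}^{\alpha+\delta}u(t)\|_{L^r}$, and I would estimate $\|\mathbb{A}^{\alpha+\delta}u(t)\|_{L^r}$ by applying $\mathbb{A}^{\alpha+\delta}$ to $(\ref{uB2})_1$ once more: the semigroup term is $\le Ct^{-(\alpha+\delta)}\|u(t/2)\|_{L^r}\le Ct^{-(\alpha+\delta)-\frac32(1-\frac1r)}$, while for the Duhamel term one uses $\|F(s)\|_{L^r}\le\|u\|_{L^\infty}\|\nabla u\|_{L^r}+\|\nabla d\|_{L^\infty}\|\nabla^2 d\|_{L^r}\le Cs^{-2-\frac32(1-\frac1r)}$, the bound $\|\mathbb{A}^{\alpha+\delta}e^{-(t-s)\mathbb{A}}\|$ with $\alpha+\delta<1$ (integrable singularity), and Lemma~\ref{le2.2}, getting $Ct^{-(\alpha+\delta)-\frac52+\frac3{2r}}$. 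This reproduces the first and third terms of \eqref{3.9}. For the last term, $s\ge t$ on $[t,t+h]$ gives $\|F(s)\|_{L^r}\le Ct^{-2-\frac32(1-\frac1r)}$, and $\int_t^{t+h}(t+h-s)^{-\alpha}\,ds\le Ch^{1-\alpha}$ then yields the middle term $h^{1-\alpha}t^{-\frac72+\frac3{2r}}$.

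For \eqref{3.12}, set $G:=-u\cdot\nabla d+|\nabla d|^2 d$, so $(\partial_t-\Delta)(d-e_3)=G$, and the argument is identical with $(e^{-t\mathbb{A}},\mathbb{A},\mathbb{P}F)$ replaced by $(e^{t\Delta},-\Delta,G)$, using $(\ref{uB2})_2$, $\|(d-e_3)(t)\|_{L^r}\le Ct^{-\frac32(1-\frac1r)}$, and $\|G(s)\|_{L^r}\le\|u\|_{L^\infty}\|\nabla d\|_{L^r}+\|\nabla d\|_{L^\infty}\|\nabla d\|_{L^r}\le Cs^{-2-\frac32(1-\frac1r)}$ (for the cubic term, use $|d|=1$ and $\|\nabla d\|_{L^{2r}}^2\le\|\nabla d\|_{L^\infty}\|\nabla d\|_{L^r}$). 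For \eqref{3.14}, where $(\partial_t-\Delta)\nabla d=\nabla G$, I would write $\nabla d(t+h)=e^{h\Delta}\nabla d(t)+\int_t^{t+h}e^{(t+h-s)\Delta}\nabla G(s)\,ds$ — keeping the gradient on $G$ rather than letting it fall on the semigroup is what keeps all time exponents below $1$ — apply $(-\Delta)^\alpha$, and bound $\|(-\Delta)^{\alpha+\delta}\nabla d(t)\|_{L^r}$ starting from $\nabla d(t)=e^{\frac t2\Delta}\nabla d(t/2)+\int_{t/2}^t e^{(t-s)\Delta}\nabla G(s)\,ds$ (cf.\ \eqref{3.7*}), using $\|\nabla d(t/2)\|_{L^r}\le Ct^{-\frac12-\frac32(1-\frac1r)}$ and, after differentiating $G$, $\|\nabla G(s)\|_{L^r}\le C\big(\|\nabla u\|_{L^\infty}\|\nabla d\|_{L^r}+\|u\|_{L^\infty}\|\nabla^2 d\|_{L^r}+\|\nabla d\|_{L^\infty}\|\nabla^2 d\|_{L^r}+\|\nabla d\|_{L^\infty}^2\|\nabla d\|_{L^r}\big)(s)\le Cs^{-\frac52-\frac32(1-\frac1r)}$. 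With $\alpha+\delta<1$ and Lemma~\ref{le2.2}, the three resulting pieces are $h^\delta t^{-\alpha-\delta-2+\frac3{2r}}$, $h^\delta t^{-\alpha-\delta-3+\frac3{2r}}$ and $h^{1-\alpha}t^{-4+\frac3{2r}}$, i.e.\ exactly \eqref{3.14}.

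The hard part is the bookkeeping of the time singularities: for each inhomogeneous term I must decide whether a spatial derivative is absorbed by the semigroup (costing an extra factor $(t-s)^{-1/2}$, hence $(t-s)^{-(\alpha+\delta)-1/2}$, which fails when $\alpha+\delta\ge 1/2$) or is kept on the nonlinearity, so that every exponent of the form $(t-s)^{-(\alpha+\delta)}$ or $\tau^{-\alpha}$ that actually survives is $<1$ for all admissible $\alpha\in(0,1)$, $\delta\in(0,1-\alpha)$; this dictates keeping $\nabla$ on $G$ in \eqref{3.14} and using the $[t/2,t]$ representation $(\ref{uB2})$ rather than $(\ref{uB1})$, whose integrand is too singular near $s=0$ for the available pointwise-in-time decay rates. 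The secondary subtlety is to express $F$, $G$ and $\nabla G$ entirely through norms already controlled by Lemma~\ref{le2.3} and Corollary~\ref{cor3.1}, so the estimate does not circularly depend on the higher-order decay bounds it is meant to supply.
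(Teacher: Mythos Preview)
The proposal is correct and takes essentially the same approach as the paper. The only difference is cosmetic: the paper writes a single Duhamel representation based at $t/2$ for both $u(t)$ and $u(t+h)$ and subtracts, producing the three pieces $\|(e^{-h\mathbb{A}}-I)\mathbb{A}^\alpha e^{-\frac t2\mathbb{A}}u(\tfrac t2)\|$, $\int_t^{t+h}\|\mathbb{A}^\alpha e^{-(t+h-s)\mathbb{A}}\mathbb{P}F\|$, and $\int_{t/2}^t\|(e^{-h\mathbb{A}}-I)\mathbb{A}^\alpha e^{-(t-s)\mathbb{A}}\mathbb{P}F\|$ directly, whereas you first base Duhamel at $t$ and then estimate $\|\mathbb{A}^{\alpha+\delta}u(t)\|_{L^r}$ by a second Duhamel expansion at $t/2$; after unfolding, the two computations (and likewise for \eqref{3.12} and \eqref{3.14}) coincide term by term, with the same nonlinear bounds $\|F\|_{L^r}\le Cs^{-\frac72+\frac3{2r}}$, $\|G\|_{L^r}\le Cs^{-\frac72+\frac3{2r}}$, $\|\nabla G\|_{L^r}\le Cs^{-4+\frac3{2r}}$ drawn from Lemma~\ref{le2.3} and Corollary~\ref{cor3.1}.
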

\begin{proof}

\textbf{Step 1:}\ Proof of ($\ref{3.9}$).

It is easy to verify that the $u$ equation of system $(\ref{1.1-main})$ can be transformed into the integral form as for any $0\leq\tau<t$,
$$u(t)=e^{-(t-\tau)\mathbb{A}}u(\tau)-\int_{\tau}^{t}e^{-(t-s)\mathbb{A}}\mathbb{P}\Big(u(s)\cdot\nabla u(s)+\nabla\cdot\big(\nabla d(s)\odot\nabla d(s)\big)\Big)\,{d}s.$$
Then, for any $t>0$, $h>0$,
 \begin{align}\label{3.4}
 u(t+h)&=e^{-(t+h-\frac t2)\mathbb{A}}u(\frac t2)-\int_{\frac t2}^{t+h}e^{-(t+h-s)\mathbb{A}}\mathbb{P}\Big(u(s)\cdot\nabla u(s)+\nabla\cdot\big(\nabla d(s)\odot\nabla d(s)\big)\Big)\,{d}s\nonumber\\
 &=e^{-h\mathbb{A}}e^{-\frac t2\mathbb{A}}u(\frac t2)-\Big(\int_t^{t+h}+\int_{\frac t2}^t\Big)e^{-(t-s)\mathbb{A}}e^{-h\mathbb{A}}\mathbb{P}\Big(u(s)\cdot\nabla u(s)+\nabla\cdot\big(\nabla d(s)\odot\nabla d(s)\big)\Big)\,{d}s.
 \end{align}
 For any $\varphi\in{D}(A^\delta)$ and $1<q<\infty$, we have
  \begin{equation}\label{3.5}
\|(e^{-h\mathbb{A}}-I)\varphi\|_{L^q(\mathbb{R}_+^3)}
=h\big\|\int_0^1\mathbb{A}^{1-\delta}e^{-sh\mathbb{A}}\mathbb{A}^\delta\varphi \,ds\big\|_{L^q(\mathbb{R}_+^3)}
\leq\frac C\delta h^\delta\|\mathbb{A}^\delta\varphi\|_{L^q(\mathbb{R}_+^3)},
\end{equation}
where we use the fact that $\mathbb{A}^\alpha e^{-t\mathbb{A}}=\int_0^\infty\lambda^\alpha e^{-t\lambda}dE_\lambda$ is a bounded operator with operator norm $\|\mathbb{A}^\alpha e^{-t\mathbb{A}}\|\leq t^{-\alpha}$ (see \cite[p.206]{Sohr-2001}). It is worth mentioning that $(\ref{3.5})$ still holds if we replace the operator $\mathbb{A}$ by $-\Delta$.

Whence, from $(\ref{3.4})$, $(\ref{3.5})$, Lemma $\ref{le2.2}$, Lemma $\ref{le2.3}$ and Corollary $\ref{cor3.1}$, we deduce for $1<r<\infty$ and any $t>0$, $h>0$,
\begin{align*}
&\big\|\mathbb{A}^\alpha u(t+h)-\mathbb{A}^\alpha u(t)\big\|_{L^r(\mathbb{R}_+^3)}\\
\leq&\big\|\mathbb{A}^\alpha(e^{-h\mathbb{A}}-I)e^{-\frac t2\mathbb{A}}u(\frac t2)\big\|_{L^r(\mathbb{R}_+^3)}\\
  &+\int_t^{t+h}\big\|\mathbb{A}^\alpha e^{-(t+h-s)\mathbb{A}}\mathbb{P}\big(u\cdot\nabla u+\nabla\cdot(\nabla d\odot\nabla d)\big)(s)\big\|_{L^r(\mathbb{R}_+^3)}\,ds\\
  &+\int_{\frac t2}^t\big\|(e^{-h\mathbb{A}}-I)\mathbb{A}^\alpha e^{-(t-s)\mathbb{A}}\mathbb{P}\big(u\cdot\nabla u+\nabla\cdot(\nabla d\odot\nabla d)\big)(s)\big\|_{L^r(\mathbb{R}_+^3)}\,ds\\
\leq&Ch^\delta\big\|\mathbb{A}^{\alpha+\delta}e^{-\frac t2\mathbb{A}}u\big(\frac t2)\big\|_{L^r(\mathbb{R}_+^3)}\\
   &+C\int_t^{t+h}(t+h-s)^{-\alpha}\Big(\|u(s)\|_{L^{r_1}(\mathbb{R}_+^3)}\|\nabla u(s)\|_{L^{r_2}(\mathbb{R}_+^3)}+\|\nabla d(s)\|_{L^{r_1}(\mathbb{R}_+^3)}\|\nabla^2 d(s)\|_{L^{r_2}(\mathbb{R}_+^3)}\Big)\,ds\\
      &+Ch^\delta\int_{\frac t2}^t(t-s)^{-\alpha-\delta}\Big(\|u(s)\|_{L^{r_1}(\mathbb{R}_+^3)}\|\nabla u(s)\|_{L^{r_2}(\mathbb{R}_+^3)}+\|\nabla d(s)\|_{L^{r_1}(\mathbb{R}_+^3)}\|\nabla ^2d(s)\|_{L^{r_2}(\mathbb{R}_+^3)}\Big)\,ds\\
\leq &Ch^\delta t^{-\alpha-\delta}\|u(\frac t2)\|_{L^r(\mathbb{R}_+^3)}\\
   &+C\int_t^{t+h}(t+h-s)^{-\alpha}\Big(s^{-\frac 32(1-\frac1{r_1})-\frac12-\frac 32(1-\frac1{r_2})}+s^{-\frac12-\frac 32(1-\frac1{r_1})-1-\frac 32(1-\frac1{r_2})}\Big)\,ds\\
&+Ch^\delta\int_{\frac t2}^t(t-s)^{-\alpha-\delta}\Big(s^{-\frac 32(1-\frac1{r_1})-\frac12-\frac 32(1-\frac1{r_2})}+s^{-\frac12-\frac 32(1-\frac1{r_1})-1-\frac 32(1-\frac1{r_2})}\Big)\,ds\\
\leq&C\Big(h^\delta t^{-\alpha-\delta-\frac 32+\frac 3{2r}}+h^{1-\alpha}t^{-\frac72+\frac 3{2r}}+h^\delta t^{-\alpha-\delta-\frac52+\frac 3{2r}}\Big),
\end{align*}
where $\frac1r=\frac1{r_1}+\frac1{r_2}$, ${r_1}, {r_2}\in(1,\infty)$.

\textbf{Step 2:}\ Proof of ($\ref{3.12}$).

Applying a similar argument as in the proof of $(\ref{3.9})$, we are able to obtain, after a straightforward computation, for any $t>0$, $h>0$ that
\begin{equation}\label{3.7}
(d-e_3)(t+h)=e^{{h}\Delta}e^{\frac{t}{2}\Delta}(d-e_3)(\frac{t}{2})-\Big(\int_t^{t+h}+\int_{\frac t2}^t\Big)e^{(t-s)\Delta}e^{{h}\Delta}\Big(u(s)\cdot\nabla d(s)-\left|\nabla d(s)\right|^2d(s)\Big)\,{d}s.
\end{equation}
Therefore, from $(\ref{3.5})$, $(\ref{3.7})$, Lemma $\ref{le2.2}$, Lemma $\ref{le2.3}$ and Corollary $\ref{cor3.1}$, we conclude that for all $1<r<\infty$ and any $t>0$, $h>0$,
\begin{align*}
&\big\|(-\Delta)^\alpha  d(t+h)-(-\Delta)^\alpha  d(t)\big\|_{L^r(\mathbb{R}_+^3)}\\
\leq&\big\|(-\Delta)^\alpha(e^{h\Delta}-I)e^{\frac t2\Delta} (d-e_3)(\frac t2)\big\|_{L^r(\mathbb{R}_+^3)}\\
&+\int_t^{t+h}\big\|(-\Delta)^\alpha e^{(t+h-s)\Delta}\big(u\cdot\nabla d+|\nabla d|^2d\big)(s)\big\|_{L^r(\mathbb{R}_+^3)}\,ds\\
&+\int_{\frac t2}^t\big\|(e^{h\Delta}-I)(-\Delta)^\alpha e^{(t-s)\Delta}\big(u\cdot\nabla d+|\nabla d|^2d\big)(s)\big\|_{L^r(\mathbb{R}_+^3)}\,ds\\
\leq&Ch^\delta\big\|(-\Delta)^{\alpha+\delta}e^{\frac t2\Delta} (d-e_3)\big(\frac t2\big)\big\|_{L^r(\mathbb{R}_+^3)}\\
  &+C\int_t^{t+h}(t+h-s)^{-\alpha}\Big(\|u(s)\|_{L^{r_1}(\mathbb{R}_+^3)}\|\nabla d(s)\|_{L^{r_2}(\mathbb{R}_+^3)}+\| \nabla d(s)\|_{L^{2r}(\mathbb{R}_+^3)}^2\Big)\,ds\\
  &+Ch^\delta\int_{\frac t2}^t(t-s)^{-\alpha-\delta}\Big(\|u(s)\|_{L^{r_1}(\mathbb{R}_+^3)}\|\nabla d(s)\|_{L^{r_2}(\mathbb{R}_+^3)}+\| \nabla d(s)\|_{L^{2r}(\mathbb{R}_+^3)}^2\Big)\,ds\\
\leq &Ch^\delta t^{-\alpha-\delta}\| (d-e_3)(\frac t2)\|_{L^r(\mathbb{R}_+^3)}\\
    &+C\int_t^{t+h}(t+h-s)^{-\alpha}s^{-\frac12-3+\frac3{2r}}\,ds+Ch^\delta\int_{\frac t2}^t(t-s)^{-\alpha-\delta}s^{-\frac12-3+\frac3{2r}}\,ds\\
\leq&C\Big(h^\delta t^{-\alpha-\delta-\frac 32+\frac 3{2r}}+h^{1-\alpha}t^{-\frac72+\frac 3{2r}}+h^\delta t^{-\alpha-\delta-\frac52+\frac 3{2r}}\Big),
\end{align*}
where $\frac1r=\frac1{r_1}+\frac1{r_2}$, ${r_1}, {r_2}\in(1,\infty)$.

\textbf{Step 3:}\ Proof of ($\ref{3.14}$).

In the same way, it follows from $(\ref{3.7*})$ that
\begin{equation}\label{3.15}
\nabla d(t+h)=e^{{h}\Delta}e^{\frac{t}{2}\Delta}\nabla d(\frac{t}{2})+\Big(\int_t^{t+h}+\int_{\frac t2}^t\Big)e^{(t-s)\Delta}e^{{h}\Delta}\Big(-\nabla(u\cdot\nabla d)+\nabla(|\nabla d|^2d)\Big)(s)\,{d}s.
\end{equation}
Therefore, from $(\ref{3.5})$, $(\ref{3.15})$, Lemma $\ref{le2.2}$, Lemma $\ref{le2.3}$ and Corollary $\ref{cor3.1}$, we conclude that for all $1<r<\infty$ and any $t>0$, $h>0$,
\begin{align*}
&\big\|(-\Delta)^\alpha \nabla d(t+h)-(-\Delta)^\alpha \nabla d(t)\big\|_{L^r(\mathbb{R}_+^3)}\\
\leq&\big\|(-\Delta)^\alpha(e^{h\Delta}-I)e^{\frac t2\Delta} \nabla d(\frac t2)\big\|_{L^r(\mathbb{R}_+^3)}\\
  &+\int_t^{t+h}\big\|(-\Delta)^\alpha e^{(t+h-s)\Delta}\big(\nabla(u\cdot\nabla d)+\nabla(|\nabla d|^2d)\big)(s)\big\|_{L^r(\mathbb{R}_+^3)}\,ds\\
     &+\int_{\frac t2}^t\big\|(e^{h\Delta}-I)(-\Delta)^\alpha e^{(t-s)\Delta}\big(\nabla(u\cdot\nabla d)+\nabla(|\nabla d|^2d)\big)(s)\big\|_{L^r(\mathbb{R}_+^3)}\,ds\\
\leq&Ch^\delta\big\|(-\Delta)^{\alpha+\delta}e^{\frac t2\Delta} \nabla d(\frac t2)\big\|_{L^r(\mathbb{R}_+^3)}\\
   &+C\int_t^{t+h}(t+h-s)^{-\alpha}\Big(\|\nabla u(s)\|_{L^{r_1}(\mathbb{R}_+^3)}\|\nabla d(s)\|_{L^{r_2}(\mathbb{R}_+^3)}
+\|u(s)\|_{L^{r_1}(\mathbb{R}_+^3)}\|\nabla^2 d(s)\|_{L^{r_2}(\mathbb{R}_+^3)}\\
   &+\|\nabla d(s)\|_{L^{r_1}(\mathbb{R}_+^3)}\|\nabla^2 d(s)\|_{L^{r_2}(\mathbb{R}_+^3)}
+\| \nabla d(s)\|_{L^{3r}(\mathbb{R}_+^3)}^3\Big)\,ds\\
&+Ch^\delta\int_{\frac t2}^t(t-s)^{-\alpha-\delta}\Big(\|\nabla u(s)\|_{L^{r_1}(\mathbb{R}_+^3)}\|\nabla d(s)\|_{L^{r_2}(\mathbb{R}_+^3)}
+\|u(s)\|_{L^{r_1}(\mathbb{R}_+^3)}\|\nabla^2 d(s)\|_{L^{r_2}(\mathbb{R}_+^3)}\\
&+\|\nabla d(s)\|_{L^{r_1}(\mathbb{R}_+^3)}\|\nabla^2 d(s)\|_{L^{r_2}(\mathbb{R}_+^3)}
+\| \nabla d(s)\|_{L^{3r}(\mathbb{R}_+^3)}^3\Big)\,ds\\
\leq &Ch^\delta t^{-\alpha-\delta}\| \nabla d(\frac t2)\|_{L^r(\mathbb{R}_+^3)}\\
  &+C\int_t^{t+h}(t+h-s)^{-\alpha}s^{-\frac12-\frac12-3+\frac3{2r}}\,ds+Ch^\delta\int_{\frac t2}^t(t-s)^{-\alpha-\delta}s^{-\frac12-\frac12-3+\frac3{2r}}\,ds\\
\leq&C\Big(h^\delta t^{-2-\alpha-\delta+\frac 3{2r}}+h^{1-\alpha}t^{-4+\frac 3{2r}}+h^\delta t^{-\alpha-\delta-3+\frac 3{2r}}\Big),
\end{align*}
where $\frac1r=\frac1{r_1}+\frac1{r_2}$, ${r_1}, {r_2}\in(1,\infty)$.
\end{proof}
Base on Lemma \ref{le3.3*}, we can deduce the following decay rates.
\begin{Lemma}\label{le3.4}
Assume $u_0, d_0-e_3\in L^1(\mathbb{R}_+^{3})$, if $(u,d,p)$ is the global strong solution obtained by Proposition $\ref{solu1}$, then it holds that for any $t>0$ and $1<r<\infty$,
$$\|\nabla^{2}u(t)\|_{{L^{r}(\mathbb{R}_{+}^{3})}}+\|\partial_{t}u(t)\|_{{L^{r}(\mathbb{R}_{+}^{3})}}+\|\nabla p(t)\|_{{L^{r}(\mathbb{R}_{+}^{3})}}+\|\partial_{t}d(t)\|_{{L^{r}(\mathbb{R}_{+}^{3})}}\leq Ct^{-1-\frac{3}{2}(1-\frac1r)}.$$
\end{Lemma}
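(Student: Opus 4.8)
The plan is to handle the four quantities in two groups. The estimate for $\partial_t d$ is almost immediate: rewriting $(\ref{1.1-main})_2$ as $\partial_t d=\Delta d-u\cdot\nabla d+|\nabla d|^2d$ and using that $|d|$ is bounded gives $\|\partial_t d(t)\|_{L^r}\le C\|\nabla^2 d(t)\|_{L^r}+C\|u(t)\|_{L^{2r}}\|\nabla d(t)\|_{L^{2r}}+C\|\nabla d(t)\|_{L^{2r}}^2$, and Corollary \ref{cor3.1} together with Lemma \ref{le2.3} shows the first term decays like $t^{-1-\frac32(1-1/r)}$ and the other two strictly faster. So the substance of the lemma is the bound for $\nabla^2u$, $\partial_t u$ and $\nabla p$, and the key step is to prove $\|\partial_t u(t)\|_{L^r}\le Ct^{-1-\frac32(1-1/r)}$ (equivalently $\|\mathbb{A}u(t)\|_{L^r}$, since $\partial_t u=-\mathbb{A}u-\mathbb{P}F$ with $F:=u\cdot\nabla u+\nabla\cdot(\nabla d\odot\nabla d)$).

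To get that bound I would start from the mild formula on $[\tfrac t2,t]$, apply $\mathbb{A}$, and regularize the singular Duhamel integral by subtracting $\mathbb{P}F(t)$, obtaining
\[
\mathbb{A}u(t)=\mathbb{A}e^{-\frac t2\mathbb{A}}u(\tfrac t2)-\int_{\frac t2}^t\mathbb{A}e^{-(t-s)\mathbb{A}}\big(\mathbb{P}F(s)-\mathbb{P}F(t)\big)\,ds-\big(I-e^{-\frac t2\mathbb{A}}\big)\mathbb{P}F(t).
\]
By $\|\mathbb{A}e^{-\sigma\mathbb{A}}\|\le C\sigma^{-1}$ and Lemma \ref{le2.3}, the first term is $\le Ct^{-1}\|u(\tfrac t2)\|_{L^r}\le Ct^{-1-\frac32(1-1/r)}$; the last term is $\le C\|\mathbb{P}F(t)\|_{L^r}\le C\|F(t)\|_{L^r}$, and splitting $F$ and using Lemma \ref{le2.3} and Corollary \ref{cor3.1} gives $\|F(t)\|_{L^r}\le Ct^{-\frac72+\frac3{2r}}$, which is faster than the target (here I crucially use that $\mathbb{P}$ is bounded on $L^r$ for $1<r<\infty$, so no decomposition of $\mathbb{P}$ is required). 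For the middle term I would expand $F(s)-F(t)$ as a sum of products in which exactly one factor is a time difference of $u$, $\nabla u$, $\nabla d$ or $\nabla^2 d$, bounding each such time difference by Lemma \ref{le3.3*} (taking $\alpha=\tfrac12$, and a small positive $\alpha$ followed by a Sobolev embedding for an undifferentiated difference of $u$ or $\nabla d$, together with the Riesz-transform identifications $\|\mathbb{A}^{1/2}v\|_{L^q}\sim\|\nabla v\|_{L^q}$ and $\|(-\Delta)^{1/2}v\|_{L^q}\sim\|\nabla v\|_{L^q}$) and the remaining factors by Lemma \ref{le2.3} and Corollary \ref{cor3.1}. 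This yields $\|\mathbb{P}F(s)-\mathbb{P}F(t)\|_{L^r}\le C(t-s)^\delta t^{-\delta-\frac72+\frac3{2r}}$ for $\tfrac t2\le s\le t$ and small $\delta>0$, and then $\|\mathbb{A}e^{-(t-s)\mathbb{A}}\|\le C(t-s)^{-1}$ together with Lemma \ref{le2.2} controls the middle term by $Ct^{-\frac72+\frac3{2r}}\le Ct^{-1-\frac32(1-1/r)}$.

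Once $\|\partial_t u(t)\|_{L^r}\le Ct^{-1-\frac32(1-1/r)}$ is in hand, I would regard $(u,p)$ as the solution of the steady Stokes system $-\Delta u+\nabla p=-\partial_t u-F$, $\nabla\cdot u=0$, $u|_{\partial\mathbb{R}_+^3}=0$, and apply the regularity estimate \eqref{3.43} with $\ell=0$ to conclude $\|\nabla^2u(t)\|_{L^r}+\|\nabla p(t)\|_{L^r}\le C(\|\partial_t u(t)\|_{L^r}+\|F(t)\|_{L^r})\le Ct^{-1-\frac32(1-1/r)}$, the range of small $t$ being covered by the standard short-time regularity of the strong solution (using in particular $u_0\in L^1(\mathbb{R}_+^3)$). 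The hard part will be the estimate of $\|\mathbb{P}F(s)-\mathbb{P}F(t)\|_{L^r}$: the kernel $\|\mathbb{A}e^{-(t-s)\mathbb{A}}\|\sim(t-s)^{-1}$ is not integrable, so one must split the quadratic, derivative-loaded nonlinearity $F$ into time differences with care and invoke Lemma \ref{le3.3*} with exactly the right fractional order, so that the surviving Hölder gain $(t-s)^\delta$ with $\delta>0$ renders the Duhamel integral convergent — which is precisely why Lemma \ref{le3.3*} was proved beforehand.
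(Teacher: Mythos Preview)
Your proposal is correct and follows essentially the same route as the paper: the heart of the argument is the decomposition of $\mathbb{A}u(t)$ via the mild formula with the subtraction $\mathbb{P}F(s)-\mathbb{P}F(t)$ to tame the $(t-s)^{-1}$ singularity, using the H\"older-in-time estimates of Lemma~\ref{le3.3*} exactly as you describe. The only cosmetic differences are that the paper starts the Duhamel representation at $t/4$ rather than $t/2$ (producing an extra, harmless non-singular piece $\int_{t/4}^{t/2}$), and that for $\nabla^2u$ the paper appeals directly to $\|\nabla^2u\|_{L^r}\le C\|\mathbb{A}u\|_{L^r}$ rather than to the steady Stokes estimate~\eqref{3.43}; your variant via \eqref{3.43} is equally valid.
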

\begin{proof}
By a direct calculation, we obtain for $1<r<\infty$ and $t>0$ that
\begin{align}\label{3.16}
\mathbb{A}u(t)=&\mathbb{A}e^{-\frac{3t}4\mathbb{A}}u{(\frac t4)}-(I-e^{-\frac t2\mathbb{A}})\mathbb{P}\Big(u\cdot\nabla u+\nabla\cdot(\nabla d\odot\nabla d)\Big)(t)\nonumber\\
&-\int_{\frac t4}^{\frac t2}\mathbb{A}e^{-(t-s)\mathbb{A}}\mathbb{P}\Big(u\cdot\nabla u+\nabla\cdot(\nabla d\odot\nabla d)\Big)(s)\,ds\nonumber\\
&-\int_{\frac t2}^t\mathbb{A}e^{-(t-s)\mathbb{A}}\Big(\mathbb{P}(u\cdot\nabla u)(s)-\mathbb{P}(u\cdot\nabla u)(t)\Big)\,ds\nonumber\\
&-\int_{\frac t2}^t\mathbb{A}e^{-(t-s)\mathbb{A}}\Big(\mathbb{P}\nabla\cdot(\nabla d\odot\nabla d)(s)-\mathbb{P}\nabla\cdot(\nabla d\odot\nabla d)(t)\Big)\,ds\nonumber\\
=&J_1(t)+J_2(t)+J_3(t)+J_4(t)+J_5(t).
\end{align}
According to Lemma $\ref{le2.3}$ and Corollary $\ref{cor3.1}$, we have for any $1<r<\infty$ and $t>0$,
\begin{align}
\|J_1(t)\|_{L^r(\mathbb{R}_+^3)}&\leq Ct^{-1}\|u{(\frac t4)}\|_{L^r(\mathbb{R}_+^3)}\leq Ct^{-1-\frac 32(1-\frac1r)};\label{3.17}\\
\|J_2(t)\|_{L^r(\mathbb{R}_+^3)}&\leq2\|\mathbb{P}(u\cdot\nabla u)(t)+\mathbb{P}\nabla\cdot(\nabla d\odot\nabla d)(t)\|_{L^r(\mathbb{R}_+^3)}\nonumber\\
&\leq C\|u(t)\|_{L^{2r}(\mathbb{R}_+^3)}\|\nabla u(t)\|_{L^{2r}(\mathbb{R}_+^3)}+C\|\nabla d(t)\|_{L^{2r}(\mathbb{R}_+^3)}\|\nabla ^2d(t)\|_{L^{2r}(\mathbb{R}_+^3)}\nonumber\\
&\leq C t^{-2-\frac 32(1-\frac1r)};\label{3.18}\\
\|J_3(t)\|_{L^r(\mathbb{R}_+^3)}&\leq C\int_{\frac t4}^{\frac t2}(t-s)^{-1}\Big(\|\mathbb{P}(u\cdot\nabla u)(s)\|_{L^r(\mathbb{R}_+^3)}+\|\mathbb{P}(\nabla\cdot(\nabla d\odot\nabla d) )(s)\|_{L^r(\mathbb{R}_+^3)}\Big)\,ds\nonumber\\
&\leq C\int_{\frac t4}^{\frac t2}(t-s)^{-1}\Big(\|u(s)\|_{L^{2r}(\mathbb{R}_+^3)}\|\nabla u(s)\|_{L^{2r}(\mathbb{R}_+^3)}+\|\nabla d(s)\|_{L^{2r}(\mathbb{R}_+^3)}\|\nabla ^2d(s)\|_{L^{2r}(\mathbb{R}_+^3)}\Big)\,ds\nonumber\\
&\leq C\int_{\frac t4}^{\frac t2}(t-s)^{-1}\Big(s^{-\frac12-3+\frac 3{2r}}+s^{-\frac32-3+\frac 3{2r}}\Big)\,ds\nonumber\\
&\leq   Ct^{-2-\frac 32(1-\frac1r)}.\label{3.19}
\end{align}

By Lemma $\ref{le3.3*}$ and calculations similar to \cite[p.3950]{Han-2011}, we have
\begin{equation}\label{3.20}
\|J_4(t)\|_{L^r(\mathbb{R}_+^3)}\leq Ct^{-2-\frac 32(1-\frac1r)}.
\end{equation}
Note that $\|\nabla d(t)\|_{L^r(\mathbb{R}_+^3)}\approx \|(-\Delta)^{\frac12}d(t)\|_{L^r(\mathbb{R}_+^3)}$, using Lemma $\ref{le2.2}$, Lemma $\ref{le2.3}$, Lemma $\ref{le3.3*}$ and Corollary $\ref{cor3.1}$, we obtain the upper bound of the time decay rate for $J_5(t)$ as
\begin{align}\label{3.21}
\|J_5(t)\|_{L^r(\mathbb{R}_+^3)}\leq & C\int_{\frac t2}^t(t-s)^{-1}\big\|\mathbb{P}\nabla\cdot(\nabla d\odot\nabla d)(s)-\mathbb{P}\nabla\cdot(\nabla d\odot\nabla d)(t)\big\|_{L^r(\mathbb{R}_+^3)}\,ds\nonumber\\
 \leq &C\int_{\frac t2}^t(t-s)^{-1}\big\|\mathbb{P}\nabla\cdot\Big(\nabla d(s)\odot\big(\nabla d(s)-\nabla d(t)\big)\Big)\big\|_{L^r(\mathbb{R}_+^3)}\,ds\nonumber\\
 &+C\int_{\frac t2}^t(t-s)^{-1}\big\|\mathbb{P}\nabla\cdot\Big(\big(\nabla d(s)-\nabla d(t)\big)\odot \nabla d(t)\Big)\big\|_{L^r(\mathbb{R}_+^3)}\,ds\nonumber\\
 \leq & C\int_{\frac t2}^t(t-s)^{-1}\|\nabla^2 d(s)\|_{L^{2r}(\mathbb{R}_+^3)}\|\nabla d(s)-\nabla d(t)\|_{L^{2r}(\mathbb{R}_+^3)}\,ds\nonumber\\
 &+C\int_{\frac t2}^t(t-s)^{-1}\|\nabla d(s)\|_{L^{2r}(\mathbb{R}_+^3)}\|\nabla (\nabla d(s)-\nabla d(t))\|_{L^{2r}(\mathbb{R}_+^3)}\,ds\nonumber\\
 \leq & C\int_{\frac t2}^t\Big((t-s)^{\delta-1}s^{-\frac92-\delta+\frac 3{2r}}+(t-s)^{-\frac12}s^{-6+\frac 3{2r}}+(t-s)^{\delta-1}s^{-\delta-\frac{11}2+\frac 3{2r}}\Big)\,ds\nonumber\\
 \leq & Ct^{-\frac92+\frac 3{2r}}.
\end{align}

A combination of $(\ref{3.16})-$$(\ref{3.21})$ shows that for any $1<r<\infty$ and $t>0$,
$$\|\mathbb{A}u(t)\|_{L^r(\mathbb{R}_+^3)}\leq{C}t^{-1-\frac 32(1-\frac1r)},$$
which implies that (see \cite{Borchers-Miyakawa1988})
$$\|\nabla^2u(t)\|_{L^r(\mathbb{R}_+^3)}\leq{C}\|\mathbb{A}u(t)\|_{L^r(\mathbb{R}_+^3)}\leq Ct^{-1-\frac 32(1-\frac1r)}.$$
Note that $\partial_tu(t)=-\mathbb{A}u(t)-\mathbb{P}(u\cdot\nabla u)(t)-\mathbb{P}\nabla\cdot(\nabla d\odot\nabla d)(t)$, we have
\begin{align*}
\|\partial_tu(t)\|_{L^r(\mathbb{R}_+^3)}&\leq\|\mathbb{A}u(t)\|_{L^r(\mathbb{R}_+^3)}+\|u(t)\|_{L^{r_1}(\mathbb{R}_+^3)}\|\nabla u(t)\|_{L^{r_2}(\mathbb{R}_+^3)}+\|\nabla d(t)\|_{L^{r_1}(\mathbb{R}_+^3)}\|\nabla^2 d(t)\|_{L^{r_2}(\mathbb{R}_+^3)}\\
&\leq C\Big(t^{-1-\frac 32(1-\frac1r)}+t^{-\frac72+\frac3{2r}}+t^{-\frac92+\frac3{2r}}\Big)  \leq Ct^{-1-\frac 32(1-\frac1r)},
\end{align*}
and then
\begin{align*}
\|\nabla p(t)\|_{L^r(\mathbb{R}_+^3)}\leq&\|\partial_tu(t)\|_{L^r(\mathbb{R}_+^3)}+\|\nabla^2u(t)\|_{L^r(\mathbb{R}_+^3)}+\|u(t)\|_{L^{r_1}(\mathbb{R}_+^3)}\|\nabla u(t)\|_{L^{r_2}(\mathbb{R}_+^3)}\\
&+\|\nabla d(t)\|_{L^{r_1}(\mathbb{R}_+^3)}\|\nabla^2 d(t)\|_{L^{r_2}(\mathbb{R}_+^3)}\\
\leq& Ct^{-1-\frac 32(1-\frac1r)},
\end{align*}
where $1<r<\infty$, $\frac1r=\frac1{r_1}+\frac1{r_2}$, ${r_1}, {r_2}\in(1,\infty)$.

On the other hand, by $(\ref{1.1-main})_2$,
it follows from Lemma $\ref{le2.3}$ and Corollary $\ref{cor3.1}$ that
\begin{align*}
\|\partial_t{d}(t)\|_{L^{r}(\mathbb{R}_+^3)}
\leq&C \Big(\|u(t)\|_{L^{2r}(\mathbb{R}_+^3)}\|\nabla d(t)\|_{L^{2r}(\mathbb{R}_+^3)}+\|\nabla ^2d(t)\|_{L^{r}(\mathbb{R}_+^3)}+\|\nabla d(t)\|_{L^{2r}(\mathbb{R}_+^3)}^2\Big)\\
\leq &Ct^{-1-\frac{3}{2} (1-\frac1{r})}.
\end{align*}
Thus, we complete the proof of Lemma \ref{le3.4}.
\end{proof}
Next we will prove the Theorem $\ref{th1.3-main}$ in the case $k=1,2$ by using the similar arguments for the incompressible Navier-Stokes equations in the half-space (see \cite{Han-2016}).

\begin{Lemma}\label{le3.7**}
Assume $u_0, d_0-e_3\in L^1(\mathbb{R}_+^{3})$. Then there exists $t_1>0$ such that for $1<r<\infty$ and $t\geq t_1$, the strong solution $(u,d)$ of $(\ref{1.1-main})$-$(\ref{1.2})$ obtained in Proposition $\ref{solu1}$ satisfies
$$\|\nabla^{3}d(t)\|_{L^r(\mathbb{R}_+^3)}\leq Ct^{-\frac{3}2-\frac 32(1-\frac1r)}.$$
\end{Lemma}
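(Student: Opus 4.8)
The plan is to reduce the estimate of $\nabla^3 d$ to a decay estimate for $\nabla\partial_t d$ via elliptic regularity for the Laplacian, and then to obtain the latter from the evolution equation satisfied by $\partial_t d$ together with a continuity argument in the spirit of Lemma \ref{le3.1}. Rewriting $(\ref{1.1-main})_2$ as $-\Delta d=-\partial_t d-u\cdot\nabla d+|\nabla d|^2 d$ and differentiating once in space, the standard elliptic estimate on $\mathbb{R}_+^3$ gives
\begin{align*}
\|\nabla^3 d(t)\|_{L^r(\mathbb{R}_+^3)}&\leq C\|\nabla\Delta d(t)\|_{L^r(\mathbb{R}_+^3)}\\
&\leq C\|\nabla\partial_t d(t)\|_{L^r(\mathbb{R}_+^3)}+C\|\nabla(u\cdot\nabla d)(t)\|_{L^r(\mathbb{R}_+^3)}+C\|\nabla(|\nabla d|^2 d)(t)\|_{L^r(\mathbb{R}_+^3)},
\end{align*}
where one uses that $\partial_3 d$ obeys the homogeneous Dirichlet condition on $\partial\mathbb{R}_+^3$, that the tangential components $\partial_1 d,\partial_2 d$ obey the homogeneous Neumann condition $\partial_3(\partial_\alpha d)|_{\partial\mathbb{R}_+^3}=\partial_\alpha(\partial_3 d)|_{\partial\mathbb{R}_+^3}=0$, and that $\nabla d\to0$ as $|x|\to\infty$. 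Writing $d=(d-e_3)+e_3$, expanding the last two terms, splitting by Hölder's inequality and inserting the rates of Corollary \ref{cor3.1} and Lemma \ref{le2.3}, one finds $\|\nabla(u\cdot\nabla d)(t)\|_{L^r(\mathbb{R}_+^3)}+\|\nabla(|\nabla d|^2 d)(t)\|_{L^r(\mathbb{R}_+^3)}\leq Ct^{-4+\frac3{2r}}$, which decays strictly faster than the target rate $t^{-\frac32-\frac32(1-\frac1r)}=t^{-3+\frac3{2r}}$. Thus it suffices to prove $\|\nabla\partial_t d(t)\|_{L^r(\mathbb{R}_+^3)}\leq Ct^{-\frac32-\frac32(1-\frac1r)}$ for $t$ large.

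Set $w=\partial_t d$. Differentiating $(\ref{1.1-main})_2$ in time, $w$ solves the forced heat equation $w_t-\Delta w=F$ with $F=-\partial_t u\cdot\nabla d-u\cdot\nabla w+2\langle\nabla d,\nabla w\rangle d+|\nabla d|^2 w$ and $\partial_3 w|_{\partial\mathbb{R}_+^3}=0$; Duhamel's formula started at time $t/2$, after applying $\nabla$, reads $\nabla w(t)=\nabla e^{\frac t2\Delta}w(\frac t2)+\int_{\frac t2}^{t}\nabla e^{(t-s)\Delta}F(s)\,ds$. For the first term, Lemma \ref{le2.1} together with the decay of $\partial_t d$ from Lemma \ref{le3.4} gives $\|\nabla e^{\frac t2\Delta}w(\frac t2)\|_{L^r(\mathbb{R}_+^3)}\leq Ct^{-\frac12}\|\partial_t d(\frac t2)\|_{L^r(\mathbb{R}_+^3)}\leq Ct^{-\frac12}t^{-1-\frac32(1-\frac1r)}=Ct^{-\frac32-\frac32(1-\frac1r)}$, which already has the asserted rate. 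In the integral, $\|\nabla e^{(t-s)\Delta}F(s)\|_{L^r(\mathbb{R}_+^3)}\leq C(t-s)^{-\frac12}\|F(s)\|_{L^r(\mathbb{R}_+^3)}$ with integrable singularity on $[\frac t2,t]$; using $d=(d-e_3)+e_3$, the terms $-\partial_t u\cdot\nabla d$ and $|\nabla d|^2 w$ of $F$ decay at least like $s^{-\frac92+\frac3{2r}}$ by Lemma \ref{le3.4}, Lemma \ref{le2.3} and Corollary \ref{cor3.1}, while the remaining terms satisfy $\|u\cdot\nabla w\|_{L^r(\mathbb{R}_+^3)}+2\|\langle\nabla d,\nabla w\rangle d\|_{L^r(\mathbb{R}_+^3)}\leq C\big(\|u(s)\|_{L^\infty(\mathbb{R}_+^3)}+\|\nabla d(s)\|_{L^\infty(\mathbb{R}_+^3)}(1+\|(d-e_3)(s)\|_{L^\infty(\mathbb{R}_+^3)})\big)\|\nabla w(s)\|_{L^r(\mathbb{R}_+^3)}\leq Cs^{-\frac32}\|\nabla w(s)\|_{L^r(\mathbb{R}_+^3)}$. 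By Lemma \ref{le2.2} this yields
$$\|\nabla w(t)\|_{L^r(\mathbb{R}_+^3)}\leq Ct^{-\frac32-\frac32(1-\frac1r)}+C\int_{\frac t2}^{t}(t-s)^{-\frac12}s^{-\frac32}\|\nabla w(s)\|_{L^r(\mathbb{R}_+^3)}\,ds .$$

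To close this, observe that by Proposition \ref{solu1} the map $s\mapsto\|\nabla\partial_t d(s)\|_{L^r(\mathbb{R}_+^3)}$ is finite on $(0,\infty)$, hence bounded on each compact interval, so a continuity/bootstrap argument applies: assuming $\|\nabla w(s)\|_{L^r(\mathbb{R}_+^3)}\leq A(1+s)^{-\frac32-\frac32(1-\frac1r)}$ on $[t_1,t]$, Lemma \ref{le2.2} bounds the integral above by $CA\,t^{-1}t^{-\frac32-\frac32(1-\frac1r)}\leq\frac A2 t^{-\frac32-\frac32(1-\frac1r)}$ as soon as $t\geq t_1$ with $t_1$ depending only on $C$; choosing $A$ large relative to the constant of the first term, the bound self-improves and propagates to all $t\geq t_1$, giving $\|\nabla\partial_t d(t)\|_{L^r(\mathbb{R}_+^3)}\leq Ct^{-\frac32-\frac32(1-\frac1r)}$. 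Inserting this into the reduction of the first paragraph proves the lemma.

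The main obstacle is that the forcing $F$ contains $\nabla w=\nabla\partial_t d$, i.e.\ the third derivatives of $d$ one is trying to estimate; this self-reference is exactly why the conclusion is stated only for $t\geq t_1$, and it is absorbed — as for $\|\nabla^2 d\|_{L^\infty}$ and $\|\nabla u\|_{L^\infty}$ in Lemma \ref{le3.1} — through the smallness of the coefficient $\int_{\frac t2}^t(t-s)^{-1/2}s^{-3/2}\,ds\sim t^{-1}$ multiplying $\|\nabla w(s)\|_{L^r(\mathbb{R}_+^3)}$. The remaining effort is routine book-keeping: checking that each term produced by expanding $\nabla(u\cdot\nabla d)$, $\nabla(|\nabla d|^2 d)$ and $F$ decays strictly faster than $t^{-\frac32-\frac32(1-\frac1r)}$ after the sharp rates of Corollary \ref{cor3.1}, Lemma \ref{le2.3} and Lemma \ref{le3.4} are inserted. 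An alternative avoiding the continuity argument is to differentiate the Duhamel representation $(\ref{3.7*})$ of $\nabla d$ in $t$ (with a fixed initial time, then evaluated at $t/2$, as in the proof of Lemma \ref{le3.4}) and to control the non-integrable kernel $\Delta e^{(t-s)\Delta}$ by splitting the nonlinearity into its increment plus its value at time $t$, estimating the increment via the Hölder-in-time bounds $(\ref{3.9})$, $(\ref{3.12})$ and $(\ref{3.14})$ of Lemma \ref{le3.3*}.
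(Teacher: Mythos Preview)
Your proposal is correct and follows essentially the same route as the paper: both reduce $\|\nabla^3 d\|_{L^r}$ to $\|\nabla\partial_t d\|_{L^r}$ via elliptic regularity for $(\ref{1.1-main})_2$, derive the heat equation satisfied by $w=\partial_t d$, apply Duhamel from $t/2$, and close by absorbing the self-referential term $\|\nabla w\|_{L^r}$ through the smallness of $\int_{t/2}^t(t-s)^{-1/2}s^{-3/2}\,ds\sim t^{-1}$ for $t\ge t_1$. The only cosmetic differences are that the paper works with the explicit Green's function $\mathcal{W}(x,y,t)$ and its pointwise bounds $(\ref{3.24})$ rather than the semigroup notation $e^{t\Delta}$, and phrases the continuity argument in terms of the sup-weighted quantity $f_1(t)$.
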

\begin{proof}
Set $d_1(t)=\partial_t d(t)$. It is not difficult to verify that $d_1(t)$ satisfies
\begin{align}\label{d1}
	\left\{
	\begin{array}{lll}
		\partial_t d_1+u_t\cdot\nabla d+u\cdot\nabla d_1=\Delta d_1+|\nabla d|^2d_1+2(\nabla d:\nabla d_1) d&\text{in}\quad\mathbb{R}_+^3\times(1,\infty),\\
       \frac{\partial d_1}{\partial x_3}=0&\text{on}\quad\partial\mathbb{R}_+^3\times(1,\infty),\\
		d_1(x,1)=\partial_td(1)&\text{in}\quad\mathbb{R}_+^3.\\
	\end{array}
	\right.
\end{align}
Using the $\text{Duhamel}$'s principle and the solution operators to the heat equation in $\mathbb{R}_+^3$ (see \cite{Ukai-1987}), we can know that
\begin{align}\label{3.23}
d_1(x,t)
=&\int_{\mathbb{R}_+^3}\mathcal{W}(x,y,\frac{t}{2})d_1(y,\frac{t}{2})\,dy\nonumber\\
&-\int_{\frac{t}{2}}^t\int_{\mathbb{R}_+^3}\mathcal{W}(x,y,t-s)\big(u_t\cdot\nabla d+u\cdot\nabla d_1-|\nabla d|^2d_1-2(\nabla d:\nabla d_1) d\big)(y,s)\,dyds,
\end{align}
where $\mathcal{W}(x,y,t)=G_t(x-y)-G_t(x-y^*)$, $y^*=(y_1,y_2,-y_3)$. Moreover, $G_t(x-y^*)$ satisfies the following estimate (see \cite[p.346]{Solonnikov-2003}):
\begin{equation}\label{3.24}
|\partial_t^{\widetilde{s}}\partial_x^{\widetilde{\ell}}\partial_y^{\widetilde{m}}G_t(x-y^*)|\leq Ct^{-s-\frac{\widetilde{m}_3}{2}}(t+x_3^2)^{-\frac{\widetilde{\ell}_3}{2}}(|x-y^*|^2+t)^{-\frac{3+|\widetilde{\ell}'|+|\widetilde{m}'|}{2}}e^{-\frac{cy_3^2}{t}},
\end{equation}
where $\widetilde{m}=(\widetilde{m}_1,\widetilde{m}_2,\widetilde{m}_3)=(\widetilde{m}',\widetilde{m}_3)$, $
\widetilde{\ell}=(\widetilde{\ell}_1,\widetilde{\ell}_2,\widetilde{\ell}_3)=(\widetilde{\ell'},\widetilde{\ell}_3).$

Let $1<r<\infty$. Applying Corollary $\ref{cor3.1}$, Lemma  $\ref{le2.2}$, Lemma $\ref{le2.3}$, Lemma $\ref{pro3.1}$ and Lemma $\ref{le3.4}$, together with $(\ref{3.24})$, we obtain that for any integer $m\geq1$ and $t>2$,
\begin{align}\label{3.25}
&\|\nabla_x^m\int_{\mathbb{R}_+^3}\mathcal{W}(\cdot,y,\frac{t}{2})d_1(y,\frac{t}{2})\,dy\|_{L^r(\mathbb{R}_+^3)}\nonumber\\
\leq &C\|\nabla^mG_t\|_{L^1(\mathbb{R}_+^3)}\|d_1(\frac t2)\|_{L^r(\mathbb{R}_+^3)}
+C\|d_1(\frac t2)\|_{L^r(\mathbb{R}_+^3)}\times\nonumber\\
&\sum_{m=m_3+|m'|}\int_{\mathbb{R}_+^3}(x_3+\sqrt{t} )^{-m_3}(|x'|+x_3+\sqrt{t})^{-3-|m'|}\,dx'dx_3\nonumber\\
\leq & Ct^{-\frac{m+2}{2}-\frac{3}{2}(1-\frac{1}{r})};
\end{align}
and
\begin{align}\label{3.26}
&\big\|\int_{\frac{t}{2}}^{t}\int_{\mathbb{R}_{+}^{3}}\nabla_{x}\mathcal{W}(x,y,t-s)\big(u_t\cdot\nabla d+u\cdot\nabla d_1-|\nabla d|^2d_1-2(\nabla d:\nabla d_1) d\big)(y,s)\,dyds\big\|_{L^{r}(\mathbb{R}_{+}^{3})}\nonumber\\
\leq & C\int_{\frac{t}{2}}^t\|\nabla G_{t-s}(\cdot)\|_{L^1(\mathbb{R}_+^3)}\big\|\big(u_t\cdot\nabla d+u\cdot\nabla d_1+|\nabla d|^2d_1+(\nabla d:\nabla d_1) d\big)(s)\big\|_{L^r(\mathbb{R}_+^3)}\,ds\nonumber\\
\leq &C\int_{\frac t2}^t(t-s)^{-\frac12}\big(\|\partial_tu(s)\|_{L^{2r}(\mathbb{R}_+^3)}\|\nabla d(s)\|_{L^{2r}(\mathbb{R}_+^3)}
+\|u(s)\|_{L^{\infty}(\mathbb{R}_+^3)}\|\nabla d_1(s)\|_{L^{r}(\mathbb{R}_+^3)}\nonumber\\
&+\|\nabla d(s)\|_{L^{4r}(\mathbb{R}_+^3)}^2\|d_1(s)\|_{L^{2r}(\mathbb{R}_+^3)}+\|\nabla d(s)\|_{L^{\infty}(\mathbb{R}_+^3)}\|\nabla d_1(s)\|_{L^{r}(\mathbb{R}_+^3)}\big)\,ds\nonumber\\
\leq &C\int_{\frac{t}{2}}^{t}(t-s)^{-\frac{1}{2}}\big(s^{-1-\frac{3}{2}(1-\frac{1}{2r})-\frac{1}{2}-\frac{3}{2}(1-\frac{1}{2r})}
+s^{-1-3(1-\frac{1}{4r})-1-\frac{3}{2}(1-\frac{1}{2r})}\big)\,ds\nonumber\\
&+Cf_1(t)\int_{\frac{t}{2}}^t(t-s)^{-\frac{1}{2}}\big(s^{-\frac{3}{2}-\frac{3}{2}-\frac{3}{2}(1-\frac{1}{r})}+s^{-2-\frac{3}{2}-\frac{3}{2}(1-\frac{1}{r})}\big)\,ds\nonumber\\
\leq &Ct^{-\frac{5}{2}-\frac{3}{2}(1-\frac1r)}+C_1t^{-\frac{5}{2}-\frac{3}{2}(1-\frac1r)}f_1(t),
\end{align}
where $f_1(t)=\sup\limits_{0<s\leq t}[s^{\frac{3}{2}+\frac{3}{2}(1-\frac{1}{r})}\|\nabla d_1(s)\|_{L^r(\mathbb{R}_+^3)}].$

By $(\ref{3.23})$, $(\ref{3.25})$ and $(\ref{3.26})$, we have
$$t^{\frac{3}{2}+\frac{3}{2}(1-\frac{1}{r})}\|\nabla d_1(t)\|_{L^r(\mathbb{R}_+^3)}\leq C+C_1t^{-1}f_1(t),$$
from which,
$$f_1(t)\leq C+C_1t^{-1}f_1(t).$$
This implies that $f_1(t)$ is bounded for some $t_1\leq t$, namely,
\begin{equation}\label{3.28}
\|\nabla\partial_td(t)\|_{L^r(\mathbb{R}_+^3)}=\|\nabla d_1(t)\|_{L^r(\mathbb{R}_+^3)}\leq Ct^{-\frac{3}{2}-\frac{3}{2}(1-\frac{1}{r})}.
\end{equation}
In view of Lemma $\ref{le2.3}$ and Corollary $\ref{cor3.1}$, we find that
 \begin{align}\label{3.30}
 \|\nabla(u\cdot\nabla d)(t)\|_{L^r(\mathbb{R}_+^3)}&\leq C(\|\nabla u(t)\|_{L^{2r}(\mathbb{R}_+^3)}\|\nabla d(t)\|_{L^{2r}(\mathbb{R}_+^3)}
+\|u(t)\|_{L^{2r}(\mathbb{R}_+^3)}\|\nabla^2d(t)\|_{L^{2r}(\mathbb{R}_+^3)})\nonumber\\
&\leq Ct^{{-\frac{5}{2}-\frac{3}{2}(1-\frac{1}{r})}},
 \end{align}
 and
  \begin{align}\label{3.31}
 \|\nabla( |\nabla d|^2d)(t)\|_{L^r(\mathbb{R}_+^3)}&\leq C(
\|\nabla d(t)\|_{L^{2r}(\mathbb{R}_+^3)}\|\nabla^2 d(t)\|_{L^{2r}(\mathbb{R}_+^3)}+\|\nabla d(t)\|_{L^{3r}(\mathbb{R}_+^3)}^3)\nonumber\\
&\leq Ct^{{-3-\frac{3}{2}(1-\frac{1}{r})}}
  \end{align}
 hold for any $1<r<\infty$ and $t\geq t_1$.

Applying the standard elliptic estimates (see \cite{Saal-2006}) to system $(\ref{1.1-main})$-$(\ref{1.2})$, we can know that for each integer $\ell\geq0$,
\begin{equation}\label{3.29}
\|\nabla^{\ell+2}d(t)\|_{L^r(\mathbb{R}_+^3)}\leq C\big(\|\nabla^\ell(u\cdot\nabla d)(t)\|_{L^r(\mathbb{R}_+^3)}+\|\nabla^\ell\partial_td(t)\|_{L^r(\mathbb{R}_+^3)}+\|\nabla^\ell( |\nabla d|^2d)(t)\|_{L^r(\mathbb{R}_+^3)}\big).
\end{equation}
Putting $(\ref{3.28})$-$(\ref{3.29})$ together, we obtain for $1<r<\infty$ and $t\geq t_1$,
  \begin{align*}
  \|\nabla^{3}d(t)\|_{L^r(\mathbb{R}_+^3)}&\leq C\big(\|\nabla(u\cdot\nabla d)(t)\|_{L^r(\mathbb{R}_+^3)}+\|\nabla\partial_td(t)\|_{L^r(\mathbb{R}_+^3)}+\|\nabla( |\nabla d|^2d)(t)\|_{L^r(\mathbb{R}_+^3)}\big)\nonumber\\
  &\leq Ct^{-\frac{3}{2}-\frac{3}{2}(1-\frac{1}{r})}.
\end{align*}
\end{proof}
\begin{Lemma}\label{le3.8}
Assume $u_0, d_0-e_3\in L^1(\mathbb{R}_+^{3})$. Then there exists $t_2>0$ such that for $1<r<\infty$ and $t\geq t_2$, the strong solution $(u,d,p)$ of $(\ref{1.1-main})$-$(\ref{1.2})$ obtained in Proposition $\ref{solu1}$ satisfies
\begin{align*}
&\|\nabla^{3}u(t)\|_{L^r(\mathbb{R}_+^3)}+\|\nabla^2p(t)\|_{L^r(\mathbb{R}_+^3)}\leq Ct^{-\frac{3}2-\frac 32(1-\frac1r)};\\
&\|\nabla^{4}d(t)\|_{L^r(\mathbb{R}_+^3)}\leq Ct^{-2-\frac 32(1-\frac1r)}.
\end{align*}
\end{Lemma}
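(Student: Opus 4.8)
The plan is to follow, for the velocity and pressure, the steady‑Stokes approach of \cite{Han-2016}, and, for the director field, the elliptic‑regularity approach already used in Lemma~\ref{le3.7**}, thereby reducing the whole statement to decay bounds for $\nabla\partial_tu$ and $\nabla^2\partial_td$.

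First I would freeze the time variable and regard $(u(\cdot,t),p(\cdot,t))$ as a solution of the stationary Stokes system $-\Delta u+\nabla p=f$, $\nabla\cdot u=0$ in $\mathbb{R}_+^3$, with $f=-\partial_tu-u\cdot\nabla u-\nabla\cdot(\nabla d\odot\nabla d)$. The a priori estimate $(\ref{3.43})$ with $\ell=1$ (so $g=0$) then gives
\[
\|\nabla^3u(t)\|_{L^r(\mathbb{R}_+^3)}+\|\nabla^2p(t)\|_{L^r(\mathbb{R}_+^3)}\leq C\big(\|\nabla\partial_tu(t)\|_{L^r}+\|\nabla(u\cdot\nabla u)(t)\|_{L^r}+\|\nabla(\nabla\cdot(\nabla d\odot\nabla d))(t)\|_{L^r}\big).
\]
Expanding the last two norms by the product rule and inserting the rates already established in Corollary~\ref{cor3.1}, Lemma~\ref{le3.4} and Lemma~\ref{le3.7**} (the $\nabla^3d$ bound forcing the restriction $t\geq t_1$), a routine exponent count shows $\|\nabla(u\cdot\nabla u)(t)\|_{L^r}+\|\nabla(\nabla\cdot(\nabla d\odot\nabla d))(t)\|_{L^r}=O(t^{-2-\frac32(1-\frac1r)})$, i.e.\ faster than the target $t^{-\frac32-\frac32(1-\frac1r)}$. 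Symmetrically, the standard elliptic estimate $(\ref{3.29})$ with $\ell=2$ reduces $\|\nabla^4d(t)\|_{L^r}$ to $\|\nabla^2\partial_td(t)\|_{L^r}$, up to nonlinear terms which are again $O(t^{-2-\frac32(1-\frac1r)})$. So it suffices to prove, for all sufficiently large $t$,
\[
\|\nabla\partial_tu(t)\|_{L^r(\mathbb{R}_+^3)}\leq Ct^{-\frac32-\frac32(1-\frac1r)},\qquad\|\nabla^2\partial_td(t)\|_{L^r(\mathbb{R}_+^3)}\leq Ct^{-2-\frac32(1-\frac1r)}.
\]

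To obtain these, put $v:=\partial_tu$ and $d_1:=\partial_td$. Then $v$ is itself a Stokes flow, driven by $-\partial_t\big(u\cdot\nabla u+\nabla\cdot(\nabla d\odot\nabla d)\big)$, while $d_1$ solves $(\ref{d1})$. Using Duhamel's formula for $v$ and $d_1$ over $[\frac{t}{2},t]$, applying one spatial derivative to the Stokes semigroup for $v$, and — rather than putting both spatial derivatives on the half‑space heat operator for $d_1$, which would produce the non‑integrable singularity $(t-s)^{-1}$ — transferring one of them onto the forcing (each transferred derivative costing only $(t-s)^{-1/2}$, with the kernel bounds $(\ref{3.24})$ used for the pieces touching the boundary), I would estimate the resulting integrals with Lemma~\ref{le2.1}, Lemma~\ref{le2.2}, Lemma~\ref{pro3.1}, the decay rates of Corollary~\ref{cor3.1}, Lemma~\ref{le3.4}, Lemma~\ref{le3.7**} and $(\ref{3.28})$, and the time‑continuity estimates of Lemma~\ref{le3.3*} together with $(\ref{3.5})$. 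The crucial structural feature is that the forcings are self‑referential: the $v$‑equation contains $u\cdot\nabla v$ and, through $\partial_t\big(\nabla\cdot(\nabla d\odot\nabla d)\big)$, a term of size $\|\nabla^2d_1\|_{L^{2r}}\|\nabla d\|_{L^{2r}}$, while the once‑differentiated $d_1$‑equation contains $u\cdot\nabla^2d_1$ and $\nabla v\cdot\nabla d$. Hence, setting
\[
M_1(t)=\sup_{t_2\leq\tau\leq t}\tau^{\frac32+\frac32(1-\frac1r)}\|\nabla\partial_\tau u(\tau)\|_{L^r},\qquad M_2(t)=\sup_{t_2\leq\tau\leq t}\tau^{2+\frac32(1-\frac1r)}\|\nabla^2\partial_\tau d(\tau)\|_{L^r},
\]
these terms contribute $\leq Ct^{-1}\big(M_1(t)+M_2(t)\big)$ after integration, whereas every other term in the forcings — and the semigroups acting on the data at time $\frac{t}{2}$ — contributes only a bounded multiple of the respective target decay. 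This yields the coupled inequality $M_1(t)+M_2(t)\leq C+Ct^{-1}\big(M_1(t)+M_2(t)\big)$, so $M_1$ and $M_2$ are finite on $[t_2,\infty)$ once $t_2$ is chosen large; feeding this back into the reductions of the first step completes the proof, with $t_2\geq t_1$.

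I expect the main obstacle to be this coupling, which is precisely the phenomenon the introduction flags for the liquid‑crystal system: the nonlinear terms in $d$ raise the differential order, so one cannot close the $\nabla^2\partial_td$ estimate without simultaneously controlling $\nabla\partial_tu$, and vice versa, and the continuity argument must be run for the pair. A secondary technical nuisance is that some of the required time‑Hölder bounds — in particular for $\partial_tu$, should one instead use the ``subtract the forcing at time $t$'' device on the heat operator — are not literally contained in Lemma~\ref{le3.3*} and have to be produced by interpolating $\|(e^{-h\mathbb{A}}-I)\partial_tu\|$ between $\|\partial_tu\|$ and $\|\nabla\partial_tu\|$ via $(\ref{3.5})$, i.e.\ against $M_1$ itself; and the boundary behaviour of the heat operator has to be handled with care, since $u$ and $\partial_tu$ vanish on $\partial\mathbb{R}_+^3$ but the terms $|\nabla d|^2d_1$ and $(\nabla d:\nabla d_1)d$ do not. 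Apart from these points, the argument is a (lengthy) exponent‑chasing exercise using the rates of Lemmas~\ref{le3.4} and \ref{le3.7**} and estimate $(\ref{3.28})$.
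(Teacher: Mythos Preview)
Your proposal is correct and follows essentially the same strategy as the paper: reduce $\nabla^3u,\nabla^2p$ and $\nabla^4d$ via the steady Stokes estimate $(\ref{3.43})$ and the elliptic estimate $(\ref{3.29})$ to bounds on $\nabla\partial_tu$ and $\nabla^2\partial_td$, and then obtain the latter by writing Duhamel formulas for $u_1=\partial_tu$ and $d_1=\partial_td$ on $[\tfrac t2,t]$, handling the $\partial_{x_3}$-derivative on the heat kernel through the integration-by-parts identity $(\ref{3.37})$ and the weighted bound $(\ref{3.24})$, and closing a coupled continuity argument. The only difference is bookkeeping: the paper normalises both $\|\nabla u_1\|_{L^r}$ and $\|\nabla^2 d_1\|_{L^r}$ with the \emph{same} exponent $\tfrac32+\tfrac32(1-\tfrac1r)$ in its quantity $f_2(t)$, closes $(\ref{3.39})$, and then substitutes the resulting bound $(\ref{3.40})$ back into $(\ref{3.38})$ to bootstrap $\|\nabla^2\partial_td\|_{L^r}$ to the sharper rate $(\ref{3.41})$; your $M_1,M_2$ carry the two target exponents from the outset, which also closes (the cross term $\|\nabla d\|_{L^\infty}\|\nabla u_1\|_{L^r}$ in the $d_1$-equation still gains a negative power of $t$ against the $M_2$ scale) and spares the extra bootstrap step.
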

\begin{proof}We will divide the proof into three steps.

\textbf{Step 1:}\ The estimate of $\nabla\partial_t u(t)$.

Set $u_1(t)=\partial_t u(t)$. Taking $\partial_t$ over problem $(\ref{1.1-main})_1$, we find that $u_1(t)$ satisfies
\begin{align}\label{u1}
	\left\{
	\begin{array}{lll}
		\partial_tu_1-\Delta u_1+(u_1\cdot\nabla u+u\cdot\nabla u_1)+\nabla\partial_tp+ \nabla \cdot(\nabla d_1\odot\nabla d)+\nabla \cdot(\nabla d\odot\nabla d_1)=0&\text{in}\quad\mathbb{R}_+^3\times(1,\infty),\\
         \nabla \cdot u_1=0&\text{in}\quad\mathbb{R}_+^3\times(1,\infty),\\
       u_1(x,t)=0&\text{on}\quad\partial\mathbb{R}_+^3\times(1,\infty),\\
		u_1(x,1)=\partial_tu(1)&\text{in}\quad\mathbb{R}_+^3.\\
	\end{array}
	\right.
\end{align}
Applying the $\text{Duhamel}$'s principle and the nonstationary Stokes solution formula (see \cite{Solonnikov2003,Solonnikov-2003}), we can know that
\begin{align}\label{u1-}
u_1(x,t)=&\int_{\mathbb{R}_+^3}\mathcal{M}(x,y,\frac{t}{2})u_1(y,\frac{t}{2})\,dy\nonumber\\
&-\int_{\frac{t}{2}}^t\int_{\mathbb{R}_+^3}\mathcal{M}(x,y,t-s)\mathbb{P}\big(u_1\cdot\nabla u+u\cdot\nabla u_1+\nabla \cdot(\nabla d_1\odot\nabla d)+\nabla \cdot(\nabla d\odot\nabla d_1)\big)(y,s)\,dyds,
\end{align}
where $\mathcal{M}=(M_{ij})_{i,j=1,2,3}$ is defined as follows for $1\leq i,j\leq3$
$$M_{ij}(x,y,t)=\delta_{ij}(G_t(x-y)-G_t(x-y^*))+M_{ij}^*(x,y,t)=\delta_{ij}G_t(x-y)+N_{ij}^*(x,y,t).$$
Here
$$M_{ij}^*(x,y,t)=4(1-\delta_{j3})\frac{\partial}{\partial x_j}\int_{0}^{x_3}\int_{\mathbb{R}^{2}}\frac{\partial E(x-z)}{\partial x_i}G_t(z-y^*)\,dz,$$
and
$$N_{ij}^*(x,y,t)=-\delta_{ij}G_t(x-y^*)+M_{ij}^*(x,y,t),$$
$E(z)=-\frac{1}{4\pi|z|}$ is the fundamental solution of the Laplace equation. In addition, $M^*=(M_{ij}^*)_{3\times 3}$ and $ \mathcal{N}^*=(N_{ij}^*)_{3\times 3}$ still satisfy $(\ref{3.24})$ (see \cite{Solonnikov-2003}), namely,
\begin{align}\label{3.36*}
&|\partial_t^{\widetilde{s}}\partial_x^{\widetilde{\ell}}\partial_y^{\widetilde{m}}M^*(x,y,t)|+|\partial_t^{\widetilde{s}}\partial_x^{\widetilde{\ell}}\partial_y^{\widetilde{m}}\mathcal{N}^*(x,y,t)|\nonumber\\
\leq & Ct^{-s-\frac{\widetilde{m}_3}{2}}(t+x_3^2)^{-\frac{\widetilde{\ell}_3}{2}}(|x-y^*|^2+t)^{-\frac{3+|\widetilde{\ell}'|+|\widetilde{m}'|}{2}}e^{-\frac{cy_3^2}{t}}.
\end{align}

Let $1<r<\infty$. Similarly, using Corollary $\ref{cor3.1}$, Lemmas $\ref{le2.2}$-$\ref{le2.3}$, Lemmas $\ref{le3.4}$-$\ref{le3.7**}$ and Lemma $\ref{pro3.1}$, with the help of $(\ref{3.36*})$, we can establish the estimates for any integer $m\geq1$ and $t\geq 2t_1$,
\begin{align}\label{3.35}
&\|\nabla_x^m\int_{\mathbb{R}_+^3}\mathcal{M}(\cdot,y,\frac{t}{2})u_1(y,\frac{t}{2})\,dy\|_{L^r(\mathbb{R}_+^3)}\nonumber\\ 
\leq &C\|\nabla^mG_t\|_{L^1(\mathbb{R}_+^3)}\|u_1(\frac t2)\|_{L^r(\mathbb{R}_+^3)}+C\|u_1(\frac t2)\|_{L^r(\mathbb{R}_+^3)}\times\nonumber\\
&\sum_{m=m_3+|m'|}\int_{\mathbb{R}_+^3}(x_3+\sqrt{t} )^{-m_3}(|x'|+x_3+\sqrt{t} )^{-3-|m'|}\,dx'dx_3\nonumber\\
\leq & Ct^{-\frac{m+2}{2}-\frac{3}{2}(1-\frac{1}{r})};
\end{align}
and
\begin{align}\label{3.36}
&\big\|\int_{\frac{t}{2}}^{t}\int_{\mathbb{R}_{+}^{3}}\nabla_{x}\mathcal{M}(x,y,t-s)\mathbb{P}\big(u_1\cdot\nabla u+u\cdot\nabla u_1+\nabla \cdot(\nabla d_1\odot\nabla d)+\nabla \cdot(\nabla d\odot\nabla d_1)\big)(y,s)\,dyds\big\|_{L^{r}(\mathbb{R}_{+}^{3})}\nonumber\\
\leq & C\int_{\frac{t}{2}}^t\|\nabla G_{t-s}\|_{L^1(\mathbb{R}_+^3)}\big\|\mathbb{P}\big(u_1\cdot\nabla u+u\cdot\nabla u_1+\nabla \cdot(\nabla d_1\odot\nabla d)+\nabla \cdot(\nabla d\odot\nabla d_1)\big)(s)\big\|_{L^r(\mathbb{R}_+^3)}\,ds\nonumber\\
&+C\int_{\frac{t}{2}}^t\big(\sup\limits_{\substack{x\in\mathbb{R}_+^3\\}}\|\nabla_x\mathcal{N}^{*}(x,\cdot,t-s)\|_{L^1(\mathbb{R}_+^3)}+\sup\limits_{\substack{y\in\mathbb{R}_+^3\\}}\|\nabla_x\mathcal{N}^{*}(\cdot,y,t-s)\|_{L^1(\mathbb{R}_+^3)}\big)\nonumber\\
&\times\|\mathbb{P}\big(u_1\cdot\nabla u+u\cdot\nabla u_1+\nabla \cdot(\nabla d_1\odot\nabla d)+\nabla \cdot(\nabla d\odot\nabla d_1)\big)(s)\|_{L^r(\mathbb{R}_+^3)}\,ds\nonumber\\
 \leq &C\int_{\frac t2}^t(t-s)^{-\frac12}\big(\|u_1\|_{L^{2r}(\mathbb{R}_+^3)}\|\nabla u(s)\|_{L^{2r}(\mathbb{R}_+^3)}
+\|u(s)\|_{L^{\infty}(\mathbb{R}_+^3)}\|\nabla u_1(s)\|_{L^{r}(\mathbb{R}_+^3)}\nonumber\\
&+\|\nabla d(s)\|_{L^{\infty}(\mathbb{R}_+^3)}\|\nabla^2 d_1(s)\|_{L^{r}(\mathbb{R}_+^3)}+\|\nabla d_1(s)\|_{L^{2r}(\mathbb{R}_+^3)}\|\nabla ^2d(s)\|_{L^{2r}(\mathbb{R}_+^3)}\big)\,ds\nonumber\\
\leq &C\int_{\frac{t}{2}}^{t}(t-s)^{-\frac{1}{2}}s^{-\frac{3}{2}-\frac{3}{2}-\frac{3}{2}(1-\frac{1}{r})}\,ds  \nonumber\\
&+C\int_{\frac{t}{2}}^{t}(t-s)^{-\frac{1}{2}}s^{-\frac{3}{2}}\big(\|\nabla u_1(s)\|_{L^{r}(\mathbb{R}_{+}^{3})}+\|\nabla^2 d_1(s)\|_{L^{r}(\mathbb{R}_{+}^{3})}  \big )\,ds\nonumber\\
\leq &C t^{-\frac 52-\frac 32(1-\frac1r)}+Ct^{-\frac{5}{2}-\frac{3}{2}(1-\frac{1}{r})}f_2(t),
\end{align}
where $f_2(t)=\sup\limits_{0<s\leq t}[s^{\frac{3}{2}+\frac{3}{2}(1-\frac{1}{r})}(\|\nabla u_1(s)\|_{L^{r}(\mathbb{R}_{+}^{3})}+\|\nabla^2 d_1(s)\|_{L^{r}(\mathbb{R}_{+}^{3})})] $.

\textbf{Step 2:}\ The estimate of $\nabla^2\partial_t d(t)$.

Recall that the following results hold (see \cite[p.16]{Han-2016}): For $1\leq k \leq 3$ and $0<s<t$, we have
\begin{align}\label{3.37}
&\int_{\mathbb{R}_+^3}\partial_{x_k}\partial_{x_3}\mathcal{W}(x,y,t-s)g(y,s)\,dy\nonumber\\
=&\int_{\mathbb{R}_+^3}\partial_{x_k}\mathcal{W}(x,y,t-s)\partial_{y_3}g(y,s)\,dy-2\int_{\mathbb{R}_+^3}\partial_{x_k}\partial_{x_3}G_{t-s}(x-y^*)g(y,s)\,dy
\end{align}
and
\begin{align}\label{3.33}
&\int_{\mathbb{R}_+^3}\partial_{x_k}\partial_{x_3}G_{t-s}(x-y)g(y,s)\,dy\nonumber\\
=&\int_{\mathbb{R}_+^3}\partial_{x_k}(G_{t-s}(x-y)-G_{t-s}(x-y^*))\partial_{y_3}g(y,s)\,dy-\int_{\mathbb{R}_+^3}\partial_{x_k}\partial_{x_3}G_{t-s}(x-y^*)g(y,s)\,dy.
\end{align}

Let $1<r<\infty$ and set $D(x,t):=u_1\cdot\nabla d+u\cdot\nabla d_1-|\nabla d|^2d_1-2(\nabla d:\nabla d_1) d$. Using one-dimensional Hardy inequality yields for $0<\theta <1$,
\begin{align}\label{3.38*}
&\|x_3^{-\theta}D(x,t)\|_{L^r(\mathbb{R}_+^3)}^r\nonumber\\
\leq&\int_{\mathbb{R}^{2}}\int_0^1x_3^{r-r\theta}\frac{|D(x^{\prime},x_3)|^r}{x_3^r}\,dx_3dx^{\prime}+\|D(x,t)\|_{L^r(\mathbb{R}^{2}\times[1,+\infty))}^r\nonumber\\
\leq&C(\|D(x,t)\|_{L^r(\mathbb{R}^{3}_+)}+\|\nabla_x D(x,t)\|_{L^r(\mathbb{R}^{3}_+)})^r.
\end{align}
Using $(\ref{3.24})$, $(\ref{3.28})$, $(\ref{3.37})$-$(\ref{3.38*})$, Lemmas $\ref{le2.2}$-$\ref{le2.3}$, Lemmas $\ref{le3.4}$-$\ref{le3.7**}$, Lemma $\ref{pro3.1}$ and Corollary $\ref{cor3.1}$, we find for $1\leq k\leq3$, $1\leq j\leq 2$ and $t\geq 2t_1$ that
\begin{align}\label{3.38}
&\big\|\int_{\frac{t}{2}}^{t}\int_{\mathbb{R}_{+}^{3}}\partial_{x_k}\partial_{x_j}\mathcal{W}(x,y,t-s)\big(u_1\cdot\nabla d+u\cdot\nabla d_1-|\nabla d|^2d_1-2(\nabla d:\nabla d_1) d\big)(y,s)\,dyds\big\|_{L^{r}(\mathbb{R}_{+}^{3})}\nonumber\\
&+\big\|\int_{\frac{t}{2}}^{t}\int_{\mathbb{R}_{+}^{3}}\partial_{x_k}\partial_{x_3}\mathcal{W}(x,y,t-s)\big(u_1\cdot\nabla d+u\cdot\nabla d_1-|\nabla d|^2d_1-2(\nabla d:\nabla d_1) d\big)(y,s)\,dyds\big\|_{L^{r}(\mathbb{R}_{+}^{3})}\nonumber\\
\leq&C\int_{\frac{t}{2}}^{t}\|\partial_{x_k}G_{t-s}(\cdot)\|_{L^1(\mathbb{R}_+^3)}(\|\partial_3D(s)\|_{L^r(\mathbb{R}_+^3)}+\|\partial_jD(s)\|_{L^r(\mathbb{R}_+^3)})\,ds\nonumber\\
&+C\int_{\frac{t}{2}}^{t}\|x_{3}^{2\varepsilon}\partial_{x_{k}}\partial_{x_{3}}G_{t-s}(\cdot)\|_{L^{1}(\mathbb{R}_{+}^{3})}\|y_{3}^{-2\varepsilon}D(s)\|_{L^{r}(\mathbb{R}_{+}^{3})}\,ds\nonumber\\
\leq& C\int_{\frac{t}{2}}^{t}(t-s)^{-\frac{1}{2}}
\big(\| u_1(s)\|_{L^{2r}(\mathbb{R}_{+}^{3})}\|\nabla^2 d(s)\|_{L^{2r}(\mathbb{R}_{+}^{3})}+\|\nabla u(s)\|_{L^{2r}(\mathbb{R}_{+}^{3})}\|\nabla d_1(s)\|_{L^{2r}(\mathbb{R}_{+}^{3})}\nonumber\\
&+\|\nabla d(s)\|_{L^{4r}(\mathbb{R}_{+}^{3})}^2\|\nabla d_1(s)\|_{L^{2r}(\mathbb{R}_{+}^{3})}
+\| \nabla d(s)\|_{L^{3r}(\mathbb{R}_{+}^{3})}\|\nabla^2 d(s)\|_{L^{3r}(\mathbb{R}_{+}^{3})}\|  d_1(s)\|_{L^{3r}(\mathbb{R}_{+}^{3})}\nonumber\\
&+\|\nabla^2 d(s)\|_{L^{2r}(\mathbb{R}_{+}^{3})}\|\nabla d_1(s)\|_{L^{2r}(\mathbb{R}_{+}^{3})}\big)\,ds\nonumber\\
&+C\int_{\frac{t}{2}}^{t}(t-s)^{-\frac{1}{2}}\big(\|\nabla d(s)\|_{L^{\infty}(\mathbb{R}_{+}^{3})}\|\nabla u_1(s)\|_{L^{r}(\mathbb{R}_{+}^{3})}
          +(\| u(s)\|_{L^{\infty}(\mathbb{R}_{+}^{3})}
          +\| \nabla d(s)\|_{L^{\infty}(\mathbb{R}_{+}^{3})})\|\nabla^2 d_1(s)\|_{L^{r}(\mathbb{R}_{+}^{3})}   \big )\,ds\nonumber\\
&+C\int_{\frac t2}^t(t-s)^{-1+\varepsilon}
    \big( \|u_1(s)\|_{L^{2r}(\mathbb{R}_+^3)}\|\nabla d(s)\|_{L^{2r}(\mathbb{R}_+^3)}
      +\|u(s)\|_{L^{2r}(\mathbb{R}_+^3)}\|\nabla d_1(s)\|_{L^{2r}(\mathbb{R}_+^3)} \nonumber\\
&+\|\nabla d(s)\|_{L^{4r}(\mathbb{R}_+^3)}^2\| d_1(s)\|_{L^{2r}(\mathbb{R}_+^3)}
+\|\nabla d(s)\|_{L^{2r}(\mathbb{R}_+^3)}\|\nabla d_1(s)\|_{L^{2r}(\mathbb{R}_+^3)}   \big)\,ds\nonumber\\
\leq&C\int_{\frac{t}{2}}^{t}(t-s)^{-\frac{1}{2}}s^{-2-\frac{3}{2}-\frac{3}{2}(1-\frac{1}{r})}ds+C\int_{\frac{t}{2}}^t(t-s)^{-1+\varepsilon}s^{-\frac{3}{2}-\frac{3}{2}-\frac{3}{2}(1-\frac{1}{r})}ds\nonumber\\
     &+C\int_{\frac{t}{2}}^{t}(t-s)^{-\frac{1}{2}}s^{-\frac{3}{2}}\big(\|\nabla u_1(s)\|_{L^{r}(\mathbb{R}_{+}^{3})}+\|\nabla^2 d_1(s)\|_{L^{r}(\mathbb{R}_{+}^{3})}  \big )\,ds\nonumber\\
\leq &C_\varepsilon t^{-\frac 52-\frac 32(1-\frac1r)}
+Ct^{-\frac{5}{2}-\frac{3}{2}(1-\frac{1}{r})}f_2(t),
\end{align}
where $\varepsilon\in(0,\frac12)$, $f_2(t)=\sup\limits_{0<s\leq t}[s^{\frac{3}{2}+\frac{3}{2}(1-\frac{1}{r})}(\|\nabla u_1(s)\|_{L^{r}(\mathbb{R}_{+}^{3})}+\|\nabla^2 d_1(s)\|_{L^{r}(\mathbb{R}_{+}^{3})})] $.

\textbf{Step 3:}\ The estimates of $\nabla^3 u(t)$ and $\nabla^4d(t)$.

Using $(\ref{3.23})$, $(\ref{3.25})$, $(\ref{u1-})$, $(\ref{3.35})$, $(\ref{3.36})$ and $(\ref{3.38})$, we conclude that for $t\geq 2t_1$,
\begin{equation}\label{3.39}
t^{\frac{3}{2}+\frac{3}{2}(1-\frac{1}{r})}\big(\|\nabla u_1(t)\|_{L^{r}(\mathbb{R}_{+}^{3})}+\|\nabla^2 d_1(t)\|_{L^{r}(\mathbb{R}_{+}^{3})}  \big )\leq C+C_2t^{-1}f_2(t).
\end{equation}
From $(\ref{3.39})$, we can know that for some $t_2\leq t$,
\begin{equation}\label{3.40}
\|\nabla \partial_tu(t)\|_{L^{r}(\mathbb{R}_{+}^{3})}+\|\nabla^2\partial_t d(t)\|_{L^{r}(\mathbb{R}_{+}^{3})}\leq Ct^{-\frac{3}{2}-\frac{3}{2}(1-\frac{1}{r})}.
\end{equation}
Substituting $(\ref{3.40})$ into $(\ref{3.38})$ and running the same argument as in $(\ref{3.38})$, combined with $(\ref{3.25})$, we derive for $t\geq t_2$,
\begin{equation}\label{3.41}
\|\nabla^2 \partial_td(t)\|_{L^{r}(\mathbb{R}_{+}^{3})}\leq Ct^{-2-\frac{3}{2}(1-\frac{1}{r})}.
\end{equation}
By applying $(\ref{3.29})$ with $\ell=2$, together with $(\ref{3.41})$, Lemma $\ref{le2.3}$, Lemma $\ref{le3.4}$, Lemma $\ref{le3.7**}$ and Corollary $\ref{cor3.1}$, we conclude for $1<r<\infty$ and $t\geq t_2$,
\begin{align*}
\|\nabla^{4}d(t)\|_{L^r(\mathbb{R}_+^3)}&\leq C\big(\|\nabla^2(u\cdot\nabla d)(t)\|_{L^r(\mathbb{R}_+^3)}+\|\nabla^2\partial_td(t)\|_{L^r(\mathbb{R}_+^3)}+\|\nabla^2( |\nabla d|^2d)(t)\|_{L^r(\mathbb{R}_+^3)}\big)\nonumber\\
&\leq Ct^{-2-\frac{3}{2}(1-\frac{1}{r})}.
\end{align*}

On the other hand, applying $(\ref{3.43})$ to system $(\ref{1.1-main})$-$(\ref{1.2})$, we get for $1<r<\infty$, $\ell\geq0$ and $t\geq t_2$,
\begin{align}\label{3.44}
&\|\nabla^{\ell+2}u(t)\|_{L^r(\mathbb{R}_+^3)}+\|\nabla^{\ell+1}p(t)\|_{L^r(\mathbb{R}_+^3)}\nonumber\\
\leq &C\big(\|\nabla^\ell(u\cdot\nabla u)(t)\|_{L^r(\mathbb{R}_+^3)}+\|\nabla^\ell\partial_tu(t)\|_{L^r(\mathbb{R}_+^3)}+\|\nabla^\ell(\nabla \cdot(\nabla d\odot\nabla d))(t)\|_{L^r(\mathbb{R}_+^3)}\big).
\end{align}
Using $(\ref{3.40})$, $(\ref{3.44})$, Lemma $\ref{le2.3}$, Lemma $\ref{le3.4}$, Lemma $\ref{le3.7**}$ and Corollary $\ref{cor3.1}$, we get for $1<r<\infty$ and $t\geq t_2$,
\begin{align*}
&\|\nabla^3u(t)\|_{L^r(\mathbb{R}_+^3)}+\|\nabla^2p(t)\|_{L^r(\mathbb{R}_+^3)}\nonumber\\
\leq&C\big(\|\nabla(u\cdot\nabla u)(t)\|_{L^r(\mathbb{R}_+^3)}+\|\nabla\partial_tu(t)\|_{L^r(\mathbb{R}_+^3)}+\|\nabla(\nabla \cdot(\nabla d\odot\nabla d))(t)\|_{L^r(\mathbb{R}_+^3)}\big)\nonumber\\
\leq& Ct^{-\frac{3}{2}-\frac{3}{2}(1-\frac{1}{r})}.
\end{align*}
Thus, we complete the proof of Lemma \ref{le3.8}.
\end{proof}

Next, we will establish the $L^1$-decay rates for the first-order spatial derivatives of velocity $u$.

\begin{Lemma}\label{le3.9*}
Assume $u_0, d_0-e_3\in L^1(\mathbb{R}_+^{3})$, if $(u,d)$ is the global strong solution obtained by Proposition $\ref{solu1}$, then
$$\|\nabla u(t)\|_{L^1(\mathbb{R}_+^3)}\leq Ct^{-\frac12}$$
holds for any $t>0$.
\end{Lemma}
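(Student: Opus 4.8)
The plan is to work from the Duhamel representation on $(0,t)$,
\begin{equation*}
u(t)=e^{-t\mathbb{A}}u_0-\int_0^te^{-(t-s)\mathbb{A}}\mathbb{P}\big(u\cdot\nabla u+\nabla\cdot(\nabla d\odot\nabla d)\big)(s)\,ds,
\end{equation*}
apply $\nabla$, and estimate the two resulting terms in $L^1(\mathbb{R}_+^3)$. The linear term is immediately controlled by $(\ref{2.3})$: $\|\nabla e^{-t\mathbb{A}}u_0\|_{L^1(\mathbb{R}_+^3)}\le Ct^{-\frac12}\|u_0\|_{L^1(\mathbb{R}_+^3)}$, which is legitimate since $u_0\in L^1(\mathbb{R}_+^3)$ by hypothesis. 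The only difficulty in the Duhamel integral is that $\mathbb{P}$ is unbounded on $L^1$; to bypass this I use the decomposition $(\ref{demcomposition})$, writing $\mathbb{P}F=F+G$ with $F=u\cdot\nabla u+\nabla\cdot(\nabla d\odot\nabla d)$ and $G=\sum_{i,j=1}^3\nabla\mathcal{N}\partial_i\partial_j(u_iu_j+\langle\partial_id,\partial_jd\rangle)$. Once $F(s)$ and $G(s)$ are shown to lie in $L^1(\mathbb{R}_+^3)$, the solenoidal field $\mathbb{P}F(s)=F(s)+G(s)$ is in $L^1(\mathbb{R}_+^3)$, so $(\ref{2.3})$ applies to it and
\begin{equation*}
\Big\|\int_0^t\nabla e^{-(t-s)\mathbb{A}}\mathbb{P}F(s)\,ds\Big\|_{L^1(\mathbb{R}_+^3)}\le C\int_0^t(t-s)^{-\frac12}\big(\|F(s)\|_{L^1(\mathbb{R}_+^3)}+\|G(s)\|_{L^1(\mathbb{R}_+^3)}\big)\,ds.
\end{equation*}

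For the nonlinear pieces, the Cauchy--Schwarz inequality together with $\nabla\cdot u=0$ gives $\|F(s)\|_{L^1(\mathbb{R}_+^3)}\le\|u(s)\|_{L^2(\mathbb{R}_+^3)}\|\nabla u(s)\|_{L^2(\mathbb{R}_+^3)}+C\|\nabla d(s)\|_{L^2(\mathbb{R}_+^3)}\|\nabla^2 d(s)\|_{L^2(\mathbb{R}_+^3)}$, which by Lemma $\ref{le2.3}$ and Corollary $\ref{cor3.1}$ is $\le Cs^{-2}$ for $s\ge1$. For $G$ I invoke $(\ref{le2.5})$ in Lemma $\ref{le3.3}$,
\begin{equation*}
\|G(s)\|_{L^1(\mathbb{R}_+^3)}\le C\big(\|u(s)\|_{H^1(\mathbb{R}_+^3)}^2+\|\nabla d(s)\|_{H^1(\mathbb{R}_+^3)}^2+\||\nabla d(s)||\nabla^3 d(s)|\|_{L^1(\mathbb{R}_+^3)}\big);
\end{equation*}
the first two terms are $\le Cs^{-\frac32}$ for $s\ge1$ by $(\ref{2.7})$, $(\ref{2.8})$, $(\ref{2.9})$ and Corollary $\ref{cor3.1}$, while the third is bounded by $\|\nabla d(s)\|_{L^2(\mathbb{R}_+^3)}\|\nabla^3 d(s)\|_{L^2(\mathbb{R}_+^3)}$, and the crucial point is that Lemma $\ref{le3.7**}$ supplies $\|\nabla^3 d(s)\|_{L^2(\mathbb{R}_+^3)}\le Cs^{-\frac94}$ for $s\ge t_1$, hence $\||\nabla d(s)||\nabla^3 d(s)|\|_{L^1(\mathbb{R}_+^3)}\le Cs^{-\frac72}$. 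Combining these with the boundedness, on any compact subinterval of $(0,\infty)$, of the quantities involved (Proposition $\ref{solu1}$), one gets $\|\mathbb{P}F(s)\|_{L^1(\mathbb{R}_+^3)}\le C(1+s)^{-\frac32}$ for $s\ge1$; near $s=0$, the smoothing of the Stokes and heat semigroups together with $u_0\in L^2(\mathbb{R}_+^3)\cap L^3(\mathbb{R}_+^3)$ and $\nabla d_0\in L^2(\mathbb{R}_+^3)\cap L^3(\mathbb{R}_+^3)$ (Proposition $\ref{solu1}$) give $\|\mathbb{P}F(s)\|_{L^1(\mathbb{R}_+^3)}\le Cs^{-\theta}$ for some $\theta\in(0,1)$.

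It remains to carry out the time integration. Splitting $\int_0^t=\int_0^{\min\{1,t\}}+\int_{\min\{1,t\}}^t$, the first part is bounded by $Ct^{-\frac12}$ using $\|\mathbb{P}F(s)\|_{L^1(\mathbb{R}_+^3)}\le Cs^{-\theta}$ with $\theta<1$ and Lemma $\ref{le2.2}$ (when $t\ge2$ one simply uses $(t-s)^{-\frac12}\le Ct^{-\frac12}$ there), and the second part is bounded by $Ct^{-\frac12}$ via $\|\mathbb{P}F(s)\|_{L^1(\mathbb{R}_+^3)}\le C(1+s)^{-\frac32}$ and the estimate $\int_0^t(t-s)^{-\frac12}(1+s)^{-\frac32}\,ds\le Ct^{-\frac12}$ from Lemma $\ref{le2.2}$. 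Adding the contribution of the linear term yields $\|\nabla u(t)\|_{L^1(\mathbb{R}_+^3)}\le Ct^{-\frac12}$ for all $t>0$.

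The main obstacle is precisely the $L^1$-unboundedness of $\mathbb{P}$: the decomposition $(\ref{demcomposition})$ reduces it to controlling $G$ in $L^1$, and the genuinely new ingredient (compared with Lemma $\ref{le2.5**}$) is that the term $\||\nabla d(s)||\nabla^3 d(s)|\|_{L^1(\mathbb{R}_+^3)}$ appearing in $(\ref{le2.5})$ is handled using the higher-order decay of the director field from Lemma $\ref{le3.7**}$; this is why the extra hypotheses $u_0\in D^1(\mathbb{R}_+^3)$ and $\nabla d_0\in D^1(\mathbb{R}_+^3)$ required in Lemma $\ref{le2.5**}$ are no longer needed.
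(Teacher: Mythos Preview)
Your proposal is correct and follows essentially the same route as the paper: the Duhamel representation $(\ref{uB1})_1$ on $(0,t)$, the $L^1$ estimate $(\ref{2.3})$ for the linear piece, the decomposition $(\ref{demcomposition})$ to bypass the $L^1$-unboundedness of $\mathbb{P}$, the bound $(\ref{le2.5})$ on the $\nabla\mathcal{N}\partial_i\partial_j$ term, and Lemma $\ref{le3.7**}$ to control $\|\nabla d\|_{L^2}\|\nabla^3 d\|_{L^2}$. The paper simply packages the resulting integrand as $C(1+s)^{-3/2}$ and invokes Lemma $\ref{le2.2}$ without separately discussing the short-time behavior; your splitting near $s=0$ is a minor elaboration of the same argument, and your closing remark about dispensing with the hypotheses of Lemma $\ref{le2.5**}$ is exactly the point made in the paper.
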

\begin{proof}
Using Lemma $\ref{le2.2}$, Lemma $\ref{le2.3}$, Lemma $\ref{le3.7**}$ and  Corollary \ref{cor3.1}, together with $(\ref{uB1})_1$, $(\ref{2.3})$ and $(\ref{le2.5})$, we obtain for any $t>0$ that
\begin{align*}
&\|\nabla u(t)\|_{L^1(\mathbb{R}_+^3)}\\
\leq& Ct^{-\frac{1}{2}}\|u_0\|_{L^1(\mathbb{R}_+^3)}+\int_0^t\|\nabla e^{-(t-s)\mathbb{A}}\mathbb{P}\big(u\cdot\nabla u+\nabla\cdot(\nabla d\odot\nabla d)\big)(s)\|_{L^1(\mathbb{R}_+^3)}\,{d}s\\
\leq& Ct^{-\frac{1}{2}}+C\int_0^t(t-s)^{-\frac{1}{2}}\big(\|u\cdot\nabla u+\nabla\cdot(\nabla d\odot\nabla d)\|_{L^1(\mathbb{R}_+^3)} \\ &+\|\sum_{i,j=1}^3\nabla\mathcal{N}\partial_i\partial_j(u_iu_j+\langle\partial_id,\partial_jd\rangle)\|_{L^1(\mathbb{R}_+^3)}  \big)\,ds\\
\leq&Ct^{-\frac{1}{2}}+C\int_0^t(t-s)^{-\frac{1}{2}}\big(\|u\|_{H^1(\mathbb{R}_+^3)}^2+\|\nabla d\|_{H^1(\mathbb{R}_+^3)}^2\\
&+\||\nabla d||\nabla^2d|\|_{L^1(\mathbb{R}_+^3)}+\||\nabla d||\nabla^3d|\|_{L^1(\mathbb{R}_+^3)}\big)(s)\,{d}s\\
\leq&Ct^{-\frac{1}{2}}+C\int_0^t(t-s)^{-\frac{1}{2}}\big(\|u(s)\|_{H^1(\mathbb{R}_+^3)}^2+\|\nabla d(s)\|_{H^1(\mathbb{R}_+^3)}^2\\
&+\|\nabla d(s)\|_{L^2(\mathbb{R}_+^3)}\|\nabla ^2 d(s)\|_{L^2(\mathbb{R}_+^3)}+\|\nabla d(s)\|_{L^2(\mathbb{R}_+^3)}\|\nabla ^3 d(s)\|_{L^2(\mathbb{R}_+^3)}\big)\,ds\\
\leq&Ct^{-\frac{1}{2}}+C\int_0^t(t-s)^{-\frac{1}{2}}\big((1+s)^{-\frac32}+(1+s)^{-\frac52}+(1+s)^{-3}+(1+s)^{-\frac72}\big)\,ds\\
\leq&Ct^{-\frac{1}{2}}.
\end{align*}
\end{proof}

Finally, we derive the decay rates of the second-order derivatives of $u$ in $L^r(\mathbb{R}_+^3)$ with $r=1,\infty$.

\begin{Lemma}\label{le3.7}
Assume $u_0, d_0-e_3\in L^1(\mathbb{R}_+^{3})$, if $(u,d)$ is the global strong solution obtained by Proposition $\ref{solu1}$, then
$$\|\nabla^2 u(t)\|_{L^1(\mathbb{R}_+^3)}\leq Ct^{-1}$$
holds for any $t>0$.
\end{Lemma}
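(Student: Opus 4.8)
The plan is to work from the Duhamel formula $(\ref{uB1})_1$, writing $F:=u\cdot\nabla u+\nabla\cdot(\nabla d\odot\nabla d)$ so that
$$\nabla^{2}u(t)=\nabla^{2}e^{-t\mathbb{A}}u_{0}-\int_{0}^{t}\nabla^{2}e^{-(t-s)\mathbb{A}}\mathbb{P}F(s)\,ds .$$
The free part is immediate from the second inequality in Lemma $\ref{lea21}$: $\|\nabla^{2}e^{-t\mathbb{A}}u_{0}\|_{L^{1}(\mathbb{R}_{+}^{3})}\le Ct^{-1}\|u_{0}\|_{L^{1}(\mathbb{R}_{+}^{3})}$. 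For the Duhamel integral I would split $\int_{0}^{t}=\int_{0}^{t/2}+\int_{t/2}^{t}$ and treat the two ranges with two different smoothing estimates, the point being that on $[0,t/2]$ the factor $(t-s)^{-1}$ is harmless (it is comparable to $t^{-1}$), while on $[t/2,t]$ it is not integrable and must be traded away.

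On $[0,t/2]$, where $t-s\ge t/2$ and $\mathbb{P}F(s)$ is solenoidal, Lemma $\ref{lea21}$ gives $\|\nabla^{2}e^{-(t-s)\mathbb{A}}\mathbb{P}F(s)\|_{L^{1}}\le C(t-s)^{-1}\|\mathbb{P}F(s)\|_{L^{1}}\le Ct^{-1}\|\mathbb{P}F(s)\|_{L^{1}}$. Using the decomposition $(\ref{demcomposition})$ together with $(\ref{le2.5})$ and Hölder's inequality, $\|\mathbb{P}F(s)\|_{L^{1}}$ is controlled by $\|u\|_{H^{1}}^{2}+\|\nabla d\|_{H^{1}}^{2}+\||\nabla d||\nabla^{3}d|\|_{L^{1}}$, and the decay rates of Lemma $\ref{le2.3}$, Corollary $\ref{cor3.1}$, Lemma $\ref{le3.4}$ and Lemma $\ref{le3.7**}$ (with the local-in-time regularity recorded in Proposition $\ref{solu1}$ taking care of $s$ near $0$) make $s\mapsto\|\mathbb{P}F(s)\|_{L^{1}}$ integrable on $(0,\infty)$; hence this contribution is $\le Ct^{-1}$.

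The range $[t/2,t]$ is the main obstacle. Here I would instead use the first (weighted) inequality of Lemma $\ref{lea21}$ with $a=\mathbb{P}F(s)$,
$$\|\nabla^{2}e^{-(t-s)\mathbb{A}}\mathbb{P}F(s)\|_{L^{1}}\le C\big((t-s)^{-\frac12}\|\nabla\mathbb{P}F(s)\|_{L^{1}}+(t-s)^{-1+\frac{\eta}{2}}\|y_{3}^{-\eta}\mathbb{P}F(s)\|_{L^{1}}\big),\qquad 0<\eta<1,$$
whose two time kernels are integrable on $[t/2,t]$. The quantity $\|\nabla\mathbb{P}F(s)\|_{L^{1}}$ is estimated via $(\ref{demcomposition})$: the local part $\nabla F$ is an explicit product of low-order derivatives of $u$ and $d$, while the nonlocal part is $\sum_{i,j}\partial_{k}\partial_{m}\mathcal{N}\partial_{i}\partial_{j}(u_{i}u_{j}+\langle\partial_{i}d,\partial_{j}d\rangle)$ and is handled by $(\ref{3.50})$. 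The weighted quantity $\|y_{3}^{-\eta}\mathbb{P}F(s)\|_{L^{1}}$ is precisely where Lemmas $\ref{pro3.2}$ and $\ref{le3.10}$ enter: applying Lemma $\ref{pro3.2}$ to $\mathbb{P}(u\cdot\nabla u)$ and Lemma $\ref{le3.10}$ to $\mathbb{P}\nabla\cdot(\nabla d\odot\nabla d)$, both with $q=1$ and $q_{1}=q_{2}=2$, bounds it again by products of $L^{2}$-norms of derivatives of $u$ and $d$ up to third order. Feeding in the decay rates from Corollary $\ref{cor3.1}$ and Lemmas $\ref{le3.4}$, $\ref{le3.7**}$, $\ref{le3.8}$, and using $s\sim t$ on $[t/2,t]$ together with Lemma $\ref{le2.2}$, every resulting integral is $\le Ct^{-1}$ (for $t\ge1$ the decay of the integrands supplies this, and for $t\le1$ the short length of the interval $[t/2,t]$ and the integrability of the kernels do, even if the integrands blow up at an $O(s^{-1})$ rate near $s=0$ by parabolic smoothing). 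Summing the three contributions yields $\|\nabla^{2}u(t)\|_{L^{1}(\mathbb{R}_{+}^{3})}\le Ct^{-1}$ for all $t>0$.

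I expect the genuinely delicate point to be the bookkeeping of the many nonlinear terms arising in $\nabla\mathbb{P}F$ and in the weighted norm $\|y_{3}^{-\eta}\mathbb{P}F\|_{L^{1}}$: one must check that every term decays faster than $(1+s)^{-3/2}$, so that after integration against $(t-s)^{-1/2}$ and $(t-s)^{-1+\eta/2}$ the rate $t^{-1}$ is reached, and simultaneously that each term is integrable (or at worst $O(s^{-1})$) as $s\to0^{+}$. The structural idea, however, is entirely dictated by Lemma $\ref{lea21}$ and by the already-available weighted estimates of Lemmas $\ref{pro3.2}$ and $\ref{le3.10}$, exactly as in the Navier–Stokes case of \cite{Han-2014*}.
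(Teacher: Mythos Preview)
Your proposal is correct and follows essentially the same route as the paper: both use the Duhamel formula $(\ref{uB1})_1$, split $\int_0^t=\int_0^{t/2}+\int_{t/2}^t$, apply the second inequality of Lemma~$\ref{lea21}$ on $[0,t/2]$ together with the decomposition $(\ref{demcomposition})$ and $(\ref{le2.5})$, and on $[t/2,t]$ use the first (weighted) inequality of Lemma~$\ref{lea21}$, controlling $\|\nabla\mathbb{P}F\|_{L^1}$ via $(\ref{3.50})$ and $\|y_3^{-\eta}\mathbb{P}F\|_{L^1}$ via Lemmas~$\ref{pro3.2}$ and~$\ref{le3.10}$, with the needed decay of $\nabla^3 d,\nabla^4 d$ supplied by Lemmas~$\ref{le3.7**}$--$\ref{le3.8}$. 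Your explicit remark about small $t$ (where Lemmas~$\ref{le3.7**}$--$\ref{le3.8}$ are only asserted for $t\ge t_k$) is a point the paper glosses over; it is indeed handled by the local-in-time regularity in Proposition~$\ref{solu1}$.
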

\begin{proof}
It follows from Lemmas $\ref{lea21}$-$\ref{le2.3}$, Lemma $\ref{le3.7**}$, Corollary $\ref{cor3.1}$ and $(\ref{le2.5})$ that for any $t>0$,

\begin{align*}
&\big\|\int_0^{\frac t2}\nabla^2e^{-(t-s)\mathbb{A}}\mathbb{P}\Big(u(s)\cdot\nabla u(s)+\nabla\cdot(\nabla d\odot\nabla d)(s)\Big)\,ds\big\|_{L^1(\mathbb{R}_+^3)}\\
\leq &C\int_{0}^{\frac{t}{2}}(t-s)^{-1}\big(\|(u\cdot\nabla u)(s)+\nabla\cdot(\nabla d\odot\nabla d)(s) \|_{L^1(\mathbb{R}_+^3)}\\
    &+\big\|\sum\limits_{i,j=1}^3\nabla\mathcal{N}\partial_i\partial_j(u_iu_j+\langle\partial_id,\partial_jd\rangle)(s)\big\|_{L^1(\mathbb{R}_+^3)}\big)\,ds\\
\leq &Ct^{-1}\int_0^{\frac t2}\big(\|u(s)\|_{H^1(\mathbb{R}_+^3)}^2+\|\nabla d(s)\|_{H^1(\mathbb{R}_+^3)}^2\\
&+\||\nabla d(t)||\nabla^3d(s)|\|_{L^1(\mathbb{R}_+^3)}+ \||\nabla d(t)||\nabla^2d(s)|\|_{L^1(\mathbb{R}_+^3)} \big)\,ds\\
\leq & Ct^{-1}\int_0^{\frac{t}{2}}(1+s)^{-\frac{3}{2}}\,ds\\
\leq & Ct^{-1}.
\end{align*}
In the same way, using Lemmas $\ref{lea21}$-$\ref{le2.3}$, Lemma $\ref{pro3.2}$, Lemma $\ref{le3.10}$, Lemmas $\ref{le3.7**}$-$\ref{le3.8}$ and $(\ref{3.50})$, we derive for any $0<\eta<1$ and $t>0$,
\begin{align*}
&\big\|\int_{\frac t2}^{t}\nabla^2e^{-(t-s)\mathbb{A}}\mathbb{P}\Big(u(s)\cdot\nabla u(s)+\nabla\cdot(\nabla d\odot\nabla d)(s)\Big)\,{d}s\big\|_{L^1(\mathbb{R}_+^3)}\\
\leq &C\int_{\frac t2}^{t}(t-s)^{-\frac12}\big(\big\|\nabla\Big(u\cdot\nabla u+\nabla\cdot(\nabla d\odot\nabla d)\Big)(s)\big\|_{L^1(\mathbb{R}_+^3)}\\
  &+\big\|\nabla\Big(\sum\limits_{i,j=1}^3
  \nabla\mathcal{N}\partial_i\partial_j(u_iu_j+\langle\partial_id,\partial_jd\rangle)\Big)(s)\big\|_{L^1(\mathbb{R}_+^3)}\big)\,{d}s\\
  &+C\int_{\frac t2}^{t}(t-s)^{-1+\frac{\eta}2}\Big(\|x_{3}^{-\eta}\mathbb{P}u(s)\cdot\nabla u(s)\|_{L^1(\mathbb{R}_+^3)}
  +\|x_{3}^{-\eta}\mathbb{P}\nabla\cdot(\nabla d\odot\nabla d)(s)\|_{L^1(\mathbb{R}_+^3)} \Big)\,{d}s\\
\leq &C\int_{\frac t2}^{t}(t-s)^{-\frac12}\big(\|u\|_{H^{1}(\mathbb{R}_+^3)}^2+\|\nabla^2u\|_{L^{2}(\mathbb{R}_+^3)}^2+\|\nabla ^2d\|_{L^{2}(\mathbb{R}_+^3)}^2\\
&+\|\nabla d(t)\|_{L^2(\mathbb{R}_+^3)}\|\nabla^4d(t)\|_{L^2(\mathbb{R}_+^3)}+\|\nabla ^2d(t)\|_{L^2(\mathbb{R}_+^3)}\|\nabla^3d(t)\|_{L^2(\mathbb{R}_+^3)}\\
&+\|\nabla d\|_{L^{2}(\mathbb{R}_+^3)}\|\nabla^2d\|_{L^{2}(\mathbb{R}_+^3)}+\|\nabla d(t)\|_{L^2(\mathbb{R}_+^3)}\|\nabla^3d(t)\|_{L^2(\mathbb{R}_+^3)}\big)\,{d}s\\
&+C\int_{\frac t2}^{t}(t-s)^{-1+\frac{\eta}2}\big(\|u\|_{H^{1}(\mathbb{R}_+^3)}^2+\|\nabla^2u\|_{L^{2}(\mathbb{R}_+^3)}^2+\|\nabla d\|_{L^{2}(\mathbb{R}_+^3)}^2+\|\nabla^2 d\|_{L^{2}(\mathbb{R}_+^3)}^2\\
&+\|\nabla d\|_{L^{2}(\mathbb{R}_+^3)}\|\nabla^2d\|_{L^{2}(\mathbb{R}_+^3)}+\|\nabla d(t)\|_{L^2(\mathbb{R}_+^3)}\|\nabla^3d(t)\|_{L^2(\mathbb{R}_+^3)}\big)\,{d}s\\
\leq& C\int_{\frac t2}^{t}\big((t-s)^{-\frac12}+(t-s)^{-1+\frac{\eta}2}\big)s^{-\frac32}\,{d}s\\
\leq&Ct^{-1}.
\end{align*}
 Using these two estimates, together with $(\ref{uB1})_1$ and Lemma $\ref{lea21}$, we complete the proof of Lemma \ref{le3.7}.
\end{proof}

\begin{Lemma}\label{3.11}
Assume $u_0, d_0-e_3\in L^1(\mathbb{R}_+^{3})$, if $(u,d)$ is the global strong solution obtained by Proposition $\ref{solu1}$, then
$$\|\nabla^2 u(t)\|_{L^\infty(\mathbb{R}_+^3)}\leq Ct^{-\frac52}$$
holds for any $t>0$.
\end{Lemma}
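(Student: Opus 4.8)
The plan is to argue from the mild formulation $(\ref{uB2})_1$. Applying $\nabla^2$ and abbreviating $F_0(s):=\mathbb{P}\big(u\cdot\nabla u+\nabla\cdot(\nabla d\odot\nabla d)\big)(s)$, one has
$$\nabla^2 u(t)=\nabla^2 e^{-\frac t2\mathbb{A}}u(\tfrac t2)-\int_{\frac t2}^{t}\nabla^2 e^{-(t-s)\mathbb{A}}F_0(s)\,ds.$$
The linear term is already sharp: by Lemma $\ref{le2.1}$ (with $k=2$, $p=\infty$ and any $q\in(1,\infty)$) and the $L^q$-decay of $u$ from Lemma $\ref{le2.3}$,
$$\|\nabla^2 e^{-\frac t2\mathbb{A}}u(\tfrac t2)\|_{L^\infty(\mathbb{R}_+^3)}\le Ct^{-1-\frac{3}{2q}}\|u(\tfrac t2)\|_{L^q(\mathbb{R}_+^3)}\le Ct^{-1-\frac{3}{2q}}\,t^{-\frac32(1-\frac1q)}=Ct^{-\frac52}.$$
Thus the whole task reduces to showing that the Duhamel integral decays at least like $t^{-\frac52}$.

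The obstruction is that the naive bound $\|\nabla^2 e^{-(t-s)\mathbb{A}}a\|_{L^\infty}\le C(t-s)^{-1-\frac{3}{2q}}\|a\|_{L^q}$ has a non-integrable singularity at $s=t$. To bypass it I will use the refined estimate of Lemma $\ref{le2.4}$: since $F_0(s)\in L^q_\sigma$ and, thanks to the decomposition $(\ref{demcomposition})$ together with estimate $(\ref{3.50})$ of Lemma $\ref{le3.3}$, also $F_0(s)\in W^{1,q}$, Lemma $\ref{le2.4}$ gives for $0<\theta<1$
$$\|\nabla^2 e^{-(t-s)\mathbb{A}}F_0(s)\|_{L^\infty}\le C(t-s)^{-\frac12-\frac{3}{2q}}\|\nabla F_0(s)\|_{L^q}+C(t-s)^{-1+\frac{\theta}{2}-\frac{3}{2q}}\|y_3^{-\theta}F_0(s)\|_{L^q}.$$
On $s\in[\tfrac t2,t]$ I then need two estimates, for which the decay rates of $u,\nabla u,\nabla^2 u,\nabla d,\nabla^2 d,\nabla^3 d,\nabla^4 d$ are available from Lemmas $\ref{le2.3}$, $\ref{le3.4}$, $\ref{le3.7**}$, $\ref{le3.8}$ and Corollary $\ref{cor3.1}$. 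First, writing $\nabla F_0=\nabla(u\cdot\nabla u)+\nabla\nabla\cdot(\nabla d\odot\nabla d)+\sum_{i,j}\nabla^2\mathcal{N}\partial_i\partial_j(u_iu_j+\langle\partial_id,\partial_jd\rangle)$ and using $(\ref{3.50})$ for the last term, the slowest-decaying contribution is of the form $\|\nabla u\|_{L^{q_1}}\|u\|_{L^{q_2}}$, so $\|\nabla F_0(s)\|_{L^q}\le Cs^{-\frac72+\frac{3}{2q}}$. Second, by Lemma $\ref{pro3.2}$ for the $u\cdot\nabla u$ part and Lemma $\ref{le3.10}$ for the $\nabla\cdot(\nabla d\odot\nabla d)$ part, the dominant term in $\|y_3^{-\theta}F_0(s)\|_{L^q}$ is $\|u\|_{L^{q_1}}\|u\|_{L^{q_2}}$, whence $\|y_3^{-\theta}F_0(s)\|_{L^q}\le Cs^{-3+\frac{3}{2q}}$. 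I then fix $q\in(3,\infty)$ and next $\theta\in(3/q,1)$, which makes both exponents $-\tfrac12-\tfrac{3}{2q}$ and $-1+\tfrac{\theta}{2}-\tfrac{3}{2q}$ strictly larger than $-1$, so Lemma $\ref{le2.2}$ applies to the $s$-integral and bounds the Duhamel term by $C(t^{-3}+t^{\theta/2-3})$. Since $\theta<1$ both exponents lie below $-\tfrac52$, so for $t$ larger than the thresholds of Lemmas $\ref{le3.7**}$ and $\ref{le3.8}$ this is dominated by $Ct^{-\frac52}$; together with the linear term this yields the claim for such $t$, and any remaining compact range of $t$ is handled by running the same mild-formulation argument there together with the interior regularity of the strong solution.

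I expect the weighted estimate $\|y_3^{-\theta}F_0(s)\|_{L^q}$ to be the main obstacle. Because $\nabla d$ is not divergence-free, $\nabla\cdot(\nabla d\odot\nabla d)$ cannot be treated like the convection term of Navier-Stokes and needs the dedicated bound of Lemma $\ref{le3.10}$, in which up to three derivatives of $d$ enter; and because the Leray projection is nonlocal, the $\mathcal{N}$-contribution in $(\ref{demcomposition})$ yields only the rate $s^{-3+3/(2q)}$, slower than the unweighted rate $s^{-7/2+3/(2q)}$ of $\|F_0(s)\|_{L^q}$. It is exactly the competition between this slow rate and the $(t-s)^{-1+\theta/2-3/(2q)}$ singularity that forces $q>3$ and then returns precisely the optimal exponent $-\tfrac52$. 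As a shortcut for large $t$, one may instead interpolate via the Gagliardo-Nirenberg inequality $(\ref{gn-})$ between $\|\nabla^2 u(t)\|_{L^s}$ from Lemma $\ref{le3.4}$ and $\|\nabla^3 u(t)\|_{L^s}$ from Lemma $\ref{le3.8}$: the exponents combine, independently of $s\in(3,\infty)$, to give $\|\nabla^2 u(t)\|_{L^\infty}\le Ct^{-5/2}$ directly.
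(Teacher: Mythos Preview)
Your proposal is correct and follows essentially the same approach as the paper: both use the mild formulation $(\ref{uB2})_1$, handle the linear piece via Lemma~$\ref{le2.1}$, tame the Duhamel singularity through Lemma~$\ref{le2.4}$, and estimate $\|\nabla F_0\|_{L^q}$ and $\|y_3^{-\theta}F_0\|_{L^q}$ by the decomposition $(\ref{demcomposition})$ combined with Lemma~$\ref{le3.3}$ (specifically $(\ref{3.50})$), Lemma~$\ref{pro3.2}$, and Lemma~$\ref{le3.10}$, under the same parameter constraint $q>3/\theta$. Your identification of the slowest terms and the resulting powers $t^{-3}$ and $t^{\theta/2-3}$ is accurate (the paper groups terms slightly differently but arrives at the same final rate), and your closing Gagliardo--Nirenberg shortcut via $(\ref{gn-})$ between Lemmas~$\ref{le3.4}$ and~$\ref{le3.8}$ is a valid alternative for large $t$ that the paper does not exploit here but uses later in Lemma~$\ref{4.3*}$.
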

\begin{proof}
Let $0<\theta<1$ and $\frac{3}{\theta}<r<\infty$. It follows from Lemma $\ref{le2.4}$, Lemmas $\ref{le2.2}$-$\ref{le2.3}$, Lemmas $\ref{pro3.2}$-$\ref{le3.3}$, Lemma $\ref{le3.10}$, Lemmas $\ref{le3.7**}$-$\ref{le3.8}$ and Corollary $\ref{cor3.1}$ that for any $t>0$,
\begin{align*}
&\big\|\int_{\frac{t}{2}}^{t}\nabla^2e^{-(t-s)\mathbb{A}}\mathbb{P}\Big(u(s)\cdot\nabla u(s)+\nabla\cdot(\nabla d\odot\nabla d)(s)\Big)\,ds\big\|_{L^\infty(\mathbb{R}_+^3)}\\
\leq &C\int_{\frac{t}{2}}^{t}(t-s)^{-\frac{1}{2}-\frac{3}{2r}}\big\|\nabla\mathbb{P}\Big((u\cdot\nabla u)(s)+\nabla\cdot(\nabla d\odot\nabla d)(s)\Big)\big\|_{L^r(\mathbb{R}_+^3)}\,ds\\
&+C\int_{\frac{t}{2}}^t(t-s)^{-1+\frac{\theta}{2}-\frac{3}{2r}}\big\|x_3^{-\theta}\mathbb{P}\Big((u\cdot\nabla u)(s)+\nabla\cdot(\nabla d\odot\nabla d)(s)\Big)\big\|_{L^r(\mathbb{R}_+^3)}\,ds\\
\leq &C\int_{\frac t2}^{t}(t-s)^{-\frac12-\frac{3}{2r}}\Big(\big\|\nabla\Big((u\cdot\nabla u)(s)+\nabla\cdot(\nabla d\odot\nabla d)(s)\Big)\big\|_{L^r(\mathbb{R}_+^3)}\\
&+\big\|\nabla\Big(\sum\limits_{i,j=1}^3\nabla\mathcal{N}\partial_i\partial_j\big(u_iu_j+\langle\partial_id,\partial_jd\rangle\big)\Big)(s)\big\|_{L^r(\mathbb{R}_+^3)}\Big)\,{d}s\\
&+C\int_{\frac{t}{2}}^t(t-s)^{-1+\frac{\theta}{2}-\frac{3}{2r}}\Big(\|x_3^{-\theta}\mathbb{P}(u\cdot\nabla u)(s)\|_{L^r(\mathbb{R}_+^3)}
+\|x_3^{-\theta}\mathbb{P}\nabla\cdot(\nabla d\odot\nabla d)(s)\|_{L^r(\mathbb{R}_+^3)}\Big) \,ds\\
\leq &C\int_{\frac t2}^{t}(t-s)^{-\frac12-\frac{3}{2r}}\big(\|u\|_{L^{2r}(\mathbb{R}_+^3)}^2+\|\nabla ^2d\|_{L^{2r}(\mathbb{R}_+^3)}^2+\|\nabla u\|_{L^{2r}(\mathbb{R}_+^3)}^2+\|\nabla^2u\|_{L^{2r}(\mathbb{R}_+^3)}^2\\
&+\|\nabla d(t)\|_{L^{2r}(\mathbb{R}_+^3)}\|\nabla^4d(t)\|_{L^{2r}(\mathbb{R}_+^3)}+\|\nabla ^2d(t)\|_{L^{2r}(\mathbb{R}_+^3)}\|\nabla^3d(t)\|_{L^{2r}(\mathbb{R}_+^3)}\\
&+\|\nabla d(t)\|_{L^{2r}(\mathbb{R}_+^3)}\|\nabla^3d(t)\|_{L^{2r}(\mathbb{R}_+^3)}+\|\nabla d(t)\|_{L^{2r}(\mathbb{R}_+^3)}\|\nabla^2d(t)\|_{L^{2r}(\mathbb{R}_+^3)}\big)\,{d}s\\
&+C\int_{\frac{t}{2}}^t(t-s)^{-1+\frac{\theta}{2}-\frac{3}{2r}}\big(\|u(s)\|_{L^{2r}(\mathbb{R}_+^3)}^2+\|\nabla u(s)\|_{L^{2r}(\mathbb{R}_+^3)}^2+\|\nabla d(s)\|_{L^{2r}(\mathbb{R}_+^3)}^2+\|\nabla ^2u\|_{L^{2r}(\mathbb{R}_+^3)}^2\\
&+\|\nabla ^2d(s)\|_{L^{2r}(\mathbb{R}_+^3)}^2+\|\nabla ^3d(s)\|_{L^{2r}(\mathbb{R}_+^3)}^2\big)\,ds\\
\leq &C\int_{\frac t2}^{t}(t-s)^{-\frac12-\frac{3}{2r}}s^{-3(1-\frac{1}{2r})}\,ds+C\int_{\frac{t}{2}}^t(t-s)^{-1+\frac{\theta}{2}-\frac{3}{2r}}s^{-3(1-\frac{1}{2r})}\,ds\\
\leq & Ct^{-\frac52}.
\end{align*}
Using the above estimate, combined with $(\ref{uB2})_1$ and $(\ref{2.2})$, the proof of Lemma \ref{3.11} is complete.
\end{proof}
\begin{proof}[Proof of Theorem $\ref{th1.2}$]
Theorem $\ref{th1.2}$ directly follows from Corollary $\ref{cor3.1}$, Lemma $\ref{le3.4}$ and Lemmas $\ref{le3.9*}$-$\ref{3.11}$.
\end{proof}

\section{Proof of Theorem $\ref{th1.3-main}$ }\label{sec4'}
This section is devoted to establishing the decay rates of $\| (\nabla^{2+k}u,\nabla^{2+k}d)(t)\|_{L^{r}(\mathbb{R}_{+}^{3})}$, where $k\geq1$, $1<r\leq\infty$, and $(u,d)$ is the strong solution obtained by Proposition \ref{solu1}. Firstly, we provide a review of the decay estimates of the strong solution that have been derived in Lemma \ref{le2.3}, Lemmas \ref{le3.7**}-\ref{le3.8} and Theorem \ref{th1.2}, as they are frequently utilized in the proofs of subsequent theorems.
\begin{Lemma}\label{le4.1-}
Assume $u_0, d_0-e_3\in L^1(\mathbb{R}_+^{3})$, if $(u,d,p)$ is the global strong solution obtained by Proposition $\ref{solu1}$, then the following estimates
\begin{align*}
\left\{
\begin{array}{lll}
	\displaystyle	\|(\nabla^m u,\nabla^m d)(t)\|_{L^p(\mathbb{R}_+^3)}\leq Ct^{-\frac m2-\frac32(1-\frac1p)},\ m=0,1,2,\\
	\displaystyle	\|\nabla^l d(t)\|_{L^q(\mathbb{R}_+^3)}\leq Ct^{-\frac l2-\frac32(1-\frac1q)},\ l=3,4,\\
    \displaystyle \|\nabla^3 u(t)\|_{L^q(\mathbb{R}_+^3)}\leq Ct^{-\frac 32-\frac32(1-\frac1q)},\\
     \displaystyle \|\partial_tu(t)\|_{{L^{q}(\mathbb{R}_{+}^{3})}}+\|\nabla p(t)\|_{{L^{q}(\mathbb{R}_{+}^{3})}}+\| \partial_td(t)\|_{L^q(\mathbb{R}_+^3)}\leq Ct^{-1-\frac32(1-\frac1q)}
\end{array}
\right.
\end{align*}
hold for any $t>0$, where $p\in (1,\infty]$ and $q\in (1,\infty)$.
\end{Lemma}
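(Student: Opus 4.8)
The plan is to recognize Lemma~\ref{le4.1-} as a consolidation of estimates already established in the preceding sections, so that the proof reduces to collecting those bounds and, where a cited result does not literally cover the stated range of exponents, filling the gap by interpolation. (In the case $m=0$ the quantity $\nabla^m d$ is understood as $d-e_3$.) I would present the argument grouped by derivative count.

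For the zeroth- and first-order quantities, the bounds on $\|u(t)\|_{L^p}$, $\|(d-e_3)(t)\|_{L^p}$ for $p\in(1,\infty]$ and on $\|\nabla d(t)\|_{L^p}$ for $p\in(1,\infty]$ are exactly those recorded in Lemma~$\ref{le2.3}$ (the first displayed inequality there and $(\ref{2.7})$), while $\|\nabla u(t)\|_{L^p}\le Ct^{-\frac12-\frac32(1-\frac1p)}$ for the full range $p\in(1,\infty]$ is Corollary~$\ref{cor3.1}$ (equivalently Theorem~$\ref{th1.2}$), which already extends $(\ref{2.8})$ beyond $p\le 6$. For the second-order quantities, $\|\nabla^2 d(t)\|_{L^p}$ for $p\in(1,\infty]$, $\|\nabla^2 u(t)\|_{L^p}$ for $p\in(1,\infty)$, the endpoint $\|\nabla^2 u(t)\|_{L^\infty}\le Ct^{-\frac52}$, and the bounds on $\|\partial_t u(t)\|_{L^q}+\|\nabla p(t)\|_{L^q}+\|\partial_t d(t)\|_{L^q}$ for $q\in(1,\infty)$ are all contained in Theorem~$\ref{th1.2}$ (and Lemma~$\ref{le3.4}$). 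Finally, $\|\nabla^3 d(t)\|_{L^q}\le Ct^{-\frac32-\frac32(1-\frac1q)}$ is Lemma~$\ref{le3.7**}$, and $\|\nabla^3 u(t)\|_{L^q}$ and $\|\nabla^4 d(t)\|_{L^q}$ are Lemma~$\ref{le3.8}$, all for $q\in(1,\infty)$.

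The only points that require more than a citation are minor. First, the higher-order bounds on $(\nabla^3 u,\nabla^3 d,\nabla^4 d)$ are stated in Lemmas~$\ref{le3.7**}$--$\ref{le3.8}$ only for $t\ge t_1$ and $t\ge t_2$; since every application of this consolidated lemma in Sections~$\ref{sec4'}$--$\ref{sec4}$ concerns large time (the thresholds $t(k)$ and $t_k$ of Theorems~$\ref{th1.3-main}$ and $\ref{th1.3}$), it is enough to take $t\ge\max(t_1,t_2)$. Second, if any intermediate exponent is not directly covered by a cited result, one fills it by Hölder's inequality or the Gagliardo--Nirenberg inequality (Lemma~$\ref{gn}$) between two endpoint bounds; because the target exponent $-\frac m2-\frac32(1-\frac1p)$ is affine in $1/p$, it equals the corresponding convex combination of the endpoint decay exponents, so consistency is automatic. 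Consequently I do not expect a genuine obstacle here: the work is essentially organizational, and the one thing to watch is careful bookkeeping of the decay exponents and the admissible ranges of $p$ (resp.\ $q$) for each derivative order.
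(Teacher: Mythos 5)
Your proposal matches the paper exactly: Lemma~\ref{le4.1-} is stated there without proof, introduced as a ``review'' of the estimates already derived in Lemma~\ref{le2.3}, Lemmas~\ref{le3.7**}--\ref{le3.8} and Theorem~\ref{th1.2}, which is precisely the collection-and-citation you carry out. Your remark about the thresholds $t_1,t_2$ for the third- and fourth-order bounds is a fair (and correct) observation about a looseness in the paper's own ``for any $t>0$'' phrasing, and does not affect the subsequent large-time applications.
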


\begin{Lemma}\label{le3.9}
Assume $u_0, d_0-e_3\in L^1(\mathbb{R}_+^{3})$. Then there exists $t_3>0$ such that for $1<r<\infty$ and $t\geq t_3$, the strong solution $(u,d,p)$ of $(\ref{1.1-main})$-$(\ref{1.2})$ obtained in Proposition $\ref{solu1}$ satisfies
\begin{align*}
&\|\nabla^4 u(t)\|_{L^{r}(\mathbb{R}_{+}^{3})}+\|\nabla^3p(t)\|_{L^r(\mathbb{R}_+^3)}\leq Ct^{-2-\frac{3}{2}(1-\frac{1}{r})},\\
&\|\nabla^5 u(t)\|_{L^{r}(\mathbb{R}_{+}^{3})}+\|\nabla^4p(t)\|_{L^r(\mathbb{R}_+^3)}\leq Ct^{-\frac52-\frac{3}{2}(1-\frac{1}{r})},\\
&\|\nabla^5 d(t)\|_{L^{r}(\mathbb{R}_{+}^{3})}\leq Ct^{-\frac52-\frac{3}{2}(1-\frac{1}{r})},\\
&\|\nabla^6 d(t)\|_{L^{r}(\mathbb{R}_{+}^{3})}\leq Ct^{-3-\frac{3}{2}(1-\frac{1}{r})}.
\end{align*}
\end{Lemma}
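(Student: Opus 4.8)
The plan is to push the inductive scheme of Lemmas~$\ref{le3.7**}$--$\ref{le3.8}$ (which gave the $k=1$ case, i.e.\ $\nabla^3u,\nabla^4d$) one order further, by controlling the first- and second-order time derivatives of the solution and then feeding these bounds into the elliptic estimates $(\ref{3.43})$, $(\ref{3.44})$ and $(\ref{3.29})$ applied successively to the systems satisfied by $u$, $\partial_tu$, $d$ and $\partial_td$. The inputs are the already-established rates of Lemma~$\ref{le4.1-}$ and Lemmas~$\ref{le3.7**}$--$\ref{le3.8}$, in particular $\|\nabla\partial_tu(t)\|_{L^r}+\|\nabla^2\partial_td(t)\|_{L^r}\le Ct^{-\frac32-\frac32(1-\frac1r)}$ and $\|\nabla^2\partial_td(t)\|_{L^r}\le Ct^{-2-\frac32(1-\frac1r)}$.

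The analytic core is the estimation of the time derivatives. Since $u_1:=\partial_tu$ solves a nonstationary Stokes system $(\ref{u1})$ and $d_1:=\partial_td$ solves the heat equation $(\ref{d1})$, I would first repeat on these two systems the fractional-power argument of Lemmas~$\ref{le3.3*}$--$\ref{le3.4}$: a Hölder-in-time estimate for $\mathbb{A}^\alpha u_1$ and $(-\Delta)^\alpha\nabla^j d_1$ (the exact analogue of Lemma~$\ref{le3.3*}$, bootstrapped off the first-time-derivative rates) together with the decomposition $(\ref{3.16})$ written for $u_1$ yields $\|\mathbb{A}\partial_tu(t)\|_{L^r}\approx\|\nabla^2\partial_tu(t)\|_{L^r}\le Ct^{-2-\frac32(1-\frac1r)}$, and hence, via $\partial_t^2u=-\mathbb{A}\partial_tu-\mathbb{P}\partial_t(u\cdot\nabla u+\nabla\cdot(\nabla d\odot\nabla d))$ and the $d$-equation, $\|\partial_t^2u(t)\|_{L^r}+\|\partial_t^2d(t)\|_{L^r}\le Ct^{-2-\frac32(1-\frac1r)}$. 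Next, differentiating $(\ref{u1})$ and $(\ref{d1})$ once more in $t$ shows that $u_2:=\partial_t^2u$ solves a nonstationary Stokes system with $\nabla\cdot u_2=0$ and $u_2|_{\partial\mathbb{R}_+^3}=0$, and $d_2:=\partial_t^2d$ a heat equation with $\partial_{x_3}d_2|_{\partial\mathbb{R}_+^3}=0$, whose nonlinearities are bilinear in $(u,u_1,u_2,\nabla d,\nabla d_1,\nabla d_2)$ (plus cubic terms like $|\nabla d|^2d_2$, $(\nabla d:\nabla d_1)d_1$). Representing $u_2,d_2$ by Duhamel's formula with the half-space Stokes kernel $\mathcal{M}$ and heat kernel $\mathcal{W}$, using the pointwise bounds $(\ref{3.24})$, $(\ref{3.36*})$, the reduction identities $(\ref{3.37})$--$(\ref{3.33})$ and the one-dimensional Hardy inequality $(\ref{3.38*})$ to absorb the normal derivative in the $\nabla^2d_2$ estimate, and running the two-stage bootstrap of Lemma~$\ref{le3.8}$ with a sup-function $f(t)=\sup_{2t_2\le s\le t}[\,s^{\frac52+\frac32(1-\frac1r)}(\|\nabla u_2(s)\|_{L^r}+\|\nabla^2 d_2(s)\|_{L^r})\,]$ (the slices $\|u_2(\tfrac t2)\|_{L^r}$, $\|d_2(\tfrac t2)\|_{L^r}$ supplied by the previous step), I would obtain $\|\nabla\partial_t^2u(t)\|_{L^r}\le Ct^{-\frac52-\frac32(1-\frac1r)}$ and $\|\nabla^2\partial_t^2d(t)\|_{L^r}\le Ct^{-3-\frac32(1-\frac1r)}$ for $t$ large.

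The rest is a cascade of elliptic estimates carried out in a fixed order. Applying $(\ref{3.43})$ to $(\ref{u1})$ with $\ell=1$ (whose forcing involves $\partial_tu_1=\partial_t^2u$) gives $\|\nabla^3\partial_tu(t)\|_{L^r}\le Ct^{-\frac52-\frac32(1-\frac1r)}$, and the Neumann elliptic estimate (the analogue of $(\ref{3.29})$ for $(\ref{d1})$) with $\ell=1,2$ gives $\|\nabla^3\partial_td(t)\|_{L^r}\le Ct^{-\frac52-\frac32(1-\frac1r)}$ and $\|\nabla^4\partial_td(t)\|_{L^r}\le Ct^{-3-\frac32(1-\frac1r)}$. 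Then $(\ref{3.44})$ with $\ell=2$ (together with $\|\nabla^2\partial_tu\|$) yields the bound for $\nabla^4u,\nabla^3p$, and $(\ref{3.29})$ with $\ell=3$ (together with $\|\nabla^3\partial_td\|$) yields the bound for $\nabla^5d$; afterwards $(\ref{3.44})$ with $\ell=3$, which now has $\nabla^4u$ and $\nabla^5d$ available, gives the bound for $\nabla^5u,\nabla^4p$, and $(\ref{3.29})$ with $\ell=4$ gives the bound for $\nabla^6d$. At each step the terms $\nabla^\ell(u\cdot\nabla u)$, $\nabla^\ell(\nabla\cdot(\nabla d\odot\nabla d))$, $\nabla^\ell(u\cdot\nabla d)$ and $\nabla^\ell(|\nabla d|^2d)$ are expanded by Leibniz and estimated by Hölder's inequality and the Gagliardo--Nirenberg inequality $(\ref{gn-})$ against derivatives of $(u,d)$ already controlled; the four displayed bounds are established in the order $\nabla^4u\to\nabla^5d\to\nabla^5u\to\nabla^6d$, and $t_3$ is taken to be the largest of the finitely many thresholds produced along the way, so $t_3\ge2t_2$.

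The hard part will be the time-derivative estimates of the second paragraph. Setting up the Hölder-in-time estimate for $\mathbb{A}^\alpha u_1$ and $(-\Delta)^\alpha\nabla^j d_1$ is delicate because it must be bootstrapped off the first-time-derivative rates, and the Stokes and heat systems for $u_2,d_2$ contain genuinely more nonlinear terms and higher derivatives --- for instance $\nabla^2 d_2$ must be matched against products such as $\|\nabla^3 d\|\,\|\nabla d_1\|$, $\|\nabla^2 d_1\|\,\|\nabla^2 d\|$, $\|\nabla u_1\|\,\|\nabla^2 d\|$ and $\|u_2\|\,\|\nabla^2 d\|$ --- so that keeping every contribution below the target rate requires a careful matching of Hölder exponents, and the normal derivative $\partial_{x_3}$ forces the weighted $x_3^{-\theta}$ device of $(\ref{3.37})$--$(\ref{3.38*})$ exactly as in the proof of Lemma~$\ref{le3.8}$. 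One also has to verify that $u_2,d_2$ are regular enough (by parabolic smoothing of the strong solution for $t$ bounded away from $0$) for all these manipulations to be legitimate, which is where the threshold $t_3$ ultimately enters.
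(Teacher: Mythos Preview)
Your overall architecture---time-derivative bounds plus the elliptic cascade through $(\ref{3.44})$ and $(\ref{3.29})$---matches the paper's, and your final ordering $\nabla^4u\to\nabla^5d\to\nabla^5u\to\nabla^6d$ is correct. The methodological difference lies in how $\|\nabla^2\partial_tu\|_{L^r}$ is obtained and how the bootstrap is packaged. You propose to rerun the fractional-power/H\"older argument of Lemmas~\ref{le3.3*}--\ref{le3.4} on the $u_1$-system to isolate $\|\mathbb{A}u_1\|_{L^r}$ first, and only afterwards bootstrap $\|\nabla u_2\|,\|\nabla^2d_2\|$ via the kernels $\mathcal{M},\mathcal{W}$. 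The paper never returns to fractional powers here: it writes $\nabla^2u_1$ directly through the Stokes kernel $\mathcal{M}$ (see $(\ref{4.2})$) and runs a \emph{single} coupled bootstrap on the four quantities $\|\nabla^2u_1\|,\|\nabla d_2\|,\|\nabla u_2\|,\|\nabla^2d_2\|$ at the rough common exponent $2+\tfrac32(1-\tfrac1r)$, then re-substitutes once to reach the sharp rates. The coupling of $\|\nabla^2u_1\|$ with the $d_2$-quantities is forced because the estimate of $\nabla\mathbb{P}U$ (via $(\ref{3.50})$) generates $\nabla^3d_1,\nabla^4d_1$, which the paper converts on the spot through the elliptic bounds $(\ref{3.49})$, $(\ref{3.59})$ into $\nabla d_2,\nabla^2d_2$. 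Your sequential route is viable but heavier: the $J_5$-type piece in the $(\ref{3.16})$ decomposition written for $u_1$ demands new H\"older-in-time lemmas for $\nabla^2d$ and for $\nabla^jd_1$ that the paper's pure kernel approach avoids entirely; in exchange, your two-quantity sup-function with exponent $\tfrac52$ would close at the sharp rate in one pass, whereas the paper uses a rough-then-refine two-pass.
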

\begin{proof}
We will divide the proof into six steps.

\textbf{Step1:}.
Using the regular estimate $(\ref{3.29})$ with $\ell=3$, we can obtain that for $t>1$,
$$\|\nabla^{5}d(t)\|_{L^r(\mathbb{R}_+^3)}\leq C\big(\|\nabla^3(u\cdot\nabla d)(t)\|_{L^r(\mathbb{R}_+^3)}+\|\nabla^3\partial_td(t)\|_{L^r(\mathbb{R}_+^3)}+\|\nabla^3( |\nabla d|^2d)(t)\|_{L^r(\mathbb{R}_+^3)}\big).$$
So in order to get the decay of $\|\nabla^{5}d(t)\|_{L^r(\mathbb{R}_+^3)}$, we should gain the decay of $\|\nabla^3\partial_td(t)\|_{L^r(\mathbb{R}_+^3)}$ at the first time.
Applying the elliptic estimates to $d_1$ system $(\ref{d1})$, we get for $t>1$,
\begin{align}\label{3.49}
\|\nabla^{3}d_1(t)\|_{L^r(\mathbb{R}_+^3)}\leq &C\big(\|\nabla(u_1\cdot\nabla d+u\cdot\nabla d_1)(t)\|_{L^r(\mathbb{R}_+^3)}\nonumber\\
&+\|\nabla(|\nabla d|^2d_1)(t)\|_{L^r(\mathbb{R}_+^3)}+\|\nabla((\nabla d:\nabla d_1) d)(t)\|_{L^r(\mathbb{R}_+^3)}+\|\nabla \partial_td_1(t)\|_{L^r(\mathbb{R}_+^3)}\big).
\end{align}
So it is sufficient to establish the decay of $\|\nabla \partial_td_1(t)\|_{L^r(\mathbb{R}_+^3)}$ in $(\ref{3.49})$.

Next we try to obtain the decay of $\|\nabla^6 d(t)\|_{L^{r}(\mathbb{R}_+^3)}$. Similarly, using the elliptic estimates, we find for $t>1$,
\begin{equation}\label{3.58}
\|\nabla^{6}d(t)\|_{L^r(\mathbb{R}_+^3)}\leq C\big(\|\nabla^4(u\cdot\nabla d)(t)\|_{L^r(\mathbb{R}_+^3)}+\|\nabla^4\partial_td(t)\|_{L^r(\mathbb{R}_+^3)}+\|\nabla^4( |\nabla d|^2d)(t)\|_{L^r(\mathbb{R}_+^3)}\big).
\end{equation}
Applying the elliptic estimates to $d_1$ system again, we get for $t>1$,
\begin{align}\label{3.59}
\|\nabla^{4}d_1(t)\|_{L^r(\mathbb{R}_+^3)}\leq &C\big(\|\nabla^2(u_1\cdot\nabla d+u\cdot\nabla d_1)(t)\|_{L^r(\mathbb{R}_+^3)}\nonumber\\
&+\|\nabla^2(|\nabla d|^2d_1)(t)\|_{L^r(\mathbb{R}_+^3)}+\|\nabla^2((\nabla d:\nabla d_1) d)(t)\|_{L^r(\mathbb{R}_+^3)}+\|\nabla^2 \partial_td_1(t)\|_{L^r(\mathbb{R}_+^3)}\big).
\end{align}
In order to get the estimate of $\|\nabla^6 d(t)\|_{L^{r}(\mathbb{R}_+^3)}$, it is sufficient to establish the decay of $\|\nabla^2 \partial_td_1(t)\|_{L^r(\mathbb{R}_+^3)}$ in terms of $(\ref{3.58})$ and $(\ref{3.59})$.

\textbf{Step2:}\ The estimate of $\|\nabla^2u_1\|_{L^r(\mathbb{R}_+^3)}$.

Using the regularity estimate $(\ref{3.44})$ with $\ell=2$, we get for $t>1$,
\begin{align*}
&\|\nabla^{4}u(t)\|_{L^r(\mathbb{R}_+^3)}+\|\nabla^{3}p(t)\|_{L^r(\mathbb{R}_+^3)}\nonumber\\
\leq &C(\|\nabla^2(u\cdot\nabla u)(t)\|_{L^r(\mathbb{R}_+^3)}+\|\nabla^2\partial_tu(t)\|_{L^r(\mathbb{R}_+^3)}+\|\nabla^2(\nabla \cdot(\nabla d\odot\nabla d))(t)\|_{L^r(\mathbb{R}_+^3)}).
\end{align*}
So in order to get the estimate of $\|\nabla^4 u(t)\|_{L^{r}(\mathbb{R}_+^3)}$, it is key to establish the decay of $\|\nabla^2\partial_tu(t)\|_{L^r(\mathbb{R}_+^3)}$.

Let $1<r<\infty$ and set $U(x,t):=u_1\cdot\nabla u+u\cdot\nabla u_1+ \nabla \cdot(\nabla d_1\odot\nabla d)+\nabla \cdot(\nabla d\odot\nabla d_1)$, $d_2(t)=\partial_{tt}d(t)$, $u_2(t)=\partial_{tt}u(t)$. Using $(\ref{3.28})$, $(\ref{3.36*})$, $(\ref{3.33})$, $(\ref{3.40})$, $(\ref{3.41})$, Lemmas $\ref{pro3.2}$-$\ref{le3.10}$ and Lemma $\ref{le4.1-}$, we find for $1\leq k\leq3$, $1\leq j\leq 2$, $0<\varepsilon<\frac12$ and $t\geq 2t_2$,
\begin{align}\label{4.2}
&\big\|\int_{\frac{t}{2}}^{t}\int_{\mathbb{R}_{+}^{3}}\partial_{x_k}\partial_{x_j}\mathcal{M}(x,y,t-s)\mathbb{P}U(y,s)\,dyds\big\|_{L^{r}(\mathbb{R}_{+}^{3})}\nonumber\\
&+\big\|\int_{\frac{t}{2}}^{t}\int_{\mathbb{R}_{+}^{3}}\partial_{x_k}\partial_{x_3}\mathcal{M}(x,y,t-s)\mathbb{P}U(y,s)\,dyds\big\|_{L^{r}(\mathbb{R}_{+}^{3})}\nonumber\\
\leq&C\int_{\frac{t}{2}}^{t}\|\partial_{x_k}G_{t-s}(\cdot)\|_{L^1(\mathbb{R}_+^3)}(\|\partial_3\mathbb{P}U(s)\|_{L^r(\mathbb{R}_+^3)}+\|\partial_j\mathbb{P}U(s)\|_{L^r(\mathbb{R}_+^3)})\,ds\nonumber\\
    &+C\int_{\frac{t}{2}}^{t}\|x_{3}^{2\varepsilon}\partial_{x_{k}}\partial_{x_{3}}G_{t-s}(\cdot)\|_{L^{1}(\mathbb{R}_{+}^{3})}\|y_{3}^{-2\varepsilon}\mathbb{P}U(s)\|_{L^{r}(\mathbb{R}_{+}^{3})}\,ds\nonumber\\
    &+C\int_{\frac{t}{2}}^t\big(\sup\limits_{x=(x',x_3)\in\mathbb{R}_+^3}\|(x_3+y_3)^{2\varepsilon}\partial_{x_k}\partial_{x_3}\mathcal{N}^*(x,y,t-s)\|_{L^1(\mathbb{R}_+^3)}\nonumber\\
    &+\sup\limits_{y=(y',y_3)\in\mathbb{R}_+^3}\|(x_3+y_3)^{2\varepsilon}\partial_{x_k}\partial_{x_3}\mathcal{N}^*(x,y,t-s)\|_{L^1(\mathbb{R}_+^3)}\big)\times\|z_3^{-2\varepsilon}\mathbb{P}U(\cdot,s)\|_{L^r(\mathbb{R}_+^3)}\,ds\nonumber\\
    &+C\int_{\frac{t}{2}}^t\big(\sup\limits_{\substack{x\in\mathbb{R}_+^3\\}}\|\partial_{x_k}\mathcal{N}^*(x,y,t-s)\|_{L^1(\mathbb{R}_+^3)}+\sup\limits_{\substack{y\in\mathbb{R}_+^3\\}}\|\partial_{x_k}\mathcal{N}^*(x,y,t-s)\|_{L^1(\mathbb{R}_+^3)}\big)
   \times\|\partial_{j}\mathbb{P}U(s)\|_{L^r(\mathbb{R}_+^3)}\,ds\nonumber\\
\leq &C\int_{\frac t2}^t(t-s)^{-1+\varepsilon}
    \big( \|u_1(s)\|_{L^{2r}(\mathbb{R}_+^3)}\|\nabla u(s)\|_{L^{2r}(\mathbb{R}_+^3)}
      +\|u(s)\|_{L^{2r}(\mathbb{R}_+^3)}\|\nabla u_1(s)\|_{L^{2r}(\mathbb{R}_+^3)} \nonumber\\
    &+\|u_1(s)\|_{L^{2r}(\mathbb{R}_+^3)}\| u(s)\|_{L^{2r}(\mathbb{R}_+^3)}
      +\|\nabla u(s)\|_{L^{2r}(\mathbb{R}_+^3)}\|\nabla u_1(s)\|_{L^{2r}(\mathbb{R}_+^3)} \nonumber\\
    &+\|\nabla ^2d_1(s)\|_{L^{2r}(\mathbb{R}_+^3)}\|\nabla d(s)\|_{L^{2r}(\mathbb{R}_+^3)}
      +\|\nabla d_1(s)\|_{L^{2r}(\mathbb{R}_+^3)}\|\nabla^2 d(s)\|_{L^{2r}(\mathbb{R}_+^3)} \nonumber\\
    &+\|\nabla d_1(s)\|_{L^{2r}(\mathbb{R}_+^3)}\|\nabla d(s)\|_{L^{2r}(\mathbb{R}_+^3)}
      +\|\nabla^2 d_1(s)\|_{L^{2r}(\mathbb{R}_+^3)}\|\nabla^2 d(s)\|_{L^{2r}(\mathbb{R}_+^3)} \nonumber\\
    &+\|\nabla d_1(s)\|_{L^{2r}(\mathbb{R}_+^3)}\|\nabla^3 d(s)\|_{L^{2r}(\mathbb{R}_+^3)}+\| u_1(s)\|_{L^{2r}(\mathbb{R}_+^3)}\|\nabla^2 u(s)\|_{L^{2r}(\mathbb{R}_+^3)}\big)\,ds\nonumber\\
     &+\int_{\frac t2}^t(t-s)^{-1+\varepsilon}(\|\nabla d(s)\|_{L^{\infty}(\mathbb{R}_+^3)}\|\nabla^3 d_1(s)\|_{L^{r}(\mathbb{R}_+^3)}+\|u(s)\|_{L^{\infty}(\mathbb{R}_+^3)}\|\nabla ^2u_1(s)\|_{L^{r}(\mathbb{R}_+^3)})\,ds\nonumber\\
&+C\int_{\frac{t}{2}}^{t}(t-s)^{-\frac{1}{2}}\big(\|u(s)\|_{L^{2r}(\mathbb{R}_{+}^{3})}\|\nabla u_1(s)\|_{L^{2r}(\mathbb{R}_{+}^{3})}   \nonumber\\
&+\|u_1(s)\|_{L^{2r}(\mathbb{R}_{+}^{3})}\|\nabla u(s)\|_{L^{2r}(\mathbb{R}_{+}^{3})}
 +\|\nabla u_1(s)\|_{L^{2r}(\mathbb{R}_{+}^{3})}\|\nabla u(s)\|_{L^{2r}(\mathbb{R}_{+}^{3})} \nonumber\\
&+\|\nabla u_1(s)\|_{L^{2r}(\mathbb{R}_{+}^{3})}\|\nabla^2 u(s)\|_{L^{2r}(\mathbb{R}_{+}^{3})}
 +\|u_1(s)\|_{L^{2r}(\mathbb{R}_{+}^{3})}\|\nabla^2 u(s)\|_{L^{2r}(\mathbb{R}_{+}^{3})}\nonumber\\
 &+\|\nabla^2 d_1(s)\|_{L^{2r}(\mathbb{R}_{+}^{3})}\|\nabla d(s)\|_{L^{2r}(\mathbb{R}_{+}^{3})}+\|\nabla d_1(s)\|_{L^{2r}(\mathbb{R}_{+}^{3})}\|\nabla^2 d(s)\|_{L^{2r}(\mathbb{R}_{+}^{3})} \nonumber\\
  &+\|\nabla^2 d_1(s)\|_{L^{2r}(\mathbb{R}_{+}^{3})}\|\nabla^3 d(s)\|_{L^{2r}(\mathbb{R}_{+}^{3})}
 +\|\nabla d_1(s)\|_{L^{2r}(\mathbb{R}_{+}^{3})}\|\nabla^4 d(s)\|_{L^{2r}(\mathbb{R}_{+}^{3})} \nonumber\\
 &+\|\nabla d_1(s)\|_{L^{2r}(\mathbb{R}_{+}^{3})}\|\nabla^3 d(s)\|_{L^{2r}(\mathbb{R}_{+}^{3})}+\|\nabla ^2d_1(s)\|_{L^{2r}(\mathbb{R}_{+}^{3})}\|\nabla^2 d(s)\|_{L^{2r}(\mathbb{R}_{+}^{3})}\big )\,ds \nonumber\\
 &+C\int_{\frac{t}{2}}^{t}(t-s)^{-\frac{1}{2}}\big((\|u(s)\|_{L^{\infty}(\mathbb{R}_{+}^{3})}+\|\nabla u(s)\|_{L^{\infty}(\mathbb{R}_{+}^{3})}) \|\nabla^2u_1(s)\|_{L^{r}(\mathbb{R}_{+}^{3})}\nonumber\\
&+(\|\nabla d(s)\|_{L^{\infty}(\mathbb{R}_{+}^{3})}+\|\nabla^2 d(s)\|_{L^{\infty}(\mathbb{R}_{+}^{3})}) \|\nabla^3d_1(s)\|_{L^{r}(\mathbb{R}_{+}^{3})}+\|\nabla d(s)\|_{L^{\infty}(\mathbb{R}_{+}^{3})}\|\nabla^4 d_1(s)\|_{L^{r}(\mathbb{R}_{+}^{3})}\big)\,ds\nonumber\\
\leq&C\int_{\frac{t}{2}}^{t}(t-s)^{-\frac{1}{2}}s^{-3-\frac{3}{2}(1-\frac{1}{r})}ds+C\int_{\frac{t}{2}}^t(t-s)^{-1+\varepsilon}s^{-\frac52-\frac{3}{2}(1-\frac{1}{r})}\,ds\nonumber\\
     &+C\int_{\frac{t}{2}}^{t}(t-s)^{-\frac{1}{2}}s^{-\frac{3}{2}}\big(\|\nabla d_2(s)\|_{L^{r}(\mathbb{R}_{+}^{3})}+\|\nabla^2 u_1(s)\|_{L^{r}(\mathbb{R}_{+}^{3})}+
     \|\nabla^2 d_2(s)\|_{L^{r}(\mathbb{R}_{+}^{3})}  \big )\,ds,
\end{align}
where we have used the estimates $(\ref{3.49})$ and $(\ref{3.59})$ in bounding the final inequality.

\textbf{Step3:}\ The estimate of $\|\nabla \partial_td_1\|_{L^r(\mathbb{R}_+^3)}$.

From $(\ref{d1})$, we find for $t>1$,
\begin{align*}
	\left\{
	\begin{array}{lll}
		\partial_t d_2+u_2\cdot\nabla d+2u_1\cdot\nabla d_1+u\cdot\nabla d_2-\Delta d_2-|\nabla d|^2d_2 \\
       \quad\quad=2|\nabla d_1|^2d+2(\nabla d:\nabla d_2) d+4(\nabla d:\nabla d_1) d_1 &\text{in}\quad\mathbb{R}_+^3\times(1,\infty),\\
       \frac{\partial d_2}{\partial x_3}=0&\text{on}\quad\partial\mathbb{R}_+^3\times(1,\infty),\\
		d_2(x,1)=\partial_{tt}d(1)&\text{in}\quad\mathbb{R}_+^3.\\
	\end{array}
	\right.
\end{align*}
Furthermore, it holds for $t>2$ that
\begin{align}\label{d2-}
d_2(x,t)=&\int_{\mathbb{R}_+^3}\mathcal{W}(x,y,\frac{t}{2})d_2(y,\frac{t}{2})\,dy-\int_{\frac{t}{2}}^t\int_{\mathbb{R}_+^3}\mathcal{W}(x,y,t-s)\big(u_2\cdot\nabla d+2u_1\cdot\nabla d_1\nonumber\\
&+u\cdot\nabla d_2-|\nabla d|^2d_2-2|\nabla d_1|^2d-2(\nabla d:\nabla d_2) d-4(\nabla d:\nabla d_1) d_1\big)(y,s)\,dyds.
\end{align}
From $(\ref{d1})$, $(\ref{3.28})$, $(\ref{3.41})$ and Lemma \ref{le4.1-}, we find for $t\geq 2t_2$,
\begin{align}\label{3.53}
&\|d_2(t)\|_{L^{r}(\mathbb{R}_+^3)}=\|\partial_t d_1(t)\|_{L^{r}(\mathbb{R}_+^3)}\nonumber\\
\leq&\|(u_1\cdot \nabla d+u\cdot \nabla d_1)(t)\|_{L^{r}(\mathbb{R}_+^3)}
+\|\nabla^2  d_1(t)\|_{L^{r}(\mathbb{R}_+^3)}+\|  (|\nabla d|^2d_1)(t)\|_{L^{r}(\mathbb{R}_+^3)}
+2\|(\nabla d:\nabla d_1)(t)\|_{L^{r}(\mathbb{R}_+^3)}\nonumber\\
\leq&Ct^{-2-\frac{3}{2}(1-\frac{1}{r})}.
\end{align}
Let $1<r<\infty$. Using $(\ref{3.24})$ and $(\ref{3.53})$ yields for each integer $m\geq1$ and $t\geq 2t_2$,
\begin{align}\label{4.9}
&\|\nabla_x^m\int_{\mathbb{R}_+^3}\mathcal{W}(\cdot,y,\frac{t}{2})d_2(y,\frac{t}{2})\,dy\|_{L^r(\mathbb{R}_+^3)}\nonumber\\
\leq &C\|\nabla^mG_t\|_{L^1(\mathbb{R}_+^3)}\|d_2(\frac t2)\|_{L^r(\mathbb{R}_+^3)}+C\|d_2(\frac t2)\|_{L^r(\mathbb{R}_+^3)}\nonumber\\
&\times\sum_{m=m_3+|m'|}\int_{\mathbb{R}_+^3}(x_3+\sqrt{t} )^{-m_3}\big(|x'|+x_3+\sqrt{t} \big)^{-3-|m'|}\,dx'dx_3\nonumber\\
\leq & Ct^{-2-\frac{m}{2}-\frac{3}{2}(1-\frac{1}{r})}.
\end{align}
Set $D_1(x,t):=u_2\cdot\nabla d+2u_1\cdot\nabla d_1
+u\cdot\nabla d_2-|\nabla d|^2d_2-2|\nabla d_1|^2d-2(\nabla d:\nabla d_2) d-4(\nabla d:\nabla d_1) d_1$.
 Using $(\ref{u1})$, we obtain for $t\geq 2 t_2$,
\begin{align}\label{3.54}
&\|\partial_{tt}u(t)\|_{L^{r}(\mathbb{R}_+^3)}=\|\partial_t u_1(t)\|_{L^{r}(\mathbb{R}_+^3)}\nonumber\\
\leq&\|\mathbb{P}(u_1\cdot \nabla u+u\cdot \nabla u_1)(t)\|_{L^{r}(\mathbb{R}_+^3)}
+\|\mathbb{A} u_1(t)\|_{L^{r}(\mathbb{R}_+^3)}+\|  \mathbb{P}(\nabla \cdot(\nabla d_1\odot\nabla d)+\nabla \cdot(\nabla d\odot\nabla d_1))(t)\|_{L^{r}(\mathbb{R}_+^3)}\nonumber\\
\leq&\|\nabla^2 u_1(t)\|_{L^{r}(\mathbb{R}_+^3)}
+\|u_1(t)\|_{L^{2r}(\mathbb{R}_+^3)}\|\nabla u(t)\|_{L^{2r}(\mathbb{R}_+^3)}
+\|u(t)\|_{L^{2r}(\mathbb{R}_+^3)}\|\nabla u_1(t)\|_{L^{2r}(\mathbb{R}_+^3)}\nonumber\\
&+\|\nabla^2 d_1(t)\|_{L^{2r}(\mathbb{R}_+^3)}\|\nabla d(t)\|_{L^{2r}(\mathbb{R}_+^3)}
+\|\nabla d_1(t)\|_{L^{2r}(\mathbb{R}_+^3)}\|\nabla^2 d(t)\|_{L^{2r}(\mathbb{R}_+^3)}.
\end{align}And then from $(\ref{3.54})$, combined with Lemma $\ref{le4.1-}$ and Lemma $\ref{pro3.1}$, we obtain for $t\geq 2t_2$,
\begin{align}\label{4.11}
&\big\|\int_{\frac{t}{2}}^{t}\int_{\mathbb{R}_{+}^{3}}\nabla_{x}\mathcal{W}(x,y,t-s)D_1(y,s)\,dyds\big\|_{L^{r}(\mathbb{R}_{+}^{3})}\nonumber\\
\leq & C\int_{\frac{t}{2}}^t\|\nabla G_{t-s}(\cdot)\|_{L^1(\mathbb{R}_+^3)}\|D_1(s)\|_{L^r(\mathbb{R}_+^3)}\,ds\nonumber\\
\leq &C\int_{\frac t2}^t(t-s)^{-\frac12}\big(\|\nabla d(s)\|_{L^{\infty }(\mathbb{R}_+^3)}\|u_2(s)\|_{L^{r}(\mathbb{R}_+^3)}\nonumber\\
     &+\| u(s)\|_{L^{\infty}(\mathbb{R}_+^3)}\|\nabla d_2(s)\|_{L^{r}(\mathbb{R}_+^3)}
     +\|\nabla d(s)\|_{L^{\infty }(\mathbb{R}_+^3)}\|\nabla d_2(s)\|_{L^{r}(\mathbb{R}_+^3)}\big)\,ds\nonumber\\
 &+C\int_{\frac t2}^t(t-s)^{-\frac12}\big(\|u_1(s)\|_{L^{2r}(\mathbb{R}_+^3)}\|\nabla d_1(s)\|_{L^{2r}(\mathbb{R}_+^3)}
    +\|\nabla d(s)\|_{L^{3r}(\mathbb{R}_+^3)}\|\nabla d_1(s)\|_{L^{3r}(\mathbb{R}_+^3)}\|d_1(s)\|_{L^{3r}(\mathbb{R}_+^3)}\nonumber\\
    &+\|\nabla d(s)\|_{L^{4r}(\mathbb{R}_+^3)}^2\| d_2(s)\|_{L^{2r}(\mathbb{R}_+^3)}
    +\|\nabla d_1(s)\|_{L^{4r}(\mathbb{R}_+^3)}^2\| d(s)\|_{L^{2r}(\mathbb{R}_+^3)}\big)\,ds\nonumber\\
\leq&C\int_{\frac{t}{2}}^{t}(t-s)^{-\frac{1}{2}}s^{-4-\frac{3}{2}(1-\frac{1}{r})}\,ds
     +C\int_{\frac{t}{2}}^{t}(t-s)^{-\frac{1}{2}}s^{-\frac{3}{2}}\big(\|\nabla d_2(s)\|_{L^{r}(\mathbb{R}_{+}^{3})}+\|\nabla^2 u_1(s)\|_{L^{r}(\mathbb{R}_{+}^{3})}
      \big )\,ds.
\end{align}

\textbf{Step4:}\ The estimate of $\|\nabla^2 \partial_td_1\|_{L^r(\mathbb{R}_+^3)}$.

Using Lemma $\ref{pro3.1}$, $(\ref{3.24})$, $(\ref{3.28})$, $(\ref{3.37})$-$(\ref{3.38*})$, $(\ref{3.40})$, $(\ref{3.41})$ and Lemma $\ref{le4.1-}$, we find for $1\leq k\leq3$, $1\leq j\leq 2$, $0<\varepsilon<\frac12$ and $t\geq 2t_2$,
\begin{align}\label{4.14}
&\big\|\int_{\frac{t}{2}}^{t}\int_{\mathbb{R}_{+}^{3}}\partial_{x_k}\partial_{x_j}\mathcal{W}(x,y,t-s)D_1(y,s)\,dyds\big\|_{L^{r}(\mathbb{R}_{+}^{3})}\nonumber\\
&+\big\|\int_{\frac{t}{2}}^{t}\int_{\mathbb{R}_{+}^{3}}\partial_{x_k}\partial_{x_3}\mathcal{W}(x,y,t-s)D_1(y,s)\,dyds\big\|_{L^{r}(\mathbb{R}_{+}^{3})}\nonumber\\
\leq&C\int_{\frac{t}{2}}^{t}\|\partial_{x_k}G_{t-s}\|_{L^1(\mathbb{R}_+^3)}(\|\partial_3D_1(s)\|_{L^r(\mathbb{R}_+^3)}+\|\partial_jD_1(s)\|_{L^r(\mathbb{R}_+^3)})\,ds\nonumber\\
&+C\int_{\frac{t}{2}}^{t}\|x_{3}^{2\varepsilon}\partial_{x_{k}}\partial_{x_{3}}G_{t-s}(\cdot)\|_{L^{1}(\mathbb{R}_{+}^{3})}\|z_{3}^{-2\varepsilon}D_1(\cdot,s)\|_{L^{r}(\mathbb{R}_{+}^{3})}\,ds\nonumber\\
\leq&C\int_{\frac{t}{2}}^{t}(t-s)^{-1+\varepsilon}\big(\|\nabla d(s)\|_{L^{\infty }(\mathbb{R}_+^3)}\|u_2(s)\|_{L^{r}(\mathbb{R}_+^3)}
       +\|u_1(s)\|_{L^{2r}(\mathbb{R}_+^3)}\|\nabla d_1(s)\|_{L^{2r }(\mathbb{R}_+^3)}\nonumber\\
        &+\|u(s)\|_{L^{\infty }(\mathbb{R}_+^3)}\|\nabla d_2(s)\|_{L^{r }(\mathbb{R}_+^3)}
         +\|\nabla d(s)\|_{L^{4r}(\mathbb{R}_+^3)}^2\| d_2(s)\|_{L^{2r }(\mathbb{R}_+^3)}
         +\|\nabla d_1(s)\|_{L^{2r}(\mathbb{R}_+^3 )}^2\nonumber\\
           &+\|\nabla d(s)\|_{L^{\infty}(\mathbb{R}_+^3)}\|\nabla d_2(s)\|_{L^{r }(\mathbb{R}_+^3)}
            +\|\nabla d(s)\|_{L^{3r }(\mathbb{R}_+^3 )}\|\nabla d_1(s)\|_{L^{3r }(\mathbb{R}_+^3)}\| d_1(s)\|_{L^{3r }(\mathbb{R}_+^3)}\big)\,ds\nonumber\\
     &+C\int_{\frac{t}{2}}^{t}(t-s)^{-\frac12}\big(\|\nabla d(s)\|_{L^{\infty }(\mathbb{R}_+^3)}\|\nabla u_2(s)\|_{L^{r}(\mathbb{R}_+^3)}
        +\|\nabla^2 d(s)\|_{L^{\infty }(\mathbb{R}_+^3)}\|u_2(s)\|_{L^{r}(\mathbb{R}_+^3)}\nonumber\\
         &+\|\nabla u_1(s)\|_{L^{2r}(\mathbb{R}_+^3)}\|\nabla d_1(s)\|_{L^{2r }(\mathbb{R}_+^3)}
      +\| u_1(s)\|_{L^{2r}(\mathbb{R}_+^3)}\|\nabla^2 d_1(s)\|_{L^{2r }(\mathbb{R}_+^3)}\nonumber\\
        &+\|u(s)\|_{L^{\infty }(\mathbb{R}_+^3)}\|\nabla^2 d_2(s)\|_{L^{r}(\mathbb{R}_+^3)}
      +\|\nabla u(s)\|_{L^{\infty }(\mathbb{R}_+^3)}\|\nabla d_2(s)\|_{L^{r}(\mathbb{R}_+^3)}\nonumber\\
      &+\|\nabla^2 d(s)\|_{L^{\infty }(\mathbb{R}_+^3)}\|\nabla d_2(s)\|_{L^{r}(\mathbb{R}_+^3)}
      +\|\nabla d(s)\|_{L^{\infty }(\mathbb{R}_+^3)}\|\nabla^2 d_2(s)\|_{L^{r}(\mathbb{R}_+^3)}\nonumber\\
      &+\||\nabla d(s)|^2\|_{L^{\infty }(\mathbb{R}_+^3)}\|\nabla d_2(s)\|_{L^{r}(\mathbb{R}_+^3)}
      +\|\nabla d(s)\|_{L^{3r}(\mathbb{R}_+^3)}\|\nabla^2 d(s)\|_{L^{3r }(\mathbb{R}_+^3)}\| d_2(s)\|_{L^{3r}(\mathbb{R}_+^3)}\nonumber\\
      &+\|\nabla d_1(s)\|_{L^{3r}(\mathbb{R}_+^3)}\|\nabla^2 d(s)\|_{L^{3r }(\mathbb{R}_+^3)}\| d_1(s)\|_{L^{3r}(\mathbb{R}_+^3)}
      +\|\nabla d(s)\|_{L^{3r}(\mathbb{R}_+^3)}\|\nabla^2 d_1(s)\|_{L^{3r }(\mathbb{R}_+^3)}\| d_1(s)\|_{L^{3r}(\mathbb{R}_+^3)}\nonumber\\
      &+\|\nabla d_1(s)\|_{L^{4r}(\mathbb{R}_+^3)}^2\|\nabla d(s)\|_{L^{2r }(\mathbb{R}_+^3)}
      +\|\nabla d_1(s)\|_{L^{2r}(\mathbb{R}_+^3)}\|\nabla^2 d_1(s)\|_{L^{2r }(\mathbb{R}_+^3)}
      \big)\,ds\nonumber\\
\leq&C\int_{\frac{t}{2}}^{t}(t-s)^{-\frac{1}{2}}s^{-\frac{9}{2}-\frac{3}{2}(1-\frac{1}{r})}ds+C\int_{\frac{t}{2}}^t(t-s)^{-1+\varepsilon}s^{-4-\frac{3}{2}(1-\frac{1}{r})}\,ds\nonumber\\
     &+C\int_{\frac{t}{2}}^{t}(t-s)^{-\frac{1}{2}}s^{-\frac{3}{2}}\big(\|\nabla d_2(s)\|_{L^{r}(\mathbb{R}_{+}^{3})}+\|\nabla u_2(s)\|_{L^{r}(\mathbb{R}_{+}^{3})}+\|\nabla^2 u_1(s)\|_{L^{r}(\mathbb{R}_{+}^{3})}+
     \|\nabla^2 d_2(s)\|_{L^{r}(\mathbb{R}_{+}^{3})}  \big )\,ds,
\end{align}where we have applied the estimate $(\ref{3.54})$ to bound the final inequality.

\textbf{Step5:}\ The estimate of $\|\nabla \partial_tu_1\|_{L^r(\mathbb{R}_+^3)}$.

Now we try to get the estimate of $\|\nabla^5u(t)\|_{L^r(\mathbb{R}_+^3)}$. Using the regularity estimate $(\ref{3.44})$ with $\ell=3$, we get for $t>1$,
\begin{align*}
&\|\nabla^{5}u(t)\|_{L^r(\mathbb{R}_+^3)}+\|\nabla^{4}p(t)\|_{L^r(\mathbb{R}_+^3)}\nonumber\\
\leq &C(\|\nabla^3(u\cdot\nabla u)(t)\|_{L^r(\mathbb{R}_+^3)}+\|\nabla^3\partial_tu(t)\|_{L^r(\mathbb{R}_+^3)}+\|\nabla^3(\nabla \cdot(\nabla d\odot\nabla d))(t)\|_{L^r(\mathbb{R}_+^3)}).
\end{align*}
Applying the regularity estimate $(\ref{3.43})$ to problem $(\ref{u1})$, we obtain for $t>1$,
\begin{align}\label{3.62}
\|\nabla^3u_1(t)\|_{L^r(\mathbb{R}_+^3)}\leq& C\big(\|\nabla(u_1\cdot\nabla u+u\cdot\nabla u_1)(t)\|_{L^r(\mathbb{R}_+^3)}
+\|\nabla(\nabla\cdot(\nabla d_1\odot\nabla d))(t)\|_{L^r(\mathbb{R}_+^3)}\nonumber\\
&+\|\nabla(\nabla\cdot(\nabla d\odot\nabla d_1))(t)\|_{L^r(\mathbb{R}_+^3)}+\|\nabla\partial_tu_1(t)\|_{L^r(\mathbb{R}_+^3)}\big).
\end{align}
So it is sufficient to establish the decay of $\|\nabla\partial_tu_1(t)\|_{L^r(\mathbb{R}_+^3)}$ in $(\ref{3.62})$. From $(\ref{u1})$, we find for $t>1$,
\begin{align*}
	\left\{
	\begin{array}{lll}
		\partial_tu_2-\Delta u_2+u_2\cdot\nabla u+2u_1\cdot\nabla u_1+u\cdot\nabla u_2+\nabla \partial_{tt}p\\
\quad\quad+ \nabla \cdot(\nabla d_2\odot\nabla d)+\nabla \cdot(\nabla d\odot\nabla d_2)+2 \nabla \cdot(\nabla d_1\odot\nabla d_1)=0&\text{in}\quad\mathbb{R}_+^3\times(1,\infty),\\
       \nabla\cdot u_2=0&\text{in}\quad\mathbb{R}_+^3\times(1,\infty),\\
       u_2(x,t)=0&\text{on}\quad\partial\mathbb{R}_+^3\times(1,\infty),\\
		u_2(x,1)=\partial_{tt}u(1)&\text{in}\quad\mathbb{R}_+^3.\\
	\end{array}
	\right.
\end{align*}
Set $U_1(x,t):=u_2\cdot\nabla u+2u_1\cdot\nabla u_1+u\cdot\nabla u_2+\nabla \cdot(\nabla d\odot\nabla d_2)+ \nabla \cdot(\nabla d_2\odot\nabla d)+2 \nabla \cdot(\nabla d_1\odot\nabla d_1)$, then it holds for $t>2$,
\begin{equation}\label{u2-}
u_2(x,t)=\int_{\mathbb{R}_+^3}\mathcal{M}(x,y,\frac{t}{2})u_2(y,\frac{t}{2})dy-\int_{\frac{t}{2}}^t\int_{\mathbb{R}_+^3}\mathcal{M}(x,y,t-s)\mathbb{P}U_1(y,s)\,dyds.
\end{equation}
Let $1<r<\infty$. Using $(\ref{3.28})$, $(\ref{3.36*})$, $(\ref{3.33})$, $(\ref{3.40})$, $(\ref{3.41})$, Lemma $\ref{le4.1-}$ and Lemma $\ref{pro3.1}$, we find for each integer $m\geq1$ and $t\geq2t_2$,
\begin{align}\label{4.19-}
&\|\nabla_x^m\int_{\mathbb{R}_+^3}\mathcal{M}(\cdot,y,\frac{t}{2})u_2(y,\frac{t}{2})\,dy\|_{L^r(\mathbb{R}_+^3)}\nonumber\\
\leq &C\|\nabla^mG_t\|_{L^1(\mathbb{R}_+^3)}\|u_2(\frac t2)\|_{L^r(\mathbb{R}_+^3)}+C\|u_2(\frac t2)\|_{L^r(\mathbb{R}_+^3)}\times\nonumber\\
&\sum_{m=m_3+|m'|}\int_{\mathbb{R}_+^3}(x_3+\sqrt{t} )^{-m_3}\big(|x'|+x_3+\sqrt{t} \big)^{-3-|m'|}\,dx'dx_3\nonumber\\
\leq & Ct^{-\frac{m}{2}}\|u_2( t)\|_{L^r(\mathbb{R}_+^3)};
\end{align}
and
\begin{align}\label{4.20}
&\big\|\int_{\frac{t}{2}}^{t}\int_{\mathbb{R}_{+}^{3}}\nabla_{x}\mathcal{M}(x,y,t-s)\mathbb{P}U_1(y,s)\,dyds\big\|_{L^{r}(\mathbb{R}_{+}^{3})}\nonumber\\
\leq & C\int_{\frac{t}{2}}^t\|\nabla G_{t-s}\|_{L^1(\mathbb{R}_+^3)}\|\mathbb{P}U_1(s)\|_{L^r(\mathbb{R}_+^3)}\,ds\nonumber\\
&+C\int_{\frac{t}{2}}^t\big(\sup\limits_{\substack{x\in\mathbb{R}_+^3\\}}\|\nabla_x\mathcal{N}^{*}(x,\cdot,t-s)\|_{L^1(\mathbb{R}_+^3)}+\sup\limits_{\substack{y\in\mathbb{R}_+^3\\}}\|\nabla_x\mathcal{N}^{*}(\cdot,y,t-s)\|_{L^1(\mathbb{R}_+^3)}\big)
\times\|\mathbb{P}U_1(s)\|_{L^r(\mathbb{R}_+^3)}\,ds\nonumber\\
\leq &C\int_{\frac t2}^t(t-s)^{-\frac12}\big(\|u_1(s)\|_{L^{2r}(\mathbb{R}_+^3)}\|\nabla u_1(s)\|_{L^{2r}(\mathbb{R}_+^3)}+\|\nabla^2 d_1(s)\|_{L^{2r}(\mathbb{R}_+^3)}\|\nabla d_1(s)\|_{L^{2r}(\mathbb{R}_+^3)}\big)\,ds \nonumber\\
&+\int_{\frac t2}^t(t-s)^{-\frac12}\big(\|\nabla u(s)\|_{L^{\infty}(\mathbb{R}_+^3)}\|u_2\|_{L^{r}(\mathbb{R}_+^3)}
      +\|u(s)\|_{L^{\infty}(\mathbb{R}_+^3)}\|\nabla u_2(s)\|_{L^{r}(\mathbb{R}_+^3)}\nonumber\\
      &+\|\nabla d(s)\|_{L^{\infty}(\mathbb{R}_+^3)}\|\nabla^2 d_2(s)\|_{L^{r}(\mathbb{R}_+^3)}
      +\|\nabla^2 d(s)\|_{L^{\infty}(\mathbb{R}_+^3)}\|\nabla d_2(s)\|_{L^{r}(\mathbb{R}_+^3)}
      \big)\,ds     \nonumber\\
 \leq &C\int_{\frac t2}^t(t-s)^{-\frac12}s^{-4-\frac{3}{2}(1-\frac{1}{r})}\,ds
      +C\int_{\frac{t}{2}}^{t}(t-s)^{-\frac{1}{2}}s^{-\frac{3}{2}}\big(\|\nabla d_2(s)\|_{L^{r}(\mathbb{R}_{+}^{3})}\nonumber\\
      &+\|\nabla u_2(s)\|_{L^{r}(\mathbb{R}_{+}^{3})}
      +\|\nabla^2 u_1(s)\|_{L^{r}(\mathbb{R}_{+}^{3})}+
     \|\nabla^2 d_2(s)\|_{L^{r}(\mathbb{R}_{+}^{3})}  \big )\,ds,
\end{align}
where we have used the estimate $(\ref{3.54})$ in bounding the final inequality.

\textbf{Step6:}\ The estimates of $\|\nabla^4 u(t)\|_{L^{r}(\mathbb{R}_{+}^{3})}$, $\|\nabla^5 u(t)\|_{L^{r}(\mathbb{R}_{+}^{3})}$, $\|\nabla^5 d(t)\|_{L^{r}(\mathbb{R}_{+}^{3})}$ and $\|\nabla^6 d(t)\|_{L^{r}(\mathbb{R}_{+}^{3})}$.

Set $f_3(t)=\sup\limits_{0<s\leq t}[s^{2+\frac{3}{2}(1-\frac{1}{r})}(\|\nabla d_2(s)\|_{L^{r}(\mathbb{R}_{+}^{3})}+\|\nabla u_2(s)\|_{L^{r}(\mathbb{R}_{+}^{3})}+\|\nabla^2 u_1(s)\|_{L^{r}(\mathbb{R}_{+}^{3})}+
     \|\nabla^2 d_2(s)\|_{L^{r}(\mathbb{R}_{+}^{3})})] $.
By $(\ref{3.54})$, $(\ref{4.19-})$ and Lemma $\ref{le4.1-}$, we get for $t\geq 2t_2$,
\begin{align}\label{4.18}
&\|\nabla_x\int_{\mathbb{R}_+^3}\mathcal{M}(\cdot,y,\frac{t}{2})u_2(y,\frac{t}{2})\,dy\|_{L^r(\mathbb{R}_+^3)}\nonumber\\
\leq & Ct^{-\frac{1}{2}}\|u_2(t)\|_{L^r(\mathbb{R}_+^3)}\nonumber\\
\leq & Ct^{-\frac{1}{2}}(\|\nabla^2u_1(t)\|_{L^r(\mathbb{R}_+^3)}+t^{-3-\frac32(1-\frac1r)}).
\end{align}
Using $(\ref{4.2})$, $(\ref{d2-})$, $(\ref{4.9})$, $(\ref{4.11})$, $(\ref{4.14})$, $(\ref{u2-})$-$(\ref{4.18})$, $(\ref{u1-})$, $(\ref{3.35})$, we conclude for $t\geq 2t_2$,
\begin{equation}\label{4.19}
t^{2+\frac{3}{2}(1-\frac{1}{r})}\big(\|\nabla d_2(s)\|_{L^{r}(\mathbb{R}_{+}^{3})}+\|\nabla u_2(s)\|_{L^{r}(\mathbb{R}_{+}^{3})}+\|\nabla^2 u_1(s)\|_{L^{r}(\mathbb{R}_{+}^{3})}+
     \|\nabla^2 d_2(s)\|_{L^{r}(\mathbb{R}_{+}^{3})}  \big )\leq C+C_3t^{-1}f_3(t).
\end{equation}
Note that there exists $t_3\geq2t_2$ such that $C_3t_3^{-1}\leq \frac12$. Whence $(\ref{4.19})$ yields for $t\geq t_3$ that $f_3(t)\leq 2C$,
which implies for $1<r<\infty$ and $t\geq t_3$,
\begin{equation}\label{3.68}
\|\nabla d_2(s)\|_{L^{r}(\mathbb{R}_{+}^{3})}+\|\nabla u_2(s)\|_{L^{r}(\mathbb{R}_{+}^{3})}+\|\nabla^2 u_1(s)\|_{L^{r}(\mathbb{R}_{+}^{3})}+
     \|\nabla^2 d_2(s)\|_{L^{r}(\mathbb{R}_{+}^{3})}\leq Ct^{-{2}-\frac{3}{2}(1-\frac{1}{r})}.
\end{equation}
Using $(\ref{3.68})$ and running the same argument as in \textbf{Step1-6}, we obtain for $1<r<\infty$ and $t\geq t_3$,
\begin{align*}
&\|\nabla^4 u(t)\|_{L^{r}(\mathbb{R}_{+}^{3})}+\|\nabla^3p(t)\|_{L^r(\mathbb{R}_+^3)}\leq Ct^{-2-\frac{3}{2}(1-\frac{1}{r})},\\
&\|\nabla^5 u(t)\|_{L^{r}(\mathbb{R}_{+}^{3})}+\|\nabla^4p(t)\|_{L^r(\mathbb{R}_+^3)}\leq Ct^{-\frac52-\frac{3}{2}(1-\frac{1}{r})},\\
&\|\nabla^5 d(t)\|_{L^{r}(\mathbb{R}_{+}^{3})}\leq Ct^{-\frac52-\frac{3}{2}(1-\frac{1}{r})},\\
&\|\nabla^6 d(t)\|_{L^{r}(\mathbb{R}_{+}^{3})}\leq Ct^{-3-\frac{3}{2}(1-\frac{1}{r})}.
\end{align*}
\end{proof}

\begin{Lemma}\label{4.3*}
Assume $u_0, d_0-e_3\in L^1(\mathbb{R}_+^{3})$. Then for every $k\geq1$, there exists some number $t(k)>0$ such that for $t\geq t(k)$ and $1<r\leq\infty$, the strong solution $(u,d,p)$ of $(\ref{1.1-main})$-$(\ref{1.2})$ obtained in Proposition $\ref{solu1}$ satisfies
\begin{align*}
\|\nabla^{2+k}u(t)\|_{L^r(\mathbb{R}_+^3)}+\|\nabla^{1+k}&p(t)\|_{L^r(\mathbb{R}_+^3)}+\|\nabla^{2+k}d(t)\|_{L^r(\mathbb{R}_+^3)}\leq Ct^{-\frac{2+k}2-\frac 32(1-\frac1r)}.
\end{align*}
\end{Lemma}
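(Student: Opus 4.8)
The plan is to prove Lemma~\ref{4.3*} by induction on $k$, following the strategy already carried out for $k=1,2$ in Lemma~\ref{le3.7**}, Lemma~\ref{le3.8}, and Lemma~\ref{le3.9}. The base cases $k=1,2$ are Lemma~\ref{le3.7**}--\ref{le3.8} (together with Theorem~\ref{th1.2}), and the cases $k=3,4$ are recorded in Lemma~\ref{le3.9}; so it suffices to set up the induction step. I would actually induct on a slightly stronger hypothesis: for all $j\le k$ there exists $t(j)>0$ such that for $t\ge t(j)$ and $1<r<\infty$,
\begin{align*}
&\|\nabla^{2+j}u(t)\|_{L^r(\mathbb{R}_+^3)}+\|\nabla^{1+j}p(t)\|_{L^r(\mathbb{R}_+^3)}+\|\nabla^{2+j}d(t)\|_{L^r(\mathbb{R}_+^3)}\le Ct^{-\frac{2+j}{2}-\frac32(1-\frac1r)},\\
&\|\nabla^{j}\partial_t u(t)\|_{L^r(\mathbb{R}_+^3)}+\|\nabla^{j+1}\partial_t d(t)\|_{L^r(\mathbb{R}_+^3)}\le Ct^{-\frac{2+j}{2}-\frac32(1-\frac1r)},
\end{align*}
and then upgrade to $1<r\le\infty$ at the end by the Gagliardo--Nirenberg inequality \eqref{gn-} applied between consecutive derivative orders (exactly as $L^\infty$-bounds were deduced in Lemma~\ref{3.11} and in the interpolation step of Lemma~\ref{le3.1}).

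The heart of the induction step is to estimate the time derivatives $\nabla^{k}\partial_t u$ and $\nabla^{k+1}\partial_t d$, after which the spatial estimates $\nabla^{2+k}u$, $\nabla^{1+k}p$, $\nabla^{2+k}d$ follow mechanically from the steady Stokes regularity \eqref{3.43} and the elliptic regularity \eqref{3.29} (with $\ell=k$), using Lemma~\ref{le3.3} and Lemma~\ref{le4.1-} to control the nonlinear right-hand sides $\nabla^k(u\cdot\nabla u)$, $\nabla^k\bigl(\nabla\cdot(\nabla d\odot\nabla d)\bigr)$, $\nabla^k(u\cdot\nabla d)$, $\nabla^k(|\nabla d|^2 d)$. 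For the time derivatives I would differentiate the $u$- and $d$-equations $\lceil (k+1)/2\rceil$ times in $t$, obtaining a cascade $d_1=\partial_t d,\ d_2=\partial_{tt}d,\dots$ and $u_1=\partial_t u,\ u_2=\partial_{tt}u,\dots$ each solving a Stokes/heat system of the form in \eqref{d1}, \eqref{u1}, with forcing terms that are products of lower derivatives and lower time-derivatives of $(u,d)$. Representing $d_\ell$ via the heat solution operator $\mathcal W(x,y,t)$ and $u_\ell$ via the Stokes solution operator $\mathcal M(x,y,t)$ (as in \eqref{3.23}, \eqref{u1-}, \eqref{d2-}, \eqref{u2-}), splitting the time integral over $[\tfrac t2,t]$, moving one spatial derivative onto the data by the integration-by-parts identities \eqref{3.37}--\eqref{3.33}, absorbing the residual boundary-type terms via the Hardy inequality \eqref{3.38*} and the kernel bounds \eqref{3.24}, \eqref{3.36*}, and finally closing with Lemma~\ref{pro3.1} and Lemma~\ref{le2.2}, one arrives at an inequality of the schematic form
$$t^{\frac{2+k}{2}+\frac32(1-\frac1r)}\Bigl(\|\nabla^{k}\partial_t u(t)\|_{L^r}+\|\nabla^{k+1}\partial_t d(t)\|_{L^r}+\cdots\Bigr)\le C+C_k t^{-1} f_k(t),$$
where $f_k$ is the sup over $s\le t$ of the bracketed quantity times the weight; since $C_k t^{-1}\le\tfrac12$ for $t\ge t(k)$, a continuity/boundedness argument forces $f_k(t)\le 2C$, exactly as in \eqref{3.39} and \eqref{4.19}.

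The main obstacle will be bookkeeping: as $k$ grows the forcing terms in the higher-order time-differentiated systems contain an ever-larger collection of products $\nabla^{a}d\cdot\nabla^{b}d_{\ell}$, $\nabla^a u\cdot\nabla^b u_\ell$, etc., and one must verify that in every such product the total ``decay budget'' (counting spatial derivatives at rate $\tfrac12$ per derivative and each extra $t$-derivative at rate $1$, on top of the base $L^r$-rate $-\tfrac32(1-\tfrac1r)$ for $u,\nabla d$ and their $t$-derivatives) exceeds the threshold $\tfrac{2+k}{2}+\tfrac32(1-\tfrac1r)$ needed for the integral $\int_{t/2}^t(t-s)^{-\alpha}s^{-\beta}\,ds\le Ct^{1-\alpha-\beta}$ of Lemma~\ref{le2.2} to produce the claimed rate --- with only the genuinely top-order terms (those containing $\nabla^{k}\partial_t u$, $\nabla^{k+1}\partial_t d$, $\nabla d_2$, etc.) landing in $f_k(t)$ and being absorbed. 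This is precisely the phenomenon flagged in the introduction: because $\nabla\cdot(\nabla d\odot\nabla d)$ involves one more derivative of $d$ than $u\cdot\nabla u$ does of $u$, the estimates for $\nabla^{2+k}u$ and $\nabla^{2+k}d$ must be solved jointly, which is why the induction hypothesis bundles the $u$- and $d$-quantities together (cf. \eqref{3.39}, \eqref{4.19}). I do not expect any new analytic difficulty beyond what appears in Lemma~\ref{le3.9}; the step is a careful but routine iteration of that argument, and the termination of the $t$-differentiation cascade (one need only differentiate enough times that the lowest-order term $\|\nabla^{k}\partial_t u\|_{L^r}$ is reached) guarantees the scheme closes after finitely many rounds for each fixed $k$.
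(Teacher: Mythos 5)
Your proposal follows essentially the same route as the paper: the authors likewise prove the lemma by iterating the time-differentiation / solution-formula / absorption scheme of Lemmas \ref{le3.8} and \ref{le3.9} (their proof literally reads ``repeating the proofs of Lemma \ref{le3.8} and Lemma \ref{le3.9}''), and then obtain the $r=\infty$ case from the Gagliardo--Nirenberg inequality \eqref{gn-} applied between the $L^6$ norms of $\nabla^{2+k}$ and $\nabla^{3+k}$. Your write-up is if anything more explicit about the induction structure than the paper's; the only cosmetic point is that the formal induction hypothesis should also record the decay of the higher-order time derivatives (e.g.\ $\nabla^a\partial_{tt}u$, $\nabla^b\partial_{tt}d$) that enter $f_k(t)$, which your narrative already acknowledges.
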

\begin{proof}
Repeating the proofs of Lemma $\ref{le3.8}$ and Lemma $\ref{le3.9}$, it can be easily shown that for every integer $k\geq1$, there exists $t(k)>0$ independent of $t$, such that for $1<r<\infty$ and $t\geq t(k)$,
\begin{align*}
\|\nabla^{2+k}u(t)\|_{L^r(\mathbb{R}_+^3)}+\|\nabla^{1+k}&p(t)\|_{L^r(\mathbb{R}_+^3)}+\|\nabla^{2+k}d(t)\|_{L^r(\mathbb{R}_+^3)}\leq Ct^{-\frac{2+k}2-\frac 32(1-\frac1r)}.
\end{align*}
By virtue of $(\ref{gn-})$, we can obtain that for every $k\geq1$ and $t\geq t(k)$,
\begin{align*}
&\|(\nabla^{2+k}u,\nabla^{2+k}d)(t)\|_{L^\infty(\mathbb{R}_+^3)}+\|\nabla^{1+k}p(t)\|_{L^\infty(\mathbb{R}_+^3)}\\
\leq& C\|\nabla^{2+k}u(t)\|_{L^{6}(\mathbb{R}_+^3)}^{\frac{1}{2}}\|\nabla^{3+k}u(t)\|_{L^{6}(\mathbb{R}_+^3)}^{\frac{1}{2}}\\
&+ C\|\nabla^{2+k}d(t)\|_{L^{6}(\mathbb{R}_+^3)}^{\frac{1}{2}}\|\nabla^{3+k}d(t)\|_{L^{6}(\mathbb{R}_+^3)}^{\frac{1}{2}}\\
&+ C\|\nabla^{1+k}p(t)\|_{L^{6}(\mathbb{R}_+^3)}^{\frac{1}{2}}\|\nabla^{2+k}p(t)\|_{L^{6}(\mathbb{R}_+^3)}^{\frac{1}{2}}\\
\leq& C t^{-\frac{2+k}2-\frac32}.
\end{align*}
Hence, we finish the proof of Theorem $\ref{th1.3-main}$.
\end{proof}

\section{Proof of Theorem $\ref{th1.3}$ }\label{sec4}
In this section, we shall prove the decay rates of the solution stated in Proposition $\ref{solu1}$ under the additional assumption that the initial datum $u_0$ satisfies $\|x_3u_0\|_{L^1(\mathbb{R}_+^3)}<\infty$.
\begin{Lemma}\label{le4.1}
Under the assumptions of Theorem \ref{th1.3}, if $(u,d)$ is the global strong solution obtained by Proposition $\ref{solu1}$, then
\begin{align*}
&\|\nabla u(t)\|_{L^\infty(\mathbb{R}_+^3)}\leq Ct^{-\frac52},\\
&\|\nabla u(t)\|_{L^p(\mathbb{R}_+^3)}\leq Ct^{-1-\frac32(1-\frac1p)}
\end{align*}
hold for any $t>0$, $p\in(1,\infty)$.
\end{Lemma}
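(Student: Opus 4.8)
The plan is to read off almost everything from Lemma~\ref{le2.3}: under the extra hypothesis $\|x_3u_0\|_{L^1(\mathbb{R}_+^3)}<\infty$ of Theorem~\ref{th1.3} one already has the sharpened velocity decay $\|u(t)\|_{L^r(\mathbb{R}_+^3)}\le Ct^{-\frac12-\frac32(1-\frac1r)}$ for every $r\in(1,\infty]$ and $\|\nabla u(t)\|_{L^p(\mathbb{R}_+^3)}\le Ct^{-1-\frac32(1-\frac1p)}$ for $p\in(1,6]$, while the director decay $\|\nabla d(t)\|_{L^s(\mathbb{R}_+^3)}\le Ct^{-\frac12-\frac32(1-\frac1s)}$ ($s\in[1,\infty]$) and $\|\nabla^2 d(t)\|_{L^q(\mathbb{R}_+^3)}\le Ct^{-1-\frac32(1-\frac1q)}$ ($q\in[1,\infty]$) is available from Lemma~\ref{le4.1-}. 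The $L^p$ bound for $p\in(1,6]$ is therefore nothing new; the only genuinely new points are the endpoint bound $\|\nabla u(t)\|_{L^\infty}\le Ct^{-\frac52}$ and the extension of the range of $p$ to $(1,\infty)$, and I expect no continuity/bootstrap argument to be needed here.

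For the $L^\infty$ bound I would work from the mild formulation $(\ref{uB2})_1$, apply $\nabla$, and split off the linear part. For the linear part, Lemma~\ref{le2.1} with any fixed exponent $q\in(3,\infty)$ gives $\|\nabla e^{-\frac t2\mathbb{A}}u(\frac t2)\|_{L^\infty(\mathbb{R}_+^3)}\le Ct^{-\frac12-\frac3{2q}}\|u(\frac t2)\|_{L^q(\mathbb{R}_+^3)}\le Ct^{-\frac52}$, using the improved $L^q$-decay of $u$. For the Duhamel term I would estimate the nonlinearity in $L^r$ with $r\in(3,6]$, where $\mathbb{P}$ is bounded, using the ``$L^\infty$-times-$L^r$'' splitting to keep both factors within the admissible ranges: $\|\mathbb{P}(u\cdot\nabla u)(s)\|_{L^r}\le C\|u(s)\|_{L^\infty}\|\nabla u(s)\|_{L^r}\le Cs^{-3-\frac32(1-\frac1r)}$ and $\|\mathbb{P}\nabla\!\cdot\!(\nabla d\odot\nabla d)(s)\|_{L^r}\le C\|\nabla d(s)\|_{L^\infty}\|\nabla^2 d(s)\|_{L^r}\le Cs^{-3-\frac32(1-\frac1r)}$. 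Then $\|\nabla e^{-(t-s)\mathbb{A}}\mathbb{P}(\cdot)(s)\|_{L^\infty}\le C(t-s)^{-\frac12-\frac3{2r}}\|(\cdot)(s)\|_{L^r}$ by Lemma~\ref{le2.1}, and Lemma~\ref{le2.2} gives $\int_{t/2}^t(t-s)^{-\frac12-\frac3{2r}}s^{-3-\frac32(1-\frac1r)}\,ds\le Ct^{-4}$; here $r>3$ is exactly what makes $\frac12+\frac3{2r}<1$, so the integral converges. Adding the two contributions yields $\|\nabla u(t)\|_{L^\infty(\mathbb{R}_+^3)}\le C(t^{-\frac52}+t^{-4})\le Ct^{-\frac52}$ for all $t>0$.

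For $p\in(6,\infty)$ I would simply interpolate: $\|\nabla u(t)\|_{L^p}\le\|\nabla u(t)\|_{L^6}^{6/p}\,\|\nabla u(t)\|_{L^\infty}^{1-6/p}$, and substituting the $L^6$ bound from Lemma~\ref{le2.3} and the $L^\infty$ bound just obtained, a one-line computation shows the resulting exponent is precisely $-1-\frac32(1-\frac1p)$; the range $p\in(1,6]$ is covered by Lemma~\ref{le2.3}. I do not anticipate a serious obstacle: the single point requiring care is the choice of the auxiliary exponent $r$ in the Duhamel estimate of the $L^\infty$ bound, which must lie in $(3,6]$ — above $3$ so that the time kernel $(t-s)^{-\frac12-\frac3{2r}}$ is integrable at $s=t$, and at most $6$ so that the sharpened decay $\|\nabla u\|_{L^r}\le Ct^{-1-\frac32(1-\frac1r)}$ of Lemma~\ref{le2.3} applies — and it is exactly the $L^\infty$-times-$L^r$ factorisation of the quadratic terms (which uses that under $(\ref{1.5})$ the decay of $u$ matches that of $\nabla d$) that makes this double constraint compatible.
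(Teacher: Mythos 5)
Your proposal is correct and follows essentially the same route as the paper: the $L^\infty$ bound is obtained from the mild formulation $(\ref{uB2})_1$ on $[t/2,t]$, with the linear term controlled by the improved $L^q$-decay of $u$ from Lemma~\ref{le2.3} and the Duhamel term by an $L^p$--$L^q$ semigroup estimate applied to the quadratic nonlinearities measured in $L^r$ with $r>3$, followed by interpolation with the $L^6$ bound for $p\in(6,\infty)$. The only (immaterial) differences are that the paper pairs the nonlinearity as $\|u\|_{L^{r_1}}\|\nabla u\|_{L^{r_2}}$ with two finite exponents rather than your $L^\infty\times L^r$ splitting, and measures the semigroup-propagated data term in $L^2$ rather than $L^q$ with $q>3$; both choices give the same rates.
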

\begin{proof}
Using $(\ref{uB2})_1$, Lemma $\ref{le2.1}$, Lemma $\ref{le2.2}$, Lemma $\ref{le4.1-}$ and the second part of Lemma $\ref{le2.3}$, one has for any $t>0$,
\begin{align}\label{5.1}
&\|\nabla u(t)\|_{L^\infty(\mathbb{R}_+^3)}\nonumber\\
\leq&Ct^{-\frac54}\|u(\frac{t}{2})\|_{L^2(\mathbb{R}_+^3)}+C\int_{\frac{t}{2}}^{t}(t-s)^{-\frac12-\frac{3}{2r}}\|\big(u\cdot\nabla u+\nabla\cdot(\nabla d\odot\nabla d)\big)(s)\|_{L^r(\mathbb{R}_+^3)}\,{d}s\nonumber\\
\leq& C t^{-\frac52}+C\int_{\frac t2}^t(t-s)^{-\frac12-\frac{3}{2r}}\Big(\|u(s)\|_{L^{r_1}(\mathbb{R}_+^3)}\|\nabla u(s)\|_{L^{r_2}(\mathbb{R}_+^3)}+\|\nabla d(s)\|_{L^{r_1}(\mathbb{R}_+^3)}\|\nabla^2d(s)\|_{L^{r_2}(\mathbb{R}_+^3)}\Big)\,{d}s\nonumber\\
\leq&Ct^{-\frac52}+C\int_{\frac{t}{2}}^{t}(t-s)^{-\frac12-\frac{3}{2r}}\Big(s^{-\frac12-\frac 32(1-\frac1{r_1})}s^{-\frac12-\frac 32(1-\frac1{r_2})}+s^{-\frac12-\frac 32(1-\frac1{r_1})}s^{-1-\frac 32(1-\frac1{r_2})}\Big)\,{d}s\nonumber\\
\leq&Ct^{-\frac52},
\end{align}
where $\frac1r=\frac1{r_1}+\frac1{r_2}$, $r\in(3,\infty)$, $r_1,{r_2}\in(1,\infty)$.

The conclusion now follows from $(\ref{5.1})$, Lemma $\ref{le2.3}$ and the interpolation inequality.
\end{proof}

\begin{Lemma}\label{le5.4}
Under the assumptions of Theorem \ref{th1.3}, if $(u,d,p)$ is the global strong solution obtained by Proposition $\ref{solu1}$, then it holds for any $t>0$ and $1<r<\infty$ that
$$\|\nabla^{2}u(t)\|_{{L^{r}(\mathbb{R}_{+}^{3})}}+\|\partial_{t}u(t)\|_{{L^{r}(\mathbb{R}_{+}^{3})}}+\|\nabla p(t)\|_{{L^{r}(\mathbb{R}_{+}^{3})}}\leq Ct^{-\frac32-\frac 32(1-\frac1r)}.$$
\end{Lemma}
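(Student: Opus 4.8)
The plan is to rerun the proof of Lemma \ref{le3.4} verbatim, the single new ingredient being that under the weighted hypothesis $\int_{\mathbb{R}_+^3}x_3|u_0|\,dx<\infty$ the velocity enjoys the faster decay $\|u(t)\|_{L^r(\mathbb{R}_+^3)}\le Ct^{-\frac12-\frac32(1-\frac1r)}$ (second part of Lemma \ref{le2.3}) and $\|\nabla u(t)\|_{L^p(\mathbb{R}_+^3)}\le Ct^{-1-\frac32(1-\frac1p)}$ for all $p\in(1,\infty)$ (Lemma \ref{le4.1}). First I would start from the representation $(\ref{3.16})$, $\mathbb{A}u(t)=J_1(t)+\cdots+J_5(t)$, obtained by applying $\mathbb{A}$ to Duhamel's formula for $u$ on $[\frac t4,t]$ and subtracting the stationary part of the source on $[\frac t2,t]$. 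The only piece whose rate is governed by $\|u\|_{L^r}$ is $J_1(t)=\mathbb{A}e^{-\frac{3t}4\mathbb{A}}u(\frac t4)$; using the analyticity bound $\|\mathbb{A}e^{-\tau\mathbb{A}}\|_{\mathcal L(L^r_\sigma)}\le C\tau^{-1}$ together with the improved decay of $u$ yields $\|J_1(t)\|_{L^r(\mathbb{R}_+^3)}\le Ct^{-1}\|u(\tfrac t4)\|_{L^r(\mathbb{R}_+^3)}\le Ct^{-\frac32-\frac32(1-\frac1r)}$, which is exactly the claimed rate.

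For $J_2,J_3,J_4,J_5$ I would simply invoke the bounds already proved inside Lemma \ref{le3.4}, namely $\|J_i(t)\|_{L^r(\mathbb{R}_+^3)}\le Ct^{-2-\frac32(1-\frac1r)}$ for $i=2,3,4$ (the bound for $J_4$ uses the time-H\"older continuity of $\mathbb{A}^\alpha u$ from Lemma \ref{le3.3*}) and $\|J_5(t)\|_{L^r(\mathbb{R}_+^3)}\le Ct^{-\frac92+\frac3{2r}}$. These estimates require only $u_0,d_0-e_3\in L^1(\mathbb{R}_+^3)$, which still holds, and each of these rates is strictly faster than $t^{-\frac32-\frac32(1-\frac1r)}$, so summing the five terms gives $\|\mathbb{A}u(t)\|_{L^r(\mathbb{R}_+^3)}\le Ct^{-\frac32-\frac32(1-\frac1r)}$ and hence $\|\nabla^2u(t)\|_{L^r(\mathbb{R}_+^3)}\le C\|\mathbb{A}u(t)\|_{L^r(\mathbb{R}_+^3)}\le Ct^{-\frac32-\frac32(1-\frac1r)}$ by the equivalence of $\|\nabla^2\cdot\|_{L^r}$ and $\|\mathbb{A}\cdot\|_{L^r}$ on $D(\mathbb{A})$ (see \cite{Borchers-Miyakawa1988}). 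Then $\partial_tu$ and $\nabla p$ are read off from the equation: from $\partial_tu=-\mathbb{A}u-\mathbb{P}(u\cdot\nabla u)-\mathbb{P}\nabla\cdot(\nabla d\odot\nabla d)$ and, using $(\ref{1.1-main})_1$, $\nabla p=\Delta u-\partial_tu-u\cdot\nabla u-\nabla\cdot(\nabla d\odot\nabla d)$, the boundedness of $\mathbb{P}$ on $L^r$ ($1<r<\infty$) and H\"older's inequality reduce everything to the products $\|u\|_{L^{r_1}}\|\nabla u\|_{L^{r_2}}$ and $\|\nabla d\|_{L^{r_1}}\|\nabla^2d\|_{L^{r_2}}$ with $\frac1r=\frac1{r_1}+\frac1{r_2}$, which by Lemma \ref{le4.1} and Corollary \ref{cor3.1} decay like $t^{-3-\frac32(1-\frac1r)}$; hence $\|\partial_tu(t)\|_{L^r}+\|\nabla p(t)\|_{L^r}\le C\big(\|\mathbb{A}u(t)\|_{L^r}+\|\nabla^2u(t)\|_{L^r}\big)+Ct^{-3-\frac32(1-\frac1r)}\le Ct^{-\frac32-\frac32(1-\frac1r)}$.

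The only substantive new input is the improved estimate for $J_1$, i.e.\ the faster decay of $u$ itself, so I do not anticipate a genuine obstacle. The point that must be checked — and that one might at first mistake for the hard part — is that the time-difference terms $J_4$ and $J_5$ do \emph{not} need the time-H\"older estimate of Lemma \ref{le3.3*} to be re-derived with the sharpened $u$-decay: their crude rates $t^{-2-\frac32(1-\frac1r)}$ and $t^{-\frac92+\frac3{2r}}$ already sit below the target, because the bottleneck of the whole scheme lies in the linear term $J_1$, which has improved by a full factor $t^{-1/2}$, and not in the quadratic interaction terms, whose rates are unchanged but already sufficient. One should also keep track of exponents: all auxiliary bounds for $u,\nabla u,\nabla^2d$ are used at conjugate exponents $r_1,r_2\in(1,\infty)$ and at $2r\in(1,\infty)$, which is precisely the range of validity of Lemma \ref{le2.3}, Lemma \ref{le4.1} and Corollary \ref{cor3.1}, so no endpoint difficulty arises.
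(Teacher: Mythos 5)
Your argument is correct and follows the same basic route as the paper: the decomposition $(\ref{3.16})$ of $\mathbb{A}u$ into $J_1,\dots,J_5$, the improved bound on the linear term $J_1$ via $\|u(\tfrac t4)\|_{L^r}\leq Ct^{-\frac12-\frac32(1-\frac1r)}$, the equivalence $\|\nabla^2u\|_{L^r}\leq C\|\mathbb{A}u\|_{L^r}$, and the recovery of $\partial_tu$ and $\nabla p$ from the equation. Where you diverge — and it is a legitimate simplification — is in the treatment of the remaining terms: the paper first re-derives the time-H\"older continuity estimates of Lemma~\ref{le3.3*} with sharpened exponents (displays $(\ref{4.3})$ and $(\ref{5.4})$) and then re-estimates the difference terms $J_4$, $J_5$ with that improved input, while you simply reuse the bounds already established in Lemma~\ref{le3.4}, namely $Ct^{-2-\frac32(1-\frac1r)}$ for $J_2,J_3,J_4$ and $Ct^{-\frac92+\frac3{2r}}=Ct^{-3-\frac32(1-\frac1r)}$ for $J_5$. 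Your observation that these crude rates already lie strictly below the target $t^{-\frac32-\frac32(1-\frac1r)}$, that they were proved under the hypotheses of Theorem~\ref{th1.2} alone (hence remain valid under the additional weight condition), and that the bottleneck of the whole scheme is the linear term $J_1$, is accurate; so your shortcut loses nothing for the stated conclusion and spares the reader the re-derivation of $(\ref{4.3})$--$(\ref{5.4})$. The only thing the paper's longer route buys is slightly faster (unclaimed) decay for the nonlinear contributions, which is not needed here.
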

\begin{proof}
Following the proofs of $(\ref{3.9})$-$(\ref{3.14})$, combined with Lemma $\ref{le2.3}$ and Lemma $\ref{le4.1}$, we obtain for all $1<r<\infty$ and any $t>0$,
\begin{align}
&\|\mathbb{A}^\alpha u(t+h)-\mathbb{A}^\alpha u(t)\|_{L^r(\mathbb{R}_+^3)}+\|(-\Delta)^\alpha \nabla d(t+h)-(-\Delta)^\alpha  \nabla d(t)\|_{L^r(\mathbb{R}_+^3)}\nonumber\\
\leq& C\Big(h^\delta t^{-\alpha-\delta-2+\frac 3{2r}}+h^{1-\alpha}t^{-\frac92+\frac 3{2r}}+h^\delta t^{-\alpha-\delta-\frac72+\frac 3{2r}}\Big);\label{4.3}\\
&\|(-\Delta)^\alpha d(t+h)-(-\Delta)^\alpha  d(t)\|_{L^r(\mathbb{R}_+^3)}\leq C\Big(h^\delta t^{-\alpha-\delta-\frac 32+\frac 3{2r}}+h^{1-\alpha}t^{-4+\frac 3{2r}}+h^\delta t^{-\alpha-\delta-3+\frac 3{2r}}\Big)\label{5.4},
\end{align}
where $0<\alpha+\delta<1$, and $\alpha>0$, $\delta>0$.

From $(\ref{4.3})$, $(\ref{5.4})$, Lemma $\ref{le2.1}$, Lemma $\ref{le4.1-}$, Lemma $\ref{le4.1}$ and Lemmas $\ref{le2.2}$-$\ref{le2.3}$, we obtain for all $1<r<\infty$ and any $t>0$,
\begin{align*}
&\|\mathbb{A}u(t)\|_{{L^{r}(\mathbb{R}_{+}^{3})}}\nonumber\\
\leq& Ct^{-1}\|u{(\frac t4)}\|_{L^r(\mathbb{R}_+^3)}+C\|u(t)\|_{L^{r_1}(\mathbb{R}_+^3)}\|\nabla u(t)\|_{L^{r_2}(\mathbb{R}_+^3)}+C\|\nabla d(t)\|_{L^{r_1}(\mathbb{R}_+^3)}\|\nabla ^2d(t)\|_{L^{r_2}(\mathbb{R}_+^3)}\nonumber\\
 &+C\int_{\frac t4}^{\frac t2}(t-s)^{-1}\Big(\|u(s)\|_{L^{r_1}(\mathbb{R}_+^3)}\|\nabla u(s)\|_{L^{r_2}(\mathbb{R}_+^3)}+\|\nabla d(s)\|_{L^{r_1}(\mathbb{R}_+^3)}\|\nabla ^2d(s)\|_{L^{r_2}(\mathbb{R}_+^3)}\Big)\,ds\nonumber\\
&+C\int_{\frac t2}^t(t-s)^{-1}\|u(s)\|_{L^{r_1}(\mathbb{R}_+^3)}\|\nabla u(s)-\nabla u(t)\|_{L^{r_2}(\mathbb{R}_+^3)}\,ds\nonumber\\
&+C\int_{\frac t2}^t(t-s)^{-1-\frac 32(\frac1q-\frac1r)}\|(u(s)-u(t))\|_{L^{\frac{3q}{3-q}}(\mathbb{R}_+^3)}\|\nabla u(t)\|_{L^3(\mathbb{R}_+^3)}\,ds\nonumber\\
 &+ C\int_{\frac t2}^t(t-s)^{-1}\|\nabla^2 d(s)\|_{L^{r_1}(\mathbb{R}_+^3)}\|\nabla d(s)-\nabla d(t)\|_{L^{r_2}(\mathbb{R}_+^3)}\,ds\\
 &+C\int_{\frac t2}^t(t-s)^{-1}\|\nabla d(s)\|_{L^{r_1}(\mathbb{R}_+^3)}\|\nabla (\nabla d(s)-\nabla d(t))\|_{L^{r_2}(\mathbb{R}_+^3)}\,ds\\
\leq&Ct^{-\frac32-\frac 32(1-\frac1r)}+Ct^{-\frac12-\frac 32(1-\frac1{r_1})}t^{-1-\frac 32(1-\frac1{r_2})}\nonumber\\
&+  C\int_{\frac t4}^{\frac t2}(t-s)^{-1}(1+s)^{-\frac12-\frac 32(1-\frac1{r_1})}s^{-1-\frac 32(1-\frac1{r_2})}ds\nonumber\\
&+C\int_{\frac t2}^t\Big((t-s)^{\delta-1}s^{-\frac92-\delta+\frac 3{2r}}+(t-s)^{-\frac12}s^{-\frac{13}2+\frac 3{2r}}+(t-s)^{\delta-1}s^{-6-\delta+\frac 3{2r}}\Big)\,ds\nonumber\\
&+Ct^{-2}\int_{\frac t2}^t\Big((t-s)^{\delta-1-\frac 32(\frac1q-\frac1r)}s^{-\delta-\frac 52+\frac 3{2q}}\nonumber\\
&+(t-s)^{-\frac12-\frac 32(\frac1q-\frac1r)}s^{-\frac92+\frac 3{2q}}+(t-s)^{\delta-1-\frac 32(\frac1q-\frac1r)}s^{-4-\delta+\frac 3{2q}}\Big)\,ds\nonumber\\
\leq& Ct^{-\frac32-\frac 32(1-\frac1r)}.
\end{align*}
Here $\frac1r=\frac1{r_1}+\frac1{r_2}$, ${r_1}, {r_2}\in(1,\infty)$, and we take the number
\begin{align*}
q=
\left\{\begin{array}{ll}
\displaystyle		r,&\mathrm{~if~}r\in(1,3),\smallskip\\
\displaystyle 2,&\mathrm{~if~}r\in[3,\infty).  \smallskip
\end{array}\right.
\end{align*}

 Whence, for all $1<r<\infty$ and any $t>0$,
$$\|\nabla^2u(t)\|_{L^r(\mathbb{R}_+^3)}\leq C\|\mathbb{A}u(t)\|_{L^r(\mathbb{R}_+^3)}\leq Ct^{-\frac32-\frac 32(1-\frac1r)}.$$
Using the above estimates, combined with Lemma $\ref{le2.3}$, Lemma $\ref{le4.1-}$ and Lemma $\ref{le4.1}$, we obtain for $t>0$,
\begin{align*}
\|\partial_tu(t)\|_{L^r(\mathbb{R}_+^3)}\leq&\|\mathbb{A}u(t)\|_{L^r(\mathbb{R}_+^3)}+\|u(t)\|_{L^{r_1}(\mathbb{R}_+^3)}\|\nabla u(t)\|_{L^{r_2}(\mathbb{R}_+^3)}+\|\nabla d(t)\|_{L^{r_1}(\mathbb{R}_+^3)}\|\nabla^2 d(t)\|_{L^{r_2}(\mathbb{R}_+^3)}\\
\leq&Ct^{-\frac32-\frac 32(1-\frac1r)},
\end{align*}
and
\begin{align*}
\|\nabla p(t)\|_{L^r(\mathbb{R}_+^3)}\leq&\|\partial_tu(t)\|_{L^r(\mathbb{R}_+^3)}+\|\nabla^2u(t)\|_{L^r(\mathbb{R}_+^3)}+\|u(t)\|_{L^{r_1}(\mathbb{R}_+^3)}\|\nabla u(t)\|_{L^{r_2}(\mathbb{R}_+^3)}\\
&+\|\nabla d(t)\|_{L^{r_1}(\mathbb{R}_+^3)}\|\nabla^2 d(t)\|_{L^{r_2}(\mathbb{R}_+^3)}\\
\leq&Ct^{-\frac32-\frac 32(1-\frac1r)},
\end{align*}
where $\frac1r=\frac1{r_1}+\frac1{r_2}$, ${r_1}, {r_2}\in(1,\infty)$.
\end{proof}
\begin{Lemma}
Under the assumptions of Theorem \ref{th1.3}, if $(u,d)$ is the global strong solution obtained by Proposition $\ref{solu1}$, then
$$\|\nabla^2 u(t)\|_{L^\infty(\mathbb{R}_+^3)}\leq Ct^{-3}$$
holds for any $t>0$.
\end{Lemma}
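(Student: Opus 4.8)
The plan is to run the proof of Lemma~\ref{3.11} verbatim at the structural level, but feeding in the accelerated decay rates that become available under the extra hypothesis $(\ref{1.5})$. Starting from the Duhamel identity $(\ref{uB2})_1$, I would apply $\nabla^2$ and split the estimate of $\|\nabla^2u(t)\|_{L^\infty(\mathbb{R}_+^3)}$ into the \emph{linear part} $\|\nabla^2e^{-\frac t2\mathbb{A}}u(\tfrac t2)\|_{L^\infty(\mathbb{R}_+^3)}$ and the \emph{nonlinear integral} $\big\|\int_{\frac t2}^t\nabla^2e^{-(t-s)\mathbb{A}}\mathbb{P}\big(u\cdot\nabla u+\nabla\cdot(\nabla d\odot\nabla d)\big)(s)\,ds\big\|_{L^\infty(\mathbb{R}_+^3)}$. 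The inputs are the improved bounds $\|u(t)\|_{L^p(\mathbb{R}_+^3)}\le Ct^{-\frac12-\frac32(1-\frac1p)}$ (Lemma~\ref{le2.3}), $\|\nabla u(t)\|_{L^p(\mathbb{R}_+^3)}\le Ct^{-1-\frac32(1-\frac1p)}$ (Lemma~\ref{le4.1}), $\|\nabla^2u(t)\|_{L^p(\mathbb{R}_+^3)}\le Ct^{-\frac32-\frac32(1-\frac1p)}$ (Lemma~\ref{le5.4}), together with the unchanged bounds $\|\nabla^kd(t)\|_{L^p(\mathbb{R}_+^3)}\le Ct^{-\frac k2-\frac32(1-\frac1p)}$ for $k=1,\dots,4$ (Lemma~\ref{le4.1-}). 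The point — exactly the one recorded in the Remark after Theorem~\ref{th1.3} — is that under $(\ref{1.5})$ the term $u\cdot\nabla u$ now decays at the same $L^p$-rate as $\nabla\cdot(\nabla d\odot\nabla d)$, so the two nonlinear contributions can be treated on equal footing.

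For the linear part I would use the $L^q$–$L^\infty$ estimate $(\ref{2.2})$ with $k=2$ and any fixed finite $q>3$, which gives $\|\nabla^2e^{-\frac t2\mathbb{A}}u(\tfrac t2)\|_{L^\infty(\mathbb{R}_+^3)}\le Ct^{-1-\frac{3}{2q}}\|u(\tfrac t2)\|_{L^q(\mathbb{R}_+^3)}\le Ct^{-1-\frac{3}{2q}}\cdot t^{-\frac12-\frac32(1-\frac1q)}=Ct^{-3}$, the powers of $q$ cancelling. This term is the decisive one: it fixes the asymptotic rate at $t^{-3}$, and the remaining work is merely to show the nonlinear integral decays at least this fast.

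For the nonlinear integral I would invoke Lemma~\ref{le2.4} with a parameter $\theta\in(0,1)$, applied to $a=\mathbb{P}\big(u\cdot\nabla u+\nabla\cdot(\nabla d\odot\nabla d)\big)(s)$ (which is divergence free with vanishing third component on $\partial\mathbb{R}_+^3$), obtaining a bound by
\[
C\int_{\frac t2}^t(t-s)^{-\frac12-\frac{3}{2q}}\big\|\nabla\mathbb{P}\big(u\cdot\nabla u+\nabla\cdot(\nabla d\odot\nabla d)\big)(s)\big\|_{L^q(\mathbb{R}_+^3)}\,ds
\]
\[
+\,C\int_{\frac t2}^t(t-s)^{-1+\frac\theta2-\frac{3}{2q}}\big\|x_3^{-\theta}\mathbb{P}\big(u\cdot\nabla u+\nabla\cdot(\nabla d\odot\nabla d)\big)(s)\big\|_{L^q(\mathbb{R}_+^3)}\,ds.
\]
Using the decomposition $(\ref{demcomposition})$ for $\mathbb{P}$, the bilinear bound $(\ref{3.50})$ of Lemma~\ref{le3.3} for the first integrand, and Lemmas~\ref{pro3.2} and~\ref{le3.10} for the weighted integrand, and then inserting the decay rates above with exponents $q_1=q_2=2q$ throughout, I expect to reach $\|\nabla\mathbb{P}(\cdots)(s)\|_{L^q(\mathbb{R}_+^3)}\le Cs^{-\frac32-3(1-\frac1{2q})}$ and $\|x_3^{-\theta}\mathbb{P}(\cdots)(s)\|_{L^q(\mathbb{R}_+^3)}\le Cs^{-1-3(1-\frac1{2q})}$, the bottleneck in the second being the pure terms $\|u\|_{L^{2q}(\mathbb{R}_+^3)}^2$ and $\|\nabla d\|_{L^{2q}(\mathbb{R}_+^3)}^2$. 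Choosing $q$ large enough that $q>3/\theta$ makes both time-kernels $(t-s)^{-\frac12-\frac{3}{2q}}$ and $(t-s)^{-1+\frac\theta2-\frac{3}{2q}}$ integrable near $s=t$, so Lemma~\ref{le2.2} yields $Ct^{-4}$ for the first integral and $Ct^{\frac\theta2-4}\le Ct^{-3}$ for the second (for $t$ bounded below). Combining with the linear estimate gives $\|\nabla^2u(t)\|_{L^\infty(\mathbb{R}_+^3)}\le Ct^{-3}$; the range of small $t$ is absorbed using the continuity of the solution, as in Lemma~\ref{3.11}.

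The hard part will be the bookkeeping of the weighted bilinear term: one must check that every product occurring in Lemmas~\ref{pro3.2}, \ref{le3.3} and~\ref{le3.10} — in particular the director products $\|\nabla^jd\|\,\|\nabla^{4-j}d\|$, which do \emph{not} enjoy the additional half-power gain that the velocity products do — still decays at least like $s^{-1-3(1-\frac1{2q})}$, and that the compatibility chain $\frac{3}{2q}<\frac\theta2<\frac12$ leaves room for the temporal integration. This is precisely where $(\ref{1.5})$ is essential: without it $\|u\|_{L^{2q}(\mathbb{R}_+^3)}^2$ only decays like $s^{-3(1-\frac1{2q})}$ and the same computation produces $t^{-5/2}$, recovering Lemma~\ref{3.11} rather than the sharper $t^{-3}$.
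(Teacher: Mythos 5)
Your proposal is correct and follows essentially the same route as the paper: the paper's proof is exactly the argument of Lemma~\ref{3.11} rerun with the accelerated rates of Lemmas~\ref{le2.3}, \ref{le4.1} and \ref{le5.4}, splitting via $(\ref{uB2})_1$, bounding the linear part by $Ct^{-3}$ through the $L^q$--$L^\infty$ Stokes estimate, and treating the nonlinear integral with Lemma~\ref{le2.4}, the decomposition $(\ref{demcomposition})$, and Lemmas~\ref{pro3.2}, \ref{le3.3}, \ref{le3.10}, with the same bottleneck terms $\|u\|_{L^{2r}}^2$ and $\|\nabla d\|_{L^{2r}}^2$ in the weighted integrand. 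The only cosmetic differences are your choice of a general $q>3$ rather than $q=2$ in the linear part and your slightly sharper (non-Young-split) rate $s^{-\frac32-3(1-\frac1{2q})}$ for the unweighted integrand, neither of which changes the outcome.
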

\begin{proof}
Let $0<\theta<1$ and $\frac{3}{\theta}<r<\infty$. Following the proof of Lemma \ref{3.11}, we conclude for any $t>0$,
\begin{align*}
&\|\nabla^2 u(t)\|_{L^\infty(\mathbb{R}_+^3)}\\
\leq & \|\nabla^2 e^{-\frac t2\mathbb{A}}u(\frac t2)\|_{L^\infty(\mathbb{R}_+^3)}\\
&+\int_{\frac{t}{2}}^{t}\big\|\nabla^2e^{-(t-s)\mathbb{A}}\mathbb{P}\Big(u(s)\cdot\nabla u(s)+\nabla\cdot(\nabla d\odot\nabla d)(s)\Big)\big\|_{L^\infty(\mathbb{R}_+^3)}\,ds\\
\leq &Ct^{-1-\frac34}\| u(t)\|_{L^2(\mathbb{R}_+^3)}+C\int_{\frac t2}^{t}(t-s)^{-\frac12-\frac{3}{2r}}\Big(\|u\|_{L^{2r}(\mathbb{R}_+^3)}^2+\|\nabla ^2d\|_{L^{2r}(\mathbb{R}_+^3)}^2+\|\nabla u\|_{L^{2r}(\mathbb{R}_+^3)}^2+\|\nabla^2u\|_{L^{2r}(\mathbb{R}_+^3)}^2\\
&+\|\nabla d(t)\|_{L^{2r}(\mathbb{R}_+^3)}\|\nabla^4d(t)\|_{L^{2r}(\mathbb{R}_+^3)}+\|\nabla ^2d(t)\|_{L^{2r}(\mathbb{R}_+^3)}\|\nabla^3d(t)\|_{L^{2r}(\mathbb{R}_+^3)}\\
&+\|\nabla d(t)\|_{L^{2r}(\mathbb{R}_+^3)}\|\nabla^3d(t)\|_{L^{2r}(\mathbb{R}_+^3)}+\|\nabla d(t)\|_{L^{2r}(\mathbb{R}_+^3)}\|\nabla^2d(t)\|_{L^{2r}(\mathbb{R}_+^3)}\Big)\,{d}s\\
&+C\int_{\frac{t}{2}}^t(t-s)^{-1+\frac{\theta}{2}-\frac{3}{2r}}\Big(\|u(s)\|_{L^{2r}(\mathbb{R}_+^3)}^2+\|\nabla u(s)\|_{L^{2r}(\mathbb{R}_+^3)}^2+\|\nabla^2 u(s)\|_{L^{2r}(\mathbb{R}_+^3)}^2+\|\nabla d(s)\|_{L^{2r}(\mathbb{R}_+^3)}^2\\
&+\|\nabla ^2d(s)\|_{L^{2r}(\mathbb{R}_+^3)}^2+\|\nabla ^3d(s)\|_{L^{2r}(\mathbb{R}_+^3)}^2\Big)\,ds\\
\leq &Ct^{-3}+C\int_{\frac t2}^{t}(t-s)^{-\frac12-\frac{3}{2r}}s^{-1-3(1-\frac{1}{2r})}\,ds+C\int_{\frac{t}{2}}^t(t-s)^{-1+\frac{\theta}{2}-\frac{3}{2r}}s^{-1-3(1-\frac{1}{2r})}\,ds\\
\leq & Ct^{-3}.
\end{align*}
\end{proof}

\begin{Lemma}\label{le5.4-}
Under the assumptions of Theorem \ref{th1.3}, for every integer $k\geq1$, there exists $t(k)>0$ such that for $1<r\leq\infty$ and $t\geq t(k)$, the strong solution $(u,d,p)$ obtained in Proposition $\ref{solu1}$ satisfies
$$\|\nabla^{2+k}u(t)\|_{L^r(\mathbb{R}_+^3)}+\|\nabla^{1+k}p(t)\|_{L^r(\mathbb{R}_+^3)}\leq Ct^{-\frac12-\frac{2+k}2-\frac 32(1-\frac1r)}.$$
\end{Lemma}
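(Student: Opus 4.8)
The plan is to run the scheme of Lemmas \ref{le3.8}, \ref{le3.9} and \ref{4.3*} --- which yielded the decay of $\nabla^{2+k}u$ and $\nabla^{1+k}p$ in the un-weighted case --- now feeding in the accelerated $u$-estimates available under the hypothesis $\int_{\mathbb{R}_+^3}x_3|u_0|\,dx<\infty$. The steady Stokes regularity estimate $(\ref{3.43})$--$(\ref{3.44})$ reduces, for each $k\ge1$,
$$\|\nabla^{2+k}u(t)\|_{L^r(\mathbb{R}_+^3)}+\|\nabla^{1+k}p(t)\|_{L^r(\mathbb{R}_+^3)}\le C\big(\|\nabla^k(u\cdot\nabla u)(t)\|_{L^r(\mathbb{R}_+^3)}+\|\nabla^k\partial_tu(t)\|_{L^r(\mathbb{R}_+^3)}+\|\nabla^k(\nabla\cdot(\nabla d\odot\nabla d))(t)\|_{L^r(\mathbb{R}_+^3)}\big),$$
so the task splits into three pieces. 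For $\|\nabla^k(u\cdot\nabla u)\|_{L^r(\mathbb{R}_+^3)}$ I would combine the Leibniz rule with the accelerated bounds of Lemma \ref{le4.1-}, Lemma \ref{le4.1} and Lemma \ref{le5.4} --- $\|u(t)\|_{L^r(\mathbb{R}_+^3)}\le Ct^{-\frac12-\frac32(1-\frac1r)}$, $\|\nabla u(t)\|_{L^p(\mathbb{R}_+^3)}\le Ct^{-1-\frac32(1-\frac1p)}$, $\|\nabla^2u(t)\|_{L^r(\mathbb{R}_+^3)}\le Ct^{-\frac32-\frac32(1-\frac1r)}$, with the higher spatial derivatives of $u$ furnished inductively --- to see that this term decays strictly faster than the target $t^{-\frac12-\frac{2+k}2-\frac32(1-\frac1r)}$. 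For $\|\nabla^k(\nabla\cdot(\nabla d\odot\nabla d))\|_{L^r(\mathbb{R}_+^3)}$ I would use only the (un-accelerated) director estimates of Theorem \ref{th1.2} and Theorem \ref{th1.3-main}; since its leading contribution $\|\nabla^{k+2}d\|_{L^{2r}(\mathbb{R}_+^3)}\|\nabla d\|_{L^{2r}(\mathbb{R}_+^3)}$ decays like $t^{-\frac{k+3}2-3+\frac3{2r}}$, which already beats $t^{-\frac{k+3}2-\frac32+\frac3{2r}}$, this piece is negligible --- and, in line with the open problem noted after Theorem \ref{th1.3}, no improvement of the decay of $d$ is needed or attempted.

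The heart of the matter is $\|\nabla^k\partial_tu\|_{L^r(\mathbb{R}_+^3)}$. Exactly as in Lemmas \ref{le3.8} and \ref{le3.9} I would set $u_1=\partial_tu$, $u_2=\partial_{tt}u$, $d_1=\partial_td$, $d_2=\partial_{tt}d$, represent these through the nonstationary Stokes and heat solution formulas $(\ref{u1-})$, $(\ref{d2-})$, $(\ref{u2-})$ with the kernels $\mathcal{M},\mathcal{W}$, split each integral over $[\frac t2,t]$, and estimate by means of the kernel bounds $(\ref{3.24})$, $(\ref{3.36*})$, the identities $(\ref{3.37})$, $(\ref{3.33})$, the one-dimensional Hardy inequality $(\ref{3.38*})$, and Lemmas \ref{pro3.1}, \ref{le2.2}, \ref{le2.3}, \ref{le3.7**}. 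The only change relative to those lemmas is that every $u$-driven term now picks up an extra factor $s^{-1/2}$ coming from the accelerated $u$-bounds, while every director-driven term, retained at its Theorem \ref{th1.3-main} rate, turns out already strictly smaller than the corresponding target; tracking this through the analogues of $(\ref{3.35})$--$(\ref{3.41})$ and $(\ref{4.2})$--$(\ref{4.20})$, the borderline terms (those of the shape $\|u\|_{L^\infty(\mathbb{R}_+^3)}\|\nabla^{1+j}u_1\|_{L^r(\mathbb{R}_+^3)}$, $\|\nabla d\|_{L^\infty(\mathbb{R}_+^3)}\|\nabla^{2+j}d_1\|_{L^r(\mathbb{R}_+^3)}$, etc., with now $\|u\|_{L^\infty},\|\nabla d\|_{L^\infty}\lesssim t^{-2}$) collect into $Ct^{-1}f_j(t)$ with
$$f_j(t)=\sup_{0<s\le t}\Big[\,s^{\,\frac12+\frac{j+2}2+\frac32(1-\frac1r)}\big(\|\nabla^{j}\partial_tu(s)\|_{L^r(\mathbb{R}_+^3)}+\|\nabla^{j+1}\partial_td(s)\|_{L^r(\mathbb{R}_+^3)}+\cdots\big)\,\Big],$$
the dots standing for the relevant cohort of higher time derivatives at each level. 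Thus $f_j(t)\le C+Ct^{-1}f_j(t)$, hence $f_j(t)\le 2C$ once $t\ge t(k)$, where $t(k)$ is chosen to make the coefficient smaller than $\frac12$; this gives $\|\nabla^k\partial_tu(t)\|_{L^r(\mathbb{R}_+^3)}\le Ct^{-\frac12-\frac{2+k}2-\frac32(1-\frac1r)}$, and plugging back into $(\ref{3.44})$ proves the estimate for $1<r<\infty$. The homogeneous term $\nabla^{2+k}e^{-\frac t2\mathbb{A}}u(\frac t2)$ already carries the accelerated rate by Lemma \ref{le2.1} applied to the accelerated bound on $\|u(\frac t2)\|_{L^r(\mathbb{R}_+^3)}$, so it causes no loss.

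Finally, the endpoint $r=\infty$ I would get from the Gagliardo--Nirenberg inequality $(\ref{gn-})$ precisely as in Lemma \ref{4.3*}, interpolating $\nabla^{2+k}u$ (respectively $\nabla^{1+k}p$) between its $L^6$-norm and the $L^6$-norm of one further derivative, both already controlled at levels $k$ and $k+1$, to reach $t^{-\frac12-\frac{2+k}2-\frac32}$. The hard part will be the bookkeeping: one must close the continuity argument simultaneously for the whole coupled hierarchy $(\nabla^{j}\partial_tu,\nabla^{j+1}\partial_td,\nabla^{j}\partial_{tt}u,\dots)$ at each step of the induction on $k$, and verify that every cross term --- in particular those carrying the un-accelerated director field $d$ and the boundary-correction kernels $\mathcal N^*$, $\mathcal N$ --- stays strictly below the target rate, which comes down to a finite list of exponent inequalities of the same flavour as in $(\ref{3.38})$, $(\ref{4.14})$ and $(\ref{4.20})$.
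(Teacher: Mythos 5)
Your proposal is correct and follows essentially the same route as the paper: reduce via the steady Stokes estimate \eqref{3.44} to bounding $\nabla^k(u\cdot\nabla u)$, $\nabla^k\partial_tu$ and $\nabla^k(\nabla\cdot(\nabla d\odot\nabla d))$, feed the accelerated $u$-rates (Lemmas \ref{le4.1-}, \ref{le4.1}, \ref{le5.4}) and the unaccelerated director rates (Theorem \ref{th1.3-main}) into the representation formulas \eqref{u1-}, \eqref{d2-}, \eqref{u2-}, close the coupled continuity argument for the hierarchy $(\nabla^j\partial_tu,\nabla^{j+1}\partial_td,\dots)$ with the weight $s^{\frac12+\frac{j+2}{2}+\frac32(1-\frac1r)}$ exactly as in the paper's $\tilde f_2,\tilde f_3$, and finish the $r=\infty$ case by Gagliardo--Nirenberg. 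The only cosmetic discrepancy is that the self-improvement coefficient comes out as $Ct^{-3/2}f_j(t)$ rather than $Ct^{-1}f_j(t)$ in the paper's computation (see \eqref{5.6}--\eqref{5.7} and \eqref{5.11}--\eqref{5.14}), which only makes the closure easier.
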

\begin{proof}
Suppose $\|x_3u_0\|_{L^1(\mathbb{R}_+^3)}<\infty$. Using Lemma \ref{le2.3}, Lemma \ref{le4.1-}, Lemma \ref{4.3*} and Lemmas \ref{le4.1}-\ref{le5.4}, following the proofs of $(\ref{3.35})$, $(\ref{3.36})$ and $(\ref{3.38})$, we conclude for $m\geq1$ and $t>2$,
\begin{equation}\label{5.5}
\|\nabla_x^m\int_{\mathbb{R}_+^3}\mathcal{M}(\cdot,y,\frac{t}{2})u_1(y,\frac{t}{2})\,dy\|_{L^r(\mathbb{R}_+^3)}
\leq  Ct^{-\frac12-\frac{m+2}{2}-\frac{3}{2}(1-\frac{1}{r})};
\end{equation}

\begin{align}\label{5.6}
&\big\|\int_{\frac{t}{2}}^{t}\int_{\mathbb{R}_{+}^{3}}\nabla_{x}\mathcal{M}(x,y,t-s)\mathbb{P}U(y,s)\,dyds\big\|_{L^{r}(\mathbb{R}_{+}^{3})}\nonumber\\
\leq &C\int_{\frac{t}{2}}^{t}(t-s)^{-\frac{1}{2}}s^{-\frac{5}{2}-\frac{3}{2}-\frac{3}{2}(1-\frac{1}{r})}\,ds +C\tilde{f_2}(t)\int_{\frac{t}{2}}^t(t-s)^{-\frac{1}{2}}s^{-4-\frac{3}{2}(1-\frac{1}{r})}\,ds\nonumber\\
\leq &C t^{-\frac 72-\frac 32(1-\frac1r)}+C\tilde{f_2}(t)t^{-\frac{7}{2}-\frac{3}{2}(1-\frac{1}{r})};
\end{align}
and
\begin{align}\label{5.7}
&\big\|\int_{\frac{t}{2}}^{t}\int_{\mathbb{R}_{+}^{3}}\partial_{x_k}\partial_{x_j}\mathcal{W}(x,y,t-s)\big(u_1\cdot\nabla d+u\cdot\nabla d_1-|\nabla d|^2d_1-2(\nabla d:\nabla d_1) d\big)(y,s)\,dyds\big\|_{L^{r}(\mathbb{R}_{+}^{3})}\nonumber\\
&+\big\|\int_{\frac{t}{2}}^{t}\int_{\mathbb{R}_{+}^{3}}\partial_{x_k}\partial_{x_3}\mathcal{W}(x,y,t-s)\big(u_1\cdot\nabla d+u\cdot\nabla d_1-|\nabla d|^2d_1-2(\nabla d:\nabla d_1) d\big)(y,s)\,dyds\big\|_{L^{r}(\mathbb{R}_{+}^{3})}\nonumber\\
\leq&C\int_{\frac{t}{2}}^{t}(t-s)^{-\frac{1}{2}}s^{-\frac{5}{2}-\frac{3}{2}-\frac{3}{2}(1-\frac{1}{r})}ds+C\int_{\frac{t}{2}}^t(t-s)^{-1+\varepsilon}s^{-2-\frac{3}{2}-\frac{3}{2}(1-\frac{1}{r})}\,ds\nonumber\\
     &+C\int_{\frac{t}{2}}^{t}(t-s)^{-\frac{1}{2}}s^{-2}\big(\|\nabla u_1(s)\|_{L^{r}(\mathbb{R}_{+}^{3})}+\|\nabla^2 d_1(s)\|_{L^{r}(\mathbb{R}_{+}^{3})}  \big )\,ds\nonumber\\
\leq &C_\varepsilon t^{-3-\frac 32(1-\frac1r)}
+C\tilde{f_2}(t)t^{-\frac{7}{2}-\frac{3}{2}(1-\frac{1}{r})},
\end{align}
where $\tilde{f_2}(t)=\sup\limits_{0<s\leq t}[s^{2+\frac{3}{2}(1-\frac{1}{r})}(\|\nabla u_1(s)\|_{L^{r}(\mathbb{R}_{+}^{3})}+\|\nabla^2 d_1(s)\|_{L^{r}(\mathbb{R}_{+}^{3})})] $.

Putting $(\ref{5.5})$-$(\ref{5.7})$ together, following the proof of $(\ref{3.40})$, we have for $1<r<\infty$ and $t\geq\tilde{t_1}$ with some number $\tilde{t_1}>2$,
\begin{equation}\label{5.8}
\|\nabla \partial_tu(t)\|_{L^{r}(\mathbb{R}_{+}^{3})}=\|\nabla u_1(t)\|_{L^{r}(\mathbb{R}_{+}^{3})}\leq Ct^{-2-\frac{3}{2}(1-\frac{1}{r})}.
\end{equation}
Thanks to Lemma \ref{le2.3}, Lemma \ref{le4.1} and Lemma \ref{le5.4}, we find for $1<r<\infty$ and $t\geq\tilde{t_1}$,
\begin{align}\label{5.9}
\|\nabla(u\cdot\nabla u)(t)\|_{L^r(\mathbb{R}_+^3)}\leq& C\big(\|\nabla u(t)\|_{L^{2r}(\mathbb{R}_+^3)}^2+\|u(t)\|_{L^{2r}(\mathbb{R}_+^3)}\|\nabla^2u(t)\|_{L^{2r}(\mathbb{R}_+^3)}\big)\nonumber\\
\leq&Ct^{-\frac{7}{2}-\frac{3}{2}(1-\frac{1}{r})}.
\end{align}
Applying $(\ref{3.44})$ with $\ell=1$, together with $(\ref{5.8})$ and $(\ref{5.9})$, we obtain for $1<r<\infty$ and $t\geq\tilde{t_1}$,
\begin{align*}
&\|\nabla^3u(t)\|_{L^r(\mathbb{R}_+^3)}+\|\nabla^2p(t)\|_{L^r(\mathbb{R}_+^3)}\nonumber\\
\leq&C\big(\|\nabla(u\cdot\nabla u)(t)\|_{L^r(\mathbb{R}_+^3)}+\|\nabla\partial_tu(t)\|_{L^r(\mathbb{R}_+^3)}+\|\nabla(\nabla \cdot(\nabla d\odot\nabla d))(t)\|_{L^r(\mathbb{R}_+^3)}\big)\nonumber\\
\leq& Ct^{-2-\frac{3}{2}(1-\frac{1}{r})}.
\end{align*}

Using Lemma \ref{le2.3}, Lemma \ref{le4.1}, Lemma \ref{le5.4} and $(\ref{5.8})$, following the proof of $(\ref{4.2})$, we get for $1\leq k\leq3$, $1\leq j\leq 2$, $1<r<\infty$, $0<\varepsilon<\frac12$ and $t\geq 2\tilde{t_1}$,
\begin{align}\label{5.11}
&\big\|\int_{\frac{t}{2}}^{t}\int_{\mathbb{R}_{+}^{3}}\partial_{x_k}\partial_{x_j}\mathcal{M}(x,y,t-s)\mathbb{P}U(y,s)\,dyds\big\|_{L^{r}(\mathbb{R}_{+}^{3})}\nonumber\\
&+\big\|\int_{\frac{t}{2}}^{t}\int_{\mathbb{R}_{+}^{3}}\partial_{x_k}\partial_{x_3}\mathcal{M}(x,y,t-s)\mathbb{P}U(y,s)\,dyds\big\|_{L^{r}(\mathbb{R}_{+}^{3})}\nonumber\\
\leq&C\int_{\frac{t}{2}}^{t}(t-s)^{-\frac{1}{2}}s^{-\frac{5}{2}-\frac{3}{2}-\frac{3}{2}(1-\frac{1}{r})}ds+C\int_{\frac{t}{2}}^t(t-s)^{-1+\varepsilon}s^{-2-\frac{3}{2}-\frac{3}{2}(1-\frac{1}{r})}\,ds\nonumber\\
     &+C\int_{\frac{t}{2}}^{t}(t-s)^{-\frac{1}{2}}s^{-2}\big(\|\nabla d_2(s)\|_{L^{r}(\mathbb{R}_{+}^{3})}+\|\nabla^2 u_1(s)\|_{L^{r}(\mathbb{R}_{+}^{3})}+
     \|\nabla^2 d_2(s)\|_{L^{r}(\mathbb{R}_{+}^{3})}  \big )\,ds\nonumber\\
\leq &C_\varepsilon t^{-3-\frac 32(1-\frac1r)}
+C\tilde{f_3}(t)t^{-4-\frac{3}{2}(1-\frac{1}{r})},
\end{align}
where $\tilde{f_3}(t)=\sup\limits_{0<s\leq t}[s^{\frac{5}{2}+\frac{3}{2}(1-\frac{1}{r})}(\|\nabla d_2(s)\|_{L^{r}(\mathbb{R}_{+}^{3})}+\|\nabla u_2(s)\|_{L^{r}(\mathbb{R}_{+}^{3})}+\|\nabla^2 u_1(s)\|_{L^{r}(\mathbb{R}_{+}^{3})}+
     \|\nabla^2 d_2(s)\|_{L^{r}(\mathbb{R}_{+}^{3})})] $.

Following the proof of $(\ref{4.11})$, we obtain that
\begin{align}\label{5.12}
&\big\|\int_{\frac{t}{2}}^{t}\int_{\mathbb{R}_{+}^{3}}\nabla_{x}\mathcal{W}(x,y,t-s)D_1(y,s)\,dyds\big\|_{L^{r}(\mathbb{R}_{+}^{3})}\nonumber\\
\leq&C\int_{\frac{t}{2}}^{t}(t-s)^{-\frac{1}{2}}s^{-3-\frac{3}{2}-\frac{3}{2}(1-\frac{1}{r})}\,ds
     +C\int_{\frac{t}{2}}^{t}(t-s)^{-\frac{1}{2}}s^{-2}\big(\|\nabla d_2(s)\|_{L^{r}(\mathbb{R}_{+}^{3})}+\|\nabla^2 u_1(s)\|_{L^{r}(\mathbb{R}_{+}^{3})}
      \big )\,ds\nonumber\\
\leq &Ct^{-4-\frac{3}{2}(1-\frac{1}{r})}
+C\tilde{f_3}(t)t^{-4-\frac{3}{2}(1-\frac{1}{r})}.
\end{align}
Following the proofs of $(\ref{4.14})$ and $(\ref{4.20})$, we get for $1\leq k\leq3$, $1\leq j\leq 2$, $1<r<\infty$ and $t\geq 2\tilde{t_1}$,
\begin{align}\label{5.13}
&\big\|\int_{\frac{t}{2}}^{t}\int_{\mathbb{R}_{+}^{3}}\partial_{x_k}\partial_{x_j}\mathcal{W}(x,y,t-s)D_1(y,s)\,dyds\big\|_{L^{r}(\mathbb{R}_{+}^{3})}\nonumber\\
&+\big\|\int_{\frac{t}{2}}^{t}\int_{\mathbb{R}_{+}^{3}}\partial_{x_k}\partial_{x_3}\mathcal{W}(x,y,t-s)D_1(y,s)\,dyds\big\|_{L^{r}(\mathbb{R}_{+}^{3})}\nonumber\\
\leq&C\int_{\frac{t}{2}}^{t}(t-s)^{-\frac{1}{2}}s^{-5-\frac{3}{2}(1-\frac{1}{r})}ds+C\int_{\frac{t}{2}}^t(t-s)^{-1+\varepsilon}s^{-\frac{9}{2}-\frac{3}{2}(1-\frac{1}{r})}\,ds\nonumber\\
     &+C\int_{\frac{t}{2}}^{t}(t-s)^{-\frac{1}{2}}s^{-2}\big(\|\nabla d_2(s)\|_{L^{r}(\mathbb{R}_{+}^{3})}+\|\nabla u_2(s)\|_{L^{r}(\mathbb{R}_{+}^{3})}+\|\nabla^2 u_1(s)\|_{L^{r}(\mathbb{R}_{+}^{3})}+
     \|\nabla^2 d_2(s)\|_{L^{r}(\mathbb{R}_{+}^{3})}  \big )\,ds\nonumber\\
\leq &C_\varepsilon t^{-4-\frac{3}{2}(1-\frac{1}{r})}
+C\tilde{f_3}(t)t^{-4-\frac{3}{2}(1-\frac{1}{r})},
\end{align}
and
\begin{align}\label{5.14}
&\big\|\int_{\frac{t}{2}}^{t}\int_{\mathbb{R}_{+}^{3}}\nabla_{x}\mathcal{M}(x,y,t-s)\mathbb{P}U_1(y,s)\,dyds\big\|_{L^{r}(\mathbb{R}_{+}^{3})}\nonumber\\
 \leq &C\int_{\frac t2}^t(t-s)^{-\frac12}s^{-\frac{7}{2}-\frac{3}{2}-\frac{3}{2}(1-\frac{1}{r})}\,ds
      +C\tilde{f}_3(t)\int_{\frac{t}{2}}^t(t-s)^{-\frac{1}{2}}s^{-\frac92-\frac{3}{2}(1-\frac{1}{r})}\,ds\nonumber\\
      \leq &Ct^{-\frac{9}{2}-\frac{3}{2}(1-\frac{1}{r})}+C\tilde{f}_3(t)t^{-4-\frac{3}{2}(1-\frac{1}{r})}.
\end{align}
From $(\ref{4.19-})$, $(\ref{5.11})$-$(\ref{5.14})$, following the proof of $(\ref{3.68})$, we conclude that for $1<r<\infty$ and $t\geq \tilde{t_2}$ with some number $\tilde{t_2}\geq 2\tilde{t_1}$,
\begin{equation}\label{5.15}
\|\nabla d_2(s)\|_{L^{r}(\mathbb{R}_{+}^{3})}+\|\nabla u_2(s)\|_{L^{r}(\mathbb{R}_{+}^{3})}+\|\nabla^2 u_1(s)\|_{L^{r}(\mathbb{R}_{+}^{3})}+
     \|\nabla^2 d_2(s)\|_{L^{r}(\mathbb{R}_{+}^{3})}\leq Ct^{-\frac{5}{2}-\frac{3}{2}(1-\frac{1}{r})}.
\end{equation}
Using $(\ref{3.44})$ and $(\ref{5.15})$, running the same argument as in $(\ref{5.11})$-$(\ref{5.14})$, we obtain that for $1<r<\infty$ and $t\geq \tilde{t_2}$,
\begin{align*}
\|\nabla^4 u(t)\|_{L^{r}(\mathbb{R}_{+}^{3})}+\|\nabla^3p&(t)\|_{L^r(\mathbb{R}_+^3)}\leq Ct^{-\frac52-\frac{3}{2}(1-\frac{1}{r})},\\
\|\nabla^5 u(t)\|_{L^{r}(\mathbb{R}_{+}^{3})}+\|\nabla^4p&(t)\|_{L^r(\mathbb{R}_+^3)}\leq Ct^{-3-\frac{3}{2}(1-\frac{1}{r})}.
\end{align*}
Repeating the proofs of $(\ref{5.5})$-$(\ref{5.14})$, we can get for every integer $k\geq1$,
\begin{equation}\label{5.16}
\|\nabla^{2+k}u(t)\|_{L^r(\mathbb{R}_+^3)}+\|\nabla^{1+k}p(t)\|_{L^r(\mathbb{R}_+^3)}\leq \tilde{C}t^{-\frac12-\frac{2+k}2-\frac 32(1-\frac1r)}
\end{equation}
holds for $1<r<\infty$ and $t\geq\tilde{t_k}$ with some number $\tilde{t_k}>0$.
Using the Gagliardo-Nirenberg inequality $(\ref{gn-})$, we conclude that for any $k\geq1$ and $t\geq\tilde{t_k}$, $(\ref{5.16})$ is valid for $r=\infty$. Thus, we complete the proof of Lemma \ref{le5.4-}.
\end{proof}
\begin{proof}[Proof of Theorem $\ref{th1.3}$]

Theorem $\ref{th1.3}$ directly follows from Lemmas $\ref{le4.1}$-$\ref{le5.4-}$.
\end{proof}
\section*{Acknowledgements}

This work was partially supported by National Key R\&D Program of China (No. 2021YFA1002900), Guangzhou City Basic and Applied Basic Research Fund (No. 2024A04J6336), and the Scientific Research Innovation Project of Graduate School of South China Normal University (No. 2024KYLX084).

\bigskip

{\bf Data Availability:} Data sharing is not applicable to this article.

\bigskip

{\bf Conflict of Interest:} The authors declare that they have no conflict of interest.


\end{document}